\documentclass{article}

\usepackage{intro}
\usepackage[margin=1in]{geometry}
\usepackage{cite}
\usepackage{hyperref}
\usepackage{url}

\title{Diagrams in the mod~$p$ cohomology of Shimura curves}
\author{Andrea Dotto and Daniel Le}
\date{}

\setcounter{tocdepth}{2}

\begin{document}

\maketitle

\begin{abstract}
We prove a local-global compatibility result in the mod~$p$ Langlands program for~$\GL_2(\bQ_{p^f})$. Namely, given a global residual representation~$\rbar$ appearing in the mod $p$ cohomology of a Shimura curve that is sufficiently generic at~$p$ and satisfies a Taylor--Wiles hypothesis, we prove that the diagram occurring in the corresponding Hecke eigenspace of mod $p$ completed cohomology is determined by the restrictions of~$\rbar$ to decomposition groups at~$p$. If these restrictions are moreover semisimple, we show that the $(\varphi,\Gamma)$-modules attached to this diagram by Breuil give, under Fontaine's equivalence, the tensor inductions of the duals of the restrictions of $\rbar$ to decomposition groups at~$p$.
\end{abstract}

\section{Introduction.}

Let $p$ be a rational prime, $f$ be a positive integer, and let $L$ be the unramified extension of $\bQ_p$ of degree $f$.
Let $k_E$ be a finite extension of the residue field $k_L$ of $L$.
For a field $k$, we implicitly choose a separable closure $k^{\mathrm{sep}}/k$ and set $G_k = \mathop{Gal}(k^{\mathrm{sep}}/k)$.
A perhaps optimistic hope is that a mod $p$ local Langlands correspondence attaches to every continuous Galois representation $\rhobar: G_L \ra \GL_2(k_E)$ a $\GL_2(L)$-representation $\pi(\rhobar)$ over $k_E$ satisfying several properties including compatibility with the classical local Langlands under reduction and local-global compatibility (more on this later).
Such a correspondence has been established for $\GL_2(\bQ_p)$ in \cite{BreuilmpLLC,Colmez,Paskunas,Emerton}, but appears to be significantly more complex when $f>1$ or $\GL_2$ is replaced by a higher rank group.
One of the obstacles in realizing, or even formulating, this correspondence is that supersingular irreducible $k_E[\GL_2(L)]$-representations are not classified, and seem to be very difficult to even construct.

\paragraph{} The classical way of studying representations of $p$-adic (or Lie) groups is first to study their restriction to a compact subgroup, and then to study residual symmetries, for example the algebra of Hecke operators.
In this way, infinite dimensional representations are replaced with combinatorial objects.
The module theory of the pro-$p$ Iwahori Hecke algebra captures much of the mod $p$ representation theory in the case of $\GL_2(\bQ_p)$, but less in any more complicated case.
In \cite[\S 9]{BPmodp}, Breuil and Pa{\v s}k{\=u}nas instead consider a refinement of Hecke modules, which they call (basic) $0$-diagrams.
For our purposes, a $0$-diagram $\mathcal{D}$ is a $\GL_2(k_L)$-representation $D_0$ with an automorphism $\Pi$ of $D_0^{I_1}$ whose square acts by a nonzero scalar, where $I_1 \subset \GL_2(k_L)$ is the subgroup of unipotent upper triangular matrices.
From a $\GL_2(L)$-representation $\pi$ with central character, one obtains a $0$-diagram $\mathcal{D}(\pi)$ by taking $D_0$ to be $\pi^{K_1}$ and $\Pi$ to be given by the action of $\fourmatrix{0}{1}{p}{0}$, where $K_1$ is the kernel of the reduction map $\GL_2(\mO_L) \ra \GL_2(k_L)$.
Note that if $\pi$ is nonzero and of characteristic $p$, then so is $\pi^{K_1}$ since $K_1$ is a pro-$p$ group.
In \cite[\S 13]{BPmodp}, they construct a family of mod $p$ $0$-diagrams for each \emph{generic} $\rhobar$.
From each of these $0$-diagrams $\mathcal{D}$, \cite[\S 9]{BPmodp} construct a family of mod $p$ $\GL_2(L)$-representations $\pi$ such that $\mathcal{D}$ is a subdiagram of $\mathcal{D}(\pi)$ in the obvious sense.
For each generic $\rhobar$, the family of $0$-diagrams has size $1$ if $f=1$, but is infinite otherwise; moreover, for each basic $0$-diagram, the family of mod $p$ $\GL_2(L)$-representations has size $1$ if $f=1$, but is infinite if $f>1$ (see~\cite{Hu10})---this is a long way from the hoped-for correspondence when $f>1$.

\paragraph{} Local-global compatibility gives another approach to the mod $p$ local Langlands correspondence.
The idea is that a global mod $p$ Langlands correspondence (closely related to generalizations of Serre's conjecture) is often easy to formulate, if difficult to prove.
Namely, as in the classical picture, it suffices to use the Satake parameterization to define an unramified mod $p$ local Langlands correspondence away from $p$.
Mod $p$ local-global compatibility at $p$ states that the restriction of the global correspondence to places dividing $p$ should recover the local correspondence.
So one way to proceed is to globalize $\rhobar$ to a Galois representation $\rbar$ which is known to correspond to a space of mod $p$ automorphic forms on a quaternion algebra, and then to restrict this representation to get a candidate $\GL_2(L)$-representation $\pi_{\mathrm{glob}}(\rhobar)$ (see Definition \ref{defn:piglob} and \S \ref{sec:modpaut}).
A drawback of this construction is that it is completely unclear if $\pi_{\mathrm{glob}}(\rhobar)$ is independent of the various global choices.
The representation $\pi_{\mathrm{glob}}(\rhobar)$ is best understood when $\rbar$ satisfies favorable conditions like the Taylor--Wiles hypothesis.
We will assume this to be the case in the rest of this section (see \S \ref{sec:modpaut} for more details).

\begin{thm}\cite{Breuilmodp,EGS,HW,LMS,Le}.\label{thm:infamily}
If $\rhobar$ is generic, $\mathcal{D}(\pi_{\mathrm{glob}}(\rhobar))$ is isomorphic to one of the $0$-diagrams attached to $\rhobar$ by \cite{BPmodp}.
\end{thm}

More precisely, \cite{Breuilmodp} shows that $\mathcal{D}(\pi_{\mathrm{glob}}(\rhobar))$ contains one of the $0$-diagrams attached to $\rhobar$ conditional on a conjecture later established in \cite{EGS} (under a slightly stronger genericity hypothesis). 
Building on this, the sequence of works \cite{HW,LMS,Le} shows that in fact this inclusion is an isomorphism (subject to still stronger genericity hypotheses).
However, since this family of $0$-diagrams is infinite when $f>1$, one may still ask whether $\mathcal{D}(\pi_{\mathrm{glob}}(\rhobar))$ depends on global choices.
The following is our first main result (combining Theorems \ref{patchingdiagram} and \ref{thm:patching}).

\begin{thm}\label{thm:local}
If $\rhobar$ is generic, $\mathcal{D}(\pi_{\mathrm{glob}}(\rhobar))$ depends only on $\rhobar$.
\end{thm}

\noindent See \S \ref{Galoisreps} for the precise genericity conditions we need: they are slightly more restrictive than those in~\cite{BPmodp}. 
Theorem~\ref{thm:local} narrows the family of diagrams constructed by \cite{BPmodp} to a single one for each generic $\rhobar$.
In fact, our proof can be made completely explicit, which we do to some extent in the case when $\rhobar$ is semisimple (see Theorem \ref{thm:functor}).

\paragraph{} The mod $p$ (and $p$-adic) local Langlands correspondence for $\GL_2(\bQ_p)$ (see \cite{Colmez,Kisindef,Paskunas,CDP}) is realized by Colmez's functor.
In \cite{Breuilparameters}, Breuil observes that this functor is rather combinatorial if $\rhobar$ is semisimple and can be generalized to the $\GL_2(L)$-case.
Namely, to a \emph{Diamond} $0$-diagram $\mathcal{D}$ (a $0$-diagram of a certain form, see \S \ref{sec:diamond}) he attaches a $(\varphi,\Gamma)$-module $M(\mathcal{D})$, which recovers Colmez's functor in the $\GL_2(\bQ_p)$ case when $\rhobar$ is semisimple (recall that the family of $0$-diagrams in this case is a singleton).
In other words, if $f$ is $1$ and $\rhobar$ is semisimple, then $M(\mathcal{D}(\rhobar))$ corresponds to $\rhobar^\vee$ under Fontaine's equivalence (see \cite[\S 1]{Breuilparameters}).
Our other main result generalizes this to arbitrary $f$.

\begin{thm}\label{thm:functor}
If $\rhobar:G_L \ra \GL_2(k_E)$ is generic and semisimple, then $M(\mathcal{D}(\pi_{\mathrm{glob}}(\rhobar)))$ corresponds under Fontaine's equivalence to the tensor induction $\ind_L^{\otimes\bQ_p} \rhobar^\vee$ from $G_L$ to $G_{\bQ_p}$.
\end{thm}

By~\cite[Th\'eor\`eme~6.4]{Breuilparameters}, this theorem is equivalent to a determination of certain parameters of the diagram, which appears in Theorem~\ref{completecomputation} in the case $\det(\rhobar) \circ \mathrm{Art}_L(p) = 1$ (the other cases follow by twisting). This gives a positive answer to~\cite[Question~9.5]{Breuilmodp} when~$\rhobar$ is generic. 

\begin{rk}
Hu~\cite{Hu} obtained an analogue of this question in the setting of~\cite{BD} where $\rhobar$ is maximally nonsplit.
We repeat certain calculations from~\cite{Hu} in slightly different contexts for the sake of completeness.
(See Remark~\ref{Hureference} for an instance of this.)
\end{rk}

\paragraph{}
We now explain these parameters further. 
By Theorem \ref{thm:infamily}, the $\GL_2(k_L)$-action on $\mathcal{D}(\pi_{\mathrm{glob}}(\rhobar))$ is known (by construction, the $\GL_2(k_L)$-actions within a single family are isomorphic).
To prove Theorems \ref{thm:local} and \ref{thm:functor}, we must understand how the actions of $\GL_2(k_L)$ and $\Pi$ interact.
We show that this interaction corresponds exactly to a character of a certain finite groupoid $\mathcal{G}$ with generators indexed by $I$-isotypic lines in $D_0^{I_1}$.
It suffices to show that the action of group elements of $\mathcal{G}$ depend only on $\rhobar$.
There has been a body of work studying the action of group elements which are themselves \emph{generators} (see \cite{BD,Hu,HLM,LMP,Enns,PQ}), often with the goal of showing that $\rhobar$ can be recovered from $\pi_{\mathrm{glob}}(\rhobar)$.
The methods in these works have a common theme.
The study of $\Pi$ can be reduced to the study of the action of a Hecke operator $U_p$ and a certain element of the group algebra of $\GL_2(k_L)$.
The action of $U_p$ can then be studied by using the classical local Langlands correspondence on a characteristic zero lift of $\pi_{\mathrm{glob}}(\rhobar)$ and then taking the reduction mod $p$.
This method cannot work directly because as soon as one considers a product of generators, the lifts one considers are typically distinct and not directly comparable.
And so we need another tool.

\paragraph{} The reason for imposing the Taylor--Wiles hypothesis is that we would like to use the Taylor--Wiles patching method.
In fact, our results apply whenever $\pi_{\mathrm{glob}}(\rhobar)$ satisfies certain axioms, namely it comes from a \emph{patched module with an arithmetic action}.
The study of Colmez's functor led to a proof of many cases of the Fontaine--Mazur conjecture in \cite{KisinFM}, through the construction of restricted local deformation rings at $p$ (see \cite{Kisinsemistable}), a classical inertial local Langlands correspondence relating representations of compact open subgroups and representations of the inertia subgroups of local Galois groups, and the Taylor--Wiles patching method.
In the other direction, the techniques introduced by Kisin have led to a number of developments in the inertial mod $p$ and $p$-adic Langlands correspondences beyond $\GL_2(\bQ_p)$ (see, e.g., \cite{GLS,EGS,LLLM,LLLM2,LLL}).
Another approach to the Fontaine--Mazur conjecture is to prove local-global compatibility (see \cite{Emerton,CEGGPS2}), where
Hecke operators must play a crucial role.
Our approach is similar to \cite{CEGGPS2} in that we use the interpolation from \cite{CEGGPS} of the action of Hecke operators on Taylor--Wiles patched modules, but we further interpolate \emph{between} different deformation rings.

\paragraph{} While the various characteristic zero lifts of spaces of mod $p$ automorphic forms one might consider are incomparable, they can be interpolated using the Taylor--Wiles patching method.
The corresponding Hecke operators can then be identified with elements of various tamely potentially Barsotti--Tate deformation rings.
In turn, these elements can be compared with elements of a single tamely potentially Barsotti--Tate deformation ring, corresponding to what we call the central inertial type.
For this comparison, it is essential that we are working with patched modules which are maximal Cohen--Macaulay on deformation spaces.
In this way, understanding the action of group elements of $\mathcal{G}$ is reduced to understanding a certain product of elements of a tamely potentially Barsotti--Tate deformation ring and the action of (a product of) elements of the group algebra of $\GL_2(k_L)$ on the Deligne--Lusztig representation corresponding to the central inertial type by inertial local Langlands.
For the reader only interested in the proof of Theorem \ref{thm:local} in an axiomatic context (see Theorem \ref{patchingdiagram}), a proof with minimal reference to previous sections is given in \S \ref{sec:diagrams}.
To prove Theorem \ref{thm:functor}, we explicitly calculate these products in \S \ref{sec:constants} using \S \ref{repprelim}-\ref{sec:defring}.
Using the Taylor--Wiles method, \S \ref{sec:global} gives examples of global contexts satisfying the axioms of \S \ref{sec:diagrams}.

\subsection{Acknowledgments.}

The authors would like to thank A.~Caraiani, M.~Emerton, T.~Gee, D.~Geraghty, V.~Pa{\v s}k{\=u}nas, and S.W.~Shin for sharing a note which contains an inspiring reinterpretation of \cite{BD} using Taylor--Wiles patching.
DL would like to thank C.~Breuil for proposing \cite[Question~9.5]{Breuilmodp} and encouraging him to work on it, as well as M.~Emerton for related conversations.
AD joins DL in thanking Breuil and Emerton for their interest and encouragement, and is grateful to T.~Gee for mentioning the problem to him.
The authors thank the referee for a close reading of and comments and corrections on earlier versions of this article and V.~Pa{\v s}k{\=u}nas for pointing out an error in an earlier version.

The authors would like to thank L.~Berger, G.~Dospinescu, P.~Gille, W.~Niziol, V.~Pilloni, S.~Rozensztajn, A.~Thuillier, l'Universit\'e de Lyon, and l'ENS de Lyon for organizing and hosting the trimester on Algebraic Groups and Geometrization of the Langlands program where this collaboration began.
AD was supported by the Engineering and Physical Sciences Research Council [EP/L015234/1], The EPSRC Centre for Doctoral Training in Geometry and Number Theory (The London School of Geometry and Number Theory), University College London, and Imperial College London.
DL was supported by the National Science Foundation under agreement No.~DMS-1703182 and an AMS-Simons travel grant. 

\subsection{Notation and conventions.}
Throughout, $p > 5$ is a prime number (if $p \leq 5$ then there are no generic representations in the sense of \S \ref{Galoisreps}). Let~$L = \bQ_{p^f}$, write $\mO_L$ for the ring of integers, $k_L$ for the residue field, and $q = p^f$, and fix an algebraic closure~$\lbar L / L$.
We fix a coefficient field $E/ \bQ_p$ (a finite extension) with ring of integers~$\mO_E$, uniformizer $\varpi_E$, and residue field~$k_E$. 
By taking a further extension, we will assume that $E$ is large enough for our purposes.
In particular, we assume that every irreducible $\overline{E}$-representation of $\GL_2(k_L)$ is defined over~$E$. 
We assume there exist embeddings of $k_L$ in~$k_E$ and we fix one of them, denoted $\sigma_0 : k_L \to k_E$. We will number embeddings as $\sigma_j = \sigma_0 \circ \varphi^{-j}$, where~$\varphi: x \mapsto x^p$. 

For $\lambda \in k_L$ we write~$[\lambda] \in \mO_E$ for the Teichm\"uller lift arising from the induced embedding $\sigma_0: W(k_L) \to \mO_E$. 

The group~$\GL_2(\mO_L)$ will be denoted~$K$, and the first congruence subgroup is
\[
K_1 = \ker(\GL_2(\mO_L) \to \GL_2(k_L)).
\]  
According to the context, we write~$H$ for the diagonally embedded $\mO_L^\times \times\mO_L^\times$ in~$\GL_2(\mO_L)$ or its image in~$\GL_2(k_L)$, which is the diagonal torus. Similarly, $I$ denotes the Iwahori subgroup of $\GL_2(\mO_L)$ or the upper triangular Borel subgroup of~$\GL_2(k_L)$, and $I_1$ denotes the Sylow pro-$p$ subgroup of~$I$ or the upper unipotent subgroup of~$\GL_2(k_L)$. 
We write $\alpha$ for the $H$-character $\fourmatrix{a}{0}{0}{d} \mapsto [ad^{-1}]$. If~$\chi$ is a character of~$H$, then $\chi^s$ is its conjugate under the nontrivial element $s$ of the Weyl group $S_2$. 
Hence, if $\chi: \fourmatrix a 0 0 d \mapsto [a]^r[ad]^s$, then $\chi^s = \chi \alpha^{-r}$. If~$x \in k_E^\times$, we write~$\nr_x$ for the unramified character $G_{\bQ_p} \to k_E^\times$ that sends geometric Frobenius elements to~$x$. 

The determinant character of~$\GL_2(k_L)$ is naturally valued in~$k_L^\times$, and we obtain a $k_E^\times$-valued character through our fixed embedding~$\sigma_0: k_L \to k_E$. We will usually denote $\sigma_0 \circ \det$ by~$\det$, and write~$\det_E$ if we need to distinguish between the two.

We define two maps, $\delta_{\mathrm{red}}$ and~$\delta_{\mathrm{irr}}$, from the set of subsets of~$\{0, \ldots, f-1 \}$ to itself by
\begin{gather*}
j \in \delta_{\mathrm{red}}(J) \text{ if and only if } j+1 \in J\\
\begin{cases}
j<f-1 : j \in \delta_{\mathrm{irr}}(J) \text{ if and only if } j+1 \in J\\
f-1 \in \delta_{\mathrm{irr}}(J) \text{ if and only if } 0 \not \in J
\end{cases}
\end{gather*}
so that~$\delta_{\red}$ is a left shift. The following lemma follows from the definitions.

\begin{lemma}\label{deltabehaviour}
For all $\delta \in \{\delta_\red, \delta_\irr\}$, we have equalities $\delta(J^c) = (\delta J)^c$, $J \triangle \delta J = J^c \triangle \delta J^c$ and $\delta_\red(J \triangle \delta J) = \delta J \triangle \delta^2 J$.
\end{lemma}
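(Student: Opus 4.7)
The plan is to verify each of the three identities pointwise in $j \in \{0, \ldots, f-1\}$, with indices read cyclically modulo~$f$ (consistently with the statement that $\delta_\red$ is a left shift on the set of embeddings). The whole proof is elementary bookkeeping; the only subtlety is the extra flip that $\delta_\irr$ introduces at $j = f-1$.

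For the first identity I would split on~$j$. For $j < f-1$, the condition ``$j \in \delta A$'' reads ``$j+1 \in A$'' for both choices of~$\delta$, so complementing $A$ is equivalent to complementing $\delta A$ at this~$j$. For $j = f-1$ the cyclic rule for $\delta_\red$ gives $f-1 \in \delta_\red J$ iff $0 \in J$, which is preserved by complementation, while for $\delta_\irr$ the rule $f-1 \in \delta_\irr J$ iff $0 \notin J$ flips on both sides when one replaces $J$ by $J^c$ on the one hand and $\delta_\irr J$ by its complement on the other. The second identity is then a formal consequence of the first using the set-theoretic identity $A^c \triangle B^c = A \triangle B$ (both sides consist of the elements lying in exactly one of $A$ and $B$): apply it with $A = J$, $B = \delta J$ and rewrite $(\delta J)^c$ as $\delta(J^c)$ by the first identity.

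For the third identity I would encode both maps uniformly by the relation $\mathbf{1}_{\delta J}(j) \equiv \mathbf{1}_J(j+1) + \epsilon_j \pmod 2$ on indicator functions, with indices mod~$f$, where $\epsilon_j = 0$ for all $j$ when $\delta = \delta_\red$, while for $\delta = \delta_\irr$ we have $\epsilon_{f-1} = 1$ and $\epsilon_j = 0$ for $j < f-1$. Iterating once yields $\mathbf{1}_{\delta^2 J}(j) \equiv \mathbf{1}_J(j+2) + \epsilon_j + \epsilon_{j+1} \pmod 2$. Expanding, the left-hand side $\mathbf{1}_{\delta_\red(J \triangle \delta J)}(j) = \mathbf{1}_J(j+1) + \mathbf{1}_{\delta J}(j+1)$ and the right-hand side $\mathbf{1}_{\delta J \triangle \delta^2 J}(j) = \mathbf{1}_{\delta J}(j) + \mathbf{1}_{\delta^2 J}(j)$ both collapse to $\mathbf{1}_J(j+1) + \mathbf{1}_J(j+2) + \epsilon_{j+1} \pmod 2$. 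I expect no genuine obstacle here; the only pitfall is keeping the cyclic indexing and the $\delta_\irr$ correction straight throughout, especially at the wraparound $j = f-1$.
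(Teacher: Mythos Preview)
Your proof is correct and is precisely the kind of direct, pointwise verification the paper has in mind when it says the lemma ``follows from the definitions''; you have simply written out the details (including a clean indicator-function encoding of the $\delta_\irr$ twist) that the paper leaves implicit.
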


For a subset~$J$  of~$\bZ/f = \{0, \ldots, f-1\}$, we will write~$\ch_J$ for the characteristic function of~$J$.
We will sometimes employ the following shorthand in conjunction with~$\delta_\red$: if $J \subset \{0, \ldots, f-1\}$, we will write $\br(j, J) = \ch_J(j-1, j, j+1)$.

If~$x$ is a positive integer, we define numbers~$x_j \in [0, p-1]$ by writing $x = \sum_{j=0}^{f-1} x_ip^i + Q(q-1)$ for some~$Q > 0$.
(The only possible ambiguity occurs when $q-1|x$, in which case we put $x_j=0$ for all~$j$.)

If $F$ is a local or global field, we write $\mathrm{Art}_F$ for the Artin reciprocity map, normalized so that the global map is compatible with the local ones, and the local map takes a uniformizer to a geometric Frobenius element.
If $F$ is a number field and $w$ is a finite place of $F$, we will write $\Frob_w$ for a geometric Frobenius element at $w$.
We let $\varepsilon: G_L \ra \bZ_p^\times \ra \mO_E^\times$ denote the cyclotomic character and normalize Hodge--Tate weights so that $\mathrm{HT}_\kappa(\varepsilon) = 1$ for all embeddings $\kappa: L \ra E$.

\tableofcontents

\section{Preliminaries in representation theory.}\label{repprelim}

\subsection{$S$-operators and Jacobi sums.}
Throughout, for $0 < i \leq q-1$ we will write
\begin{gather*}
S_i = \sum_{\lambda \in \bF_q^\times} [\lambda]^i \fourmatrix{\lambda}{1}{1}{0}\\
S_i^+ = \sum_{\lambda \in \bF_q^\times} [\lambda]^i \fourmatrix{1}{0}{\lambda}{1}
\end{gather*}
and we put $S_0 = \fourmatrix{0}{1}{1}{0} + S_{q-1}, S_0^+ = \fourmatrix{1}{0}{0}{1} + S_{q-1}^+$. These are elements of the group algebra~$\mO_E[\GL_2(k_L)]$, where the Teichm\"uller lift is taken in~$W(k_L) \subseteq W(k_E)$. 

The operators~$S_i$ play a prominent role in~\cite{BPmodp}, whereas it is sometimes simpler to compute with the~$S_i^+$. If~$r$ is an integer, we will write $S_r, S_r^+$ for the operators indexed by a number in $\{0, \ldots, q-1\}$ congruent to~$r$ mod~$q-1$, and we will explicitly say which one we mean for $0, q-1$ on a case-by-case basis. 

For $0 \leq a, b \leq q-1$, introduce the Jacobi sum
\begin{displaymath}
\bJ(a, b) = \sum_{\alpha \in \bF_q} [\alpha]^a [1-\alpha]^b.
\end{displaymath}
Usually, the convention is that all powers of~$[0]$ are equal to~$0$ except the $0$-th power, which is equal to~$1$. This is reflected in the definition of the $S$-operators, and it results in $\bJ(a, 0) = \sum_{\alpha \in \bF_q} [\alpha]^a$, which vanishes unless $a = 0, q-1$. Our computations will give rise to nonzero constants, and so will never contain Jacobi sums of this type. For this reason and not to disrupt convention, we introduce the sum~$\bJ_0(a, b)$ that is defined in the same way but with the assumption that~$[0]^0 = 0$. Recall the following results.
\begin{thm}[Stickelberger]\label{Stickelbergertheorem}
Let~$0 < a, b < q-1$ be two integers such that $a + b \not = q-1$. Write their $p$-adic expansions as $a = \sum_i a_i p^i, b = \sum_j b_j p^j$, and similarly for $a+b = \sum_{k = 0}^{f-1} (a+b)_k p^k + (q-1)Q$. Then in~$\mO_E$ we have the equality
\begin{displaymath}
\bJ(a, b) = U(a, b)p^{u(a, b)} + C
\end{displaymath}
with $v_p(C) > u(a,b)$ and
\begin{gather*}
u(a, b) = \frac{1}{p-1}\left ( \sum_{j=0}^{f-1} p-1- \left ( a_j + b_j -(a+b)_j \right ) \right )\\
U(a, b) = (-1)^{f-1+u(a, b)} \frac{\prod_j a_j! b_j!}{\prod_j (a+b)_j!} \in \bZ_p^\times.
\end{gather*}
\end{thm}
\begin{lemma}\label{sumdivide}
In the situation of the theorem above, assume $a+b = q-1$. Then $\bJ(a, b) = (-1)^{a+1}$.
\end{lemma}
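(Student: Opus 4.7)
The plan is to reduce $\bJ(a,b)$ to a multiplicative character sum on $\bF_q^\times$, which vanishes for $0 < a < q-1$, leaving only a boundary contribution.

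First I would observe that since $a, b > 0$, the conventions on $[0]$ cause both the $\alpha = 0$ and $\alpha = 1$ terms to drop out of the sum, so
\[
\bJ(a,b) = \sum_{\alpha \in \bF_q \setminus \{0,1\}} [\alpha]^a [1-\alpha]^b.
\]
For such $\alpha$, both $[\alpha]$ and $[1-\alpha]$ are units in $W(k_L)$ with $[x]^{q-1}=1$. Using $b = q-1-a$, this lets me rewrite $[1-\alpha]^b = [1-\alpha]^{-a}$, and then by multiplicativity of the Teichm\"uller lift,
\[
[\alpha]^a [1-\alpha]^b = \left[\tfrac{\alpha}{1-\alpha}\right]^a.
\]

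Next I would perform the change of variables $\beta = \alpha/(1-\alpha)$, whose inverse is $\alpha = \beta/(1+\beta)$. This is a bijection from $\bF_q \setminus \{0,1\}$ onto $\bF_q \setminus \{0,-1\}$, and it transforms the sum into
\[
\bJ(a,b) = \sum_{\beta \in \bF_q^\times \setminus \{-1\}} [\beta]^a = \Bigl(\sum_{\beta \in \bF_q^\times} [\beta]^a\Bigr) - [-1]^a.
\]
Since $0 < a < q-1$, the map $\beta \mapsto [\beta]^a$ is a nontrivial character of $\bF_q^\times$ valued in $\mO_E^\times$, so its sum over $\bF_q^\times$ vanishes. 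Because $p$ is odd, $[-1] = -1$ in $W(k_L)$, giving $\bJ(a,b) = -(-1)^a = (-1)^{a+1}$, as required.

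There is no real obstacle: each step is a short formal manipulation, and the only points to be careful about are the conventions on $[0]^0$ (which are irrelevant here since $a, b > 0$) and the bijectivity of $\alpha \mapsto \alpha/(1-\alpha)$ on the complements of $\{0,1\}$ and $\{0,-1\}$ in $\bF_q$.
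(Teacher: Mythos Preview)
Your proof is correct and follows essentially the same route as the paper's: rewrite $[\alpha]^a[1-\alpha]^b$ as $[\alpha/(1-\alpha)]^a$, change variables to reduce to a character sum over $\bF_q^\times$ minus the term at $-1$, and use that the sum of a nontrivial character vanishes.
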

\begin{proof}
Write $\bJ(a, b) = \sum_{\alpha \in \bF_q \backslash \{0, 1 \}} \left [ \frac{\alpha}{1-\alpha} \right ]^a = \sum_{x \in \bF_q \backslash \{0, -1 \}} [x]^a = -(-1)^a$ since $\alpha \mapsto [\alpha]^a$ is a nontrivial character of~$\bF_q^\times$ (see~\cite[Theorem 8.1]{IrelandRosen}).
\end{proof}

\begin{lemma}\label{relations2}
Let~$0 < i, j < q-1$, so that $\bJ_0(i, j) = \bJ(i, j)$ is not zero by Theorem \ref{Stickelbergertheorem} and Lemma \ref{sumdivide}. Then
\[
S_i^+ S_j^+ =
\begin{dcases}
\bJ(i, j) S_{i+j}^+ &\mbox{if } i+j \neq q-1 \\
(-1)^{i+1}S_0^+ + (-1)^iq \fourmatrix{1}{0}{0}{1} &\mbox{if } i+j = q-1.
\end{dcases}
\]
\end{lemma}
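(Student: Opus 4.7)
The plan is a direct computation. I would first expand the product using the relation
\[
\fourmatrix{1}{0}{\lambda}{1}\fourmatrix{1}{0}{\mu}{1} = \fourmatrix{1}{0}{\lambda+\mu}{1},
\]
which gives
\[
S_i^+ S_j^+ = \sum_{\lambda,\mu \in \bF_q^\times} [\lambda]^i [\mu]^j \fourmatrix{1}{0}{\lambda+\mu}{1}.
\]
Then I would partition the double sum according to whether $\nu = \lambda + \mu$ vanishes or not.

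For the $\nu \neq 0$ contributions, I would reparametrize by writing $\lambda = \alpha \nu$ and $\mu = (1-\alpha)\nu$, with $\alpha$ ranging over $\bF_q \setminus \{0,1\}$, to extract $[\nu]^{i+j}$ and recognise the inner sum as the Jacobi sum:
\[
\sum_{\nu \in \bF_q^\times} [\nu]^{i+j}\fourmatrix{1}{0}{\nu}{1} \cdot \sum_{\alpha \in \bF_q\setminus\{0,1\}} [\alpha]^i[1-\alpha]^j = \bJ(i,j) \sum_{\nu \in \bF_q^\times} [\nu]^{i+j}\fourmatrix{1}{0}{\nu}{1}.
\]
Since $0 < i,j < q-1$, the convention $[0]^i = [0]^j = 0$ causes no conflict between $\bJ$ and $\bJ_0$ here. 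When $i+j \not\equiv 0 \pmod{q-1}$, this matches $\bJ(i,j) S_{i+j}^+$ (using $[\nu]^{q-1}=1$ to reduce the exponent into $\{1,\ldots,q-2\}$).

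For the $\nu = 0$ contribution, I would substitute $\mu = -\lambda$ to get
\[
[-1]^j \Bigl(\sum_{\lambda \in \bF_q^\times}[\lambda]^{i+j}\Bigr)\fourmatrix{1}{0}{0}{1},
\]
which vanishes unless $i+j = q-1$, in which case it contributes $(-1)^i(q-1)\cdot I$ (using $[-1] = -1$ and $(-1)^{q-1-i} = (-1)^i$ since $p$ is odd). Combining this with the nonzero-$\nu$ piece in the case $i+j=q-1$, and applying Lemma \ref{sumdivide} to rewrite $\bJ(i,j) = (-1)^{i+1}$ together with $S_{q-1}^+ = S_0^+ - I$, gives
\[
(-1)^{i+1}(S_0^+ - I) + (-1)^i(q-1) I = (-1)^{i+1} S_0^+ + (-1)^i q\, I,
\]
as required.

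There is no real obstacle here; the only care needed is in the bookkeeping at $\nu = 0$ and in the conversion $S_{q-1}^+ \leftrightarrow S_0^+$, which is handled by the special-case definitions introduced just before the lemma.
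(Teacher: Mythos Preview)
Your proposal is correct and follows essentially the same route as the paper: expand the product, substitute $\nu = \lambda+\mu$, separate the $\nu=0$ term, recognise the Jacobi sum in the $\nu\neq 0$ piece via the change of variable $\alpha = \nu^{-1}\lambda$, and finish the $i+j=q-1$ case using Lemma~\ref{sumdivide} together with $S_{q-1}^+ = S_0^+ - I$. The only cosmetic difference is that the paper leaves the final $S_{q-1}^+\leftrightarrow S_0^+$ conversion implicit, whereas you spell it out.
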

\begin{proof}
We work in this proof with $[0]^n = 0$ for all $n \in \bZ$. Assume that $q-1$ does not divide~$i+j$. We expand 
\begin{align*}
S_i^+S_j^+ & = \sum_{\lambda, \mu \in \bF_q^\times}[\lambda]^i[\mu]^j\fourmatrix{1}{0}{\lambda + \mu}{1} \\
& = \sum_{\lambda \in \bF_q^\times, \nu \in \bF_q \backslash \{\lambda \}} [\lambda]^i [\nu - \lambda]^j \fourmatrix{1}{0}{\nu}{1} = \sum_{\lambda \in \bF_q^\times, \nu \in \bF_q} [\lambda]^i [\nu - \lambda]^j \fourmatrix{1}{0}{\nu}{1}\\
& = \sum_{\lambda, \nu \in \bF_q^\times}[\nu^{-1}\lambda]^i[1-\nu^{-1}\lambda]^j[\nu]^{i+j}\fourmatrix{1}{0}{\nu}{1} + (-1)^j \left ( \sum_{\lambda \in \bF_q^\times}[\lambda]^{i+j} \right )\fourmatrix{1}{0}{0}{1}\\
& =  \sum_{\nu \in \bF_q^\times} \left ( \sum_{\alpha \in \bF_q^\times}[\alpha]^i[1-\alpha]^j \right )[\nu]^{i+j} \fourmatrix{1}{0}{\nu}{1} + 0 = \left ( \sum_{\alpha \in \bF_q^\times}[\alpha]^i[1-\alpha]^j \right )S_{i+j}^+.
\end{align*}
The vanishing of~$\left ( \sum_{\lambda \in \bF_q^\times}[\lambda]^{i+j} \right )$ occurs because, by assumption, $[\lambda]^{i+j}$ is a nontrivial character of~$\bF_q^\times$. 

Otherwise, $i+j = q-1$ and the second summand equals~$q-1$, so
\begin{displaymath}
S_i^+ S_{q-1-i}^+ = \left ( \sum_{\alpha \in \bF_q^\times}[\alpha]^i[1-\alpha]^{q-1-i} \right )S_{q-1}^+ + (-1)^{q-1-i}(q-1)\fourmatrix{1}{0}{0}{1}
\end{displaymath}
and the claim follows from Lemma~\ref{sumdivide} since $\bJ(i, q-1-i) = \bJ_0(i, q-1-i)$.
\end{proof}

\begin{lemma}\label{relations3}
Let $i$ and $j$ be as in Lemma \ref{relations2}. 
Then 
\[
S_i S_j^+ =
\begin{dcases}
\bJ(i, j) S_{i+j} &\mbox{if } i+j \neq q-1 \\
(-1)^{i+1}S_0 + (-1)^iq \fourmatrix{0}{1}{1}{0} &\mbox{if } i+j = q-1.
\end{dcases}
\]
\end{lemma}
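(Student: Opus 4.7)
The plan is to imitate the computation in Lemma~\ref{relations2} almost verbatim, exploiting the matrix identity
\[
\fourmatrix{\lambda}{1}{1}{0} \fourmatrix{1}{0}{\mu}{1} = \fourmatrix{\lambda + \mu}{1}{1}{0}.
\]
Thanks to this identity, the double sum defining $S_iS_j^+$ has exactly the same shape as the one for $S_i^+S_j^+$, with the only difference that the right factor $\fourmatrix{1}{0}{\nu}{1}$ is replaced by $\fourmatrix{\nu}{1}{1}{0}$. So all manipulations on the Teichm\"uller coefficients carry over; only the matrix tail, and consequently the reassembly into the $S$-operators, changes.

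Concretely, I would first expand $S_iS_j^+$ and substitute $\nu = \lambda + \mu$, using the convention $[0]^n = 0$ for $n > 0$ to enlarge the range of summation from $\nu \in \bF_q \setminus \{\lambda\}$ to $\nu \in \bF_q$. Isolating the $\nu = 0$ term and rescaling $\lambda = \nu\alpha$ in the remaining sum, one obtains the decomposition
\[
S_iS_j^+ \;=\; \bJ_0(i,j)\sum_{\nu \in \bF_q^\times}[\nu]^{i+j}\fourmatrix{\nu}{1}{1}{0} \;+\; (-1)^j\!\!\sum_{\lambda \in \bF_q^\times}[\lambda]^{i+j}\fourmatrix{0}{1}{1}{0},
\]
exactly as in the proof of Lemma~\ref{relations2} but with $\fourmatrix{0}{1}{1}{0}$ in place of the identity.

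If $i+j \neq q-1$, then $[\cdot]^{i+j}$ is a nontrivial character of $\bF_q^\times$, so the boundary term vanishes and the first sum is $S_{i+j}$ by the paper's convention; this immediately yields $\bJ(i,j)S_{i+j}$. If instead $i+j = q-1$, then $[\cdot]^{i+j}$ is trivial, so the boundary sum contributes $(-1)^j(q-1)\fourmatrix{0}{1}{1}{0}$, while Lemma~\ref{sumdivide} evaluates the Jacobi sum to $(-1)^{i+1}$, producing the term $(-1)^{i+1}S_{q-1}$. The final bookkeeping step, which is the only place this lemma diverges in appearance from Lemma~\ref{relations2}, is to rewrite $S_{q-1}$ via $S_0 = \fourmatrix{0}{1}{1}{0} + S_{q-1}$ and to use $(-1)^j = (-1)^{q-1-i} = (-1)^i$ (since $q$ is odd) to combine the two contributions into $(-1)^{i+1}S_0 + (-1)^iq\fourmatrix{0}{1}{1}{0}$.

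There is no genuine obstacle; the computation is a routine variant of the preceding lemma. The only point requiring mild care is the rewriting at the end: one must correctly absorb the extra copy of $\fourmatrix{0}{1}{1}{0}$ coming from the substitution $S_{q-1} = S_0 - \fourmatrix{0}{1}{1}{0}$ into the boundary term so that the coefficient of $\fourmatrix{0}{1}{1}{0}$ becomes $(-1)^i q$ rather than $(-1)^i(q-1)$.
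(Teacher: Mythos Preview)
Your proof is correct, but the paper does not repeat the double-sum manipulation. It simply observes that $S_i = \fourmatrix{0}{1}{1}{0}\,S_i^+$ for all $0 \le i \le q-1$ (from $\fourmatrix{0}{1}{1}{0}\fourmatrix{1}{0}{\lambda}{1} = \fourmatrix{\lambda}{1}{1}{0}$), so $S_iS_j^+ = \fourmatrix{0}{1}{1}{0}\,S_i^+S_j^+$, and then left-multiplies the two cases of Lemma~\ref{relations2} by $\fourmatrix{0}{1}{1}{0}$. This instantly converts $S_{i+j}^+$ to $S_{i+j}$, $S_0^+$ to $S_0$, and $\fourmatrix{1}{0}{0}{1}$ to $\fourmatrix{0}{1}{1}{0}$, finishing the proof in one line. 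Your approach reproves Lemma~\ref{relations2} in a new costume; the paper's approach recognises that the two identities are literally related by a single matrix factor and avoids all the bookkeeping with $S_{q-1}$ versus $S_0$ and the $(-1)^j$ sign that you had to track at the end.
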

\begin{proof}
This follows from Lemma \ref{relations2} and the identities $S_i = \fourmatrix{0}{1}{1}{0} S_i^+$ for all $0 \leq i \leq q-1$.
\end{proof}

Now we extend Lemma \ref{relations2} to more general products. 
The next proposition will be applied together with Lemma~\ref{S_0^+vanishing}, and the reduction mod~$p$ of~$\beta$ will turn out to  be closely related to Breuil's constants.

\begin{pp}\label{pp:contraction}
Let $k \geq 2$ and suppose that $0< i_1,\ldots, i_k < q-1$ are integers such that $q-1$ divides $\mI_t = \sum_{m=1}^t i_m$ if and only if $t=k$.
Then there exist nonzero $\alpha, \beta \in W(k_L)$ such that
\begin{gather*}
S^+_{i_1} S^+_{i_2} \cdots S^+_{i_k} = \alpha S_0^+ + \beta \fourmatrix{1}{0}{0}{1},\\
v_p(\beta) = \frac{1}{p-1} \left ( \sum_{\substack{1 \leq t \leq k \\ 0 \leq j \leq f-1}}p - 1 - i_{t, j}  \right ),\\ 
v_p(\alpha) = v_p(\beta) - f,
\end{gather*}
the leading term of $\beta$ is the Teichm\"uller lift of
\begin{equation}\label{eqn:cexpressions}
c(\beta) = (-1)^{v_p(\beta) + (k-2)(f-1)} \prod_{\substack{1 \leq t \leq k\\0 \leq j \leq f-1}} i_{t, j}!
\end{equation}
and the leading term of~$\alpha$ is the Teichm\"uller lift of~$-c(\beta)$.
\end{pp}
\begin{proof}
First we record a lemma.
\begin{lemma}\label{sums}
For all~$j$, the sum $\mI_{k-1, j} + i_{k, j}$ is equal to~$p-1$.
\end{lemma}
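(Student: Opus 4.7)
The plan is to exploit the hypotheses on divisibility by~$q-1$ together with the bound $0 < i_k < q-1$ to pin down $\mI_{k-1}$ modulo~$q-1$, and then to read off its base-$p$ digits directly. Since $\mI_k = \mI_{k-1} + i_k$ and $q-1$ divides $\mI_k$ by the $t=k$ case of the assumption, I would begin by writing
\[
\mI_{k-1} \equiv -i_k \equiv q - 1 - i_k \pmod{q-1},
\]
and observe that $q - 1 - i_k \in (0, q-1)$ because $0 < i_k < q-1$.

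Next I would unpack the convention from the notation section: $\mI_{k-1, j}$ is the $j$-th $p$-adic digit of the unique representative of $\mI_{k-1}$ modulo~$q-1$ lying in $[1, q-2]$, provided $\mI_{k-1}$ is itself not divisible by~$q-1$. This last condition is exactly the $t = k-1$ case of the standing hypothesis, so the representative is precisely $q - 1 - i_k$, and $\mI_{k-1, j}$ equals the $j$-th base-$p$ digit of $q - 1 - i_k$.

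The remaining arithmetic observation is that the subtraction $(q-1) - i_k$ involves no borrows. Indeed $q - 1 = \sum_{j=0}^{f-1} (p-1) p^j$ and $i_{k, j} \in [0, p-1]$, so each partial difference $(p-1) - i_{k, j}$ already lies in $[0, p-1]$ and is therefore a legitimate base-$p$ digit. Reading off the expansion $q - 1 - i_k = \sum_j ((p-1) - i_{k,j}) p^j$ gives $\mI_{k-1, j} = (p-1) - i_{k, j}$, and adding $i_{k, j}$ yields the claimed value $p-1$. There is no genuine obstacle; the only step that deserves care is to invoke $(q-1) \nmid \mI_{k-1}$ to rule out the representative being~$0$, which would make all digits vanish simultaneously.
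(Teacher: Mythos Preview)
Your proof is correct and follows essentially the same approach as the paper: both identify the representative of $\mI_{k-1}$ modulo $q-1$ in $[1,q-2]$ as $q-1-i_k$, and then read off the base-$p$ digits. Your final step, observing that the subtraction $(q-1)-i_k$ involves no borrows, is slightly cleaner than the paper's phrasing (which asserts without justification that the relevant digits lie in $[0,p-2]$), but the content is the same.
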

\begin{proof}
By definition, $\mI_{k-1} = \sum_{j=0}^{f-1} \mI_{k-1, j}p^j + Q(q-1)$ for some~$Q \geq 0$. 
We are assuming that $q-1$ divides $\mI_{k-1} + i_{k}$, hence $i_k + \sum_{j=0}^{f-1} \mI_{k-1, j}p^j $ is divisible by~$q-1$. However, both summands are contained in~$[1, q-2]$, hence their sum must be equal to~$q-1$. The lemma follows since $q -1 = \sum_{j=0}^{f-1} (p-1)p^j$ and $i_{k, j}, \mI_{k, j} \in [0, p-2]$ for all~$j$.
\end{proof}
Now we induct on~$k$. When~$k =2$, Lemma~\ref{relations2} states
\begin{equation}\label{eqn:basecase}
S_{i_1}^+S_{i_2}^+ = (-1)^{i_2+1}S_0^+ + (-1)^{i_2}q \fourmatrix 1 0 0 1.
\end{equation}
By Lemma~\ref{sums} we have $i_{1, j} + i_{2, j} = p-1$, which implies the claim on~$v_p(\alpha)$ and~$v_p(\beta)$. For the rest, we recall the identity $x!(p-1-x)! \equiv (-1)^{x+1} \mod p$, valid for integers $x \in [0, p-2]$. 
It implies that the product term in~(\ref{eqn:cexpressions}) is $(-1)^{i_{2, 0} + \cdots + i_{2, j-1} + f} $, which equals~$(-1)^{i_2 + f}$ since~$p$ is odd, and we are done since~$k = 2$ is even.

Assuming the proposition for~$k$, we apply Lemma~\ref{relations2} and we deduce
\[
S^+_{i_1} S^+_{i_2} \cdots S^+_{i_{k+1}} = \bJ(i_1, i_2) S^+_{\mI_2}S^+_{i_3}\cdots S^+_{i_{k+1}}.
\]
The inductive assumption applies to the product at the right-hand side (where~$\mI_2$ is identified with its unique representative in~$[1, q-2]$), and since~$\bJ(i_1, i_2) \ne 0$ we see that the existence of~$\alpha, \beta$ already follows by induction.

To compute their valuation and leading term, we apply Theorem~\ref{Stickelbergertheorem} and write
\[
v_p\,\bJ(i_1, i_2) = \frac 1 {p-1} \left ( \sum_{j=0}^{f-1} p-1-i_{1, j} - i_{2, j} + \mI_{2, j} \right )
\]
and the leading term of~$\bJ(i_1, i_2)$ as
\[
(-1)^{f-1 + v_p \bJ(i_1, i_2)} \frac{\prod_j i_{1, j}!i_{2, j}!}{\prod_j \mI_{2, j}!}.
\]
The claim follows by induction.
\end{proof}

We now study the action of these $S^+$-operators on $\mO_E[\GL_2(k_L)]$-modules with the goal of proving analogous results involving $S$-operators.
They will apply with less generally than Lemma~\ref{relations2}, in that we will not obtain an identity in the group algebra $\mO_E[\GL_2(k_L)]$.
In the following section, we will introduce the special case of $\mO_E[\GL_2(k_L)]$-modules coming from $\mO_E$-lattices in tame types.

\begin{lemma}\label{shift}
Let~$v$ be an $H$-eigenvector in an $\mO_E[\GL_2(k_L)]$-module, of eigencharacter $\chi: \fourmatrix{a}{0}{0}{d} \mapsto [a]^r \eta(ad)$. Then~$S_i v$ has eigencharacter $\chi^s \alpha^{-i} = \chi \alpha^{-r-i}$ and $S_i^+ v$ has eigencharacter $\chi \alpha^{i}$.
\end{lemma}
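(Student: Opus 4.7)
The plan is to compute the action of the torus $H$ directly on $S_i v$ and $S_i^+ v$ by using the definitions of the $S$-operators and moving the torus element past the individual matrices $\fourmatrix{\lambda}{1}{1}{0}$ and $\fourmatrix{1}{0}{\lambda}{1}$ via explicit conjugation identities. The key observation is that conjugating a diagonal matrix past these matrices has two effects: it rescales $\lambda$ by a factor expressible in terms of $\alpha$, and in the antidiagonal case it swaps the two diagonal entries (giving rise to $\chi^s$), while in the unipotent case it preserves them (leaving $\chi$).

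Concretely, first I would verify the matrix identities
\[
\fourmatrix{a}{0}{0}{d}\fourmatrix{\lambda}{1}{1}{0} = \fourmatrix{a\lambda d^{-1}}{1}{1}{0}\fourmatrix{d}{0}{0}{a}, \qquad \fourmatrix{a}{0}{0}{d}\fourmatrix{1}{0}{\lambda}{1} = \fourmatrix{1}{0}{d a^{-1} \lambda}{1}\fourmatrix{a}{0}{0}{d},
\]
both of which are one-line checks.

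Next, I plug these into the definitions of $S_i$ and $S_i^+$ and change variables in the sum over $\lambda \in \bF_q^\times$. For the first operator, setting $\mu = ad^{-1}\lambda$ gives $[\lambda]^i = [ad^{-1}]^{-i}[\mu]^i = \alpha^{-i}\fourmatrixs{a}{0}{0}{d}[\mu]^i$, so
\[
\fourmatrix{a}{0}{0}{d} S_i v = \alpha^{-i}\!\fourmatrix{a}{0}{0}{d} \, S_i \, \fourmatrix{d}{0}{0}{a}v = \alpha^{-i}\!\fourmatrix{a}{0}{0}{d} \, \chi^s\!\fourmatrix{a}{0}{0}{d}\, S_i v,
\]
using that $\fourmatrixs{d}{0}{0}{a}$ acts on $v$ by $\chi^s$ by definition. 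The analogous computation for $S_i^+$ with $\mu = da^{-1}\lambda$ yields $[\lambda]^i = \alpha^i\fourmatrixs{a}{0}{0}{d}[\mu]^i$ and no swap of $a,d$, hence the eigencharacter $\chi \alpha^i$.

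There is no real obstacle here; this is a routine computation. The only thing to be careful about is the special cases $i = 0$ and $i = q-1$, where the definition of $S_i, S_i^+$ includes an extra diagonal or antidiagonal summand (the Weyl element $\fourmatrixs{0}{1}{1}{0}$ or the identity); one checks that these summands also satisfy the same conjugation rule (since $\alpha^{0} = \alpha^{-(q-1)} = 1$ as a character of $H$), so the conclusion holds uniformly. Finally, the equality $\chi^s \alpha^{-i} = \chi \alpha^{-r-i}$ follows from the identity $\chi^s = \chi \alpha^{-r}$ already recorded in the Notation section.
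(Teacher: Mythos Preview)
Your proof is correct and follows essentially the same approach as the paper: the paper's proof simply records the two matrix identities you wrote down and says the lemma follows from an explicit computation based on them. You have given more detail (the change of variables, the check for $i=0$ and $i=q-1$, and the identity $\chi^s=\chi\alpha^{-r}$), but the underlying argument is identical.
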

\begin{proof}
This follows from an explicit computation based on the equalities
\begin{gather*}
\fourmatrix{a}{0}{0}{d}\fourmatrix{\lambda}{1}{1}{0} = \fourmatrix{ad^{-1}\lambda}{1}{1}{0} \fourmatrix{d}{0}{0}{a}\\
\fourmatrix{a}{0}{0}{d}\fourmatrix{1}{0}{\lambda}{1} = \fourmatrix{1}{0}{a^{-1}d\lambda}{1}\fourmatrix{a}{0}{0}{d}.
\end{gather*}
\end{proof}
If~$v$ is as in Lemma~\ref{shift}, it follows that $S_i S_j v$ has $H$-eigencharacter $\chi^s \alpha^{-(i-j-r)}$. The following lemma is proved in~\cite[Th\'eor\`eme~2.5.2]{BD}. 

\begin{lemma}\label{relations1}
Let~$v$ be an $I$-eigenvector in an $\mO_E[\GL_2(k_L)]$-module such that $I$ acts on $v$ through the $H$-eigencharacter~$\chi: \fourmatrix{a}{0}{0}{d} \mapsto [a]^r\eta(ad)$. 
Assume that $i \not = 0$ and that $q-1$ does not divide neither $i-j-r$ nor $i-j$. Then
\begin{displaymath}
S_i S_j v = \eta(-1)\bJ_0(i, -j - r) S_{i-j-r} v.
\end{displaymath}
\end{lemma}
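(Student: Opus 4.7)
The plan is to expand the product
\[
S_i S_j v = \sum_{\lambda, \mu \in \bF_q^\times} [\lambda]^i [\mu]^j \fourmatrix{\lambda\mu+1}{\lambda}{\mu}{1} v
\]
via the matrix identity $\fourmatrix{\lambda}{1}{1}{0}\fourmatrix{\mu}{1}{1}{0} = \fourmatrix{\lambda\mu+1}{\lambda}{\mu}{1}$, split the sum according to whether $\lambda\mu + 1$ vanishes, and exploit the $I$-eigenvector property of $v$ through Bruhat-type decompositions to reduce everything to scalar Jacobi-sum identities.

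First I would handle the terms with $\lambda\mu+1 = 0$, i.e.\ $\mu = -\lambda^{-1}$. Factoring $\fourmatrix{0}{\lambda}{-\lambda^{-1}}{1} = w \fourmatrix{-\lambda^{-1}}{0}{0}{\lambda}\fourmatrix{1}{-\lambda}{0}{1}$ with $w = \fourmatrix{0}{1}{1}{0}$, the upper-unipotent factor acts trivially on $v$ while the diagonal factor contributes $\eta(-1)[-\lambda^{-1}]^r$. The resulting contribution is proportional to $wv$ with coefficient a fixed scalar times $\sum_{\lambda \in \bF_q^\times} [\lambda]^{i-j-r}$, which vanishes by the hypothesis $q-1 \nmid i-j-r$.

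Next I would treat the big-cell terms $\lambda\mu+1 \neq 0$. Using the Bruhat decomposition
\[
\fourmatrix{\lambda\mu+1}{\lambda}{\mu}{1} = \fourmatrix{1}{0}{\mu/(\lambda\mu+1)}{1}\fourmatrix{\lambda\mu+1}{0}{0}{(\lambda\mu+1)^{-1}}\fourmatrix{1}{\lambda/(\lambda\mu+1)}{0}{1},
\]
the upper unipotent factor acts trivially on $v$ and the diagonal factor (of determinant~$1$) acts by $[\lambda\mu+1]^r$. I would then change variables $\nu = \mu/(\lambda\mu+1)$, so that $1-\nu\lambda = (\lambda\mu+1)^{-1}$ and $[\mu]^j[\lambda\mu+1]^r = [\nu]^j[1-\nu\lambda]^{-j-r}$, followed by $\alpha = \nu\lambda$ at fixed $\nu$. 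The double sum separates as
\[
\bigg(\sum_{\alpha \in \bF_q^\times \setminus \{1\}} [\alpha]^i[1-\alpha]^{-j-r}\bigg)\sum_{\nu \in \bF_q^\times} [\nu]^{j-i} \fourmatrix{1}{0}{\nu}{1}v,
\]
where the first factor is $\bJ_0(i, -j-r)$ by definition: the convention $[0]^0 = 0$ is engineered so that the excluded $\alpha = 0, 1$ terms match $\bJ_0$ uniformly, regardless of whether the exponents are zero mod~$q-1$.

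Finally, to rewrite the $\nu$-sum as a multiple of $S_{i-j-r}v$, I would apply the symmetric decomposition $\fourmatrix{\mu}{1}{1}{0} = \fourmatrix{1}{0}{\mu^{-1}}{1}\fourmatrix{\mu}{0}{0}{-\mu^{-1}}\fourmatrix{1}{\mu^{-1}}{0}{1}$ directly inside $S_{i-j-r}v$. The $I$-action on $v$ gives $S_{i-j-r}v = \eta(-1)\sum_\mu [\mu]^{i-j}\fourmatrix{1}{0}{\mu^{-1}}{1}v$, and the substitution $\nu = \mu^{-1}$ turns this into $\eta(-1)\sum_\nu [\nu]^{j-i}\fourmatrix{1}{0}{\nu}{1}v$; the hypothesis $q-1 \nmid i-j$ guarantees that no special identity-matrix correction appears. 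Combining with the big-cell computation and $\eta(-1)^2 = 1$ yields the stated identity. The main bookkeeping hurdle is precisely the edge-case verification for the Jacobi sum convention, ensuring that $\bJ_0(i, -j-r)$ is the right scalar uniformly across the residue class of $-j-r$ modulo $q-1$.
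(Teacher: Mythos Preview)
Your argument is correct and complete for $j\neq 0$, but it follows a genuinely different route from the paper. The paper uses the single identity
\[
\fourmatrix{s}{1}{1}{0}\fourmatrix{t}{1}{1}{0}=\fourmatrix{s+t^{-1}}{1}{1}{0}\fourmatrix{t}{1}{0}{-t^{-1}}\qquad(t\neq 0),
\]
so that the product $S_iS_jv$ is immediately a sum of terms of the form $\fourmatrix{*}{1}{1}{0}v$ times a scalar coming from the $I$-action; one change of variable $x=s+t$ then produces the Jacobi sum and $S_{i-j-r}v$ simultaneously. Your Bruhat decomposition of $\fourmatrix{\lambda\mu+1}{\lambda}{\mu}{1}$ reaches the same Jacobi sum but leaves a sum $\sum_\nu[\nu]^{j-i}\fourmatrix{1}{0}{\nu}{1}v$, which you then match with $S_{i-j-r}v$ by a second Bruhat-type expansion of $\fourmatrix{\mu}{1}{1}{0}$. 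Both are clean; the paper's decomposition is one step shorter because it lands directly in the $\fourmatrix{*}{1}{1}{0}$-coset, while yours is perhaps the more ``natural'' Bruhat computation.

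Two small remarks. First, your opening display already assumes $j\neq 0$: when $j=0$ the operator $S_0$ carries an extra $\fourmatrix{0}{1}{1}{0}$ term, giving a contribution $\sum_\lambda[\lambda]^i\fourmatrix{1}{\lambda}{0}{1}v=\big(\sum_\lambda[\lambda]^i\big)v$, which vanishes precisely because $q-1\nmid i=i-j$. You should say this explicitly. Second, the hypothesis $q-1\nmid i-j$ is not actually used in the step where you invoke it: the identity $S_{i-j-r}v=\eta(-1)\sum_\nu[\nu]^{j-i}\fourmatrix{1}{0}{\nu}{1}v$ holds for any $i,j$ once $S_{i-j-r}$ is well-defined. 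The only place $q-1\nmid i-j$ enters is the $j=0$ correction just mentioned.
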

\begin{proof}
Throughout this proof, we will use the convention that $[0]^n = 0$ for all $n \in \bZ$. First we notice that for $s, t \in \bF_q$ we have
\[
\fourmatrix{s}{1}{1}{0}\fourmatrix{t}{1}{1}{0} = 
\begin{dcases} 
\hspace{4mm} \fourmatrix{1}{s}{0}{1} &\mbox{if } t=0. \\
\\
\hspace{4mm} \fourmatrix{s+t^{-1}}{1}{1}{0}\fourmatrix{t}{1}{0}{-t^{-1}} &\mbox{if } t \not = 0. \\
\end{dcases}
\]
Assuming in addition that $j \not = 0$, we compute $S_i S_j v$ as follows:
\begin{align*}
S_i S_j v  & = \sum_{s, t \in \bF_q^\times}[s]^i[t]^{-j}\fourmatrix{s+t}{1}{1}{0}\fourmatrix{t^{-1}}{1}{0}{-t}v = \eta(-1) \sum_{s, t \in \bF_q^\times}[s]^i[t]^{-j-r}\fourmatrix{s+t}{1}{1}{0}v\\
& = \eta(-1)\sum_{s \in \bF_q^\times} \sum_{x \in \bF_q \backslash \{s \}}[s]^i[x-s]^{-j-r}\fourmatrix{x}{1}{1}{0} v = \eta(-1)\sum_{s, x \in \bF_q}[s]^i[x-s]^{-j-r}\fourmatrix{x}{1}{1}{0} v =\\
& = \eta(-1)\sum_{x \in \bF_q^\times}[x]^{i-j-r}\left ( \sum_{s \in \bF_q} [x^{-1}s]^i[1-x^{-1}s]^{-j-r} \right ) \fourmatrix{x}{1}{1}{0}v + \eta(-1)(-1)^{-j-r}\sum_{s \in \bF_q}[s]^{i-j-r}\fourmatrix{0}{1}{1}{0}v.
\end{align*}
Since $q-1$ does not divide $i-j-r$, the sum $\sum_{s \in \bF_q}[s]^{i-j-r} = 0$ and
\begin{displaymath}
S_i S_j v = \eta(-1) \bJ_0(i, -j-r) S_{i-j-r} v.
\end{displaymath}
When $j = 0$, we have an extra term
\begin{displaymath}
S_i S_0 v   = \sum_{s, t \in \bF_q^\times}[s]^i\fourmatrix{s+t}{1}{1}{0}\fourmatrix{t^{-1}}{1}{0}{-t}v + \sum_{s \in \bF_q} [s]^i \fourmatrix{1}{s}{0}{1}v
\end{displaymath}
but the second sum vanishes, because $I_1$ fixes~$v$ and $q-1$ does not divide $i-j = i$.
\end{proof}

To explain the hypothesis in the above lemma, we remark that we will apply it to an $I$-eigenvector~$v$ with eigencharacter~$\chi$ in the principal series type $\Ind_I^{\GL_2(\mO_L)} \chi$. Then the assumptions of the lemma are satisfied when the $H$-eigencharacter of $S_i S_j v$ has multiplicity one in the type, because the only characters with multiplicity two are~$\chi^s$ and~$\chi = \chi^s \alpha^r$ (see Lemma \ref{basistype}), and $S_i S_j v$ has eigencharacter $\chi^s \alpha^{-(i-j-r)}$.

\subsection{Lattices in tame $\GL_2(L)$-types.}\label{lattices}

We will only consider \emph{nonscalar} tame $K$-types for~$\GL_2(L)$.
By the tameness assumption, we can also consider these types as $\GL_2(k_L)$-modules.
Note that these types satisfy mod~$p$ multiplicity one.
The symbol~$\theta$ will usually denote an $\mO_E$-lattice in a tame type~$\theta[1/p]$. 

We are going to study the action of $S$-operators on tame types, recalling and generalizing results of~\cite{Breuilmodp} and~\cite{BD}. 
If~$\theta$ is an $\mO_E$-lattice in a tame type, then the restriction $\theta|_H$ splits into a direct sum of $H$-eigenspaces (because the order of the diagonal torus~$H$ is coprime to~$p$), and we begin with some general results about the action of $S$-operators on $H$-eigenvectors.

\begin{lemma}\label{S_0^+vanishing}
Let~$\theta$ be an $\mO_E$-lattice in a tame type for~$\GL_2(L)$ over~$E$. 
If~$\chi: H \to \mO_E^\times$ occurs in~$\theta$ with multiplicity one and~$x_\chi$ is an $H$-eigenvector of eigenvalue~$\chi$, then~$S_0^+ x_{\chi} =~0$. 
\end{lemma}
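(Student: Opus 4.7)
The plan is to recognize $S_0^+$ as the group-algebra sum over the lower unipotent subgroup, use its invariance properties to locate $S_0^+ x_\chi$ in the $\chi$-isotypic of the corresponding invariants, and then invoke the structure of tame types to see that this space is zero under the multiplicity-one hypothesis.

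First, since $[\lambda]^{q-1}=1$ for every $\lambda \in \bF_q^\times$, I would rewrite
\[
S_0^+ = \sum_{\lambda \in \bF_q} \fourmatrix{1}{0}{\lambda}{1} \in \mO_E[\GL_2(k_L)],
\]
so that $S_0^+$ is the sum of all elements of the lower unipotent subgroup $N^- \subset \GL_2(k_L)$. Left multiplication by any $u \in N^-$ merely permutes the summands, so $S_0^+ x_\chi \in \theta^{N^-}$. Similarly, conjugation by $\fourmatrix{a}{0}{0}{d} \in H$ rescales the lower-left entries by $a^{-1}d$ and hence permutes the summands, so $S_0^+$ commutes with $H$ and $S_0^+ x_\chi$ is still an $H$-eigenvector of eigencharacter $\chi$. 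Therefore $S_0^+ x_\chi$ lies in the $\chi$-isotypic component $(\theta^{N^-})_\chi$.

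Finally, I would show that $(\theta^{N^-})_\chi = 0$. Since $|H|$ and $|N^-|=q$ are invertible in $E$, it suffices to work with $\theta[1/p]$, where $\theta[1/p]^{N^-}$ coincides with the Jacquet module for the opposite Borel and where $H$-weight multiplicities agree with those in $\theta$. If $\theta[1/p]$ is cuspidal, the Jacquet module vanishes by definition and the claim is immediate. If $\theta[1/p] \cong \Ind_B^{\GL_2(k_L)} \chi_0$ is a nonscalar principal series type (so $\chi_0 \ne \chi_0^s$), then a Bruhat--Mackey computation identifies this Jacquet module with $\chi_0 \oplus \chi_0^s$ as an $H$-module, while a parallel Mackey computation on $(\Ind_B^{\GL_2(k_L)} \chi_0)|_H$ shows that the $H$-eigencharacters occurring with multiplicity strictly greater than one are exactly $\chi_0$ and $\chi_0^s$. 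Hence the multiplicity-one hypothesis on $\chi$ forces $\chi \notin \{\chi_0, \chi_0^s\}$, so $(\theta^{N^-})_\chi = 0$ and $S_0^+ x_\chi = 0$.

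The only delicate point is this last step, which requires matching the two $H$-weights of the Jacquet module with the two exceptional multiplicity-two $H$-weights of the ambient type. This matching is standard but is exactly where the multiplicity-one hypothesis gets used; no general abstract argument would suffice without invoking the explicit description of tame types.
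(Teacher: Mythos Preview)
Your argument is correct. The key observation that $S_0^+$ is the full sum over the lower unipotent $N^-$, hence projects (up to a unit) onto the $N^-$-invariants while preserving the $H$-eigencharacter, is exactly right, and your identification of $(\theta[1/p])^{N^-}$ with the Jacquet module in characteristic zero is the cleanest way to finish.

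The paper's proof is essentially the dual of yours. Rather than showing that $S_0^+ x_\chi$ lands in $(\theta^{N^-})_\chi$ and computing that space to be zero, the paper uses the restriction of $\theta[1/p]$ to the lower mirabolic to decompose $\theta[1/p]$ as an $N^-$-module: an induced-from-regular piece (on which $N^-$ has no invariants, so $S_0^+$ kills it) plus, in the principal series case, two $N^-$-trivial lines carrying the $H$-weights $\chi_0,\chi_0^s$. The multiplicity-one hypothesis then places $x_\chi$ in the first piece. Both arguments ultimately rest on the same fact---the $H$-weights of the $N^-$-invariants are exactly the two multiplicity-two characters---but you reach it via Bruhat--Mackey on the Jacquet side, while the paper reaches it via the Kirillov-type mirabolic model. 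Your route is arguably more self-contained here since it avoids any appeal to the regular-character induction structure; the paper's route has the advantage that the same mirabolic decomposition is reused immediately afterward in the proof of Lemma~\ref{basistype}.
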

\begin{proof}
The restriction of~$\theta[1/p]$ to the lower mirabolic subgroup is the induction of a regular character of the lower unipotent (if $\theta[1/p]$ is cuspidal) or the direct sum of such an induction and two characters (if~$\theta[1/p]$ is principal series). 
Since the centre is acting by a character, this decomposition is stable under the lower-triangular Borel subgroup, and since~$\chi$ has multiplicity one we see that~$x_{\chi}$ is contained in the direct summand that is nontrivial under the mirabolic subgroup. 
Then the claim follows from the definition of $S_0^+$ since the restriction of this summand to the lower-triangular unipotent subgroup is a direct sum of \emph{nontrivial} characters.
\end{proof}

Now we construct two kinds of relations between $H$-eigenvectors in $\theta[1/p]$ depending on whether $\theta[1/p]$ is a principal series or cuspidal representation. 
The reason we need to treat these cases separately is that the action of~$S_i^+$-operators is not transitive on a basis of $H$-eigencharacters of a principal series type.

\begin{lemma}\label{basistype}
There are $q-1$ different $H$-eigencharacters in~$\theta$. If~$\theta[1/p]$ is cuspidal they all have multiplicity one, and if $\theta[1/p] \cong \Ind_{I}^{\GL_2(\mO_L)}\chi$ they all have multiplicity one except $\chi$ and~$\chi^s$ which have each multiplicity two. 
There is an $\mO_E$-basis~$\{ x_\xi \}_\xi$ of~$\theta$ such that
\begin{enumerate}
\item $x_\xi$ is an eigenvector of~$H$ with eigencharacter~$\xi$.
\item if~$\theta[1/p]$ is a cuspidal type and $\xi_1 \not = \xi_2$, then there exist an integer $0<i(\xi_1, \xi_2)^+ < q-1$ and a scalar $\alpha_{\xi_1, \xi_2}^+ \in E^\times$ such that
\begin{displaymath}
x_{\xi_2} = \alpha_{\xi_1, \xi_2}^+ S_{i(\xi_1, \xi_2)^+}^+ x_{\xi_1}.
\end{displaymath}
\item \label{item:multone} if $\theta[1/p]$ is a principal series type, $\xi_1 \not = \xi_2$, and either~$\xi_2$ has multiplicity one or both $x_{\xi_1}, x_{\xi_2}$ are $I_1$-fixed, then there exist an integer $0\leq i(\xi_1, \xi_2) \leq q-1$ and a scalar $\alpha_{\xi_1, \xi_2} \in E^\times$ such that
\begin{displaymath}
x_{\xi_2} = \alpha_{\xi_1, \xi_2} S_{i(\xi_1, \xi_2)} x_{\xi_1}.
\end{displaymath}
\end{enumerate}
\end{lemma}
\begin{proof}
In the cuspidal case, begin with an arbitrary $H$-eigenvector~$x_\xi$. The span of the vectors $\fourmatrix{1}{0}{\lambda}{1}x_\xi$ for~$\lambda \in \bF_q$ is stable under~$H$ and under the lower unipotent subgroup, hence under the lower mirabolic subgroup of~$\GL_2(k_L)$. 
Since~$\theta$ is a lattice in a cuspidal type, the proof of Lemma~\ref{S_0^+vanishing} shows that these vectors span $\theta[1/p]$ (because the induction to the mirabolic of a regular character is irreducible).
By Lemma~\ref{S_0^+vanishing}, $S_0^+$ acts by~$0$ on~$\theta[1/p]$. 
Hence the vectors $\fourmatrix{1}{0}{\lambda}{1} x_{\xi}$ for $\lambda \in \bF_q^\times$ already span $\theta[1/p]$, and since $\dim_E \theta[1/p] = q-1$ they form a basis of~$\theta[1/p]$ over~$E$. 
Applying a Vandermonde matrix to this basis, we find that the $S_i^+ x_\xi$ for $1 \leq i \leq q-1$ form a basis of~$\theta[1/p]$ over~$E$.
The result follows by scaling since~$\theta$ has a basis of $H$-eigenvectors. 
We remark that $S_{q-1}^+ x_{\xi} = -x_{\xi}$, and then the existence of~$\alpha_{\xi_1, \xi_2}^+$ follows from Lemma~\ref{relations2}.

In the principal series case, let~$\varphi$ be a generator of an $I_1$-stable $\mO_E$-line in~$\theta$ and write the $H$-character of~$\varphi$ as $\fourmatrix{a}{0}{0}{d} \mapsto a^r\eta(ad)$. 
By the discussion in~\cite[\S 2]{Breuilmodp} certain scalar multiples of the $S_i \varphi$ for $0 \leq i \leq q-1$ form a basis of~$\theta$ over~$\mO_E$ together with~$\varphi$. Let~$\{x_\xi \}$ be this basis, so that for each~$\xi$ either $x_\xi = \varphi$ or there exists~$i(\xi)$ such that $x_{\xi}$ is a multiple of~$S_{i(\xi)}\varphi$. 

We now consider the claim in~\eqref{item:multone}. 
If~$x_{\xi_1} = \varphi$ then the claim holds by construction. 
Now notice that if~$\xi_2$ has multiplicity one then we have an equation 
\begin{displaymath}
S_{i(\xi_2) + i(\xi_1) + r} S_{i(\xi_1)} \varphi = \eta(-1)\bJ_0(i(\xi_2) + i(\xi_1) + r, -i(\xi_1)-r) S_{i(\xi_2)} \varphi.
\end{displaymath}
by Lemma~\ref{relations1}, which applies because $q-1$ does not divide any of~$i(\xi_2)$ and $i(\xi_2) + r$. (If it did, then $x_{\xi_2}$ would have eigencharacter $\chi^s\alpha^{-i(\xi_2)} = \chi^s \text{ or } \chi$ respectively, contradicting multiplicity one.)
In the case $i(\xi_2) + i(\xi_1) + r$ is divisible by~$q-1$, we take~$i(\xi_1,\xi_2) = q-1$.

Finally, in the case where both $x_{\xi_1}$ and $x_{\xi_2}$ are fixed by~$I_1$ the claim follows from the fact that $S_0$ sends a nonzero $I_1$-fixed vector to a nonzero $I_1$-fixed vector of the opposite $H$-eigencharacter.
\end{proof}

\begin{lemma}\label{relationsbasistype}
Let~$\{x_\xi \}$ be a basis as in Lemma~\ref{basistype}. If~$\xi_1 \not = \xi_2$, and $\xi_2$ appears with multiplicity one in~$\theta$, then one can find a relation
\begin{displaymath}
x_{\xi_2} = \alpha_{\xi_1, \xi_2}^+ S_{i(\xi_1, \xi_2)^+}^+ x_{\xi_1}
\end{displaymath}
with $\alpha_{\xi_1, \xi_2}^+ \in E^\times$ and $0<i(\xi_1, \xi_2)^+ < q-1$, even if~$\theta[1/p]$ is a principal series type.
\end{lemma}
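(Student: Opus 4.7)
The plan is to pick the unique $i^+ \in \{1, \ldots, q-2\}$ with $\xi_1 \alpha^{i^+} = \xi_2$; this exists because all $H$-eigencharacters of~$\theta$ share a common central character and so lie in a single $\alpha$-orbit, and $\xi_1 \neq \xi_2$. By Lemma~\ref{shift}, $S_{i^+}^+ x_{\xi_1}$ lies in the $\xi_2$-eigenspace of~$H$; since $\xi_2$ has multiplicity one, this eigenspace equals $E\cdot x_{\xi_2}$, so $S_{i^+}^+ x_{\xi_1} = c\, x_{\xi_2}$ for some $c \in E$. The lemma follows on setting $\alpha_{\xi_1, \xi_2}^+ = c^{-1}$ and $i(\xi_1,\xi_2)^+ = i^+$, once we establish $c \neq 0$.

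To argue $c \neq 0$, I would produce a multiplicity-one character~$\xi_2'$ (possibly $\xi_2$ itself) with $\xi_2' \notin \{\xi_1, \xi_1^s\}$ and $i' := i(\xi_1, \xi_2') \neq i^+$. Since $p > 5$ gives $q \geq 7$, removing $\xi_1$, $\xi_1^s$, and the at most one multiplicity-one character with $i(\xi_1,-) = i^+$ from the $q - 3$ multiplicity-one characters leaves at least $q - 6 \geq 1$ admissible~$\xi_2'$; the bijection $\xi_2' \mapsto i(\xi_1, \xi_2')$ is obtained from $\xi_2' = \xi_1^s \alpha^{-i(\xi_1, \xi_2')}$ via Lemma~\ref{shift}. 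For any such~$\xi_2'$, Lemma~\ref{basistype}(3) supplies $x_{\xi_2'} = \alpha'\, S_{i'} x_{\xi_1}$ with $\alpha' \in E^\times$, so $S_{i'} x_{\xi_1} \neq 0$.

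Since $i'$ and~$i^+$ are distinct elements of $\{1, \ldots, q-2\}$, Lemma~\ref{relations3} yields $S_{i' - i^+} S_{i^+}^+ = \bJ(i' - i^+, i^+)\, S_{i'}$, where $i' - i^+$ is taken mod $q-1$ with representative in $\{1,\ldots,q-2\}$ and we use $i' \neq q-1$. Applied to~$x_{\xi_1}$, this gives
\[
S_{i'} x_{\xi_1} = \bJ(i' - i^+, i^+)^{-1}\, S_{i' - i^+}\bigl(S_{i^+}^+ x_{\xi_1}\bigr).
\]
The Jacobi sum is nonzero by Theorem~\ref{Stickelbergertheorem} and Lemma~\ref{sumdivide}, and the left-hand side is nonzero, so $S_{i^+}^+ x_{\xi_1} \neq 0$ as required.

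The hard part is the potential coincidence $i(\xi_1, \xi_2) = i^+$ for the original~$\xi_2$, which is why the auxiliary character~$\xi_2'$ is needed; the standing hypothesis $p > 5$ ensures enough multiplicity-one characters are available. A more direct approach would multiply $S_{i^+}^+ x_{\xi_1}$ by $S_{q-1-i^+}^+$ and use Lemma~\ref{relations2} to reduce $c = 0$ to the identity $S_0^+ x_{\xi_1} = q\, x_{\xi_1}$, which Lemma~\ref{S_0^+vanishing} refutes only when $\xi_1$ itself has multiplicity one; the route through Lemma~\ref{basistype}(3) handles the multiplicity-two case $\xi_1 \in \{\chi, \chi^s\}$ uniformly.
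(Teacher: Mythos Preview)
Your proof is correct and follows essentially the same route as the paper: identify the unique $i^+$ via Lemma~\ref{shift}, reduce to showing $S_{i^+}^+ x_{\xi_1}\neq 0$, and then use the relation $S_j S_{i^+}^+ = \bJ(j,i^+) S_{i^++j}$ (Lemma~\ref{relations3}) together with the nonvanishing of $S_{i'} x_{\xi_1}$ supplied by Lemma~\ref{basistype}(3). The paper adds a preliminary shortcut---checking $S_{i^+} x_{\xi_1}$ directly and left-multiplying by $\fourmatrix{0}{1}{1}{0}$---before falling back on the same product-formula argument; your version simply skips that case distinction, and your exclusion of $\xi_1^s$ (forcing $i'\in\{1,\ldots,q-2\}$) together with the count $q-6\geq 1$ handles what the paper's ``since $p\geq 5$'' does slightly more loosely.
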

\begin{proof}
By lemma \ref{basistype}, we only need to consider the case when $\theta[1/p]$ is a principal series representation.
By Lemma~\ref{shift}, such an integer $i(\xi_1, \xi_2)^+$ is uniquely determined since $\xi_1 \not = \xi_2$ and the operator~$S_j^+$ multiplies the character by~$\alpha^{j}$. 
By the multiplicity one assumption, it suffices to prove that $S_{i(\xi_1, \xi_2)^+}^+ x_{\xi_1}$ is nonzero. 
For this, the matrix identity
\begin{displaymath}
\fourmatrix{\lambda}{1}{1}{0}\fourmatrix{1}{0}{\mu}{1} = \fourmatrix{\lambda+\mu}{1}{1}{0}
\end{displaymath}
and the same computation as Lemma~\ref{relations2} prove that there is a relation 
\[
S_j S_{i(\xi_1, \xi_2)^+}^+ = \bJ(j, i(\xi_1, \xi_2)^+) S_{i(\xi_1, \xi_2)^++j}
\]
whenever $0 < j < q-1$ and $i(\xi_1, \xi_2)^+ + j \not = q-1$. 

The type~$\theta[1/p]$ contains $q-3$ eigencharacters with multiplicity one. If $S_{i(\xi_1, \xi_2)^+}x_{\xi_1} \not = 0$, then we left multiply by $\fourmatrix 0 1 1 0$ and deduce the claim that $S_{i(\xi_1, \xi_2)^+}^+ x_{\xi_1}$ is nonzero. 
Otherwise, since $p \geq 5$, we can take a~$0<j<q-1$ not equal to $q-1-i(\xi_1, \xi_2)^+$ so that the (unambiguously defined) vector~$S_{i(\xi_1, \xi_2)^++j}x_{\xi_1}$ does not vanish, by Lemma~\ref{basistype}.
The claim follows as the Jacobi sum $\bJ(j, i(\xi_1, \xi_2)^+)$ does not vanish.
\end{proof}

We say that an inclusion of $\mO_E$-lattices $\theta^1 \to \theta^2$ in a tame type $\theta[1/p]$ is \emph{saturated} if the induced map $\theta^1 \to \lbar{\theta}^2$ is nonzero. 
By~\cite[Lemma~4.1.1]{EGS}, if~$\sigma \in \JH(\thetabar)$ then there exists a unique homothety class of lattices in~$\theta[1/p]$ with irreducible cosocle~$\sigma$. 
We will denote a representative for this class by $\theta^\sigma$.
We often choose representatives so that certain inclusions are saturated.
If we fix two representatives~$\theta^{\sigma_1}$ and~$\theta^{\sigma_2}$ and a saturated inclusion $\theta^{\sigma_1} \subset \theta^{\sigma_2}$ as above, then there exists a unique $n \in \bN$ such that $p^n \theta^{\sigma_2} \subset \theta^{\sigma_1}$ and the inclusion is saturated (since $\theta[1/p]$ and thus the $\mO_E$-lattices $\theta^{\sigma_1}$ and~$\theta^{\sigma_2}$ are defined over $W(k_E)[1/p]$ and $W(k_E)$, respectively). 
When we speak of a saturated inclusion $\theta^{\sigma_2} \to \theta^{\sigma_1}$, we will mean the map
\[
\theta^{\sigma_2} \xrightarrow{p^n} p^n\theta^{\sigma_2} \subset \theta^{\sigma_1}.
\]

\begin{defn}\label{defn:PS}
Let $\chi$ be an $\mO_E^\times$-valued character of $H$.
Let $\theta(\chi)$ be the $\mO_E$-lattice $\Ind_I^K \chi$ in a principal series tame type, realized as a space of functions on~$K$.
Let $\varphi^\chi$ be the unique element of $\theta(\chi)$ supported on $I$ such that $\varphi^\chi(1) = 1$.
Let $\sigma(\chi)$ be the (irreducible) cosocle of $\theta(\chi)$, and denote the image of $\varphi^\chi$ in $\sigma(\chi)$ by $\varphi^\chi$ as well.
For a type $\theta$ with Jordan--H\"older factor isomorphic to $\sigma(\chi)$, we write $\theta^\chi$ for short rather than $\theta^{\sigma(\chi)}$.
\end{defn}

\begin{lemma}\label{generators1}
The representation 
\[ 
\langle \mO_E[\GL_2(k_L)] \cdot S_0\varphi^{\chi^s} \rangle \subset \theta(\chi^s)
\] 
is isomorphic to~$\theta(\chi^s)^\chi$, and the inclusion is saturated.
If~$\xi$ is neither $\chi$ nor $\chi^s$, then both 
\[
\langle \mO_E[\GL_2(k_L)]\cdot S_{i(\chi^s,\xi)} \varphi^{\chi^s} \rangle,\, \langle \mO_E[\GL_2(k_L)]\cdot S^+_{i(\chi^s,\xi)^+} \varphi^{\chi^s} \rangle \subset \theta(\chi^s)
\] 
are isomorphic to~$\theta(\chi^s)^\xi$, and the inclusion is saturated.
\end{lemma}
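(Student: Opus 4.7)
The plan is to treat the three assertions uniformly: for each generator $v$ (namely $S_0 \varphi^{\chi^s}$, $S_{i(\chi^s,\xi)} \varphi^{\chi^s}$, or $S^+_{i(\chi^s,\xi)^+} \varphi^{\chi^s}$), let $W = \langle \mO_E[\GL_2(k_L)] \cdot v \rangle \subset \theta(\chi^s)$ and aim to show that (i)~$W$ is an $\mO_E$-lattice in $\theta(\chi^s)[1/p]$, (ii)~the inclusion $W \hookrightarrow \theta(\chi^s)$ is saturated, and (iii)~the cosocle of $\overline{W}$ is $\sigma(\chi)$ in the first case and $\sigma(\xi)$ in the other two. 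Uniqueness (up to homothety) of lattices in $\theta(\chi^s)[1/p]$ with prescribed irreducible cosocle, cited from~\cite[Lemma~4.1.1]{EGS}, then identifies $W$ with $\theta(\chi^s)^\chi$ or $\theta(\chi^s)^\xi$ as claimed.

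The first two items are relatively direct. By Lemma~\ref{shift}, $v$ is an $H$-eigenvector of character $\chi$ (first case) or $\xi$ (other cases). Non-vanishing follows from Lemma~\ref{basistype}(3) for $S_{i(\chi^s,\xi)} \varphi^{\chi^s}$, from Lemma~\ref{relationsbasistype} for $S^+_{i(\chi^s,\xi)^+} \varphi^{\chi^s}$, and from a direct computation for $S_0 \varphi^{\chi^s}$: using $[\lambda]^{q-1}=1$ on $\bF_q^\times$, one rewrites $S_0 \varphi^{\chi^s} = \fourmatrix{0}{1}{1}{0} \sum_{\lambda \in k_L} \fourmatrix{1}{0}{\lambda}{1} \varphi^{\chi^s}$, and the summands $\fourmatrix{1}{0}{\lambda}{1} \varphi^{\chi^s}$ lie in distinct cosets via the Bruhat decomposition $K = I \sqcup IwI$, so their sum is nonzero modulo $\varpi_E$. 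The same reduction computation (with Teichm\"uller-lift coefficients $[\lambda]^i$ replacing the $1$'s) gives $\bar v \ne 0$ in $\overline{\theta(\chi^s)}$ for the other two generators, yielding~(ii). For~(i), the nonscalar hypothesis forces $\chi \ne \chi^s$, so $\theta(\chi^s)[1/p] = \Ind_I^K \chi^s$ is irreducible as an $E[\GL_2(k_L)]$-representation; hence $W[1/p] = \theta(\chi^s)[1/p]$ and $W$ is a lattice.

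The main step is~(iii). Since $\overline{W}$ is generated by the $H$-eigenvector $\bar v$, every irreducible cosocle quotient is generated by the image of $\bar v$ and hence contains a nonzero $H$-eigenvector of the same character. For the second and third claims, $\xi$ has multiplicity one in $\theta(\chi^s)[1/p]$ by Lemma~\ref{basistype}, and since the order of $H$ is coprime to $p$ this multiplicity persists in $\overline{\theta(\chi^s)}$; thus $\xi$ appears in a unique Jordan--H\"older factor, which must be $\sigma(\xi)$ (the only Serre weight whose $I_1$-fixed line has character $\xi$), and therefore the cosocle of $\overline{W}$ is $\sigma(\xi)$. For the first claim, the key observation is that $S_0 \varphi^{\chi^s}$ is $I_1$-fixed: the vector $\sum_{\lambda \in k_L} \fourmatrix{1}{0}{\lambda}{1} \varphi^{\chi^s}$ is invariant under the opposite unipotent subgroup as a sum over its orbit, and $w$ conjugates this subgroup into $I_1$. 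Hence $\bar v$ spans the unique $I_1$-fixed $\chi$-eigenline in $\overline{\theta(\chi^s)}$, and the image of $\bar v$ in any irreducible cosocle quotient of $\overline{W}$ is $I_1$-fixed with character $\chi$, forcing the cosocle to be $\sigma(\chi)$; the possibility $\overline{W} = \overline{\theta(\chi^s)}$ (whose cosocle is $\sigma(\chi^s)$) is excluded because the $I_1$-fixed line of $\sigma(\chi^s)$ has character $\chi^s \ne \chi$, so $\bar v$ maps there to zero. The main technical obstacle is this cosocle identification, which depends crucially on the multiplicity-one properties of the $H$-eigencharacters in $\overline{\theta(\chi^s)}$ supplied by Lemma~\ref{basistype}.
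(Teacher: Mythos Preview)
Your overall strategy is sound and differs somewhat from the paper's. You compute the cosocle of the generated sublattice $W$ and then invoke uniqueness of lattices with prescribed irreducible cosocle, whereas the paper starts from a saturated inclusion $\theta(\chi^s)^\xi \subset \theta(\chi^s)$, observes that its image in $\overline{\theta(\chi^s)}$ is the minimal subrepresentation containing $\xi$, shows that the given generator already lies in $\theta(\chi^s)^\xi$ (by lifting from this mod-$\varpi_E$ image using multiplicity one of $\xi$), and concludes by Nakayama. For the first assertion the paper is slicker still: since $S_0\varphi^{\chi^s}$ is an $I$-eigenvector of character $\chi$, Frobenius reciprocity produces a map $\theta(\chi) \to \theta(\chi^s)$ sending $\varphi^\chi$ to $S_0\varphi^{\chi^s}$, so $W \cong \theta(\chi)$ directly, and $\theta(\chi)$ has cosocle $\sigma(\chi)$ by definition.

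There is, however, a gap in your step~(iii) for the second and third assertions. You correctly argue that every irreducible quotient of $\overline{W}$ contains $\xi$ as an $H$-character, and that multiplicity one of $\xi$ in $\overline{\theta(\chi^s)}$ forces the cosocle of $\overline{W}$ to be a single Jordan--H\"older factor $\sigma$. But your justification for $\sigma = \sigma(\xi)$ is flawed: the parenthetical ``the only Serre weight whose $I_1$-fixed line has character $\xi$'' presumes that the $\xi$-eigenline of $\sigma$ is its $I_1$-fixed line. This is not automatic---indeed $\bar v$ is \emph{not} $I_1$-fixed in $\overline{\theta(\chi^s)}$ (the $I_1$-invariants there have characters $\chi$ and $\chi^s$ only), so there is no reason its image in $\sigma$ should be. The correct argument is short: since $\theta(\chi^s)^\xi$ is under discussion, $\sigma(\xi)$ \emph{is} a Jordan--H\"older factor of $\overline{\theta(\chi^s)}$, and $\sigma(\xi)$ certainly contains $\xi$ as an $H$-character (it is the character of $\sigma(\xi)^{I_1}$). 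By multiplicity one of $\xi$ in $\overline{\theta(\chi^s)}$ there is only one Jordan--H\"older factor containing $\xi$, so $\sigma = \sigma(\xi)$.
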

\begin{proof}
For the first part, since $S_0 \varphi^{\chi^s}$ is an $I$-eigenvector with character $\chi$, it generates a sublattice isomorphic to~$\theta(\chi)$. 
Moreover, $S_0 \varphi^{\chi^s}$ is nonzero in~$\theta(\chi^s)/\varpi_E$.

Let $\theta(\chi^s)^\xi\subset \theta(\chi^s)$ be a saturated inclusion.
The image of~$\theta(\chi^s)^\xi$ in $\theta(\chi^s)/\varpi_E$ is nonzero, and is characterized as being the only $k_E[\GL_2(k_L)]$-subrepresentation of $\thetabar(\chi^s)$ with irreducible cosocle isomorphic to~$\sigma(\xi)$. 
Equivalently, the image is characterized as being the minimal subrepresentation of $\overline{\theta}(\chi^s)$ containing~$\sigma(\xi)$ as a Jordan--H\"older factor, or as the minimal subrepresentation containing $\xi$ as an $H$-eigencharacter (using that the $H$-character $\xi$ has multiplicity one in~$\theta(\chi^s)$ by Lemma \ref{basistype}).
In particular, this image is generated over~$k_E[\GL_2(k_L)]$ by the image of both~$S_{i(\chi^s,\xi)}\varphi^{\chi^s}$ and $S^+_{i(\chi^s,\xi)^+} \varphi^{\chi^s}$ by Lemmas \ref{basistype} and \ref{relationsbasistype}. 
Since~$H$ has order coprime to~$p$, there exists an $H$-eigenvectors in~$\theta(\chi^s)^\xi$ lifting the images of~$S_{i(\chi^s,\xi)}\varphi^{\chi^s}$ and $S^+_{i(\chi^s,\xi)^+} \varphi^{\chi^s}$. 
Since~$\xi$ has multiplicity one, these lifts are multiples of~$S_{i(\chi^s,\xi)}\varphi^{\chi^s}$ and $S^+_{i(\chi^s,\xi)^+} \varphi^{\chi^s}$, which therefore are contained in~$\theta(\chi^s)^\xi$.
We conclude by noting that both~$S_{i(\chi^s,\xi)}\varphi^{\chi^s}$ and~$S^+_{i(\chi^s,\xi)^+} \varphi^{\chi^s}$ generate~$\theta(\chi^s)^\xi$ over~$\mO_E[\GL_2(k_L)]$ because their images each generate~$\lbar \theta(\chi^s)^\xi$ over $k_E[\GL_2(k_L)]$. 
\end{proof}

We end this section by recalling a relation between $S_0$, $\Pi = \fourmatrix 0 1 p 0$, and the Hecke operator $U_p$ on a principal series type.
\begin{lemma}\label{U_peigenvalues}
Let~$\pi = \Ind_{B(L)}^{\GL_2(L)}(\chi_1| \cdot | \otimes \chi_2)$ for tamely ramified characters~$\chi_i: L^\times \to E^\times$ (this is a smooth unnormalized parabolic induction) with different restrictions to~$\mO_L^\times$. Let~$\varphi \in \pi$ be an $I_1$-fixed vector with $H$-eigencharacter $\chi_1|_{\mO_L^\times} \otimes \chi_2|_{\mO_L^\times}$. Then
\begin{displaymath}
\Pi \varphi = q^{-1} \chi_1(p) S_0 \varphi.
\end{displaymath}
\end{lemma}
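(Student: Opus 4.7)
The plan is to observe that $\Pi\varphi$ and $S_0\varphi$ both lie in the one-dimensional subspace $\pi^{I_1, H = \chi^s}$ of~$\pi^{I_1}$, where $\chi = \chi_1|_{\mO_L^\times}\otimes \chi_2|_{\mO_L^\times}$. (The dimension is one because the $I_1$-invariants of a tame principal series form a two-dimensional space on which~$H$ acts by the two characters $\chi$ and $\chi^s$ each with multiplicity one, and the assumption $\chi_1|_{\mO_L^\times} \neq \chi_2|_{\mO_L^\times}$ ensures $\chi \neq \chi^s$.) Once this is established, the two vectors must be proportional, and the constant can be read off by evaluating both as functions on~$\GL_2(L)$ at a single well-chosen element.

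The first step is to confirm the claim for both sides. For $\Pi\varphi$: conjugation by $\Pi$ swaps the two diagonal entries of~$H$, so the $H$-eigencharacter becomes $\chi^s$, and the computation $\Pi \fourmatrix 1 t 0 1 \Pi^{-1} = \fourmatrix 1 0 {pt} 1 \in I_1$ for $t \in \mO_L$ (together with its symmetric analogue for the lower unipotent part of~$I_1$) shows that $\Pi$ normalizes~$I_1$, whence $\Pi\varphi$ is $I_1$-fixed. For~$S_0\varphi$: the $H$-eigencharacter is~$\chi^s$ by Lemma~\ref{shift}. For its $I_1$-invariance, one checks that for $u = \fourmatrix 1 t 0 1 \in I_1$ with $t \in \mO_L$ the matrix identity $u\fourmatrix{[\lambda]}{1}{1}{0} = \fourmatrix{[\lambda + \bar t]}{1}{1}{0}\cdot u'$ holds for some $u'\in I_1$, using only that $[\lambda]+t$ and $[\lambda + \bar t]$ agree modulo~$p$; the action of~$u$ then permutes the summands defining $S_0\varphi$, so fixes it, and a symmetric calculation handles the lower unipotent part of~$I_1$.

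The second step is to pin down the scalar by choosing a convenient normalization of~$\varphi$: take it to be the $I$-eigenvector supported on the open cell~$BI \subset G$ with $\varphi(1) = 1$, so that $\varphi(\fourmatrix a b 0 d) = \chi_1(a)|a|\chi_2(d)$ on~$B(L)$ and $\varphi$ vanishes on~$BwI$. Evaluating both sides at $w = \fourmatrix 0 1 1 0$ then gives
\begin{gather*}
(\Pi\varphi)(w) = \varphi(w\Pi) = \varphi(\fourmatrix p 0 0 1) = q^{-1}\chi_1(p),\\
(S_0\varphi)(w) = \varphi(w^2) + \sum_{\lambda \in \bF_q^\times} \varphi(\fourmatrix 1 0 {[\lambda]} 1) = 1,
\end{gather*}
where each term in the sum over~$\bF_q^\times$ vanishes because $\fourmatrix 1 0 {[\lambda]} 1$ lies in $K \setminus I \subset BwI$ for $\lambda \neq 0$, hence outside the support of~$\varphi$. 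Dividing yields $\Pi\varphi = q^{-1}\chi_1(p) S_0\varphi$ as asserted.

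The only slightly delicate point is the direct verification that $S_0\varphi$ is $I_1$-invariant: this does not follow formally from the operator identities of the previous subsection, but it reduces to the short matrix computation indicated above. The rest of the argument is essentially formal once a sensible normalization of~$\varphi$ has been fixed.
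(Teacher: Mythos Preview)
Your argument is correct and complete. The one place where ``symmetric'' is a slight overstatement is the lower-unipotent check for $S_0\varphi$: writing $\fourmatrix{1}{0}{ps}{1}\fourmatrix{[\lambda]}{1}{1}{0} = \fourmatrix{[\lambda]}{1}{1}{0} u'$ requires $u' = \fourmatrix{1+ps[\lambda]}{ps}{-ps[\lambda]^2}{1-ps[\lambda]}$ rather than a permutation of the summands, but this is still in~$I_1$, so the conclusion stands. (A similar remark applies to the diagonal $1+p\mO_L$ part of~$I_1$, which you did not mention explicitly but which is handled the same way.)

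Your route differs from the paper's. The paper does not verify $I_1$-invariance of $S_0\varphi$ directly; instead it uses the operator identity $S_0\Pi = U_p$ and observes that $U_p$ preserves the $H$-eigencharacter, so $U_p\varphi = \alpha\varphi$ for some scalar~$\alpha$, which is then computed by evaluating at the identity to get $\alpha = \chi_1(p)$. Applying the same to~$\Pi\varphi$ (via the intertwining $\pi \cong \Ind_B(\chi_2|\cdot|\otimes\chi_1)$) yields $U_p\Pi\varphi = \chi_2(p)\Pi\varphi$, and then substituting $U_p = S_0\Pi$ together with the central action $\Pi^2 = q^{-1}\chi_1(p)\chi_2(p)$ gives the result. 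The paper's argument is more structural---it identifies the relevant constant with a Hecke eigenvalue and implicitly records $U_p\varphi = \chi_1(p)\varphi$, which is used elsewhere---whereas yours is more self-contained and avoids invoking the intertwining operator. Both are short; yours is arguably more elementary.
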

\begin{proof}
Compare with~\cite[Th\'eor\`eme~2.5.2]{BD}. 
The absolute value~$|\cdot|$ is normalized for~$L$, so that $|p| = q^{-1}$.
By the Iwasawa decomposition, the space of $I_1$-fixed vectors in~$\pi$ is two-dimensional and contains two different $H$-eigencharacters. An explicit computation gives
\begin{displaymath}
S_0 \Pi = U_p = \sum_{\lambda \in \bF_q} \fourmatrix{p}{[\lambda]}{0}{1}.
\end{displaymath}
This preserves the $H$-eigencharacter, hence $U_p \varphi = \alpha \varphi$ for some scalar~$\alpha$. Evaluating the function~$\alpha\varphi$ at the identity of~$\GL_2(L)$, we obtain $\alpha \varphi(1) = q \cdot \chi_1(p)|p| \varphi(1)$ hence $\alpha = \chi_1(p)$ since $\varphi(1) \not = 0$ as $\varphi$ is supported in $B I_1$. Now $\pi$ is isomorphic to $\Ind_{B(L)}^{\GL_2(L)}(\chi_2| \cdot | \otimes \chi_1)$, as it is the normalized induction that is Weyl-invariant, and~$\pi$ is the normalized induction of~$\chi_1|\cdot|^{1/2} \otimes \chi_2|\cdot|^{1/2}$.
Hence it is also true that $U_p \Pi \varphi = \chi_2(p) \Pi \varphi$. Substituting $S_0 \Pi$ for $U_p$, we obtain
\begin{displaymath}
q^{-1}\chi_1(p)\chi_2(p) S_0 \varphi = \chi_2(p) \Pi \varphi
\end{displaymath}
and the claim follows.
\end{proof}

\subsection{Tame types and inertial local Langlands.}\label{sec:ill}

In this section, we parametrize tame $\GL_n(L)$-types and describe the generic tame inertial local Langlands correspondence.

\subsubsection{Deligne--Lusztig induction.}\label{sec:DL}

We recall a description of Deligne--Lusztig induction for the group $\bG_0 = \Res_{k_L / \bF_p} \GL_n$. 
We will eventually take $n$ to be $2$, but we take $n$ to be general in this section and in parts of \S \ref{sec:defring}.
Let $F$ be the $p$-th power, relative Frobenius endomorphism of $\bG_0$ (relative and absolute Frobenius morphisms coincide here).
The induced endomorphism of $\bG_0(\overline{\bF}_p)$ coincides with the endomorphism coming from $p$-power Frobenius endomorphism on $\cbF_p$.
Let $\bG$ be $\bG_0 \times_{\bF_p} \cbF_p$.
Under the isomorphism 
\begin{equation} \label{eqn:product}
\bG \cong \prod_{\iota:k_L \hookrightarrow \cbF_p} \GL_{n, \cbF_p} \cong\prod_{i=0}^{f-1} \GL_{n, \cbF_p}
\end{equation}
given by the bijection $i \leftrightarrow \iota_{-i}$ for $i\in \bZ/f$, the Frobenius endomorphism $F$ of the group~acts on points by $(A_0, \ldots, A_{f-1}) \mapsto (A_{f-1}^{(p)}, A_0^{(p)}, \ldots, A_{f-2}^{(p)})$. 

There is a diagonal torus~$\bT_0$ isomorphic to $\Res_{k_L / \bF_p}(\mathbb{G}_m)^n$, and an upper-triangular Borel subgroup.
The Weyl group~$W = W(\bG_0, \bT_0)$ is isomorphic to $S_n^{\times f}$, and the $F$-conjugacy classes in~$W$ are represented by $s= (s, 1, \ldots, 1)$ where~$s$ represents conjugacy classes of the symmetric group~$S_n$.
Hence the conjugacy classes of rational maximal tori are represented by $\Res_{k_L/\bF_p}(\bT_0')$, where $\bT_0'$ runs over conjugacy classes of rational maximal tori of ${\GL_n}_{/k_L}$. 
We write~$\bT = \bT_0 \times_{\bF_p} \cbF_p$ and we identify the character lattice~$X(\bT)$ with~$(\bZ^n)^{\oplus f}$ in the usual way using (\ref{eqn:product}). 
The group~$\Gal(\cbF_p/\bF_p)$ acts on the character lattice of~$\bT$ and we denote by~$\pi$ the action of the arithmetic Frobenius element $x \mapsto x^p$. There is also an action of~$F$ by $F(\chi) = \chi \circ F$. One has the equality $(\pi \chi)(F(t)) = \chi(t)^p$, hence $\pi(\chi_0, \ldots, \chi_{f-1})= (\chi_{f-1}, \chi_0, \ldots, \chi_{f-1})$.
Let $\Lambda_R \subset X(\bT)$ be the root lattice.

We let $\bG_0^\der \subset \bG_0$ be the derived subgroup $\Res_{k_L / \bF_p} \SL_n$. 
We similarly define $\bG^\der$, $\bT_0^\der$, and $\bT^\der$.
We write $\Lambda_W$ for $X(\bT^\der)$.
The root lattice $\Lambda_R$ in $X(\bT)$ is canonically identified with the root lattice in $\Lambda_W$.
Let $X^0(\bT)$ denote the kernel of the natural restriction map $X(\bT) \ra X(\bT^\der) = \Lambda_W$.
Under the identification $X(\bT) \cong (\bZ^n)^{\oplus f}$, $X^0(\bT)$ is identified with $\bZ^{\oplus f}$ where $\bZ \hookrightarrow \bZ^n$ is embedded diagonally.

If~$\mu$ is a character of~$\bT$ and $w$ is an element of $W$, then there is an associated virtual representation $R_w(\mu)$ of~$\GL_n(k_L)$ over~$E$. 
There is an action of $W \ltimes X(\bT)$ (the semidirect product defined by the action $w \cdot \mu = F(w)\mu$ for $w\in W$ and $\mu\in X(\bT)$) on~$W \times X(\bT)$ by
\begin{displaymath}
(w_1, \mu_1) \cdot (w_2, \mu_2) = (w_1 w_2 F(w_1)^{-1}, w_1(\mu_2) + (F - w_1 w_2 F(w_1)^{-1})\mu_1 ).
\end{displaymath} 
The fibers of the map $(w, \mu) \mapsto R_w(\mu)$ are precisely the orbits of this action, by \cite[Lemma 4.2]{herzig}.

Rather than giving the definition of~$R_w(\mu)$ in full, we spell it out in the main case of interest when $n=2$.
Let $1, s \in W$ be the identity and the element $s = (s, 1, \ldots, 1)$ (nontrivial in the first component).
We identify~$\mu$ with an $f$-tuple $(\mu_1, \mu_2) = ((\mu_{1, 0}, \mu_{2, 0}), \ldots, (\mu_{1, f-1}, \mu_{2, f-1})) \in (\bZ^2)^{\oplus f}$. Then 
\begin{displaymath}
R_1(\mu) \cong \Ind_{B_2(k_L)}^{\GL_2(k_L)}\left ( [-]^{\sum_{t=0}^{f-1}\mu_{1, t}p^t} \oplus [-]^{\sum_{t=0}^{f-1}\mu_{2, t}p^t} \right )
\end{displaymath}
whereas $R_s(\mu)$ is the negative of the Deligne--Lusztig induction of the character $[-]^{\sum_{t=0}^{f-1}\mu_{1, t}p^t + p^f\sum_{t=0}^{f-1}\mu_{2, t}p^{t}}$ of~$k_{L, 2}^\times$ to~$\GL_2(k_L)$.

\subsubsection{Tame inertial types.}

Let $d$ be a natural number, and set $e'$ to be $p^{df}-1$. 
Let $\pi' \in \overline{E}$ be a root of $u^{e'}+p$.
We define the character 
\begin{align*}
[\omega_{df}]: I_{\bQ_p} &\ra \overline{\mO}_E^\times \\
g &\mapsto \frac{g(\pi')}{\pi'}.
\end{align*}
This definition does not depend on the choice of root $\pi'$.
We assume that $E$ is sufficiently large so that this character is valued in $\mO_E^\times$, and let $\omega_{df}$ be its reduction modulo $\varpi_E$.

For $(w,\mu) \in W\times X(\bT)$, there is an $s \in W$ of the form $(s,1,\ldots,1)$ such that $R_w(\mu) \cong R_s(\ba)$ for some $\ba \in X(\bT)$, as explained in \S \ref{sec:DL}.
Let $d$ be the order of $s$ and suppose that $\ba$ maps to $(a_{k,j})_{k,j}$, with $1\leq k\leq n$ and $0 \leq j\leq f-1$, under the identification $X(\bT) \cong (\bZ^n)^{\oplus f}$.
Let
\[
\ba_k = \sum_{i=0}^{f-1} a_{k,i}p^i.
\]
Then we define $\tau(w,\mu):I_L \ra \GL_n(\mO_E)$ to be 
\[
\bigoplus_{k=1}^n [\omega_{df}]^{\sum_{i=0}^{d-1} \ba_{s^i(k)}}p^{if}.
\]
Note that this does not depend on the choice of $s$ and $\ba$ above.
Moreover, $\tau(w,\mu)$ is an inertial type for the Weil group~$W_L$, i.e.~it admits an extension to $W_L$.

If $(w,\mu)$ is a \emph{good pair} in the sense of~\cite[\S 2.2]{LLL}, then $\tau(w,\mu) \cong \tau(w',\mu')$ implies that $R_w(\mu) \cong R_{w'}(\mu')$, and moreover this Deligne--Lusztig representation is irreducible (see \cite[Proposition 2.2.4]{LLL}).
If $(w,\mu)$ is a good pair, we set $\sigma(\tau(w,\mu))$ to be $R_w(\mu)$ and note that this gives a well-defined injective map from a subset of tame inertial types to tame types for $K$ or, equivalently, irreducible representations of $\GL_n(\bF_q)$.
This defines a tame inertial local Langlands (see \cite[Proposition 2.4.1(i)]{EGH}).

Noting, for any tame inertial type $\tau$, the reduction~$\taubar$ is a sum of characters whose Teichm\"uller lift is $\tau$, we let $V(\taubar(w,\mu))$ be $R_w(\mu)$ for $(w,\mu)$ a good pair as above.
The map $V$ is closely related to the maps $V$ and $V_\phi$ that appear in \cite{herzig,LLL} and \cite{GHS}, respectively, but we will not need this in the sequel.

Again, we make the above explicit in the case $n=2$.
Let $\rhobar:G_L \ra \GL_2(k_E)$ be a continuous Galois representation. There are two possibilities for the semisimplification of $\rhobar|_{I_L}$, namely
\begin{gather}
\fourmatrix{\omega_f^{\sum_{t=0}^{f-1}\mu_{1, t}p^t}}{0}{0}{\omega_f^{\sum_{t=0}^{f-1}\mu_{2, t}p^t}} \text{ in the reducible case}\\
\fourmatrix{\omega_{2f}^{\sum_{t=0}^{f-1}\mu_{1, t}p^t + p^f\sum_{t=0}^{f-1}\mu_{2, t}p^t}}{0}{0}{\omega_{2f}^{\sum_{t=0}^{f-1}\mu_{2, t}p^t+ p^f\sum_{t=0}^{f-1}\mu_{1, t}p^t}} \text{in the irreducible case},
\end{gather}
where $\mu_{k,t} \in\bZ$ for each $k \in \{1, 2\}$ and $0 \leq t \leq f-1$.
We let~$s_{\rhobar} \in S_2$ be~$1$ in the first case and the nontrivial element in the second, and we write~$\mu = (\mu_1, \mu_2)$. Then the definitions above specialize to $V(\rhobar^\semis|_{I_L}) = R_{s_{\rhobar}}(\mu)$.

\subsection{Serre weights.}\label{sec:extgph}

Recall that a \emph{Serre weight} for $\GL_2(k_L)$ is (the isomorphism class of) an irreducible $\GL_2(k_L)$-representation over $k_E$, or equivalently an irreducible $\GL_2(\mO_L)$-representation over $k_E$.
Every Serre weight is isomorphic to a representation of the form
\begin{equation} \label{eqn:serrewt} 
(\eta \circ \det) \otimes \bigotimes_{j\in \bZ/f} \Sym^{r_j} (k_L^2 \otimes_{\sigma_j,k_L} k_E)
\end{equation}
where $0 \leq r_j \leq p-1$ for all $j\in \bZ/f$ and $\eta: k_L^\times \ra k_E^\times$ is a character.
We will use the shorthand $(r_0,\ldots,r_{f-1})\otimes \eta$ for the Serre weight in (\ref{eqn:serrewt}). 

We will also use the following description of Serre weights.
We use the notation of \S \ref{sec:DL} with $n=2$.
Then $\bG_0(\bF_p) = \GL_2(k_L)$.
If $\mu\in X(\bT)$ is dominant and $0 \leq \langle \alpha,\mu \rangle \leq p-1$ for all positive roots $\alpha$ of $\bG$, then we let $L(\mu)$ be the (unique up to isomorphism) irreducible representation of $\bG$ with highest weight $\mu$.
Let $F(\mu)$ be the restriction of $L(\mu)$ to $\GL_2(k_L)$.
Then every Serre weight is isomorphic to $F(\mu)$ for some $\mu$ as above. 
Moreover, $F(\mu)$ and $F(\lambda)$ are isomorphic if and only if $\mu-\lambda\in (F-1)X^0(\bT) = (p-\pi)X^0(\bT)$ (see \cite[Lemma 9.2.4]{GHS}).
If $\mu$ is $(\mu_{1,j},\mu_{2,j})_{j\in \bZ/f}$, then $F(\mu)$ is isomorphic to the representation 
\[
(\mu_{1,j} - \mu_{2,j})_{j\in \bZ/f} \otimes \sigma_0^{\sum_{j = 0}^{f-1} p^j \mu_{2,j}}
\]
in the notation introduced above. (Recall that $\sigma_0: k_L \to k_E$ is our fixed choice of embedding.)

\paragraph{Extension graph.}

In this section, $n=2$ so that in the notation of \S \ref{sec:DL}, $\bG_0$ is the group $\Res_{k_L / \bF_p} \GL_2$.
Let $\mu\in X(\bT)$ be dominant such that $0 \leq \langle \alpha,\mu \rangle \leq p-1$ for all positive roots $\alpha$ of $\bG$.
Recall that $\Lambda_W$ is $X(\bG_0^\der)$ where $\bG_0^\der\subset \bG_0$ is the subgroup $\Res_{k_L / \bF_p} \SL_2$.
Let $\Lambda_W^\mu\subset \Lambda_W$ be the subset
\[
\Lambda_W^\mu = \{\omega \in \Lambda_W: 0 \leq \langle \alpha, \mu + \omega' \rangle \leq p-1 \text{ for all positive roots~$\alpha$ of~$\bG$}\}
\]
where $\omega' \in X(\bT)$ is any lift of $\omega$.
We now give an equivalent definition, based on \cite[\S 2]{LLLM2}, of an injective map 
\[
\ft_\mu: \Lambda_W^\mu \ra X(\bT)/(F-1)X^0(\bT)
\]
from \cite[\S 2]{LMS}.

We first define a map $\ft_\mu': X(\bT) \ra X(\bT)/(F-1)X^0(\bT)$ as follows.
Let $W_a$ and~$\tld W$ be the affine Weyl group and the extended affine Weyl group of~$\bG$, respectively.
The natural inclusion $X(\bT) \ra \tld{W}$ induces an isomorphism $X(\bT)/\Lambda_R \ra \tld{W}/W_a$.
Let $\Omega\subset \tld{W}$ be the stabilizer of the dominant base ($1$-)alcove for $\bG$.
Then we have a splitting $\tld{W} = \Omega \ltimes W_a$ of the inclusion $W_a \subset \tld{W}$.

From these considerations, for $\omega'\in X(\bT)$ there exists a unique $\tld{w}'\in \Omega$ such that $\tld{w}'t_{\pi^{-1}(\omega')} \in W_a$.
We then define $\ft_\mu'(\omega')$ to be 
\[
\tld{w}'\cdot (\mu- \eta + \omega') \pmod{(F-1)X^0(\bT)},\]
where~$\eta_j = (1, 0)_j$ and the action of $\tld{w}'$ is by the $p$-dot action.
It is easy to check that $\ft_\mu'$ is well-defined and factors through the natural quotient map $X(\bT) \ra \Lambda_W$.
Then $\ft_\mu$ is defined to be the restriction of this map to $\Lambda_W^\mu\subset \Lambda_W$.
For $\omega\in \Lambda_W^\mu$, the isomorphism class of $F(\ft_\mu(\omega))$ is well-defined.

Now let $\omega_j \in \Lambda_W$ be the fundamental dominant weight which is nonzero (only) in embedding $j$.
For a subset $J\subset \bZ/f$, let $\omega_J$ be $\sum_{j\in J} \omega_j$.
There is a unique element $w_J \in W$ such that $w_Jt_{-\pi^{-1}(\omega_J)}$ fixes the dominant base ($1$-)alcove for $\bG^{\der}$.
Explicitly, $w_J \in W$ is the element that is nontrivial on exactly~$\delta_\red(J)$.
Let $\tld{w}_J$ be $w_Jt_{-\pi^{-1}(\omega_J)}$.
(Recall that~$\pi^{-1}$ is a left shift on~$X(\bT)$.)

\begin{pp}\label{pp:jh}
Let $w \in W$ and $\nu \in \Lambda_R$ such that $-\eta + \nu+ w\omega_J \in \Lambda_W^\mu$ for all subsets $J \subset \bZ/f$.
Then the set $\JH(\overline{R_w(\mu-\eta+\nu)})$ is given by
\[
\big\{F(\ft_\mu(-\eta + \nu+ w\omega_J))\big\}_{J\subset \bZ/f}
\]
(where by an abuse of notation $\eta$ denotes the image of $\eta$ in $\Lambda_W$). 
\end{pp}
\begin{proof}
This follows from \cite[\S A, Theorem 3.4]{herzig} applied to the $\bF_p$-form $\Res_{k_L / \bF_p} \GL_2$ of $\GL_2^{\times f}$.
Note that in the notation of \emph{loc.~cit.}, $\gamma'_{w_1,w_2}$ is the Kronecker symbol $\delta_{w_1,w_2}$ for $w_1$ and $w_2\in W$.
Moreover, the highest weight of each character that appears in the formula for $X_w'(1,\mu-\eta+\nu)$ is in the unique $p$-restricted dominant ($p$-)alcove, and is thus the highest weight of a Serre weight.
The proposition then follows from checking that the $2^f$ highest weights coincide with $\ft_\mu(-\eta+\nu+w\omega_J)$ for $J\subset \bZ/f$ (modulo $(p-\pi)X^0(\bT)$).

Indeed, let $s\in W$ and $\epsilon \in X(\bT)$ be such that $\tld{s}=st_\epsilon$ is in $\Omega$.
Let $\tld{w}_h$ be $w_0 t_{-\eta}$.
Then $\tld{w}_h^{-1} \tld{s} t_{\pi^{-1}(-\eta+\nu-w\pi\epsilon)} \in W_a$ so that 
\begin{equation}\label{eqn:tr}
\ft_\mu(-\eta+\nu-w\pi\epsilon) = \tld{w}_h^{-1}\tld{s}\cdot(\mu-2\eta+\nu-w\pi\epsilon) \pmod{(p-\pi)X^0(\bT)}.
\end{equation}
If we let $w_1$ be $w_0 s$, then $\epsilon'_{w_0w_1}$ can be taken to be $\epsilon$ and $\rho'_{w_1}$ can be taken so that $\tld{w}_h^{-1} \tld{s} = t_{\rho'_{w_1}}w_1$ in the notation of \cite[\S 5]{herzig}.
Then (\ref{eqn:tr}) becomes $w_1\cdot (\mu-\eta+\nu -w\pi\epsilon'_{w_0w_1} - \eta) + p \rho'_{w_1}$ as in \cite[\S A, Theorem 3.4]{herzig}.
We conclude by noting that the maps $s \mapsto -\pi\epsilon \pmod{X^0(\bT)}$ and $s \mapsto w_1 = w_0 s$ defined above give bijections $W \ra \{\omega_J\}_{J\subset \bZ/f}$ and $W \ra W$, respectively. 
\end{proof}

\section{Galois deformation rings and Frobenius eigenvalues.}\label{sec:defring}

\subsection{Kisin modules.}\label{sec:kisin}

\paragraph{Notation.}
Recall that $L$ is $\bQ_{p^f}$. 
Let $e$ be $p^f-1$ and $\pi$ be a root of $E(u) = u^e+p$ in our fixed algebraic closure of~$L$.
Denote by $K$ the extension $L(\pi)$ and by $\Delta$ the Galois group $\Gal(K/L)$.
Then the character $[\omega_f]$ factors through $\Delta$. 

Let $\ba \in X(\bT)$ correspond to the tuple of integers $(a_{k,j})_j$ for $1\leq k\leq n$ and $0\leq j\leq f-1$.
Let
\[
\ba_k = \sum_{i=0}^{f-1} a_{k,i}p^i
\]
and 
\[
\ba_k^{(j)} = \sum_{i=0}^{f-1} a_{k,i-j}p^i,
\]
so that $\ba_k^{(j)} \equiv p^j\ba_k \pmod{q-1}$.
Write $\eta_k$ for the character $[\omega_f]^{-\ba_k}$.
Finally, let $\tau$ be the principal series tame inertial type $\tau(1,\ba)$ for $W_L$.
In other words, the $\mO_E$-dual $\tau^\rd$ is isomorphic to $\oplus_{k=1}^n \eta_k|_{I_L}$.

\paragraph{}

If~$R$ is an $\mO_E$-algebra, we give $(\mO_L \otimes_{\bZ_p} R)[\![u]\!]$ an action of~$\Delta$ by $\mO_L \otimes_{\bZ_p} R$-linear ring automorphisms, by $\widehat{g}: u \mapsto ([\omega_{f,L}](g) \otimes 1)u$ (where $[\omega_{f,L}]$ is the $\mO_L$-valued character such that $[\omega_f] = \sigma_0 \circ [\omega_{f,L}]$). 
There is also a Frobenius homomorphism $\varphi: (\mO_L \otimes_{\bZ_p} R)[\![u]\!] \to (\mO_L \otimes_{\bZ_p} R)[\![u]\!]$ which is $\varphi \otimes 1$ on $\mO_L \otimes_{\bZ_p} R$ and sends~$u$ to~$u^p$.

\begin{defn}\label{def:kisin}
Let $R$ be an $\mO_E$-algebra.
A Kisin module over $R$ is a finitely generated projective $(\mO_L \otimes_{\bZ_p} R)[\![u]\!]$-module $\fM_R$ together with a Frobenius map $\phi_{\fM_R} : \varphi^*(\fM_R) \ra \fM_R$ whose cokernel is annihilated by some power of $E(u)$.
A Kisin module over $R$ with descent datum is a Kisin module over $R$ with a semilinear action of $\Delta$ which commutes with $\phi_{\fM_R}$.
We say that the descent datum is of type $\tau$ if $\fM_R/u$ is isomorphic to $(\mO_L \otimes_{\bZ_p} R) \otimes_{\mO_E} \tau^\rd   \cong (\tau^\rd\otimes_{\mO_E} R)^f$.
\end{defn}

\begin{rk}
The dual in Definition \ref{def:kisin} appears because we will use contravariant functors to Galois representations in \S \ref{sec:phi}.
\end{rk}

In what follows we will work with Kisin modules whose $(\mO_L \otimes_{\bZ_p} R)[\![u]\!]$-rank is equal to~$n$, though in applications $n$ will be $2$. 
The decomposition 
\[
\mO_L\otimes_{\bZ_p} \mO_E \cong \prod_{\iota: k_E \hookrightarrow k_E}\mO_E \cong \prod_{j=0}^{f-1} \mO_E
\] 
given by the bijection $j \leftrightarrow \iota_j$ gives a decomposition of a Kisin module $\fM_R$ as $\oplus_{j=0}^{f-1} \fM_R^{(j)}$, where $\fM_R^{(j)}$ is a projective $R[\![u]\!]$-module of rank $n$ for all $j$.

\paragraph{Eigenbases.}
Suppose now that $\tau$ is a principal series tame inertial type and that $\fM_R$ is a Kisin module over $R$ with descent datum $\tau$.
Let $v$ be $u^e$.
For each $1\leq k\leq n$, let $\fM^{(j)}_{R,k}$ be the $R[\![v]\!]$-submodule of $\fM_R^{(j)}$ on which $\Delta$ acts by $\eta_k$, i.e.~
\[
\fM_{R,k}^{(j)} = \{ m\in \fM_R^{(j)}|\widehat{g}(m) = \eta_k(g) m\}.
\]
Then $\fM_{R,k}^{(j)}$ is a projective $R[\![v]\!]$-module of rank $n$.

Suppose that the characters $\eta_k$ are distinct.
An eigenbasis $\beta$ of a Kisin module $\fM_R$ over $R$ with descent datum of type $\tau$ is a tuple $(f_k^{(j)})_{k,j}$ where each $(f^{(j)}_1, \ldots, f^{(j)}_n)$ is an $R[\![u]\!]$-basis of~$\fM_R^{(j)}$ such that $f_k^{(j)} \in \fM_{R,k}^{(j)}$.
Note that this notion depends implicitly on a choice of ordering of the characters of $\tau$.
Let $\beta^{(j)}$ be the tuple $(f_k^{(j)})_k$.

Given an eigenbasis $\beta$ of a Kisin module $\fM_R$ over $R$ with descent datum of type $\tau$, for each $j$ let $C^{(j)}\in \mathrm{Mat}_n(R[\![u]\!])$ be the matrix such that 
\[
\phi_{\fM}^{(j)}(\varphi^*(\beta^{(j)})) = \beta^{(j+1)} C^{(j)}.
\]
In other words, the Frobenius map~$\phi_{\fM}$ induces a semilinear map $\fM_R^{(j)} \to \fM_R^{(j+1)}$, and~$C^{(j)}$ is a matrix over $R[\![u]\!]$ expressing the image of $\beta^{(j)}$ in terms of $\beta^{(j+1)}$.

\subsubsection{Kisin modules and base change}\label{sec:kisinbasechange}

Suppose that $\tau = \oplus_{k=1}^n \eta_k$ is an $n$-dimensional tame inertial type for $W_L$. 
There is a permutation $s_\tau \in S_n$ such that $\eta_k^q = \eta_{s_\tau^{-1}(k)}$ for all $k$.
Let $d$ be the order of $s_\tau$ and let $L'$ be the unramified extension of $L$ of degree $d$.
Then the base change to $L'$ of $\tau$ is a principal series tame inertial type $\tau'$ for $W_{L'}$ (in this context, ``base change'' amounts to regarding~$\tau$ as a representation of~$I_{L} = I_{L'}$).

Let $R$ be an $\mO_E$-algebra.
Let $\sigma:\mO_{L'}[\![u]\!] \ra \mO_{L'}[\![u]\!]$ be the automorphism which acts trivially on $u$ and acts on $\mO_{L'}$ by the action induced by the $q$-th power automorphism of $k_{L'}$.
Then we define a Kisin module over $R$ with descent datum of type $\tau$ to be a pair $(\fM_R, \iota)$ where $\fM_R$ is a Kisin module over $R$ with descent datum of type $\tau'$ and $\iota: \sigma^*(\fM_R) \ra \fM_R$ is an isomorphism such that $\iota \circ \sigma^*(\iota) \circ \cdots \circ (\sigma^{d-1})^*(\iota)$ is the identity morphism.
An eigenbasis of $(\fM_R, \iota)$ is then defined to be an eigenbasis $\beta$ of $\fM_R$ such that $\iota(\sigma^*(\beta))$ is a permutation of the basis $\beta$. Since~$\iota$ can be identified with a $\sigma$-semilinear map $\fM_R \to \fM_R$, this means that~$\iota$ permutes the vectors in~$\beta$.

\subsection{\'Etale $\varphi$-modules and Galois representations.}\label{sec:phi}

\subsubsection{Generalities.}

Recall from \S\ref{sec:kisin} the definitions of $K$ and $L'$.
Let $L_\infty$ be an infinite extension obtained by adjoining to~$L$ compatible $p$-power roots of $-p$.
Let $K'$ be the compositum of the fields $K$ and $L'$ and let $K_\infty'$ be the compositum of $K'$ and $L_\infty$.
Let $\Delta'$ be the Galois group $\Gal(K'/L)$: it is isomorphic to the product of~$\Delta$ with $\Gal(L'/L)\cong \bZ/d$.

\paragraph{Equivalence of categories.} Let $\mO_{\mE,L}$ denote the $p$-adic completion of $\mO_L(\!(v)\!)$, and let $\mO_{\mE^{\mathrm{un}},L}$ denote the $p$-adic completion of a maximal connected \'etale extension of $\mO_{\mE,L}$.
We similarly let $\mO_{\mE,K'}$ denote the $p$-adic completion of $\mO_{L'}(\!(u)\!)$, and let $\mO_{\mE^{\mathrm{un}},K'}$ denote the $p$-adic completion of a maximal connected \'etale extension of $\mO_{\mE,K'}$.
Then $\mO_{\mE, K'}$ is naturally a Galois finite \'etale extension of~$\mO_{\mE, L}$ by putting $v = u^e$.
This identifies $\mO_{\mE^{\mathrm{un}},L}$ with $\mO_{\mE^{\mathrm{un}},K'}$.
Moreover, classical work of Fontaine (see~\cite{fontaine}) identifies $G_{\mO_{\mE, L}}$ and $G_{\mO_{\mE, K'}}$ with $G_{L_\infty}$ and $G_{K'_\infty}$, respectively.
This identifies $\Gal(\mO_{\mE, K'}/\mO_{\mE, L})$ with $\Gal(K'_\infty/L_\infty)$, which is isomorphic to $\Delta'$.
Under this isomorphism, $\Delta'$ acts on $\mO_{\mE, K'}$ by its natural action on $\mO_{L'}$ and by $[\omega_{f',L'}]$ on $u$ (cf.~\S \ref{sec:kisin}). 

For $R$ a complete local Noetherian $\mO_E$-algebra, let $\Phi\textrm{-}\mathrm{Mod}^\et(R)$ be the category of \'etale $\varphi$-modules over $\mO_{\mE,L} \widehat{\otimes}_{\bZ_p} R$, such that the underlying $\mO_{\mE,L} \widehat{\otimes}_{\bZ_p} R$-module is finite free. Similarly, let $\Phi\textrm{-}\mathrm{Mod}^\et_\dd(R)$ be the category of \'etale $\varphi$-modules over $\mO_{\mE,K'}\widehat{\otimes}_{\bZ_p} R$ with a semilinear action of $\Delta'$. 
Let $\mathrm{Rep}_{G_{L_{\infty}}}(R)$ be the category of (continuous) representations of $G_{L_{\infty}}$ over finite free $R$-modules.
\cite{fontaine} gives an exact anti-equivalence of tensor categories
\begin{align*}
\mathbb{V}^*: \Phi\textrm{-}\mathrm{Mod}^\et(R) &\ra \mathrm{Rep}_{G_{L_{\infty}}}(R)\\
\mM &\mapsto ((\mM\otimes_{\mO_{\mE,L}} \mO_{\mE^{\mathrm{un}},L})^{\varphi=1})^*,
\end{align*}
where the action of $G_{L_\infty} \cong G_{\mO_{\mE, L}}$ comes from $\mO_{\mE^{\mathrm{un}},L}$ and $(-)^*$ denotes the contragredient representation.

A slight extension of the above gives an exact anti-equivalence of tensor categories
\begin{align*}
\mathbb{V}^*_\dd: \Phi\textrm{-}\mathrm{Mod}^\et_\dd(R) &\ra \mathrm{Rep}_{G_{L_{\infty}}}(R)\\
\mM &\mapsto ((\mM \otimes_{\mO_{\mE,K'}} \mO_{\mE^{\mathrm{un}},L})^{\varphi = 1})^*,
\end{align*}
where $G_{L_\infty}$ acts on $\mO_{\mE^{\mathrm{un}},L}$ as above and on $\mM$ through the quotient $G_{L_\infty}/G_{K'_\infty} \cong \Delta'$.
Moreover, by \cite[Theorem 2.1.6 and (12)]{CDM}, there is an equivalence of categories 
\begin{align}\label{eqn:desc}
\Phi\textrm{-}\mathrm{Mod}^\et(R) &\ra \Phi\textrm{-}\mathrm{Mod}^\et_\dd(R) \\
\mM &\mapsto \mM \otimes_{\mO_{\mE,L}} \mO_{\mE,K'}, \nonumber
\end{align}
which is compatible with Fontaine's functors above.
A quasi-inverse of (\ref{eqn:desc}) is given by taking invariants under $\Delta'$.

\paragraph{Matrices of the action.} We can write $\mO_{\mE, L} \cong \mO_{\mE} \otimes_{\bZ_p} \mO_L$, where we write $\mO_{\mE}$ for $\mO_{\mE,\bQ_p}$. As with Kisin modules, the decomposition $\mO_L\otimes_{\bZ_p} \mO_{\mE}$ as $\prod_{j=0}^{f-1} \mO_{\mE}$ (numbering embeddings via $j \leftrightarrow \sigma_j$) gives a decomposition of an element $\mM_R\in \Phi\textrm{-}\mathrm{Mod}^\et(R)$ as $\oplus_{j=0}^{f-1} \mM_R^{(j)}$, where each summand is a finite free $\mO_{\mE} \widehat{\otimes}_{\bZ_p} R \cong \mO_{\mE, L} \widehat \otimes_{\mO_L} R$-module of the same rank as that of $\mM_R$ over~$\mO_{\mE, L} \widehat{\otimes}_{\bZ_p} R$.
If $\gamma^{(j)}$ is an $\mO_{\mE} \widehat{\otimes}_{\bZ_p} R$-basis of $\mM_R^{(j)}$ for all $j$, we let $B^{(j)}$ be the matrix such that 
\[
\varphi_{\mM}^{(j)}(\varphi^*(\gamma^{(j)})) = \gamma^{(j+1)} B^{(j)}.
\]

\subsubsection{Galois representations.}~\label{Galoisreps}
In this section, we state our assuptions on mod~$p$ Galois representations and compute the $\varphi$-module of their restrictions to~$G_{L_\infty}$. We specialize the discussion to~$n = 2$.

\paragraph{Genericity.} Let $\rhobar: G_L \ra \GL_2(k_E)$ be a continuous representation, and write $V(\rhobar^\semis|_{I_L}) \cong R_{s_\rhobar}(\mu)$, with $s_\rhobar = (s_\rhobar,1,\ldots,1)$ such that~$s_{\rhobar}$ is nontrivial if and only if~$\rhobar$ is irreducible.
Write~$\mu$ as $(\mu_{1,j},\mu_{2,j})_j$.
We will assume that $\rhobar$ is \emph{generic}, which will mean that 
\[
2 < \langle \alpha^{(j)},\mu \rangle < p-3
\]
for all $j\in \bZ/f$. (Here $\alpha^{(j)}$ is the positive root in embedding~$j$, so that $\langle \alpha^{(j)},\mu \rangle = \mu_{1, j} - \mu_{2, j}$).
Note that this inequality can hold only if $p>5$.

With these assumptions, if $\rhobar$ is reducible, then $\rhobar$ is isomorphic to an extension of 
\[
\nr_{\alpha'} \omega_f^{\sum_{j=0}^{f-1} \mu_{2,j}p^j} \textrm{ by } \nr_\alpha \omega_f^{\sum_{j=0}^{f-1} \mu_{1,j}p^j}
\]
for some $\alpha$ and $\alpha' \in k_E^\times$.
Otherwise, $\rhobar$ is isomorphic to an induction
\[
\Ind_{G_{L_2}}^{G_L} \left ( \nr_\beta \omega_{2f}^{\sum_{j=0}^{f-1} \mu_{1,j}p^j+p^f\sum_{j=0}^{f-1} \mu_{2,j}p^j} \right )
\]
for some $\beta \in k_E^\times$, where $L_2$ denotes the unramified quadratic extension of $L$. In this case, we denote by~$\alpha$ the value at~$p$ of the determinant of~$\rhobar$, so that $\alpha = - \beta$, and we set~$\alpha' = -1$. The reason for doing so will become apparent when computing with $\varphi$-modules.

When working with modules over~$k_L(\!(v)\!)$, we will identify $s_{\rhobar, j}$ with a permutation matrix and write $v^{\mu_j}$ for the image of~$v$ under the cocharacter $\mu_j : \mathbb{G}_m \to \GL_2$, so that $v^{\mu_j} = \Diag(v^{\mu_{1,j}}, v^{\mu_{2,j}})$.

\begin{pp}\label{pp:rhophi}
There is an $\mM_{k_E} \in \Phi\textrm{-}\mathrm{Mod}^\et(k_E)$ with $\mathbb{V}^*(\mM_{k_E}) \cong \rhobar|_{G_{L_\infty}}$ and a basis $(\gamma^{(j)})_j$ of $\mM_{k_E}$ such that 
\begin{equation} \label{eqn:B}
B^{(j)} = U^{(j)} s^{-1}_{\rhobar,f-1-j} v^{\mu_{f-1-j}}
\end{equation}
for some lower triangular matrix $U^{(j)}\in \GL_2(k_E(\!(v)\!))$. (Recall the isomorphism $\mO_{\mE, L} \otimes_{\mO_L} k_E \cong k_E(\!(v)\!)$).
Furthermore, $U^{(j)}$ is unipotent (resp.~the product of $\begin{pmatrix}
  \alpha &   \\
   & \alpha' \\
 \end{pmatrix}$ and a unipotent matrix) if $j\neq 0$ (resp.~if $j = 0$), where $\alpha' = -1$ if $\rhobar$ is irreducible.
\end{pp}
\begin{proof}
We use Fontaine--Laffaille theory as in \cite[Appendix A]{Breuilmodp}.
Note that $\rhobar$ is Fontaine--Laffaille by the genericity condition.
Suppose that $\rhobar$ is reducible.
By twisting, we assume that $\mu_{2, j} = 0$ for all $j$.
Then there is a Fontaine--Laffaille module $M = \oplus_{j=0}^{f-1} M^{(j)}$ with $M^{(j)} = k_E e^{(j)} \oplus k_E f^{(j)}$ such that 
\[
\Fil^0 M^{(j)} = M^{(j)}, \, \Fil^1 M^{(j)} = \Fil^{\mu_{1, f-j}} M^{(j)} = k_E f^{(j)}, \, \Fil^{\mu_{1, f-j}+1} M^{(j)} = 0, \\
\]
\begin{align*}
\varphi(e^{(j)}) &=  e^{(j+1)}, \\
\varphi_{\mu_{1, f-j}}(f^{(j)}) &= f^{(j+1)}+x_{j-1} e^{(j+1)}, \, \textrm{for  } j\neq 1 \textrm{ and } \\
\varphi(e^{(1)}) &=  \alpha' e^{(2)}, \\
\varphi_{\mu_{1, f-1}}(f^{(1)}) &= \alpha f^{(2)}+\alpha' x_0 e^{(2)},
\end{align*}
for some $x_j \in k_E$ and such that $\rhobar \cong \Hom_{\Fil^.,\varphi_.}(M,A_{\mathrm{cris}} \otimes_{\bZ_p} \bF_p)$ (see e.g.~\cite[(16)]{Breuilmodp}).

Let $\mM_{k_E}$ be defined as in (\ref{eqn:B}) with 
\[
U^{(j)} = \begin{pmatrix}
  1 &   \\
  x_j & 1 \\
 \end{pmatrix}
\]
for $j\neq 0$ and
\[
U^{(0)} = \begin{pmatrix}
  \alpha &   \\
   & \alpha' \\
 \end{pmatrix}
\begin{pmatrix}
  1 &   \\
  x_0 & 1 \\
 \end{pmatrix}
\]
Let $\fM_{k_E}$ be the $\mO_L[\![v]\!]\otimes_{\bZ_p} k_E$-submodule of $\mM_{k_E}$ generated by $(\gamma^{(j)})_j$.
Note that $\varphi$ maps $\fM_{k_E}$ to itself.
Then a calculation (cf.~\cite[\S 7.4]{EGS} with $J = \emptyset$) shows that $\Theta_{p-1}(\fM_{k_E}) \cong \mathcal{F}_{p-1}(M)$, where the functors $\Theta_{p-1}$ and $\mathcal{F}_{p-1}$ are introduced in \cite[Appendix A]{EGS}.
The result now follows from \cite[Propositions A.3.2 and A.3.3]{EGS}.
The case $\rhobar$ is irreducible is proved similarly.
\end{proof}

\subsection{Modular Serre weights.} \label{sec:serrewts}

In this section, we recall some of the various descriptions of the set of modular Serre weights of a Galois representation. 
More specifically, we fix a generic Galois representation $\rhobar: G_L \ra \GL_2(k_E)$ with $V(\rhobar^\semis) \cong R_{s_\rhobar}(\mu)$.
In \cite{BDJ}, a set of Serre weights is defined in terms of the restriction $\rhobar|_{I_L}$ of $\rhobar$ to the inertia group.
We denote this set $W(\rhobar)$, though it is denoted $\mathcal{D}(\rhobar)$ in \cite{Breuilmodp} (see \cite[Proposition A.3]{Breuilmodp}). 

We will need two perspectives on weights, one formulated in terms of combinatorial objects called ``formal weights'', and another based on the map $\ft_\mu$ from \S \ref{sec:extgph}. 
The first perspective allows explicit computations and ready access to the results of~\cite{BPmodp} and~\cite{Breuilparameters}, while the second is better suited to the study of deformations of Kisin modules. Of course, they are ultimately equivalent, and we will give an explicit dictionary in Proposition~\ref{pp:serrewts}.

\paragraph{Formal weights.}
The genericity assumption implies that~$\rhobar$ can be twisted by a character~$\chi$ so that its restriction to inertia has one of the following two forms
\begin{gather*}
\rhobar |_{I_{\bQ_p}} \cong \fourmatrix{\omega_f^{\sum_{j=0}^{f-1}(r_j+1)p^j}}{*}{0}{1} \text{ in the reducible case}\\
\rhobar|_{I_{\bQ_p}} \cong \fourmatrix{\omega_{2f}^{\sum_{j=0}^{f-1}(r_j+1)p^j}}{0}{0}{\omega_{2f}^{q\sum_{j=0}^{f-1}(r_j+1)p^j}} \text{ in the irreducible case}.
\end{gather*}
In the notation of the previous section, we have $r_j+1 = \mu_{1, j}-\mu_{2, j} = \langle \alpha^{(j)}, \mu \rangle$, so that $1 < r_j < p-4$ for all values of~$j$. 

\begin{rk}
These conditions are slightly more restrictive than those in~\cite{Breuilmodp}. These constraints come from our techniques for the study of deformation rings, and we have not tried to optimize our bounds.
\end{rk}

A \emph{formal weight} is an~$f$-tuple of linear polynomials $\lambda = (\lambda_0(x_0), \ldots, \lambda_{f-1}(x_{f-1}))$ with coefficients in~$\bZ$ such that the leading coefficients are either~$1$ or~$-1$.
Note that formal weights form a group under composition.
There is a homomorphism from the group of formal weights to~$W$ sending~$\lambda$ to~$w_\lambda\in W$ where~$w_{\lambda,j}$ is trivial if and only if the leading coefficient of~$\lambda_j$ is~$1$.
(In fact, the group of formal weights is naturally isomorphic to the extended affine Weyl group of $\bG^\der$, and the above homomorphism is the natural map to $W$, which is canonically isomorphic to the Weyl group of $\bG^\der$.)
For a formal weight $\lambda$, we define the linear polynomial in $f$-variables
\begin{equation}\label{eqn:e}
e(\lambda) = 
\begin{dcases}
\half \left ( \sum_{i=0}^{f-1} p^i(x_i - \lambda_i(x_i)) \right ) &\mbox{if } w_{\lambda,f-1} = \id \\
\half \left (p^f - 1 + \sum_{i=0}^{f-1} p^i(x_i - \lambda_i(x_i)) \right ) \quad &\mbox{if } w_{\lambda,f-1} \neq \id.
\end{dcases}
\end{equation}
While $e(\lambda)$ \emph{a priori} has coefficients in $\frac{1}{2}\bZ$, for all formal weights that we consider, the coefficients in fact lie in $\bZ$.

When~$\rhobar$ is semisimple, its weights are parametrized by subsets of~$\{0, \ldots, f-1 \}$ as follows. 
To a subset~$J$, we associate the formal weight~$\lambda_J = (\lambda_0(x_0), \ldots, \lambda_{f-1}(x_{f-1}))$ of $\rhobar$, where~$\lambda_j$ is uniquely determined by whether $j-1, j$ are contained in~$J$ or not, according to the following table. 
(We remark that this is the same normalization as~\cite{Breuilparameters}, and differs from that in~\cite[\S~11]{BPmodp}. Recall that~$\ch_J$ is the characteristic function of~$J$.)
\begin{equation}\label{formalweights}
\begin{array}{c|c|c}
\ch_J(j-1, j) & \substack{\text{$\lambda_j$ when $\rhobar$ is reducible,}\\ \text{or $\rhobar$ is irreducible and $j > 0$}} & \lambda_j, \text{ when~$\rhobar$ is irreducible and~$j = 0$}\\
\hline
(1, 1) & p-3-x_j & p-1-x_0\\
(1, 0) & x_j + 1 & x_0 - 1\\
(0, 1) & p-2-x_j & p-2-x_0\\
(0, 0) & x_j & x_0
\end{array}
\end{equation}
Note that the image of~$\lambda_J$ in $W$ is $F(w_J)$, which is defined in \S \ref{sec:extgph}: this is because the image of~$\lambda_J$ is nontrivial precisely on~$J$, $w_J$ is nontrivial on~$\delta_\red(J)$, and~$F$ is a right shift on the Weyl group.
Moreover, the map from subsets of $\{0,\ldots,f-1\}$ to $W$ taking $J$ to $F(w_J)$ is a bijection.

Then, the modular weights of~$\rhobar$ are precisely the Serre weights of the form
\begin{equation}\label{weightdefinition}
\sigma(\lambda, \rhobar) = (\lambda_0(r_0), \ldots, \lambda_{f-1}(r_{f-1})) \otimes \mathrm{det}^{e(\lambda)(r_0, \ldots, r_{f-1})}\chi
\end{equation}
as~$\lambda$ runs through the formal weights of~$\rhobar$. In this formula, we have followed  our convention of writing~$\det$ for the $k_E^\times$-valued character $\det_E = \sigma_0 \circ \det$ of~$\GL_2(k_L)$.

To describe the weights of a nonsplit reducible~$\rhobar$, let $J_\rhobar$ be the set $\{f-1-j|U^{(j)} \textrm{ is diagonal}\}$. (This coincides with the definition in \cite[(17)]{Breuilmodp}, see the proof of Proposition \ref{pp:rhophi}.) 
We define~$W(\rhobar) \subset W(\rhobar^{\mathrm{ss}})$ to be the set of weights corresponding to subsets $J \subseteq \delta_{\red}(J_{\rhobar})$. This is compatible with~\cite{Breuilmodp}: indeed, the definition there is that the formal weight~$\lambda$ defines an element of~$W(\rhobar)$ if and only if $\lambda_j \in \{p-2-x_j, p-3-x_j \}$ implies that a certain parameter~$\mu_{f-j}$ of \emph{loc.~cit.} (unrelated to our use of the notation $\mu$) is $0$, and by~\cite[(18)]{Breuilmodp} we have $\delta_{\red}(J_{\rhobar}) = \{f-j: \mu_j = 0 \}$.

\paragraph{Extension graph.}
Recall that $\omega_j$ is the fundamental weight of~$\Lambda_W$ which is nonzero in embedding $j$, and that for a subset $J \subset J_\rhobar$ we write $\omega_J = \sum_{j\in J} \omega_j$ (this differs slightly from the notation in \cite[\S 2]{LMS}). 
Our genericity condition on~$\rhobar$ implies that $s_\rhobar \omega_J \in \Lambda^\mu_W$ for all~$J$.
Let $\sigma_J$ be the Serre weight $F(\ft_\mu(s_\rhobar\omega_J))$. 

\begin{pp}\label{pp:serrewts}
The set $W(\rhobar)$ is equal to $\{\sigma_J|J\subset J_\rhobar\}$.
The formal weight corresponding to~$\sigma_J$ is $\lambda_{\delta_\red(J)}$ (both in the reducible and irreducible case).
\end{pp}
\begin{proof}
The case where~$\rhobar$ is semisimple follows from~\cite[Proposition~2.10]{LMS}: we briefly recall the argument.  As~$J$ varies, the Serre weights~$F(w_{J}t_{s_\rhobar\omega'_J-p\pi^{-1}\omega'_J}(\mu) - \eta)$ are the ``obvious weights'' of~$\rhobar$, where by definition $\omega'_J \in X(\bT)$ is a lift of $\omega_J \in \Lambda_W$ and $\tld{w}_J = w_{J}t_{-\pi^{-1}(\omega'_J)} \in~\Omega$. 

Since $w_J t_{-\pi^{-1}\omega'_J}t_{\pi^{-1}s_\rhobar \omega'_J} \in W_a$ (the translation part is in $\Lambda_R$) we find that
\[
\ft_\mu(s_\rhobar \omega_J) = w_J t_{-\pi^{-1}\omega'_J}\cdot (\mu- \eta + s_\rhobar \omega'_J) = w_Jt_{s_\rhobar\omega'_J - p \pi^{-1}\omega'_J}(\mu) - \eta.
\]
Hence~$\sigma_J = F(\ft_\mu(s_\rhobar \omega_J))$ is an obvious weight, and the claim follows since all modular weights of~$\rhobar$ are obvious weights (when~$n = 2$).

Now we make explicit the correspondence with formal weights, which will make it clear that the statement of the proposition extends to nonsplit~$\rhobar$. Assume that~$\rhobar$ is split reducible, so that $s_{\rhobar} = 1$, and that the character~$\chi$ is trivial, so that
\[
\rhobar |_{I_{\bQ_p}} \cong \fourmatrix{\omega_f^{\sum_{j=0}^{f-1}(r_j+1)p^j}}{*}{0}{1}.
\]
Then the components of~$\mu$ are $a_j = r_j+1$ and $b_j = 0$ for all~$j$. 

Recall that~$\pi^{-1}$ is a left shift on~$X(\bT)$. Then we have the following correspondence
\[
\begin{array}{c|c}
(j, j+1) \in J &  \ft_\mu(\omega_J)_j = \left ( \tld{w}_J \cdot (\mu - \eta + \omega'_J) \right )_j\\
\hline
(1, 1) & (b_j-1, a_j-p+1)\\
(0, 1) & (b_j-1, a_j-p)\\
(1, 0) & (a_j, b_j)\\
(0, 0) & (a_j-1, b_j)
\end{array}
\]
Equivalently, the following table computes the components of the Serre weight~$\sigma_J$.
\[
\begin{array}{c|c}
(j-1, j) \in \delta_\red(J) & (\sigma_J)_j\\
\hline
(1, 1) & \Sym^{p - r_j - 3} \otimes \mathrm{det}^{r_j-p + 2}\\
(0, 1) & \Sym^{p- r_j  - 2} \otimes \mathrm{det}^{r_j-p+1}\\
(1, 0) & \Sym^{r_j + 1}\\
(0, 0) & \Sym^{r_j}
\end{array}
\]
A computation with~$e(\lambda)$, as in Lemma~\ref{indices} to follow, then shows that~$\sigma_J$ is the weight attached to the formal weight corresponding to~$\delta_{\red}(J)$. The remaining reducible cases follow by twisting~$\rhobar$ and observing that $J \subseteq J_\rhobar$ if and only if $\delta_\red(J) \subseteq \delta_\red(J_\rhobar)$.
To do the irreducible case, it suffices to observe that since $\omega'_J - s_\rhobar \omega'_J \in \Lambda_R$, we have
\[
\ft_\mu(s_\rhobar\omega_J) = \tld w_J \cdot(\mu - \eta + s_\rhobar \omega'_J).
\]
\end{proof}

\subsection{Orientations and crystalline Frobenius eigenvalues.}

In this section, $n$ is again an arbitrary natural number.
Using Kisin's theory, we describe Frobenius eigenvalues of crystalline Dieudonn\'e modules arising from Kisin modules.
We also introduce the notion of an orientation which plays a key role in calculating Galois deformation rings.

\begin{pp}\label{pp:dpst}
Let $\tau$ be a principal series tame inertial type for~$W_L$.
Suppose that $\fM$ is a Kisin module over $\mO_E$ with descent datum of type $\tau$, such that 
\[
\mathbb{V}^*_\dd(\fM\otimes_{\mO_L[\![u]\!]} \mO_{\mE,K})
\]
is the restriction to $G_{L_\infty}$ of a potentially crystalline representation $V$ of $G_L$.
Then the $\varphi$-module with semilinear $\Delta$-action $D_{\pst}(V^\vee)$ is isomorphic to $(\fM/u)\otimes_{\mO_E} E$ where $(-)^\vee$ denotes the contragredient representation.
\end{pp}
\begin{proof}
This follows from \cite[Proposition 2.1.5]{KisinFcrys} by adding tame descent datum.
Namely, this proposition shows that $(\fM/u)\otimes_{\mO_E} E$ and $D_{\pst}(V^\vee|_{G_K})$ are canonically isomorphic as $\varphi$-modules.
Since the $\Delta$-action is compatible with the identification $V|_{G_{K_\infty}} = \mathbb{V}^*(\fM\otimes_{\mO_L[\![u]\!]} \mO_{\mE,K})$, the isomorphism between $(\fM/u)\otimes_{\mO_E} E$ and $D_{\pst}(V^\vee|_{G_K})$ respects the induced $\Delta$-actions.
\end{proof}

\begin{cor}\label{cor:frob}
Suppose that $\tau$ is a principal series tame inertial type as in \S \ref{sec:kisin} with the characters $\eta_k$ distinct.
Let $\fM$ be a Kisin module over $\mO_E$ with descent datum of type $\tau$, $\beta$ an eigenbasis, and let $C^{(j)} = (c^{(j)}_{ik})_{ik}$ be the matrix defined in \S \ref{sec:kisin}.
Then the $\varphi$-eigenvalue on the $\eta_k$-isotypic part of $(\fM/u)\otimes_{\mO_E} E$ is given by 
\[
\prod_j c_{kk}^{(j)} \pmod{u}.
\]
\end{cor}

\begin{defn}
Let $(a_{k,j})_{k,j}$ be a tuple of integers for $1\leq k\leq n$ and $0\leq j\leq f-1$.
An orientation of $(a_{k,j})_{k,j}$ is a tuple $(s_j)_j \in S_n^f$ such that 
\[
a_{s_j(1),f-1-j} \geq a_{s_j(2),f-1-j} \geq \cdots \geq a_{s_j(n),f-1-j}
\]
and $a_{s_j(1),f-1-j} - a_{s_j(n),f-1-j} < p$.
\end{defn}

\noindent These inequalities in particular imply that if $(s_j)$ is an orientation of the tuple $(a_{k,j})_{k,j}$, then 
\[
\ba^{(j)}_{s_j(1)} \geq \ba^{(j)}_{s_j(2)} \geq \cdots \geq \ba^{(j)}_{s_j(n)}.
\]
Given a principal series tame inertial type $\tau$ as above, there is a choice of the tuple $(a_{k,j})_{k,j}$ giving $\tau$ as in \S \ref{sec:kisin} which has an orientation (though this choice is never unique).

We now fix a tuple of integers $(a_{k,j})_{k,j}$ for $1\leq k\leq n$ and $0\leq j\leq f-1$ for which there exists an orientation $(s_j)_j$, which we also fix.
Let $\tau$ be the principal series tame inertial type defined in terms of $(a_{k,j})_{k,j}$ as in \S \ref{sec:kisin}, let~$R$ be an $\mO_E$-algebra, and let $\fM$ be a Kisin module over $R$ with descent datum of type $\tau$.
Let $^\varphi \fM^{(j)}_k$ be the $R[\![v]\!]$-submodule of $\varphi^*(\fM^{(j)})$ on which $\Delta$ acts by $\eta_k$.
Note that $^\varphi \fM^{(j)}_k$ is typically strictly larger than $\varphi^*(\fM^{(j)}_k)$.

\paragraph{}

If $\beta = (f^{(j)}_k)_{k,j}$ is an eigenbasis for $\fM$, then by the proof of \cite[Lemma 2.9]{LLLM} 
\[
\beta^{(j)}_{s_j(n)} := (u^{\ba_{s_j(k)}^{(j)}-\ba_{s_j(n)}^{(j)}} f^{(j)}_{s_j(k)})_k
\]
is a basis for $\fM^{(j)}_{s_j(n)}$ for each $j$ and
\[
^\varphi \beta^{(j-1)}_{s_j(n)} := (u^{\ba_{s_j(k)}^{(j)}-\ba_{s_j(n)}^{(j)}} \otimes f^{(j-1)}_{s_j(k)})_k
\]
is a basis for $^\varphi\fM^{(j-1)}_{s_j(n)}$.
For each $j$ let $A^{(j)}\in \mathrm{Mat}_n(R[\![v]\!])$ be the matrix such that 
\[
\phi^{(j)}_{\fM}(^\varphi\beta^{(j)}_{s_{j+1}(n)}) = \beta^{(j+1)}_{s_{j+1}(n)}A^{(j)}.
\]
Then \cite[Proposition 2.13]{LLLM} states that
\[
C^{(j)} = \ad_{s_{j+1}} \Diag(u^{\ba_1}, \ldots u^{\ba_n}) (A^{(j)}) := \ad \left ( s_{j+1}\Diag(u^{\ba_{s_{j+1}(1)}^{(j+1)}}, \ldots, u^{\ba_{s_{j+1}(n)}^{(j+1)}} ) \right )(A^{(j)}),
\]
where $\Diag(u^{\ba_1^{(j+1)}},\ldots,u^{\ba_n^{(j+1)}})$ denotes the diagonal matrix with $k$-th diagonal entry $u^{\ba_k^{(j+1)}}$ and $s_{j+1}$ is identified with the permutation matrix with $(s_{j+1}(k),k)$-entry equal to $1$ for all $k$.
Corollary \ref{cor:frob} then implies the following.

\begin{cor}\label{cor:frob'}
Let $\tau$, $\fM$, and $\beta$ be as in Corollary \ref{cor:frob}.
Let $(s_j)_j$ be an orientation for a tuple $(a_{k,j})_{k,j}$ giving rise to $\tau$ as in \S \ref{sec:kisin}.
If we let $A^{(j)}=(x^{(j)}_{ik})_{ik}$ be as above, then the Frobenius eigenvalue on the $\eta_k$-isotypic part of $(\fM/u)\otimes_{\mO_E} E$ is given by 
\[
\prod_j x^{(j)}_{s_{j+1}^{-1}(k)s_{j+1}^{-1}(k)} \pmod{v}.
\]
\end{cor}

\subsubsection{Orientations and base change.}

Recall the notation $\tau$, $s_\tau$, $d$, $L'$, and $\tau'$ from \S\ref{sec:kisinbasechange}.
Then there exists a tuple $(a_{k,j})_{k,j}$ giving $\tau'$ as in \S\ref{sec:kisin} which admits an orientation and such that $a_{k,j} = a_{s_\tau^{-1}(k),j+f}$ for all $j$ and $k$.
We can and do choose an orientation $s' \in W'$ (the Weyl group of $\Res_{L'/\bQ_p}\GL_2$) such that 
\begin{equation}\label{eqn:orbc}
(s'_{j+f})^{-1} = s_\tau (s'_j)^{-1} s_\tau^{-1}
\end{equation}
for all $j$.
If $(\fM_R,\iota)$ is a Kisin module with descent data $\tau$ and $\beta$ is an eigenbasis for $(\fM_R,\iota)$, then we can define $A^{(j)}$ with respect to $s'$ as before.
Then we have that $A^{(j)} = A^{(j+f)}$ for all $j$.

\subsubsection{Orientations and \'etale $\varphi$-modules.}\label{sec:orphi}
Let $\tau$ be a tame inertial type and $\fM_R$ (or $(\fM_R,\iota)$) be a Kisin module over $R$ with descent datum of type $\tau$.
Suppose that $\sigma(\tau)$ is $R_w(\mu)$ for $(w,\mu) \in W\times X(T)$ where $\mu$ is dominant and $p$-restricted.
There exists an element $s^*\in W$ such that $(s^*)^{-1}w F(s^*) = (s_\tau,\id,\ldots,\id)$ for some $s_\tau \in S_n$.
By our discussion of Deligne--Lusztig induction in \S \ref{sec:DL} (compare also \cite[Lemma 4.2]{herzig}), $R_w(\mu)$ is isomorphic to 
\[
R_{(s_\tau,\id,\ldots,\id)}((s^*)^{-1} (\mu)).
\]
Let $L'$ be the unramified extension of $L$ of degree equal to the order $d$ of $s_\tau$.
Let $\tau'$ be the base change to $L'$, so that it is a principal series tame inertial type for $\GL_n(L')$.

Let $a_{k,j}$ be $\left ( (s^*)^{-1} \mu \right)_{k,j}$ for $1\leq k \leq n$ and $0\leq j\leq f-1$.
Then define $a_{k,j}$ for $1\leq k \leq n$ and $0\leq j\leq df-1$ so that $a_{k,j+f} = a_{s_\tau^{-1}(k),j}$ for all $k$ and $j$.
Define $s\in W$ so that $s_j=(s^*_{f-1-j})^{-1}$ for $0\leq j \leq f-1$.
There is a unique $s'\in W'$ satisfying (\ref{eqn:orbc}) for all $j$ such that $s_j = s'_j$ for $0 \leq j \leq f-1$.
Then $s'$ is an orientation for $(a_{k,j})_{k,j}$.

\begin{pp}\label{pp:kphi}
Let $(\fM_R,\iota)$ be a Kisin module over $R$ with descent datum of type $\tau$.
Let $\beta$ be an eigenbasis for $(\fM_R,\iota)$, and define $A^{(j)}$ with respect to the orientation $s'$ as before.
Recall the definitions of $K'$ and $\Delta'$ from before.
Then there is a basis for $(\fM_R\otimes_{\mO_L[\![u]\!]}\mO_{\mE,K'})^{\Delta'}$ such that 
\[
B^{(j)} = A^{(j)} s_j^{-1}v^{\mu_j}.
\]
\end{pp}
\begin{proof}
This follows from the proof of \cite[Proposition 3.4]{Le}.
\end{proof}

\subsection{Deformation rings.} \label{sec:defrings}

For a local mod $p$ Galois representation $\rhobar: G_L \ra \GL_n(k_E)$, let $R_{\rhobar}$ be the framed unrestricted $\mO_E$-deformation ring for $\rhobar$.
If $\psi: G_L \ra \mO_E^\times$ is a lift of $\det \rhobar\overline{\varepsilon}^{-1}$, then let $R^\psi_\rhobar$ be the quotient of $R_\rhobar$ parametrizing lifts of $\rhobar$ with determinant $\psi\varepsilon$. 
For an inertial type $\tau$, let $R_{\rhobar}^{\tau}$ be the quotient of $R_{\rhobar}$ corresponding to potentially crystalline lifts of $\rhobar$ of Galois type $\tau$ and parallel Hodge--Tate weights $(0,1)$. 

From now on, we set~$n = 2$.
As in \S \ref{sec:phi}, suppose that $\rhobar:G_L\ra \GL_2(k_E)$ satisfies $V(\rhobar^\semis|_I) = R_{s_\rhobar}(\mu)$.

\subsubsection{Deformations of Kisin modules with tame descent datas.}\label{sec:defk}

We describe Galois deformation rings when $\sigma_{\mathrm{alg}}$ is trivial and~$\sigma_{\mathrm{sm}} = \sigma(\tau)$ is a generic tame type, using the Kisin modules introduced in \S \ref{sec:kisin}.
Let~$\mM_{k_E}$ be an element of $\Phi\textrm{-}\mathrm{Mod}^\et(k_E)$ with basis $\gamma^{(j)}$ such that $B^{(j)}$ is given by (\ref{eqn:B}) so that $\mathbb{V}^*(\mM_{k_E}) \cong \rhobar|_{G_{L_\infty}}$.

\paragraph{Construction of tame types.}
We now describe tame types whose reductions contain modular weights of a given $\rhobar$.

\begin{pp}\label{pp:intersect}

Let~$\rhobar$ be a generic Galois representation (possibly not semisimple). 
Suppose that $w\in W$, $\nu\in \Lambda_R$ are such that $-\eta+\nu+s_\rhobar w\omega_J\in \Lambda_W^\mu$ for all subsets $J \subset \bZ/f$, and $V$ is the tame type $R_{s_\rhobar w}(\mu - \eta+\nu)$ for $\GL_2(L)$.
Then $\JH(\lbar V) \cap W(\rhobar)$ is nonempty if and only if $-\eta+\nu = -s_\rhobar w' \eta$ for some $w'\in W$ such that 
\begin{itemize}
\item $(w_j,w'_j)\neq (s,\id)$ for all $j\in \bZ/f$ and 
\item if $(w_j,w'_j) = (\id,s)$, then $j\in J_\rhobar$. 
\end{itemize}
In this case, 
\[
W(\rhobar)\cap \JH(\overline{V})=\big\{F(\ft_\mu(s_\rhobar\omega_J)): J_1(V) \subset J \subset J_2(V)^c \cap J_\rhobar\big\}
\]
where 
\[
J_1(V) = \{j\in \bZ/f: w_j = \id \quad \textrm{and} \quad w'_j \neq \id\}, \qquad J_2(V) = \{j\in \bZ/f: w_j = \id \quad \textrm{and} \quad w'_j = \id\},
\]
and $J_2(V)^c$ denotes the complement of $J_2(V)$ in $\bZ/f$.
\end{pp}
\begin{proof}
By Propositions \ref{pp:jh} and \ref{pp:serrewts}, 
\[
\JH(\overline{V}) = \{F(\ft_\mu(s_\rhobar w\omega_{J'} - \eta + \nu): J'\subset \bZ/f\}
\]
and
\[
W(\rhobar)=\big\{F(\ft_\mu(s_\rhobar\omega_J)): J \subset J_\rhobar\big\}.
\]
If these sets intersect, then since~$\ft_\mu$ is injective on~$\Lambda^\mu_W$ we must have that $J^c = J'$, and $-\eta+\nu = -s_\rhobar w' \eta$ for some $w'\in W$.

To compute the intersection we need to classify equalities in~$\Lambda_W$ of the form
\[
w\omega_{J^c} - w'\eta = \omega_J
\]
for fixed~$w, w'$ and varying~$J, J^c$.
If there exists~$j$ such that $(w_j, w_j') = (s, \id)$, the LHS is negative at~$j$ (the coefficient of $\omega_j$ is negative) but the RHS at~$j$ is nonnegative, so there cannot be any
such equalities.
Similarly, if~$(w_j, w_j') = (\id, s)$ then the LHS is positive at~$j$, so we need~$j \in J$, hence $j \in J_\rhobar$.

So assume the two conditions in the proposition.
If $(w_j,w'_j) = (\id,s)$, then $(w\omega_{J^c}-w'\eta)_j$ is nonzero and so $j\in J$.
If $(w_j,w'_j) = (\id,\id)$, then $(w\omega_{J^c}-w'\eta)_j$ cannot be $\omega_j$ and so $j\notin J$.
This implies the inclusion 
\[
W(\rhobar)\cap \JH(\overline{V})\subset \big\{F(\ft_\mu(s_\rhobar\omega_J)): J_1(V) \subset J \subset J_2(V)^c \cap J_\rhobar\big\}.
\]
The reverse inclusion is checked similarly, and this completes the proof of the proposition.
\end{proof}

\begin{rk}
Since the types~$R_{s_\rhobar w}(\mu-s_\rhobar w'\eta)$ have reductions with distinct sets of Jordan--H\"older factors, they are pairwise nonisomorphic.
If $\rhobar$ is semisimple, then $J_\rhobar = \bZ/f$ so that there are $3^f$ tame types satisfying the conditions of Proposition \ref{pp:intersect}.
It is known that for $\rhobar$ generic and semisimple, there are $3^f$ tame types $V$ such that $\JH(\lbar V) \cap W(\rhobar)$ is nonempty.
We conclude that all such tame types appear in Proposition \ref{pp:intersect}, though we will not need this in what follows.
\end{rk}

Proposition~\ref{pp:intersect} gives another perspective on the parametrization of $I_1$-invariants in~$D_0(\rhobar)$ in~\cite[Corollary~14.10]{BPmodp} as follows.
If~$\chi \in (\soc_K D_0(\rhobar))^{I_1}$, we will write~$J(\chi)$ for the subset such that $\sigma_{J(\chi)} \cong \sigma(\chi)$.

\begin{lemma}\label{charactersw}
Let~$\rhobar$ be a generic, possibly not semisimple, Galois representation.
Let~$\chi$ be a character appearing in~$D_0(\rhobar)^{I_1}$.
Then there exist $s^*, w, w' \in W$ such that
\[
\theta(\chi)[1/p] \cong R_{s_\rhobar w}(\mu - s_\rhobar w' \eta), \, s^*F(s^*)^{-1} = s_\rhobar w, \text{ and } \chi = (s^*)^{-1}(\mu-s_\rhobar w'\eta)|_H.
\]
Furthermore, if~$\chi$ appears on~$(\soc_K D_0(\rhobar))^{I_1}$ and~$i\in \bZ/f$, then~$i\in J(\chi)$ if and only if $s^*_{i-1} \ne \id$.
\end{lemma}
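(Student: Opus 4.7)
The strategy is to read the first three claims off from Proposition~\ref{pp:intersect} applied to $V = \theta(\chi)[1/p]$, and then, for the last claim, to locate the cosocle of~$\theta(\chi)$ inside the parametrization of~$W(\rhobar) \cap \JH(\overline V)$ provided by that proposition.

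By the characterization of~$D_0(\rhobar)^{I_1}$ in~\cite[Corollary~14.10]{BPmodp} (reformulated here via Proposition~\ref{pp:intersect}), the fact that~$\chi$ appears in~$D_0(\rhobar)^{I_1}$ guarantees that $\JH(\overline V) \cap W(\rhobar) \ne \emptyset$. Hence Proposition~\ref{pp:intersect} supplies $w, w' \in W$ with $V \cong R_{s_\rhobar w}(\mu - s_\rhobar w' \eta)$, establishing the first relation. Since $V$ is a principal series type, $s_\rhobar w$ must be $F$-conjugate to the identity in~$W$; I would choose~$s^* \in W$ inductively from the componentwise relation $(s_\rhobar w)_j = s^*_j (s^*_{j-1})^{-1}$ to achieve $s_\rhobar w = s^* F(s^*)^{-1}$, giving the second relation. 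Now the general isomorphism $R_{s_\rhobar w}(\lambda) \cong R_{\id}((s^*)^{-1} \lambda)$ (as in the discussion of Deligne--Lusztig induction in \S\ref{sec:DL}) applied to $\lambda = \mu - s_\rhobar w' \eta$ produces
\[
V \cong R_{\id}\big((s^*)^{-1}(\mu - s_\rhobar w' \eta)\big) = \Ind_I^K \big((s^*)^{-1}(\mu - s_\rhobar w' \eta)|_H\big).
\]
Comparing with $V = \Ind_I^K \chi$ forces $\chi = (s^*)^{-1}(\mu - s_\rhobar w' \eta)|_H$, which is the third relation.

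For the last assertion, assume $\chi \in (\soc_K D_0(\rhobar))^{I_1}$, so that $\sigma(\chi) \in W(\rhobar)$ is both a socle constituent of~$D_0(\rhobar)$ and the cosocle of~$V$. By Proposition~\ref{pp:intersect}, $W(\rhobar) \cap \JH(\overline V) = \{\sigma_J : J_1(V) \subset J \subset J_2(V)^c \cap J_\rhobar\}$, and the cosocle corresponds to the unique extremal element of this interval. I would determine this extremal element by computing the $H$-character of the cosocle of~$\Ind_I^K \chi$ directly and matching it with the formula $\sigma_J = F(\ft_\mu(s_\rhobar \omega_J))$ from Proposition~\ref{pp:serrewts}; expressing the resulting~$J(\chi)$ in terms of~$(w_j, w'_j)$ via the definitions of~$J_1(V)$ and $J_2(V)$, and then substituting $(s_\rhobar w)_j = s^*_j (s^*_{j-1})^{-1}$, should reduce the condition $i \in J(\chi)$ to~$s^*_{i-1} \ne \id$ after accounting for the index shift implicit in~$\delta_\red$ (cf.\ Proposition~\ref{pp:serrewts}).

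The main obstacle is carrying out this last identification cleanly: the argument is formal once the correct extremal~$J$ inside~$[J_1(V), J_2(V)^c \cap J_\rhobar]$ is pinned down, but the conventions to track (the direction of~$F$'s shift on~$W$, the shift implicit in~$\delta_\red$ relating~$J(\chi)$ to formal weights, and the separate role of~$s_\rhobar$ in the irreducible versus reducible cases) mean that the bookkeeping is really the content of the argument.
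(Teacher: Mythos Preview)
Your argument for the first three relations is correct and matches the paper's. One small point: since $\Ind_I^K \chi \cong \Ind_I^K \chi^s$, the comparison with $R_{\id}\big((s^*)^{-1}(\mu - s_\rhobar w'\eta)\big)$ only pins down $\chi$ up to $s$-conjugation, so among the two solutions of $s^*F(s^*)^{-1} = s_\rhobar w$ (differing by the longest element of $W$) one must choose the one giving $\chi$ rather than $\chi^s$. This is implicit in both your write-up and the paper's.

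For the final claim, the framing via ``the unique extremal element of the interval $[J_1(V), J_2(V)^c \cap J_\rhobar]$'' is a red herring: you do not say which extremum, it is not clear the cosocle must sit at one, and in any case your next sentence reverts to matching the $H$-character of $\sigma(\chi)$ against $F(\ft_\mu(s_\rhobar \omega_J))$ directly. The paper does exactly this matching, but in a way that collapses the bookkeeping you are bracing for. It notes that $(s^*)^{-1}(\mu - s_\rhobar w'\eta)$ need not be $p$-restricted and chooses $\omega' \in X(\bT)$, lifting some $-\omega_{J'} \in \Lambda_W$, so that $(s^*)^{-1}(\mu - s_\rhobar w'\eta) + \omega' - p\pi^{-1}(\omega')$ is $p$-restricted and gives $\sigma(\chi)$; the elementary observation, checked componentwise, is that the image of $\omega'_i$ in $\Lambda_W/\Lambda_R$ is nonzero precisely when $s^*_{i-1} \neq \id$. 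Writing $\tld w = t_{-\pi^{-1}(\omega')}(s^*)^{-1}$, one now has two expressions $F(\tld w_{J(\chi)}\cdot(\mu - \eta + s_\rhobar\omega'_{J(\chi)})) \cong \sigma_{J(\chi)} \cong F(\tld w \cdot(\mu - \eta - s_\rhobar w'\eta + s^*\omega' + s^*\eta))$, and injectivity of $\ft_\mu$ forces $s_\rhobar\omega_{J(\chi)}$ and $s^*\eta - s_\rhobar w'\eta + s^*\omega'$ to agree in $\Lambda_W$. Since $s^*\eta - s_\rhobar w'\eta \in \Lambda_R$, the images of $\omega_{J(\chi)}$ and $\omega'$ in $\Lambda_W/\Lambda_R$ coincide, which is exactly the asserted criterion. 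This reduction modulo $\Lambda_R$ is what replaces your anticipated case-by-case tracking of $\delta_\red$, $F$-shifts, and the reducible/irreducible split.
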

\begin{proof}
A character $\chi: H \to \mO_E^\times$ appears in~$D_0(\rhobar)^{I_1}$ if and only if $\lbar \theta(\chi)\cap W(\rhobar)$ is nonempty.
By considerations of central characters, $\theta(\chi)[1/p]$ must then be isomorphic to $R_{s_\rhobar w} (\mu-\eta+\nu)$ as in Proposition~\ref{pp:intersect}.
Note that $-\eta+\nu+s_\rhobar w\omega_J\in \Lambda_W^\mu$ for all subsets $J \subset \bZ/f$ by the genericity assumption on $\rhobar$, which guarantees that $\chi$ appearing in~\cite[Corollary~14.10]{BPmodp} is generic as well.
Then Proposition~\ref{pp:intersect} implies that there exist~$w,w'\in W$ such that $\theta(\chi)[1/p]$ is isomorphic to $R_{s_\rhobar w} (\mu-s_\rhobar w'\eta)$.
By our discussion of Deligne--Lusztig theory in \S \ref{sec:DL}, there exists $s^* \in W$ such that
\[
s^*F(s^*)^{-1} = s_\rhobar w \text{ and } \chi = (s^*)^{-1}(\mu-s_\rhobar w'\eta)|_H.
\]
Now assume~$\chi$ appears in~$(\soc_K D_0(\rhobar))^{I_1}$. 
The weight $(s^*)^{-1}(\mu-s_\rhobar w'\eta)$ need not be $p$-restricted, but there exists $\omega' \in X(\bT)$ lifting some~$-\omega_{J'} \in \Lambda_W$ such that $\sigma(\chi) \cong \sigma_{J(\chi)}$ is isomorphic to $F((s^*)^{-1}(\mu-s_\rhobar w'\eta)+\omega'-p\pi^{-1}(\omega'))$.
The image of $\omega'_i$ in $\Lambda_W/\Lambda_R$ is nonzero if and only if $s^*_{i-1} \neq \id$.
Writing $\tld{w}$ for $t_{-\pi^{-1}(\omega')} (s^*)^{-1}$ and using the definition of $\sigma_{J(\chi)}$, we have that 
\[
F(\tld{w}_{J(\chi)}\cdot(\mu-\eta+s_\rhobar\omega'_{J(\chi)})) \cong \sigma_{J(\chi)} \cong F(\tld w \cdot (\mu-\eta-s_\rhobar w'\eta + s^*\omega'+s^*\eta))
\]
for some choice of $\omega'_{J(\chi)}$ lifting $\omega_{J(\chi)}$.
We conclude that $\ft_\mu(s_\rhobar \omega'_{J(\chi)}) = \ft_\mu(s^*\eta-s_\rhobar w'\eta+s^*\omega')$ so that the image of $\omega'_{J(\chi)}$ and $\omega'$ in $\Lambda_W/\Lambda_R$ coincide (since~$\ft_\mu$ is injective and $s^*\eta - s_\rhobar w' \eta$ has image in~$\Lambda_R$).
It follows that $i \in J(\chi)$ if and only if $s^*_{i-1} \ne \id$.
\end{proof}

\paragraph{Computation of deformation rings.}
Now let $w$ and $w'$ be as in Proposition \ref{pp:intersect}, and let~$\tau$ be the tame type with $\sigma(\tau) \cong R_{s_\rhobar w}(\mu-s_\rhobar w' \eta)$.
As before, let $d$ be the order of $s_\tau$. (So $d = 1$ if~$\tau$ is principal series, and $d = 2$ if~$\tau$ is cuspidal.)
For each $j \in \bZ /df$, set
\begin{align*}
A^{(j)} = \, \, \,  & B^{(j)} v^{-(\mu_{f-1-j} - s_{\rhobar,f-1-j} w'_{f-1-j}\eta_{f-1-j})}s_{\rhobar,f-1-j}w_{f-1-j}\\
\overset{(\ref{eqn:B})}{=} & U^{(j)} v^{w'_{f-1-j}\eta_{f-1-j}}w_{f-1-j},
\end{align*}
where $j$ is interpreted on the right hand sides as an element of $\bZ /f$ via the natural projection.
The matrices $A^{(j)}$ are of the form
\begin{equation} \label{eqn:shape}
 \begin{pmatrix}
  v &   \\
  x_jv & 1 \\
 \end{pmatrix},
\begin{pmatrix}
  1 &  \\
   & v \\
 \end{pmatrix},
\textrm{ or }
\begin{pmatrix}
   & 1  \\
  v & x_j \\
 \end{pmatrix}
\end{equation}
when $j\neq 0$ and some matrix 
\[
\begin{pmatrix}
  \alpha &   \\
   & \alpha' \\
 \end{pmatrix}
\]
multiplied by a matrix in (\ref{eqn:shape}) when $j=0$.

We define $(\overline{\fM}^\tau,\iota)$ to be the Kisin module over $k_E$ with descent datum of type $\tau$ given by the matrices $A^{(j)}$ for a fixed eigenbasis.
Then $\mathbb{V}_\dd^*(\overline{\fM}^\tau \otimes_{\mO_L[\![u]\!]} \mO_{\mE,K'})$ is isomorphic to $\rhobar$ by \S \ref{sec:phi} and Proposition \ref{pp:kphi}.
We are going to write down an explicit deformation of~$\overline{\fM}^\tau$ to a complete Noetherian local $\mO_E$-algebra $R^\tau_{\overline{\fM}^\tau}$. This will admit a formally smooth extension that will be simultaneously a formally smooth extension of the Galois deformation ring $R^\tau_{\rhobar}$. 
Because of this fact, we will be able to carry through most of our computations explicitly on $R^\tau_{\overline{\fM}^\tau}$.

Write~$s$ for the nontrivial element of~$W(\GL_2) = S_2$. Let $R^\tau_{\overline{\fM}^\tau}$ be the ring 
\[
\mO_E[\![(X_j,Y_j)_{j=0}^{f-1},X_\alpha,X_{\alpha'}]\!]/I(\tau)
\]
where $I(\tau)= (f_j(\tau))_{j=0}^{f-1}$ and 
\[
f_j(\tau) = 
\begin{dcases}
Y_j &\mbox{if } (w_{f-1-j}, w'_{f-1-j}) = (\id, \id) \\
X_j &\mbox{if } (w_{f-1-j}, w'_{f-1-j}) = (\id, s)\\
Y_j &\mbox{if } (w_{f-1-j}, w'_{f-1-j}) = (s, s) \textrm{ and } x_j \neq 0 \\
X_jY_j-p &\mbox{if } (w_{f-1-j}, w'_{f-1-j}) = (s, s) \textrm{ and } x_j = 0.
\end{dcases}
\]

Let $\fM^\tau$ be the Kisin module over $R^\tau_{\overline{\fM}^\tau}$ with descent datum of type $\tau$ given by the matrices $A^{\tau,(j)}$, with respect to a fixed eigenbasis, which are defined to be

\begin{equation}\label{eqn:deform}
\begin{dcases} 
\\
\hspace{4mm} \begin{pmatrix}
  v+p &   \\
  (X_j+[x_j])v & 1 \\
 \end{pmatrix} &\mbox{if } A^{(j)} =  \begin{pmatrix}
  v &   \\
  x_jv & 1 \\
 \end{pmatrix}, \\
\\
\hspace{4mm} \begin{pmatrix}
  1 & -Y_j  \\
   & v+p \\
 \end{pmatrix} &\mbox{if } A^{(j)} =  \begin{pmatrix}
  1 &   \\
   & v \\
 \end{pmatrix}, \\
\\
\hspace{4mm} \begin{pmatrix}
  -Y_j & 1 \\
  v & X_j \\
 \end{pmatrix} &\mbox{if } A^{(j)} = \begin{pmatrix}
   & 1  \\
  v &  \\
 \end{pmatrix}, \\
\\
\hspace{4mm} \begin{pmatrix}
  -p(X_j+[x_j])^{-1} & 1 \\
  v & X_j+[x_j] \\
 \end{pmatrix} \hspace{4mm} &\mbox{if } A^{(j)} = \begin{pmatrix}
   & 1  \\
  v & x_j \\
 \end{pmatrix} \textrm{ and } x_j \neq 0, \\
\\
\end{dcases}
\end{equation}
when $j\neq 0$, and the product of the diagonal matrix 
\[
\begin{pmatrix}
  X_\alpha +[\alpha] &   \\
   & X_{\alpha'}+[\alpha'] \\
 \end{pmatrix}
\]
with the appropriate matrix in (\ref{eqn:deform}) when $j=0$.

Let $R^{\tau,\square}_{\overline{\fM}^\tau}$ represent the moduli problem which maps a local complete Noetherian $\mO_E$-algebra $R$ to the set of pairs $(f,b)$ where $f: R^\tau_{\overline{\fM}^\tau} \ra R$ is an $\mO_E$-algebra homomorphism and $b$ is an $R$-basis for 
\[
\mathbb{V}^*_\dd(\fM^\tau \otimes_{ R^\tau_{\overline{\fM}^\tau}, f} R \otimes_{\mO_L[\![u]\!]} \mO_{\mE,K'}).
\]
Then the natural forgetful map 
\[
\Spf R^{\tau,\square}_{\overline{\fM}^\tau} \ra \Spf R^\tau_{\overline{\fM}^\tau}
\]
is formally smooth. We also denote by $I(\tau)$ the ideal $I(\tau)R^{\tau,\square}_{\overline{\fM}^\tau} \subset R^{\tau,\square}_{\overline{\fM}^\tau}$.

We now define a map $\kappa^\tau: \Spf R^{\tau,\square}_{\overline{\fM}^\tau} \ra \Spf R^\tau_\rhobar$.
The natural map $\Spf R^\tau_\rhobar \ra \Spf R^\square_{\rhobar|_{G_{L_\infty}}}$ induced by restriction is a closed embedding by \cite[Proposition 3.12]{LLLM}.
The map 
\begin{align*}
\Spf R^{\tau,\square}_{\overline{\fM}^\tau} &\ra \Spf R^\square_{\rhobar|_{G_{L_\infty}}} \\
(f,b) &\mapsto (\mathbb{V}^*_\dd(\fM^\tau \otimes_{ R^\tau_{\overline{\fM}^\tau}, f} R \otimes_{\mO_L[\![u]\!]} \mO_{\mE,K'}),b).
\end{align*}
factors through $\Spf R^\tau_\rhobar$ by the proof of \cite[Corollary 5.20]{CLKisinmodules}.
We define this factoring to be $\kappa^\tau$.

\begin{thm}
The map $\kappa^\tau$ is formally smooth.
\end{thm}
\begin{proof}
This follows from \cite[\S 5.2 and 6]{LLLM} (cf. \cite[Proposition 3.4]{Le} for more details).
One first shows that the induced map $\Spf R^{\tau,\square}_{\overline{\fM}^\tau} \ra \Spf R^\square_\rhobar$ is the composition of a formally smooth map and a closed immersion (see e.g.~the proof of \cite[Theorem 3.6]{Le}).
Next, one shows that the scheme-theoretic image is $\Spf R^\tau_\rhobar$.
Since $R^{\tau,\square}_{\overline{\fM}^\tau}$ and $R^\tau_\rhobar$ are both $\mO_E$-flat with trivial nilradicals, it suffices to show that the map
\[
\Spec R^{\tau,\square}_{\overline{\fM}^\tau}[p^{-1}] \ra \Spec R^\tau_\rhobar[p^{-1}] 
\]
is surjective on $\overline{E}$-points. 
By \cite[Proposition 2.4.8]{Kisin-moduli}, it suffices to show that any Kisin module over $\mO_{\overline{E}}$ with descent data $\tau$ is isomorphic to a specialization of $\fM^\tau$.
This follows from the proof of \cite[Proposition 3.4]{Le} (there is one new case, namely $A^{(j)} = \begin{pmatrix} 1 & \\ & v\end{pmatrix}$, which is easily handled).
\end{proof}

\subsubsection{Comparing deformation rings.}\label{sec:comp}

Suppose that $\tau$ and $\tau_*$ are tame types such that $\sigma(\tau) = R_{s_\rhobar w}(\mu-s_\rhobar w' \eta)$ and $\sigma(\tau_*) = R_{s_\rhobar w_*}(\mu-s_\rhobar w'_* \eta)$ are as in Proposition \ref{pp:intersect} so that in particular $(w_j,w'_j)$ and $(w_{*,j},w'_{*,j})$ are not equal to $(s,\id)$ for all $j\in \bZ/f$.
Suppose furthermore that 
\begin{equation}\label{weightcontainment}
\text{for all $j \in \bZ/f$, $(w_j,w'_j)\neq (w_{*,j},w'_{*,j})$ implies that $w_j \neq \id$.}
\end{equation}
\begin{lemma}\label{lemma:containment}
(\ref{weightcontainment}) is equivalent to the fact that 
\[
W(\rhobar) \cap \JH(\overline \tau_*) \subseteq W(\rhobar) \cap \JH(\overline \tau).
\]
\end{lemma}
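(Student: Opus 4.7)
The plan is to translate both sides of the claimed equivalence into combinatorial statements about the subsets $J_1(\tau), J_2(\tau), J_1(\tau_*), J_2(\tau_*) \subseteq \bZ/f$ attached to $\tau$ and $\tau_*$ by Proposition~\ref{pp:intersect}, and then to compare directly.

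Using Proposition~\ref{pp:intersect}, I would identify $W(\rhobar) \cap \JH(\overline{\tau})$ and $W(\rhobar) \cap \JH(\overline{\tau_*})$ with the Boolean intervals $[J_1(\tau), J_2(\tau)^c \cap J_\rhobar]$ and $[J_1(\tau_*), J_2(\tau_*)^c \cap J_\rhobar]$ in $2^{\bZ/f}$, via $J \mapsto F(\ft_\mu(s_\rhobar \omega_J))$. This map is injective because $\ft_\mu$ is injective on $\Lambda_W^\mu$ and the genericity of $\rhobar$ places all relevant $s_\rhobar \omega_J$ in $\Lambda_W^\mu$. Hence the containment of Serre weight sets is equivalent to containment of the parametrizing intervals which, since both are nonempty by the hypothesis on $\tau$ and $\tau_*$, amounts to the conjunction
\[
J_1(\tau) \subseteq J_1(\tau_*) \quad \text{and} \quad J_2(\tau) \cap J_\rhobar \subseteq J_2(\tau_*) \cap J_\rhobar.
\]

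In parallel, I would unwind the contrapositive of~(\ref{weightcontainment}): it asserts that $w_j = \id$ forces $(w_j, w'_j) = (w_{*,j}, w'_{*,j})$. Since $w_j = \id$ if and only if $j \in J_1(\tau) \sqcup J_2(\tau)$, partitioning according to the value of $w'_j$ shows that~(\ref{weightcontainment}) is equivalent to the conjunction $J_1(\tau) \subseteq J_1(\tau_*)$ and $J_2(\tau) \subseteq J_2(\tau_*)$. The forward implication~(\ref{weightcontainment})~$\Rightarrow$~containment then follows immediately. For the converse, the first inclusion is gained for free, and the main task is to upgrade $J_2(\tau) \cap J_\rhobar \subseteq J_2(\tau_*) \cap J_\rhobar$ to $J_2(\tau) \subseteq J_2(\tau_*)$. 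The hard part will be controlling, for $j \in J_2(\tau) \setminus J_\rhobar$, the pair $(w_{*,j}, w'_{*,j})$: Proposition~\ref{pp:intersect} rules out $(s, \id)$ for all $j$ and $(\id, s)$ for $j \notin J_\rhobar$, leaving only $(\id, \id)$ and $(s, s)$; ruling out $(s, s)$ in favor of $(\id, \id)$ will be the main obstacle, and will presumably have to exploit the rigidity of the specific representatives $(w, w')$ and $(w_*, w'_*)$ attached to the types by Proposition~\ref{pp:intersect} rather than just the types themselves.
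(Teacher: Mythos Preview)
Your reduction to the combinatorics of $J_1$ and $J_2$ is exactly the right way to unpack Proposition~\ref{pp:intersect}, and the paper's one-line proof (``easy consequence of Proposition~\ref{pp:intersect}'') is really just gesturing at this same computation. Your forward direction is correct and is all that the paper ever uses: in \S\ref{sec:central} one checks~(\ref{weightcontainment}) directly (it is vacuous when $\tau$ is the central type, since then $w_j = s$ for all~$j$) and invokes the implication to obtain the Serre-weight containment and hence the closed immersion via \cite[Theorem~7.2.1]{EGS}.

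Your instinct that the converse is the ``hard part'' is more than justified: as stated, the converse is actually \emph{false} when $\rhobar$ is not semisimple, and the obstacle you isolated cannot be removed. Take $f=1$ and $\rhobar$ reducible non-split with $J_\rhobar = \emptyset$, so $W(\rhobar) = \{\sigma_\emptyset\}$. Both $(w,w') = (\id,\id)$ and $(w_*,w'_*) = (s,s)$ satisfy the hypotheses of Proposition~\ref{pp:intersect} (the condition $(\id,s)\Rightarrow j\in J_\rhobar$ is vacuous for each), and both of the corresponding types have $\sigma_\emptyset$ among their Jordan--H\"older factors. Hence $W(\rhobar)\cap\JH(\overline\tau_*) = W(\rhobar)\cap\JH(\overline\tau) = \{\sigma_\emptyset\}$ and the containment holds, yet at $j=0$ we have $(w_0,w'_0)\neq (w_{*,0},w'_{*,0})$ with $w_0=\id$, so~(\ref{weightcontainment}) fails. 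More generally, whenever $j\notin J_\rhobar$ the value of $(w_{*,j},w'_{*,j})\in\{(\id,\id),(s,s)\}$ is invisible to $W(\rhobar)\cap\JH(\overline\tau_*)$, precisely because every parametrizing $J$ lies in $J_\rhobar$; so there is no ``rigidity'' to exploit. The equivalence does hold when $\rhobar$ is semisimple (then $J_\rhobar = \bZ/f$ and your two conjunctions coincide), and one can regard the lemma as a slight overstatement in the non-semisimple case that is harmless for the sequel.
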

\begin{proof}
This is an easy consequence of Proposition \ref{pp:intersect}. 
\end{proof}
By Lemma \ref{lemma:containment} and~\cite[Theorem~7.1.1]{EGS}, there is a closed embedding
\[
\Spec R^{\tau_*}_\rhobar / p \to \Spec R^\tau_\rhobar / p.
\]
On the other hand, the definition of our family of Kisin modules implies that 
\[
I(\tau)+p\mO_E[\![X_j, Y_j, X_\alpha, X_{\alpha'}]\!] \subset I(\tau_*)+p\mO_E[\![X_j, Y_j, X_\alpha, X_{\alpha'}]\!].
\]
This induces a surjection 
\begin{equation}\label{eqn:compare}
R^\tau_{\overline{\fM}^\tau}/p \ra R^{\tau_*}_{\overline{\fM}^{\tau_*}}/p.
\end{equation}

\begin{pp}\label{pp:compare} 
The diagram
\[
\begin{tikzcd}
\Spf R^{\tau_*,\square}_{\overline{\fM}^{\tau_*}}/p \arrow{r}{(\ref{eqn:compare})} \arrow{d}{\kappa^\tau} & \Spf R^{\tau,\square}_{\overline{\fM}^\tau}/p \arrow{d}{\kappa^{\tau_*}} \\
\Spf R^{\tau_*}_\rhobar/p \arrow{r} & \Spf R^\tau_\rhobar/p
\end{tikzcd}
\]
commutes.

\end{pp}
\begin{proof}
It suffices to show that the map (\ref{eqn:compare}) commutes with the maps to $\Spf R_{\rhobar|_{G_{L_\infty}}}$.
This would follow from an isomorphism
\[
(\fM^\tau/p \otimes_{R^\tau_{\overline{\fM}^\tau}/p} R^{\tau_*}_{\overline{\fM}^{\tau_*}}/p\otimes_{\mO_{L'}[\![u]\!]} \mO_{\mE,K'})^{\Delta'} \cong (\fM^{\tau_*}/p \otimes_{\mO_{L_*'}[\![u]\!]} \mO_{\mE,K_*'})^{\Delta_*'}. 
\]
For $j\in \bZ/f$, let $A^{\tau,(j)}$ and $A^{\tau_*,(j)}$ be the matrices used to define $\fM^\tau$ and $\fM^{\tau_*}$, respectively, as above.
For $j\in \bZ/f$, let $B^{\tau,(j)}$ and $B^{\tau_*,(j)}$ be the matrices for $\fM^\tau \otimes_{\mO_{L'}[\![u]\!]} \mO_{\mE,K'}$ and $\fM^{\tau_*} \otimes_{\mO_{L'}[\![u]\!]} \mO_{\mE,K'_*}$, respectively, defined as in Proposition \ref{pp:kphi}.
Then by Proposition \ref{pp:kphi}, it suffices to show that 
\begin{equation}\label{eqn:matching}
B^{\tau,(j)} \equiv B^{\tau_*,(j)} \pmod{I(\tau_*)+(p)}.
\end{equation}
If $(w_{f-1-j},w'_{f-1-j}) = (w_{*,{f-1-j}},w'_{*,{f-1-j}})$, then it is easy to see that $A^{\tau,(j)}=A^{\tau_*,(j)}$ and $B^{\tau,(j)} = B^{\tau_*,(j)}$.

Now suppose that $(w_{f-1-j},w'_{f-1-j}) \neq (w_{*,{f-1-j}},w'_{*,{f-1-j}})$.
We check the (\ref{eqn:matching}) in cases.
If $x_j \neq 0$, then 
\begin{align*}
B^{\tau,(j)} &= \begin{pmatrix}
	v^{\mu_{1,f-1-j}} & -p(X_j+[x_j])^{-1}v^{\mu_{2,f-1-j}-1} \\
	(X_j+[x_j])v^{\mu_{1,f-1-j}} & v^{\mu_{2,f-1-j}} \\
 \end{pmatrix} \quad \textrm{and} \\
B^{\tau_*,(j)} &= \begin{pmatrix}
	(v+p)v^{\mu_{1,f-1-j}-1} &  \\
	(X_j+[x_j])v^{\mu_{1,f-1-j}} & v^{\mu_{2,f-1-j}} \\
 \end{pmatrix}.
\end{align*}
If $x_j = 0$, then
\[
B^{\tau,(j)} = \begin{pmatrix}
	v^{\mu_{1,f-1-j}} & -Y_jv^{\mu_{2,f-1-j}-1} \\
	X_jv^{\mu_{1,f-1-j}} & v^{\mu_{2,f-1-j}} \\
 \end{pmatrix}
\]
and $B^{\tau_*,(j)}$ is either
\[
 \begin{pmatrix}
	(v+p)v^{\mu_{1,f-1-j}-1} &  \\
	X_jv^{\mu_{1,f-1-j}} & v^{\mu_{2,f-1-j}} \\
 \end{pmatrix} \quad \textrm{or} \quad
 \begin{pmatrix}
	v^{\mu_{1,f-1-j}} & -Y_jv^{\mu_{1,f-1-j}} \\
	 & (v+p)v^{\mu_{2,f-1-j}-1} \\
 \end{pmatrix}.
\]
Then (\ref{eqn:matching}) is easy to check and the proposition follows.
\end{proof}

\subsubsection{Ideals and Serre weights.}\label{sec:iserrewts}

Recall the definition of the set $J_\rhobar$ from \S \ref{sec:serrewts}.
Also recall the definition of $\sigma_J \in W(\rhobar)$ for a subset $J$ of $J_\rhobar$.
Let $I(\sigma_J)$ be the ideal generated by $(\varpi_E,\{f_j(J)\}_j)$ in either $R^{\tau}_{\overline{\fM}^\tau}$ or $R^{\tau,\square}_{\overline{\fM}^\tau}$ where 
\[
f_j(J) = 
\begin{dcases}
X_j &\mbox{ if } f-1-j \in J\\
Y_j &\mbox{ if } f-1-j \notin J.
\end{dcases}
\]

\begin{pp}\label{pp:serrewt}
Suppose that $\sigma \in W(\rhobar)$ and $\tau$ is a tame type such that $\sigma(\tau)$ is isomorphic to $R_{s_\rhobar w}(\mu - s_\rhobar w'\eta)$ with $w$ and $w' \in W$ as in Proposition \ref{pp:intersect}.
Then $I(\tau) \subset I(\sigma)$ if and only if $\sigma \in \JH(\overline{\sigma(\tau)})$.
\end{pp}
\begin{proof}
Suppose that $\sigma(\tau)$ is as above. 
Then $\sigma_J \in \JH(\overline{\sigma(\tau)})$ is equivalent to the following statement by Proposition \ref{pp:jh}: for all $j\in \bZ/f$, 
\begin{itemize}
\item $f-1-j \in J$ implies that $(w_{f-1-j},w'_{f-1-j}) \neq (\id,\id)$; and
\item $f-1-j \notin J$ implies that $(w_{f-1-j},w'_{f-1-j}) \neq (\id,s)$.
\end{itemize}
(Compare the proof of Proposition~\ref{pp:intersect}.)
Now suppose that $\sigma = \sigma_J$ so that in particular, $f-1-j\in J$ implies that $f-1-j \in J_\rhobar$.
Then the above is equivalent to the fact that for all $j\in \bZ/f$, $f_j(\tau) \subset I(\sigma_J)$.
\end{proof}

\subsection{Local-global compatibility in families.}\label{sec:LLC}

We first recall the local Langlands correspondence for $\GL_n(L)$ from the introduction of \cite{HT}.
Fix an isomorphism $\iota: \lbar \bQ_p \ra \bC$.
Then we can define a local Langlands correspondence $\mathrm{rec}_p$ for $\GL_n(L)$ over $\lbar\bQ_p$ by requiring that $\iota \circ \mathrm{rec}_p = \mathrm{rec} \circ \iota$ where $\mathrm{rec}$ defined in \emph{loc.~cit.}~gives a bijection from isomorphism classes of smooth irreducible representations of $\GL_n(L)$ over $\bC$ to Frobenius semisimple Weil-Deligne representations of the Weil group $W_L$ over $\bC$.
(More specifically, $\rec$ is the correspondence over~$\bC$ that preserves $L$-functions and $\epsilon$-factors.)
We define $r_p(\pi)$ to be $\mathrm{rec}_p(\pi\otimes |\det|^{(n-1)/2})$.
Then $r_p$ does not depend on the choice of $\iota$.
(The definition of $r_p$ differs from that in \cite{CEGGPS} by a twist.)

Let $\sigma$ be the smooth $\sigma(\tau)$.
Let $\mH(\sigma)$ denote the Hecke algebra 
\[
\End_{\GL_2(L)}(\ind_{\GL_2(\mO_L)}^{\GL_2(L)} \sigma)
\]
for $\sigma$.
We write 
\[
\eta: \mH(\sigma) \ra R^{\tau}_\rhobar[p^{-1}]
\]
for the map from \cite[Theorem 4.1]{CEGGPS}, but normalized using $r_p$ defined above. 

We now specialize to the case when $\sigma$ is $\theta(\chi)_E$, the principal series tame type of $\GL_2(L)$ over $E$ given by induction of an $\mO_E$-character $\chi = \chi_1\otimes \chi_2$ of the Iwahori subgroup $I$ to $\GL_2(\mO_L)$.
Suppose that $\tau$ is the principal series tame inertial type such that $\sigma(\tau) \cong \theta(\chi)_E$.
Recall that $U_p$ is the element of $\mH(\theta(\chi)_E)$ associated to the $I$-double coset of $\fourmatrix{p}{0}{0}{1}$.
Suppose that $x\in \Spec R^\tau_\rhobar(\lbar E)$ gives a Galois representation $\rho_x:G_L \ra \GL_2(\lbar E)$.
We have that $\mathrm{WD}(\rho_x)^{F\text{-ss}}$ is of the form $(\tld{\chi}_1 \oplus \tld{\chi}_2) \circ \mathrm{Art}_L^{-1}$ (and $N=0$) where $\tld{\chi}_j: L^\times \ra \lbar E^\times$ is an extension of $\chi_j$ for $j =1$ and $2$. 
Let $\pi_x$ be $r_p^{-1}(\mathrm{WD}(\rho_x)^{F\text{-ss}})$ so that $\pi_x$ is isomorphic to $\Ind_{B(L)}^{\GL_2(L)} \tld{\chi}_1 \otimes \tld{\chi}_2 |\cdot|^{-1}$.
As in the proof of Lemma \ref{U_peigenvalues}, $U_p$ acts on $\pi_x^{I,\chi_1}$ by $q\tld{\chi}_1(p)$.
\cite[Theorem 4.1]{CEGGPS} implies that $\eta(U_p)$ at $x$ is $p^f$ times the $\Art_L(p)$-eigenvalue on the $\chi_1$-isotypic line.

Let $\eta'$ be the composition
\[
\mH(\sigma) \overset{\eta}{\ra} R^{\tau}_\rhobar[p^{-1}] \overset{\kappa^\tau}{\ra} R^{\tau,\square}_{\overline{\fM}^\tau}[p^{-1}].
\]

\begin{pp}\label{pp:etaup}
We have
\begin{equation}\label{eqn:etaup}
\eta'(U_p) = \prod_j p(x^{(j)}_{s_{j+1}^{-1}(1)s_{j+1}^{-1}(1)})^{-1} \pmod{v}.
\end{equation}
where $A^{\sigma, (j)}=(x^{(j)}_{ik})_{ik}$ gives $\fM^\tau$ as in \S \ref{sec:defk}.
\end{pp}
\begin{proof}
Let $\fM$ correspond to an $\overline{E}$-point of $\Spec R^{\tau,\square}_{\overline{\fM}^\tau}[p^{-1}]$ and set
\[
\rho = \mathbb{V}^*_\dd(\fM\otimes_{\mO_L[\![u]\!]} \mO_{\mE,K}).
\]
By Proposition \ref{pp:dpst} and Corollaries \ref{cor:frob} and \ref{cor:frob'}, $\varphi$ acts on the $\chi_1^{-1}$-isotypic line of $\fM/u \cong D_{\pst}(\rho^\vee)$ by $\prod_j x^{(j)}_{s_{j+1}^{-1}(1)s_{j+1}^{-1}(1)} \pmod{v}$. 
By the definition of $\mathrm{WD}(-)$ in \cite{fontaine}, $\Art_L(p)$ acts on the $\chi_1^{-1}$-isotypic line in $\mathrm{WD}(\rho^\vee)$ by $\prod_j x^{(j)}_{s_{j+1}^{-1}(1)s_{j+1}^{-1}(1)} \pmod{v}$. 
Then since $\Art_L(p)$ acts on the $\chi_1$-isotypic line in $\mathrm{WD}(\rho)$ by $\prod_j (x^{(j)}_{s_{j+1}^{-1}(1)s_{j+1}^{-1}(1)})^{-1} \pmod{v}$, $\eta'(U_p) = \prod_j p(x^{(j)}_{s_{j+1}^{-1}(1)s_{j+1}^{-1}(1)})^{-1} \pmod{v}$ for any $\overline{E}$-point of $\Spec R^{\tau,\square}_{\overline{\fM}^\tau}[p^{-1}]$.
The result now follows from the fact that $R^{\tau,\square}_{\overline{\fM}^\tau}[p^{-1}]$ is a reduced Jacobson ring whose maximal ideals have residue field of finite dimension over~$E$. (These properties are inherited from $R_\rhobar^\tau[p^{-1}]$, for example.)
\end{proof}

\begin{pp} \label{pp:x^j}
With the notation above, $p(x^{(j)}_{s_{j+1}^{-1}(1)s_{j+1}^{-1}(1)})^{-1} \pmod{v}$ is one of
\begin{equation}\label{eqn:diag}
1, \, p, \, -X_j, \, Y_j, \,-( X_j+[x_j]), \, \textrm{ and } p(X_j+[x_j])^{-1}.
\end{equation}
if $j \neq 0$ and the product of one of $(X_\alpha+[\alpha])^{-1}$ and $(X_{\alpha'}+[\alpha'])^{-1}$ and one of (\ref{eqn:diag}) if $j=0$.
In particular, $\eta'(U_p) \in R^{\tau,\square}_{\overline{\fM}^\sigma}$.
\end{pp}
\begin{proof}
This follows by inspection of~(\ref{eqn:deform}).
\end{proof}

\begin{rk}
Note that the last two possibilities $-(X_j+[x_j])$ and $p(X_j+[x_j])^{-1}$ can be removed if $\rhobar$ is semisimple.
\end{rk}

\subsubsection{Comparison with the central inertial type.} \label{sec:central}

\begin{defn}
If $w_0\in W$ is the longest element (nontrivial in all embeddings), then we define the \emph{central inertial type} of~$\rhobar$ to be the tame inertial type $\tau$ such that $\sigma(\tau) = R_{s_{\rhobar}w_0}(\mu-s_{\rhobar}w_0\eta)$.
\end{defn}

By Proposition~\ref{pp:intersect}, we have that $\JH(\overline{\sigma(\tau)})$ coincides with $W(\rhobar^\semis)$, which contains $W(\rhobar)$. 
Hence, whenever~$\sigma = \sigma_\mathrm{sm} = \theta(\chi)_E$ is a principal series tame type as above, and $\tau(\chi)$ is the principal series tame inertial type such that $\sigma(\tau(\chi)) \cong \theta(\chi)_E$, the condition~(\ref{weightcontainment}) holds and there exists a commutative diagram as in Proposition~\ref{pp:compare} (substituting~$\tau(\chi)$ for~$\tau_*$). We are going to define an element of~$R^{\tau,\square}_{\overline{\fM}^\tau}$ that specializes to a normalized version of~$\eta'(U_p)$.

\begin{defn}
Using Proposition \ref{pp:x^j}, let $\tld{U}'_p$ be the product $\prod_j \tld{x}^{(j)} \in R^{\tau,\square}_{\overline{\fM}^\tau}$, where
\begin{equation}\label{eqn:tldx}
\tld{x}^{(j)} = 1, \text{resp.~$1$}, Y_j, -X_j, (X_j+[x_j])^{-1},-(X_j+[x_j])
\end{equation}
if
\[
x^{(j)}_{s_{j+1}^{-1}(1)s_{j+1}^{-1}(1)} = 1, \text{ resp.~$p$}, X_j, -Y_j, X_j+[x_j], -p(X_j+[x_j])^{-1}
\]
if $j \neq 0$ and the product of the appropriate one of $(X_\alpha+[\alpha])^{-1}$ and $(X_{\alpha'}+[\alpha'])^{-1}$ and one of (\ref{eqn:tldx}) if $j=0$.
\end{defn}
\noindent
Again, $\tld{x}^{(j)}$ cannot be $(X_j+[x_j])^{-1}$ or $-(X_j+[x_j])$ if $\rhobar$ is semisimple.
Note that $p$ does not divide $\tld{U}'_p$.

\begin{pp}\label{pp:reduceup}
Let $e(\chi)\in \bN$ be such that $p^{-e(\chi)} \eta'(U_p)\in R^{\sigma,\square}_{\overline{\fM}^\sigma}$, but is not divisible by $p$. Then the image of $\tld{U}'_p$ in $R^{\sigma,\square}_{\overline{\fM}^\sigma}/p$ under the map (\ref{eqn:compare}) is the image of $p^{-e(\chi)} \eta'(U_p)$.
\end{pp}
\begin{proof}
This is a direct consequence of Proposition~\ref{pp:etaup}.
\end{proof}

\subsubsection{Fixed determinant deformation rings.}

In applications, it is often necessary to use deformation rings parameterizing lifts with fixed determinant.
Let $\rhobar$ be as in \S \ref{Galoisreps}.
Let $\psi: G_L \ra \mO_E^\times$ be a character lifting $\det \rhobar \overline{\varepsilon}^{-1}$.
For an inertial type $\tau$, let $R_\rhobar^{\psi,\tau}$ be the quotient of $R_\rhobar^\tau$ parameterizing lifts of $\rhobar$ of Galois type $\tau$, parallel Hodge--Tate weights $(0,1)$, and determinant $\psi \varepsilon$.
This quotient is nonzero if and only if $\psi|_{I_L} \cong \det \tau$.
If $\tau$ is a tame inertial type with $R^\tau_\rhobar$ nonzero, we similarly define $R^{\psi,\tau}_{\overline{\fM}^\tau}$ and $R^{\psi,\tau,\square}_{\overline{\fM}^\tau}$.
We let $I^\psi(\sigma)$ denote the image of $I(\sigma)$ in $R^{\psi,\tau}_{\overline{\fM}^\tau}$ and $R^{\psi,\tau,\square}_{\overline{\fM}^\tau}$.
Let $\tld{U}^{\psi,\prime}_p$ be the image of $\tld{U}_p'$ in $R^{\psi,\tau,\square}_{\overline{\fM}^\tau}$.
Then the fixed determinant analogues of Propositions \ref{pp:compare} and \ref{pp:reduceup} hold compatibly with the natural quotient maps from deformation rings to fixed determinant deformation rings.

\section{Diagrams, patching and groupoids.} \label{sec:diagrams}

\subsection{Some generalities on diagrams.}

We define a \emph{$0$-diagram} to be a triple $(D_0,D_1,r)$ where $D_0$ is a $L^\times\GL_2(\mO_L)$-representation with central character, $D_1$ is an $N(I)$-representation (where $N(I)$ is the normalizer in $\GL_2(L)$ of $I$), and $r: D_1|_{L^\times I} \hookrightarrow D_0|_{L^\times I}$ is an injection that induces an isomorphism $D_1 \cong D_0^{I_1}$ (we have weaker hypotheses on the central character than \cite[\S 9]{BPmodp}).
We will consider only diagrams over $k_E$.
We assume throughout that the $K_1$-action on $D_0$ is trivial. 

Let $\rhobar: G_L \to \GL_2(k_E)$ be generic and let $D_0 = D_0(\rhobar)$ be the $\GL_2(k_L)$-representation over $k_E$ defined in \cite[\S 13]{BPmodp} (where it is defined over $\lbar{\bF}_p$, but descends to $k_L$ and in particular $k_E$ via $\sigma_0$). 
Let $D_1^\circ$ be $D_0^{I_1}$.
Recall that the Jordan--H\"older factors of $D_0$ appear with multiplicity one by \cite[Theorem 1.1(ii)]{BPmodp} so that in particular the isotypic space $D_1^\circ[\chi]$ has dimension at most one.

To give a diagram $(D_0,D_1,\can)$ such that $D_1|_I = D_1^\circ$ and~$\can$ is the inclusion $D_1^\circ \hookrightarrow D_0$ is equivalent to extending the action of $I$ on $D_1^\circ$ to an action of the normalizer $N(I)$ of $I$. 
Such an extension is equivalent to giving a map $\Pi: D_1^\circ \ra D_1^\circ$ taking $D_1^\circ[\chi]$ to $D_1^\circ[\chi^s]$ for all $\chi$, and such that $\Pi^2: D_1^\circ \ra D_1^\circ$ acts by a nonzero scalar.

By definition, a morphism of diagrams consists of a pair~$(\psi_0, \psi_1)$ of morphisms, equivariant for~$L^\times \GL_2(\mO_L)$ and~$N(I)$ respectively, that commute with the inclusion~$r$.
It follows that every diagram is isomorphic to one for which~$D_1 = D_1^\circ$ and~$r = \can$.
We introduce an invariant of a diagram that classifies it up to isomorphism and will be accessible to computation.

\begin{defn}\label{defn:R}
Let $R: D_0^{I_1} \ra (\soc_K D_0)^{I_1}$ be the unique map so that if $D_0^{I_1}[\chi]$ is nonzero then $R|_{D_0^{I_1}[\chi]}$ is nonzero and given by $S_{i(\chi)}$ for some $0 \leq i(\chi) \leq q-1$, except when~$\chi$ appears in~$(\soc_K D_0)^{I_1}$, in which case $R|_{(\soc_K D_0)^{I_1}[\chi]}$ is the identity. 
(The existence and uniqueness of $R$ follow from~\cite[Lemma~2.7]{BPmodp}, compare also Lemma~\ref{basistype}.) 
Given~$\chi$, we write $R\chi$ for the character such that $R(D_0^{I_1}[\chi]) = (\soc_K D_0)^{I_1}[R\chi]$.
\end{defn}

\begin{rk}\label{welldefinedinteger}
In the previous definition, the operator~$S_{i(\chi)}$ is unique whenever~$\chi$ is not contained in~$(\soc_K D_0)^{I_1}$. This is because in most cases $R\chi$ has multiplicity one in the $\GL_2(\mO_L)$-representation generated by~$\chi$, and the only exception is when $R\chi = \chi$ or $R\chi = \chi^s$. 
The first case is excluded since we are assuming~$(\soc_K D_0)^{I_1}[\chi] = 0$. 
In the second case, only $S_0$ preserves $I_1$-invariants. 
So we can attach a uniquely determined integer~$i(\chi)$ to any character~$\chi \in D_0^{I_1}$ not appearing in $(\soc_K D_0)^{I_1}$. 
This integer has appeared before: relative to the tame principal series type $\Ind_I^K[\chi]$, it is what we called $i(\chi, R\chi)$ in Lemma~\ref{basistype}.
To shorten notation, we will continue writing~$i(\chi)$ for this integer, and when~$\chi = R\chi$ we will write~$S_{i(\chi)}$ for the identity map.
\end{rk}

\begin{defn}
We write~$\mathcal{G}$ for the groupoid with an object~$\bx_\xi$ for each character such that $(\soc_K D_0)^{I_1}[\xi]$ is nonzero, and morphisms freely generated by $g_\chi: \bx_{R\chi} \to \bx_{R\chi^s}$ for each nonzero $D_ 0^{I_1}[\chi]$.
\end{defn}

Let $D = (D_0,D_1,\can)$ be a diagram such that $D_1|_I = D_1^\circ$. 
Then we can construct a $1$-dimensional representation (i.e.~a character) $\chi_D$ of $\mathcal{G}$ as follows: the objects $\bx_\xi$ go to $(\soc_K D_0)^{I_1}[\xi]$ and the morphisms $g_\chi$ go to maps 
\[
(\soc_K D_0)^{I_1}[R\chi] \ra (\soc_K D_0)^{I_1}[R\chi^s] 
\]
defined by $g_\chi(R(v)) = R(\Pi v)$.

\begin{pp}
The diagram $D$ can be recovered from $\chi_D$.
\end{pp}
\begin{proof}
This is clear from the fact that $R: D_1^\circ[\chi] \ra D_1^\circ[R\chi]$ is an isomorphism for all $\chi$, hence~$\Pi$ is the only map that makes all the diagrams
\[
\begin{tikzcd}
D_1^\circ[\chi] \arrow{r}{\Pi}\arrow{d}{R} & D_1^\circ[\chi^s] \arrow{d}{R}\\
(\soc_K D_0)^{I_1}[R\chi] \arrow{r}{g_\chi} & (\soc_K D_0)^{I_1}[R\chi^s].
\end{tikzcd}
\]
commute.
\end{proof}

From a representation of a groupoid $\mG$, one obtains by restriction a representation of the group $\mG_\bx$ of automorphisms of any object~$\bx$ of~$\mG$. 
The following result is standard.

\begin{pp}\label{pp:groupoidrep}
Suppose that $X$ is a set of representatives for $\pi_0(\mG)$. 
The product of restriction functors
\[
\Rep_{\mG} \ra \prod_{\bx \in X} \Rep_{\mG_\bx}
\]
is an equivalence of categories.
\end{pp}
\begin{proof}
Consider the category~$G$ whose object set is~$X$ and $\Hom_G(\bx, \bx) = \mG_{\bx}$ for all~$\bx \in X$, while $\Hom_G(\bx_1, \bx_2)$ is empty if $\bx_1 \ne \bx_2$.
The inclusion functor $G \to \mG$ is fully faithful and essentially surjective, hence it is an equivalence of categories.
The proposition follows since a representation of~$\mG$ is a functor from~$\mG$ to vector spaces.
\end{proof}

Let $\chi_0, \ldots, \chi_{k-1}$ be characters such that $R\chi_i^s = R\chi_{i+1}$ for each $i$ (where we set $\chi_k$ to be $\chi_0$). Then there is a map 
\[
g_{\chi_{k-1}} \circ \cdots \circ  g_{\chi_0}: (\soc_K D_0)^{I_1}[R\chi_0] \ra (\soc_K D_0)^{I_1}[R\chi_0].
\]
By Proposition \ref{pp:groupoidrep}, if one determines the constants giving all such maps, one determines the isomorphism class of $D$.

\subsection{Diamond diagrams from patched modules.}\label{sec:diamond}
There is a special class of diagrams, called Diamond diagrams, attached to many suitable globalizations of~$\rhobar$: for example, representations of~$\Gal(\overline{F}/F)$, for $F/\bQ$ a totally real field, that are modular for a totally definite (or split at a precisely one real place) quaternion algebra over~$F$, and that recover~$\rhobar$ by restricting to decomposition groups (see \S \ref{sec:modpaut}). 
In \S \ref{sec:diagrams} and \ref{sec:constants}, we will work in the more general context of $\mO_E[\GL_2(L)]$-modules with an arithmetic action, in the sense of~\cite[\S 5.2]{GN} (see also \cite[\S 3.1]{CEGGPS2}), that recover these examples through Taylor--Wiles--Kisin patching (see \S \ref{sec:TW}). 
In this section, we recall the definition of $\mO_E[\GL_2(L)]$-modules with an arithmetic action and state our first main theorem.

\paragraph{}
Let $\rhobar: G_L \ra \GL_2(k_E)$ be a local Galois representation.
Recall from \S \ref{sec:defrings} the following notation.
The ring $R_{\rhobar}$ is the framed unrestricted $\mO_E$-deformation ring for $\rhobar$.
If $\psi: G_L \ra \mO_E^\times$ is a lift of $\det \rhobar\overline{\varepsilon}^{-1}$, then $R^\psi_\rhobar$ is the quotient of $R_\rhobar$ parameterizing lifts of $\rhobar$ with determinant $\psi\varepsilon$. 
We will assume throughout that $\psi$ factors through $W(k_E)^\times$.
For an inertial type $\tau$, recall that $R_{\rhobar}^{\tau}$ is the quotient of $R_{\rhobar}$ corresponding to potentially crystalline lifts of $\rhobar$ of Galois type $\tau$ and Hodge--Tate weight $(0,1)$. 
Fix a natural number $h$ and let $R_\infty$ be $R_{\rhobar}\widehat{\otimes}_{\mO_E} \mO_E[\![x_1,x_2,\ldots, x_h]\!]$ and $R_\infty^\psi$ be $R^\psi_{\rhobar}\widehat{\otimes}_{\mO_E} \mO_E[\![x_1,x_2,\ldots, x_h]\!]$.
Let $R_\infty(\tau)$ be $R_{\rhobar}^{\tau} \otimes_{R_{\rhobar}} R_\infty$ and  $R_\infty^\psi(\tau)$ be $R_{\rhobar}^{\psi,\tau} \otimes_{R_{\rhobar}^\psi} R_\infty^\psi$.

We consider the case where the determinant is not fixed (the necessary modifications for the fixed determinant case are straightforward).
By an $\mO_E[\GL_2(L)]$-module $M_\infty$ with an arithmetic action of $R_\infty$ we will mean a nonzero $\mO_E$-module with commuting actions of~$R_\infty$ and a right action of~$\GL_2(L)$ satisfying the following properties.
\begin{enumerate}
\item $M_\infty$ is a finitely generated right $R_\infty[\![\GL_2(\mO_L)]\!]$-module.
\item $M_\infty$ is projective in the category of pseudocompact right $\mO_E[\![\GL_2(\mO_L)]\!]$-modules.
\end{enumerate}
For a finite $\mO_E[\![\GL_2(\mO_L)]\!]$-module $\sigma$, we let $M_\infty(\sigma)$ be 
\[
M_\infty\otimes_{\mO_E[\![\GL_2(\mO_L)]\!]} \sigma.
\]
\begin{enumerate}
\setcounter{enumi}{2}
\item If $\theta$ is an $\mO_E$-lattice in a locally algebraic type $\sigma(\tau)$, then $M_\infty(\theta)$ is maximal Cohen--Macaulay over $R_\infty(\tau)$; and moreover
\item the action of $\mathcal{H}(\theta)$ on $M_\infty(\theta)[1/p]$ is given by the composite 
\[
\mathcal{H}(\theta) \overset{\eta}{\ra} R_{\rhobar}^{\tau}[1/p] \ra R_\infty(\tau)[1/p]
\]
with $\eta$ as in \S \ref{sec:LLC}. 
\end{enumerate}
In the fixed determinant case, we furthermore require that
\begin{enumerate}
\setcounter{enumi}{4}
\item the center $L^\times$ of $\GL_2(L)$ acts on $M_\infty$ through the fixed determinant character $\psi: L^\times \ra \mO_E^\times$.
\end{enumerate}
Then in particular, $M_\infty(-)$ is a (fixed determinant) patching functor in the sense of \cite[Definition 6.1.3]{EGS}. 
This gives access to results in \cite[\S 8-10]{EGS}.
For example, if $\sigma$ is a Serre weight, then $M_\infty(\sigma) \neq 0$ if and only if $\sigma \in W(\rhobar)$ by \cite[Theorem 9.1.1]{EGS}.
We say further that an $\mO_E[\GL_2(L)]$-module $M_\infty$ with an arithmetic action of $R_\infty$ is \emph{minimal} if
\begin{enumerate}
\setcounter{enumi}{5}
\item for $\theta$ as above, $\dim_{\lbar E}M_\infty(\theta) \otimes_{R_\infty(\tau)} \lbar E$ is at most one for any $\lbar E$-point of $R_\infty(\tau)$.
\end{enumerate}
In this case, $M_\infty(-)$ is a \emph{minimal} (fixed determinant) patching functor in the sense of \cite[\S 6.1]{EGS}. 

Let us elaborate on the action of $\mH(\theta)$ on $M_\infty(\theta)[1/p]$.
In fact, $\mH(\theta)$ acts on $M_\infty(\theta)$ by the identification of the latter with $\Hom_{\GL_2(\mO_L)}(\theta, M_\infty^\vee)^\vee$ in the following lemma and Frobenius reciprocity.
Note that $M_\infty^\vee$ inherits a natural left action of $\GL_2(L)$. 
\begin{lemma}\label{duals}
Let~$\theta$ be a finite $\mO_E[\![\GL_2(\mO_L)]\!]$-module. Then $M_\infty(\theta)$ is Pontrjagin dual to $\Hom_{\GL_2(\mO_L)}(\theta, M_\infty^\vee)$.
\end{lemma}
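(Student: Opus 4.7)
The plan is to deduce the lemma from the tensor–Hom adjunction applied in the category of pseudocompact $\mO_E[\![K]\!]$-modules, where Pontrjagin duality $(-)^\vee = \Hom_{\mO_E}^{\mathrm{cts}}(-, E/\mO_E)$ is an anti-equivalence between pseudocompact and discrete $\mO_E$-modules.

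First I would observe that the abstract adjunction
\[
\Hom_{\mO_E}^{\mathrm{cts}}(\theta \otimes_{\mO_E[\![K]\!]} M_\infty,\, E/\mO_E) \;\cong\; \Hom_K(\theta,\, \Hom_{\mO_E}^{\mathrm{cts}}(M_\infty, E/\mO_E))
\]
applied with the source interpreted correctly (continuous/pseudocompact tensor product on the left, discrete target $E/\mO_E$ on the right) gives an isomorphism
\[
M_\infty(\theta)^\vee \;\cong\; \Hom_K(\theta, M_\infty^\vee).
\]
Here $\theta$ is finite, so there is no distinction between the abstract and the completed tensor product; and $M_\infty$ is pseudocompact $\mO_E[\![K]\!]$-projective, so $M_\infty(\theta)$ is again pseudocompact.

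Next I would Pontrjagin-dualize once more. Because $M_\infty(\theta)$ is pseudocompact, the canonical double-dual map $M_\infty(\theta) \to M_\infty(\theta)^{\vee\vee}$ is an isomorphism, and combining with the previous display yields
\[
M_\infty(\theta) \;\cong\; \Hom_K(\theta, M_\infty^\vee)^\vee,
\]
which is exactly the assertion.

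The only point requiring care, and thus the main (minor) obstacle, is checking that the tensor–Hom adjunction is valid in the pseudocompact setting for our specific $M_\infty$ and for a \emph{finite} $\mO_E[\![K]\!]$-module $\theta$: one needs that $\theta \otimes_{\mO_E[\![K]\!]} M_\infty$ agrees with the completed tensor product (automatic since $\theta$ is finite), and that continuous $\mO_E$-linear maps to $E/\mO_E$ from a completed tensor product are computed by the usual adjunction. Both are standard in the theory of pseudocompact modules as used in, e.g., Emerton's work. No further input specific to the arithmetic action is needed—this is a purely formal consequence of the projectivity/pseudocompactness axiom on $M_\infty$.
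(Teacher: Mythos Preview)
Your argument is correct and is essentially the same approach as the paper's: the paper simply cites \cite[Lemma B.3]{GN}, which is precisely the tensor--Hom adjunction in the pseudocompact setting that you have spelled out. Your sketch just unpacks what that reference proves.
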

\begin{proof}
This follows from \cite[Lemma B.3]{GN}.
\end{proof}

The following theorem, which essentially follows from \cite[Theorem~10.1.1]{EGS}, will be fundamental to our method.

\begin{thm}\label{thm:free}
Suppose that $M_\infty$ is a minimal $\mO_E[\GL_2(L)]$-module with an arithmetic action of $R_\infty$.
Let $\tau$ be a nonscalar tame inertial type such that $R_\rhobar^\tau$ is nonzero.
If~$\sigma(\tau)^\circ$ is an $\mO_E$-lattice in $\sigma(\tau)$ with irreducible cosocle, then~$M_\infty(\sigma(\tau)^\circ)$ is free of rank one over~$R_\infty(\tau)$.
If $\psi$ factors through $W(k_E)^\times$, then the analogous fixed determinant result holds as well.
\end{thm}
\begin{proof}
If $E$ is an unramified extension of $\bQ_p$, then this follows from \cite[Theorem~10.1.1]{EGS}.
We begin with the following lemma.
Let $R_{\infty,0}$ and $R_{\infty,0}(\tau)$ be defined similarly to $R_\infty$ and $R_\infty(\tau)$, respectively, with the Witt vectors $W(k_E)$ in place of $\mO_E$. 
\begin{lemma}\label{lem:restscal}
Suppose that $M_\infty$ is an $\mO_E[\GL_2(L)]$-module with an arithmetic action of $R_\infty$.
Then the inclusion $W(k_E) \subset \mO_E$ makes $M_\infty$ a $W(k_E)[\GL_2(L)]$-module with an arithmetic action of $R_{\infty,0}$.
If $\psi$ factors through $W(k_E)^\times$, then the analogous fixed determinant result holds as well.
\end{lemma}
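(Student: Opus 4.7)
The plan is to verify each of the axioms of arithmetic action with coefficient ring $W(k_E)$ in turn, exploiting systematically that $\mO_E$ is a free $W(k_E)$-module of finite rank. Concretely this yields a finite free inclusion $W(k_E)[\![\GL_2(\mO_L)]\!] \subseteq \mO_E[\![\GL_2(\mO_L)]\!]$ and a finite ring extension $R_{\infty,0} \to R_\infty$ with $R_\infty \cong R_{\infty,0} \widehat{\otimes}_{W(k_E)} \mO_E$, as well as their restricted potentially crystalline variants $R_{\infty,0}(\tau,\lambda) \to R_\infty(\tau,\lambda)$ whenever the type and cocharacter are defined over $W(k_E)[1/p]$, which is precisely when axiom~(3) needs to be tested for the $W(k_E)$-structure.

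Axioms~(1) and~(2) will be essentially formal: finite generation over $R_{\infty,0}[\![\GL_2(\mO_L)]\!]$ is inherited from finite generation over $R_\infty[\![\GL_2(\mO_L)]\!]$, and a direct summand of a (pseudocompact) free $\mO_E[\![\GL_2(\mO_L)]\!]$-module is automatically a direct summand of a (pseudocompact) free $W(k_E)[\![\GL_2(\mO_L)]\!]$-module, since the former ring is free of finite rank over the latter. For axiom~(3), given a $W(k_E)$-lattice $\theta$ in a locally algebraic $W(k_E)[1/p]$-type $\sigma(\tau) \otimes W_\lambda$, I would set $\theta' := \theta \otimes_{W(k_E)} \mO_E$, an $\mO_E$-lattice in the scalar extension to $E$, and invoke the identification $\mO_E[\![\GL_2(\mO_L)]\!] \cong W(k_E)[\![\GL_2(\mO_L)]\!] \otimes_{W(k_E)} \mO_E$ to rewrite
\[
M_\infty(\theta) \;=\; \theta \otimes_{W(k_E)[\![\GL_2(\mO_L)]\!]} M_\infty \;\cong\; \theta' \otimes_{\mO_E[\![\GL_2(\mO_L)]\!]} M_\infty \;=\; M_\infty(\theta').
\]
The MCM property of $M_\infty(\theta')$ over $R_\infty(\tau,\lambda)$ then transports to MCM over $R_{\infty,0}(\tau,\lambda)$ because finite ring extensions of local Noetherian rings preserve both Krull dimension and module depth.

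Axiom~(4) will follow analogously from the base change $\mH(\theta') \cong \mH(\theta) \otimes_{W(k_E)} \mO_E$ together with the compatibility of the interpolation map $\eta$ of \cite[Theorem~4.1]{CEGGPS} with scalar extension, which forces the $\mH(\theta)$-action on $M_\infty(\theta) = M_\infty(\theta')$ to factor through the composite $\mH(\theta) \to R_{\infty,0}(\tau,\lambda)[1/p]$. The fixed determinant variant will reduce to the same arguments once one notes that, since $\psi$ factors through $W(k_E)^\times$ by hypothesis, axiom~(5) is inherited automatically, while the other axioms are handled as above using $R_\infty^\psi \cong R_{\infty,0}^\psi \widehat{\otimes}_{W(k_E)} \mO_E$. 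The main obstacle I anticipate is precisely the coefficient compatibility of $\eta$ in axiom~(4): this is either transparent from the construction in \cite{CEGGPS}, or, failing that, can be verified on the Jacobson-dense locus of closed points of $\Spec R_\rhobar^{\tau,\lambda}[1/p]$, where by construction the Hecke action is pinned down by classical local Langlands and is therefore insensitive to the coefficient ring.
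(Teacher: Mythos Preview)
Your proposal is correct and essentially matches the paper's approach. The paper dismisses axioms (1), (2), (4), (5) as ``straightforward'' and focuses solely on item~(3), where it writes out exactly the depth-and-dimension transfer along the finite free extension $R_{\infty,0}\to R_\infty$ that you invoke (citing EGA for the depth formula under flat base change rather than stating the general principle); your identification $M_\infty(\theta)\cong M_\infty(\theta')$ via $\theta'=\theta\otimes_{W(k_E)}\mO_E$ is the implicit first step there.
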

\begin{proof}
This is straightforward with the exception of item 3.
Let $\fm_0\subset R_{\infty,0}$ be the maximal ideal, and notice that~$R_{\infty}$ is finite free over~$R_{\infty, 0}$.
Then
\begin{align*}
\depth_{R_{\infty,0}} M_\infty(\theta) &= \depth_{R_{\infty,0}} M_\infty(\theta)+\depth_{k_E} R_\infty/\fm_0 R_\infty \\
&= \depth_{R_\infty} M_\infty(\theta) \otimes_{R_{\infty,0}} R_\infty\\
&= \depth_{R_\infty} M_\infty(\theta)\\
&= \dim \Supp_{R_\infty} M_\infty(\theta)\\
&= \dim \Supp_{R_{\infty,0}} M_\infty(\theta)
\end{align*}
where the first equality is true since~$\dim k_E = 0$, the second follows from \cite[Proposition 6.3.1]{EGA}, and the third and fifth follow from the fact that $R_\infty$ is finite free over $R_{\infty,0}$.
\end{proof}

The only place where the proof of \cite[Theorem~10.1.1]{EGS} requires that $E$ is an unramified extension of $\bQ_p$ is in the proof of \cite[Lemma 10.1.12]{EGS} ($\mO_E[\![X,Y]\!]/(XY-p)$ is a regular ring if and only if $E$ is unramified).
However, using Lemma \ref{lem:restscal}, the same proof shows that $M_\infty(\sigma(\tau)^\circ)$ is free over $R_{\infty,0}(\tau)$.
Now we deduce that
\[
M_\infty(\sigma(\tau)^\circ) \otimes_{R_{\infty, 0}(\tau)} R_\infty(\tau) \cong M_\infty(\sigma(\tau)^\circ) \otimes_{R_\infty(\tau)} \left ( R_\infty(\tau) \otimes_{R_{\infty, 0}(\tau)} R_\infty(\tau) \right )
\]
is $R_\infty(\tau)$-free.
Since $M_\infty(\sigma(\tau)^\circ)$ is a direct summand of the right-hand side (as~$R_\infty(\tau)$-modules), it is finite projective over~$R_\infty(\tau)$, hence finite free since~$R_\infty(\tau)$ is a local ring.
The rest of the proof of \cite[Theorem~10.1.1]{EGS}, where key computations are done modulo $\varpi_E$, works without modification.
\end{proof}

Let $\mathfrak{m}$ denote the maximal ideal of $R_\infty$.

\begin{defn}\label{defn:piglob}
We let $\pi(\rhobar^\vee)$ be the $\GL_2(L)$-representation $M_\infty^\vee[\mathfrak{m}]$ (where $(-)^\vee$ denotes the Pontrjagin dual). 
Let $D_0$ be $\pi(\rhobar^\vee)^{K_1}$ and $D_1$ be $\pi(\rhobar^\vee)^{I_1}$ and set $D = \mathcal{D}(\pi(\rhobar^\vee)) = (D_0,D_1,\can)$ for the inclusion $\can: D_1 \to D_0$. 
\end{defn}

\begin{rk}
We denote $M_\infty^\vee[\mathfrak{m}]$ by $\pi(\rhobar^\vee)$ and not $\pi(\rhobar)$ to emphasize that our conventions in \S \ref{sec:global} are dual to those in \cite[\S 9]{Breuilmodp}.
\end{rk}

We now suppose that $\rhobar:G_L\ra \GL_2(k_E)$ is generic as in \S \ref{Galoisreps}.
Then $D_0$ is isomorphic to $D_0(\rhobar)$ by (the proof of) \cite[Corollary 5.2]{Le}.

\begin{thm}\label{patchingdiagram}
If $\rhobar$ is generic, then the diagram~$D$ is independent of the choice of~$M_\infty$ satisfying properties~$(1)$ to~$(6)$ above.
\end{thm}

The proof of Theorem~\ref{patchingdiagram} will occupy the rest of \S 4, and will proceed by determining the compositions~$g_{\chi_{k-1}}\circ\cdots\circ g_{\chi_0}$ for every cycle of characters $\chi_0, \ldots, \chi_{k-1}$ such that $R\chi_i^s = R\chi_{i+1}$ for every $i\in \bZ/k$. 

\begin{example}
We describe the groupoid~$\mG$ when~$L = \bQ_{p^2}$ and $\rhobar$ is semisimple generic.
There are always four weights in the socle, parametrized by the subsets $J \subseteq \{0, 1\}$ through a map $J \mapsto \sigma(\lambda_J, \rhobar)$ (see sections~\ref{formalweights} and~\ref{weightdefinition}).
Hence there are four objects in the groupoid~$\mG$, corresponding to the decomposition 
\[
(\soc_K D_0(\rhobar))^{I_1} = \chi_1 \oplus \chi_2 \oplus \chi_3 \oplus \chi_4.
\]

If~$\rhobar$ is irreducible, by~\cite[Proposition~14.7]{BPmodp} the representation~$D_0(\rhobar)^{I_1}$ has dimension~$8$, and the morphisms of~$\mG$ are generated by
\begin{align*}
g_{\chi_i} &: (\soc_K D_0(\rhobar))^{I_1}[\chi_i] \ra (\soc_K D_0(\rhobar))^{I_1}[\chi_{i+1}] \\ g_{\chi_i^s} &: (\soc_K D_0(\rhobar))^{I_1}[\chi_{i+1}] \ra (\soc_K D_0(\rhobar))^{I_1}[\chi_i]
\end{align*}
(with some ordering of the characters $\chi_i$).
The diagram~$D$ is determined by the central character and a single parameter in~$k_E^\times$, corresponding to the composition $g_{\chi_3} \circ g_{\chi_2} \circ g_{\chi_1} \circ g_{\chi_0}$.

For a split reducible $\rhobar$, the dimension of~$D_0(\rhobar)^{I_1}$ is $10$, and hence there will be $10$ generating morphisms for~$\mG$.
In this case, the diagram~$D$ is determined by the central character and $5$ parameters in~$k_E^\times$ (though really one of these is determined by the central character).
\end{example}

\subsection{Lifting~$g_\chi$.}
The first step in the proof of Theorem~\ref{patchingdiagram} is to construct specific lifts of the maps~$g_\chi$ (or rather, their duals) to patched modules in characteristic zero, via composition of saturated inclusions and~$U_p$-operators. In this section, we fix a character $\chi: I/I_1 \to \mO_E^\times$ that appears in~$D_0(\rhobar)^{I_1}$.
If~$\xi$ is another character, recall from Definition \ref{defn:PS} the definitions of $\theta(\chi)$ and $\theta(\chi)^\xi$ if $\sigma(\xi)$ is a Jordan--H\"older factor of $\lbar \theta(\chi)$.

\begin{lemma}\label{generators}
The vector~$S_0\varphi^{\chi^s}$ generates~$\theta(\chi^s)^\chi$ over~$\mO_E[\GL_2(k_L)]$. 
If~$\chi \not = R\chi^s$, then the vector $S_{i(\chi^s)} \varphi^{\chi^s}$ generates~$\theta(\chi^s)^{R\chi^s}$ over~$\mO_E[\GL_2(k_L)]$. (Compare Remark~\ref{welldefinedinteger} for the definition of~$i(\chi^s)$.)
\end{lemma}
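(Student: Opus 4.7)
The plan is to reduce both claims to Lemma~\ref{generators1}. The first assertion is exactly the first sentence of that lemma (applied with $\chi^s$ in place of~$\chi$), so nothing new is needed there: the vector $S_0\varphi^{\chi^s}$ generates~$\theta(\chi^s)^\chi$ by direct quotation.

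For the second assertion, I would apply the second part of Lemma~\ref{generators1} with $\xi = R\chi^s$. That lemma requires $\xi \notin \{\chi, \chi^s\}$. The inequality $R\chi^s \ne \chi$ is the hypothesis of the present lemma, while $R\chi^s \ne \chi^s$ is forced by the very fact that the integer $i(\chi^s)$ appears in the statement: by Definition~\ref{defn:R}, $i(\chi^s)$ is defined only when~$\chi^s$ does not already lie in $(\soc_K D_0)^{I_1}$, and since~$R$ takes values in the socle this immediately gives $R\chi^s \ne \chi^s$.

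With both conditions in hand, Lemma~\ref{generators1} produces an integer $i(\chi^s, R\chi^s)$ with $0 \le i(\chi^s,R\chi^s) \le q-1$ such that $S_{i(\chi^s,R\chi^s)}\varphi^{\chi^s}$ generates~$\theta(\chi^s)^{R\chi^s}$ over $\mO_E[\GL_2(k_L)]$. It then remains to identify this integer with $i(\chi^s)$ from Definition~\ref{defn:R}. Using Lemma~\ref{shift}, the vector $S_{i(\chi^s)}\varphi^{\chi^s}$ is an $H$-eigenvector of character $\chi\,\alpha^{-i(\chi^s)}$, which by the definition of~$R$ equals~$R\chi^s$; on the other hand $S_{i(\chi^s,R\chi^s)}\varphi^{\chi^s}$ carries the same eigencharacter. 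Since $R\chi^s$ occurs with multiplicity one in $\theta(\chi^s)$, the two shifts must agree modulo~$q-1$, which yields the desired equality and completes the proof.

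The main obstacle is purely notational: one must keep track of the two a priori distinct uses of the symbol~$i(-)$, one from Definition~\ref{defn:R} and one from Lemma~\ref{basistype}, and verify they coincide in this configuration of $H$-eigencharacters. Once that identification is established, both statements are direct consequences of Lemma~\ref{generators1} and require no new representation-theoretic input.
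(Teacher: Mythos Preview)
Your argument is correct and follows the same route as the paper: both assertions are reduced to Lemma~\ref{generators1}, and the identification of $i(\chi^s)$ with $i(\chi^s,R\chi^s)$ via $H$-eigencharacter comparison is exactly what is needed.

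The one point of divergence is the boundary case $\chi^s = R\chi^s$. You dismiss it by arguing that $i(\chi^s)$ is simply undefined there, so the statement cannot be about that case. The paper instead treats it explicitly, adopting the convention that $S_{i(\chi^s)}$ is the identity when $\chi^s$ already lies in $(\soc_K D_0)^{I_1}$; then $\theta(\chi^s)^{R\chi^s}=\theta(\chi^s)$ and $\varphi^{\chi^s}$ tautologically generates it. Both readings are consistent with Definition~\ref{defn:R}, but since later formulas (e.g.\ the definition of $\pr_{\chi^s}$) are written uniformly in terms of $S_{i(\chi^s)}$, the paper's convention is the intended one. It would strengthen your write-up to note this trivial case rather than exclude it.
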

\begin{proof}
This follows from Lemma \ref{generators1}, since by Remark~\ref{welldefinedinteger} we have $i(\chi^s) = i(\chi^s, R\chi^s)$.
\end{proof}

We are going to need an isomorphism
\[
\Pi: M_\infty(\theta(\chi^s)) \to M_\infty(\theta(\chi))
\]
arising as follows.
By Lemma~\ref{duals}, it is equivalent to construct an isomorphism 
\[
\Hom_{\mO_E[\GL_2(\mO_L)]}(\theta(\chi), M_\infty^\vee) \to \Hom_{\mO_E[\GL_2(\mO_L)]}(\theta(\chi^s), M_\infty^\vee).
\]
But by Frobenius reciprocity, these $\Hom$-spaces are isomorphic to $(M_\infty^\vee)^{I_1}[\chi]$ and $(M_\infty^\vee)^{I_1}[\chi^s]$, respectively.
Hence multiplication by the matrix~$\Pi$ induces an isomorphism $(M_\infty^\vee)^{I_1}[\chi]\ra (M_\infty^\vee)^{I_1}[\chi^s]$.
We summarize this construction in the following lemma, for ease of reference.

\begin{lemma}\label{lemma:piiso}
Multiplication by~$\Pi$ on~$M_\infty^\vee$ induces an isomorphism $\Pi: M_\infty(\theta(\chi^s)) \to M_\infty(\theta(\chi))$.
\end{lemma}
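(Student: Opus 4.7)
The plan is to reduce the claim to a statement about isotypic components of $M_\infty^\vee$, via Lemma~\ref{duals} and Frobenius reciprocity. I will first rewrite
\[
M_\infty(\theta(\chi))^\vee \cong \Hom_K(\Ind_I^K \chi, M_\infty^\vee) \cong \Hom_I(\chi, M_\infty^\vee|_I) \cong (M_\infty^\vee)^{I_1}[\chi],
\]
and likewise with $\chi$ replaced by $\chi^s$. Under this identification, the question becomes whether a natural map $(M_\infty^\vee)^{I_1}[\chi^s] \to (M_\infty^\vee)^{I_1}[\chi]$ coming from the $\GL_2(L)$-action (specifically, from $\Pi$) is an isomorphism.

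Next, I will use that $\Pi \in N(I)$ normalizes both $I$ and $I_1$, and that conjugation by $\Pi$ swaps the $H$-characters $\chi$ and $\chi^s$. This implies that left multiplication by $\Pi$ on $M_\infty^\vee$ restricts to linear maps between the isotypic components $(M_\infty^\vee)^{I_1}[\chi^s]$ and $(M_\infty^\vee)^{I_1}[\chi]$. Dualizing and using the canonical description $M_\infty(\theta) = \theta \otimes_{\mO_E[\![K]\!]} M_\infty$, these maps correspond to the map $M_\infty(\theta(\chi^s)) \to M_\infty(\theta(\chi))$ in the statement (the standard compatibility check is just that for $i \in I$ and $m \in M_\infty$, $\Pi(im) = (\Pi i \Pi^{-1})\Pi m$, and $\Pi i \Pi^{-1} \in I$ acts on a character $\chi$ as $\chi^s$, so the passage to the appropriate coinvariants is well defined in both directions).

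Finally, I will verify the bijectivity. The composite of the maps induced by $\Pi$ and $\Pi^{-1}$ is multiplication by $\Pi^2 = p \in L^\times$, an element of the centre of $\GL_2(L)$. The key observation is that $p$ acts on $M_\infty$ as a unit: in the fixed-determinant case this is axiom~(5), and in general it follows because the centre commutes with the entire $\GL_2(L)$-action and hence acts through a character with values in $R_\infty^\times$, determined by the determinant of the universal Galois deformation. Therefore $\Pi$ acts invertibly and the induced maps are isomorphisms.

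I do not anticipate a substantial obstacle; the only care required is bookkeeping with the Pontrjagin duality (which side $\Pi$ acts on and how it interacts with the isotypic decomposition) and the identification of $\chi \otimes_{\mO_E[I]} M_\infty$ with the appropriate twisted $I$-coinvariants.
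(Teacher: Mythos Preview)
Your approach is essentially the same as the paper's: dualize via Lemma~\ref{duals}, apply Frobenius reciprocity to identify the duals with the isotypic components $(M_\infty^\vee)^{I_1}[\chi]$ and $(M_\infty^\vee)^{I_1}[\chi^s]$, and then observe that $\Pi$ interchanges these. The one difference is in how you justify invertibility. The paper simply notes that $\Pi$ is an invertible element of $\GL_2(L)$, hence acts as an automorphism of $M_\infty^\vee$; since conjugation by $\Pi^{-1}$ also swaps the characters, $\Pi^{-1}$ gives the inverse map between isotypic components. Your detour through $\Pi^2 = p$ and the claim that the centre acts through a character valued in $R_\infty^\times$ is unnecessary (and the latter assertion is not obviously established in the paper for the non-fixed-determinant case). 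But this is harmless over-engineering rather than a gap: the argument is correct.
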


We are also going to need the following related construction.
As in Lemma~\ref{generators}, there is an isomorphism $\theta(\chi) \ra \theta(\chi^s)^{\chi}$ sending $\varphi^{\chi}$ to $S_0 \varphi^{\chi^s}$. Precomposing with the isomorphism from Lemma \ref{lemma:piiso}, we get an isomorphism $M_\infty(\theta(\chi^s)^{\chi^s}) \ra M_\infty(\theta(\chi^s)^{\chi})$, which we also denote by $\Pi$.
Now consider a sequence
\[
\theta(\chi^s)^{R\chi} \to \theta(\chi^s)^{\chi} \to \theta(\chi^s)^{R\chi^s} \to \theta(\chi^s)^{\chi^s} = \theta(\chi^s)
\]
of saturated inclusions, as defined in~\S \ref{lattices}.
In this context we have the following commutative diagram, where the unlabelled arrows are induced by the saturated inclusions above.
\begin{equation}\label{diag:Up}
\begin{tikzcd}
M_\infty(\theta(\chi^s)^{R\chi}) \arrow{r} \arrow{rrddd}[swap]{U_p(\chi)} & M_\infty(\theta(\chi^s)^{\chi}) \arrow{r} \arrow{rdd}[swap]{U_p(\chi)} & M_\infty(\theta(\chi^s)^{R\chi^s}) \arrow{d} \\
& & M_\infty(\theta(\chi^s)^{\chi^s}) \arrow{d}{\Pi} \\
& & M_\infty(\theta(\chi^s)^{\chi}) \\
& & M_\infty(\theta(\chi^s)^{R\chi}) \arrow{u}
\end{tikzcd}
\end{equation}

\begin{lemma}\label{lemma:Upchi}
In the diagram~(\ref{diag:Up}) the diagonal arrows are multiplication by the image through~$\eta$ of the $U_p$-element of~$\mH(\theta(\chi)[1/p])$, and the vertical arrows are isomorphisms.
\end{lemma}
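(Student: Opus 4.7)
The plan is to handle the two claims separately, exploiting the identity $U_p = S_0\Pi$ from Lemma~\ref{U_peigenvalues}, the rank-one freeness from Theorem~\ref{thm:free}, and the identification $\theta(\chi) \cong \theta(\chi^s)^\chi$ from Lemma~\ref{generators} (sending $\varphi^\chi \mapsto S_0\varphi^{\chi^s}$).

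For the vertical arrows, the middle one $\Pi\colon M_\infty(\theta(\chi^s)^{\chi^s}) = M_\infty(\theta(\chi^s)) \to M_\infty(\theta(\chi^s)^\chi)$ is precisely, under the identification above, the isomorphism supplied by Lemma~\ref{lemma:piiso}. The upper and lower vertical arrows are induced by saturated inclusions of lattices inside the single principal series type $\theta(\chi^s)[1/p]$, so become isomorphisms after inverting $p$. Both source and target of each such arrow are free of rank one over $R_\infty(\tau(\chi^s))$ by Theorem~\ref{thm:free}, so each is multiplication by a non-zero-divisor $r \in R_\infty(\tau(\chi^s))$. The unit property of $r$ I would read off the lattice reductions: modulo the maximal ideal, the induced map between one-dimensional $k_E$-spaces is nonzero because the saturated inclusion is nonzero on its prescribed cosocle, which via $M_\infty$ produces a nontrivial element supported on the Serre weight $\sigma(R\chi^s)$ (respectively $\sigma(R\chi)$) lying in $W(\rhobar)\cap\JH(\thetabar(\chi^s))$ whose $M_\infty$ is nonzero by \cite[Theorem~9.1.1]{EGS}.

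For the diagonal arrows, by naturality the longer diagonal factors through the shorter (composed with the isomorphism $M_\infty(\theta(\chi^s)^{R\chi}) \to M_\infty(\theta(\chi^s)^\chi)$ just verified), so it suffices to treat the shorter diagonal $M_\infty(\theta(\chi^s)^\chi) \to M_\infty(\theta(\chi^s)^\chi)$. Identifying both copies with $M_\infty(\theta(\chi))$, it becomes an endomorphism $\phi$, which by Frobenius reciprocity and Pontrjagin duality corresponds to an endomorphism $\phi^\vee$ of $(M_\infty^\vee)^{I_1,\chi}$. Chasing a vector $v$ through the right-hand column under the identification $\varphi^\chi \leftrightarrow S_0\varphi^{\chi^s}$ gives $\phi^\vee(v) = S_0\Pi v$, which equals $U_p v$ by Lemma~\ref{U_peigenvalues}. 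Thus $\phi$ is the $\mH(\theta(\chi))$-action of $U_p$ on $M_\infty(\theta(\chi))$, which by the arithmetic-action axiom equals multiplication by $\eta(U_p) \in R_\infty(\tau(\chi))$, as required.

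The main obstacle is the unit claim for the non-$\Pi$ vertical arrows: Theorem~\ref{thm:free} alone gives multiplication by a non-zero-divisor, and promoting this to a unit requires a careful mod-$\varpi_E$ analysis of the saturated inclusions from \S\ref{lattices} and their compatibility with the rank-one generators of $M_\infty$ at the relevant modular Serre weights. The identification of the diagonal, by contrast, is essentially formal once Lemmas~\ref{generators}, \ref{lemma:piiso}, and~\ref{U_peigenvalues} are in hand.
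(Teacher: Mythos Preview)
Your handling of the diagonal arrows is essentially the paper's: both reduce to the identity $S_0\Pi = U_p$ on the $\chi$-isotypic line of $(M_\infty^\vee)^{I_1}$ and then invoke the arithmetic-action axiom (the paper makes the Frobenius-reciprocity compatibility precise via \cite[Lemma~4.17(2)]{CEGGPS}).

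For the vertical arrows your reduction, via Theorem~\ref{thm:free}, to checking that the induced map mod~$\fm$ is nonzero is fine, but the justification you sketch does not establish it. The observation that a saturated inclusion is ``nonzero on its cosocle'' holds for \emph{any} saturated inclusion and says only that $\overline{\iota}$ is a nonzero map of $\GL_2(k_L)$-modules; it carries no direct information about whether $M_\infty(\iota)$ is nonzero mod~$\fm$. In particular there is no natural map from $M_\infty(\theta(\chi^s))$ to $M_\infty(\sigma(R\chi^s))$ through which the image could be detected, since $\sigma(R\chi^s)$ is not the cosocle of $\theta(\chi^s)$.

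The paper closes this gap by a different mechanism: it shows that $M_\infty$ of the cokernel $\theta(\chi^s)/\theta(\chi^s)^{R\chi^s}$ vanishes. The key structural fact is that every nonzero $K$-map $\overline{\theta(\chi^s)}\to D_0(\rhobar)$ factors through the component $D_{0,\sigma(R\chi^s)}(\rhobar)$ and hence has $\sigma(R\chi^s)$ in its image, whereas $\sigma(R\chi^s)$ is not a Jordan--H\"older factor of the cokernel (by construction of the lattice $\theta(\chi^s)^{R\chi^s}$ and multiplicity-freeness). Equivalently, one shows $\JH(\theta(\chi^s)/\theta(\chi^s)^{R\chi^s})\cap W(\rhobar)=\emptyset$, which kills $M_\infty$ of the cokernel by exactness. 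This is exactly the special role of $R\chi^s$ coming from Definition~\ref{defn:R} and the description of $D_0(\rhobar)$ in \cite[\S 14]{BPmodp}; it is not a consequence merely of $\sigma(R\chi^s)\in W(\rhobar)$.
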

\begin{proof}
The inclusions are dual to restrictions, hence the short diagonal arrow is dual to
\begin{align*}
\Hom_{\mO_E[\GL_2(k_L)]}(\theta(\chi^s)^\chi, M_\infty^\vee) &\to \Hom_{\mO_E[\GL_2(k_L)]}(\theta(\chi^s)^\chi, M_\infty^\vee)\\
f &\mapsto f^+|_{\theta(\chi^s)^\chi}
\end{align*}
where~$f^+: \theta(\chi^s) \to M_\infty^\vee$ is characterized by the equation~$f^+(\varphi^{\chi^s}) = \Pi f(S_0 \varphi^{\chi^s})$. 
Then $f^+|_{\theta(\chi^s)^\chi}$ is characterized by the equation
\[
f^+(S_0 \varphi^{\chi^s}) = S_0\Pi f(S_0 \varphi^{\chi^s}) = U_p f(S_0\varphi^{\chi^s}),
\]
(recall from the proof of Lemma~\ref{U_peigenvalues} that $S_0 \Pi = U_p$). 
Since~$f(S_0 \varphi^{\chi^s})$ is an $I/I_1$-eigenvector with eigencharacter~$\chi$, $U_p$ is an element in $\mH(\theta(\chi)[1/p])$. 
The result now follows from \cite[Lemma 4.17(2)]{CEGGPS}.
The long diagonal arrow is the restriction of this arrow (recall that~$\sigma \mapsto M_\infty(\sigma)$ is an exact functor).

For the second claim, we only need to consider the unlabelled vertical arrows in~(\ref{diag:Up}) by Lemma \ref{lemma:piiso}.
These arrows are injective since~$\sigma \mapsto M_\infty(\sigma)$ is an exact functor.
By Nakayama's lemma, it suffices to show that $M_\infty(\theta(\chi^s)^{R\chi^s})/\fm \ra M_\infty(\theta(\chi^s))/\fm$ is surjective (and similarly for~$\chi$).
Dualizing and using that~$M_\infty^\vee[\fm]^{K(1)} \cong D_0(\rhobar)$, it suffices to show that $\Hom_K(\theta(\chi^s)/\theta(\chi^s)^{R\chi^s},D_0(\rhobar))$ is zero.
Any map in $\Hom_K(\theta(\chi^s),D_0(\rhobar))$ factors through $D_{0,\sigma(R\chi^s)}(\rhobar)$ in the notation of \cite[\S 14]{BPmodp}.
The image of a nonzero such map would then contain $\sigma(R\chi^s)$. 
Since $\theta(\chi^s)/\theta(\chi^s)^{R\chi^s}$ does not contain $\sigma(R\chi^s)$ as a Jordan--H\"older factor by construction, we are done.
Alternatively, one could show that $\JH(\theta(\chi^s)/\theta(\chi^s)^{R\chi^s}) \cap W(\rhobar) = \emptyset$, which immediately implies that $M_\infty(\theta(\chi^s)/\theta(\chi^s)^{R\chi^s})$ is zero. 
\end{proof}

We let $U_p(\chi)$ be $\eta(U_p)$ where $U_p$ is the Hecke operator in $\mH(\theta(\chi)[1/p])$.
We denote by $h_\chi: M_\infty(\theta(\chi^s)^{R\chi^s}) \to M_\infty(\theta(\chi^s)^{R\chi})$ the composition of the three vertical arrows (the last one inverted). Its mod~$\fm$ reduction is closely related to~$g_\chi$. To see this, we deduce from Lemma~\ref{generators} that there exist surjections
\begin{align}
\label{definingprI}\pr_{\chi^s}: \theta(\chi^s)^{R\chi^s} &\to \sigma(R\chi^s), \\
\nonumber S_{i(\chi^s)} \varphi^{\chi^s} &\mapsto \varphi^{R\chi^s}\\
\nonumber\\
\label{definingprII}\pr_{\chi}: \theta(\chi^s)^{R\chi} &\to \sigma(R\chi), \\
\nonumber S_{i(\chi)}S_0 \varphi^{\chi^s} &\mapsto \varphi^{R\chi}
\end{align}
where in the second case we use the isomorphism 
\begin{align*}
\theta(\chi^s)^\chi &\to \theta(\chi), \\
S_0\varphi^{\chi^s} &\mapsto \varphi^\chi.
\end{align*} 
(Recall that by definition $\sigma(\xi)$ is a quotient of~$\theta(\xi)$ with generator~$\varphi^{\xi}$. 
While the existence of the surjections is automatic from the fact that, for example, $\sigma(R\chi) \cong \cosoc \theta(\chi^s)^{R\chi}$, we characterize them using, for example, the generator $S_{i(\chi)}S_0\varphi^{\chi^s}$ of $\theta(\chi^s)^{R\chi}$; see Lemma~\ref{generators}.) 
In the special case that $\chi = R\chi^s$, so that $S_{i(\chi^s)} = S_0$ and $S_{i(\chi)} = \id$, we have that $\pr_{\chi^s} = \pr_{\chi}: S_0\varphi^{\chi^s} \mapsto \varphi^\chi$.

\begin{pp}\label{pp:dualmap}
There is a commutative diagram
\begin{equation}\label{eqn:hbar}
\begin{tikzcd}
M_\infty(\theta(\chi^s)^{\chi^s})/\fm \arrow{r}{\Pi} & M_{\infty}(\theta(\chi^s)^{\chi})/\fm  \\
M_\infty(\theta(\chi^s)^{R\chi^s})/\fm\arrow{u} \arrow{r}{h_\chi}\arrow{d}{\pr_{\chi^s}} & M_{\infty}(\theta(\chi^s)^{R\chi})/\fm \arrow{u} \arrow{d}{\pr_\chi} \\
M_{\infty}(\sigma(R\chi^s))/\fm \arrow{r}{\lbar h_\chi} & M_{\infty}(\sigma(R\chi))/\fm,
\end{tikzcd}
\end{equation}
where the unlabelled arrows are induced from saturated inclusions, the maps labelled $\pr_\chi$ and $\pr_{\chi^s}$ are induced from these surjections, and all vertical arrows are isomorphisms.
Moreover, the map $\lbar h_\chi$ is dual to $g_\chi$.
\end{pp}
\begin{proof}
The top vertical arrows are isomorphisms by Lemma \ref{lemma:Upchi}.
By exactness of $M_\infty(-)$, the bottom vertical arrows are surjective.
Since $M_\infty(\theta(\chi^s)^{R\chi^s})/\fm$ and $M_{\infty}(\theta(\chi^s)^{R\chi})/\fm$ are one-dimensional by Theorem \ref{thm:free} and $M_{\infty}(\sigma(R\chi^s))/\fm$ and $M_{\infty}(\sigma(R\chi))/\fm$ are nonzero, the bottom vertical arrows are isomorphisms.
We let $\lbar h_\chi$ be the unique map which makes the diagram commute.

Let $f$ be an element in $\Hom_{k_E[\GL_2(k_L)]}(\sigma(R\chi), M_\infty^\vee[\fm])$.
Let $f_\infl\in\Hom_{k_E[\GL_2(k_L)]}(\theta(\chi^s)^{R\chi}, M_\infty^\vee[\fm])$ and $\tld{f}\in\Hom_{k_E[\GL_2(k_L)]}(\theta(\chi^s)^\chi, M_\infty^\vee[\fm])$ be the elements obtained by the vertical isomorphisms.
Then 
\begin{align*}
g_\chi(f(\varphi^{R\chi})) &= g_\chi(\tld{f}(S_{i(\chi)} S_0 \varphi^{\chi^s})) \\
&= g_\chi(S_{i(\chi)} \tld{f}( S_0 \varphi^{\chi^s})) \\
&= g_\chi(R \tld{f}( S_0 \varphi^{\chi^s})) \\
&= R\Pi (\tld{f}( S_0 \varphi^{\chi^s})) \\
&= S_{i(\chi^s)} h_\chi^\vee(f_\infl)(\varphi^{\chi^s}) \\
&= \lbar h_\chi^\vee(f)(\varphi^{\chi^s}).
\end{align*}
\end{proof}
We deduce the following special case of Theorem~\ref{patchingdiagram}, which is also a consequence of the results of~\cite{BD}. Accordingly, in what follows we will often make the assumption that $\chi \not = R\chi^s$.
\begin{lemma}\label{fixedpoint}
Assume~$\chi = R\chi^s$. Then~$g_\chi$ does not depend on~$M_\infty$.
\end{lemma}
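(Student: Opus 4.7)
The plan is to identify $g_\chi$ with a scalar intrinsic to~$\rhobar$, computed via the action of the Hecke operator $U_p$ on a patched module that is free of rank one over a deformation ring.

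First I would observe that the assumption $\chi = R\chi^s$ forces~$\chi$ to lie in $(\soc_K D_0(\rhobar))^{I_1}$, since~$R$ takes values in the socle. Consequently $R\chi = \chi$, so that $R\chi^s$, $R\chi$, and~$\chi$ all coincide, and $h_\chi$ is an endomorphism of~$M_\infty(\theta(\chi^s)^\chi)$. Inspecting the definition of~$h_\chi$ in diagram~(\ref{diag:Up}), the inverted vertical arrow $M_\infty(\theta(\chi^s)^{R\chi}) \to M_\infty(\theta(\chi^s)^\chi)$ collapses to the identity in this case, so $h_\chi$ agrees with the middle-diagonal arrow, which is $U_p(\chi) = \eta(U_p)$ by Lemma~\ref{lemma:Upchi}.

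Next I would invoke Theorem~\ref{thm:free} to conclude that $M_\infty(\theta(\chi^s)^\chi)$ is free of rank one over~$R_\infty(\tau)$, where $\tau$ is the principal series tame inertial type with $\sigma(\tau) \cong \theta(\chi^s)$. Hence $h_\chi$ acts by multiplication by a well-defined element of~$R_\infty(\tau)$, whose reduction modulo the maximal ideal is a scalar $c \in k_E$. By axiom~(4) of the arithmetic action on~$M_\infty$, this scalar is the reduction of~$\eta(U_p)$, which is determined by~$\rhobar$ alone through the universal property of $R^\tau_\rhobar$ and the construction of~$\eta$ in~\cite{CEGGPS}; it can be made completely explicit using Propositions~\ref{pp:etaup} and~\ref{pp:reduceup} and the integral normalization $\tld{U}'_p$ of~\S\ref{sec:central}. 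Applying Proposition~\ref{pp:dualmap} and noting that the dual of a scalar endomorphism of a one-dimensional $k_E$-vector space is the same scalar, we conclude that $g_\chi = c$ depends only on~$\rhobar$.

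The main potential subtlety is to verify that $\eta(U_p)$, which a priori lies only in~$R^\tau_\rhobar[1/p]$, acts integrally on the rank-one $R_\infty(\tau)$-module $M_\infty(\theta(\chi^s)^\chi)$; this is automatic from the fact that $h_\chi$ is built from maps between integral $\mO_E$-lattices, and is made concrete by the integral representative $\tld{U}'_p$ from~\S\ref{sec:central}.
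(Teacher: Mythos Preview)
Your argument is correct and follows the same approach as the paper: both identify $h_\chi$ with multiplication by $U_p(\chi)=\eta(U_p)$ when $R\chi^s=R\chi=\chi$, and conclude that the reduction modulo~$\fm$ is an intrinsic scalar depending only on~$\rhobar$. The paper's proof is a one-line appeal to ``the above discussion'', whereas you spell out the collapse of the vertical arrows in diagram~(\ref{diag:Up}), invoke Theorem~\ref{thm:free} for freeness of rank one, and address the integrality of $\eta(U_p)$ explicitly; these elaborations are accurate but not strictly needed (the paper takes integrality as implicit in the construction of $h_\chi$). One small imprecision: the integral normalization $\tld{U}'_p$ you cite from \S\ref{sec:central} is set up relative to the \emph{central} inertial type rather than $\tau(\chi^s)$, so the more direct reference for integrality here is Proposition~\ref{pp:x^j}, which already shows $\eta'(U_p)$ is integral for any principal series tame type.
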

\begin{proof}
The above discussion has shown that the dual~$h_\chi$ to~$g_\chi$ can be identified with the reduction of~$U_p(\chi)$.
\end{proof}

\subsection{Lifting~$U_p(\chi)$.}

We translate the above calculation for the principal series type $\theta(\chi^s) = \sigma(\tau(\chi^s))$ (this defines the tame inertial type $\tau(\chi^s)$) into one for the \emph{central type} $\theta = \sigma(\tau)$ (recall that $\tau$ is the central inertial type). 
Again, $\chi: I \to \mO_E^\times$ is a character appearing in $D_0(\rhobar)^{I_1}$ with $\chi \not = R\chi^s$.
Since a composition of saturated inclusions need not be saturated, from the top line of~(\ref{diag:Up}) we see that there is a unique $e(\chi)\in \bN$ such that the diagram 
\begin{equation}\label{diag:Upbig}
\begin{tikzcd}
M_\infty(\theta(\chi^s)^{R\chi}) \arrow{r} \arrow{rd}[swap]{p^{-e(\chi)}U_p(\chi)} & M_\infty(\theta(\chi^s)^{R\chi^s}) \arrow{d}{h_\chi} \arrow{r}{\pr_{\chi^s}} & M_\infty(\sigma(R\chi^s))/\fm \arrow{d}{\overline{h}_\chi}\\
& M_\infty(\theta(\chi^s)^{R\chi}) \arrow{r}{\pr_\chi} & M_\infty(\sigma(R\chi))/\fm
\end{tikzcd}
\end{equation}
commutes, where the unlabelled horizontal map is induced from a saturated inclusion.

\begin{pp}\label{commonquotient}
There exists a unique quotient~$Q(\chi^s)^{R\chi^s}$ of~$\theta(\chi^s)^{R\chi^s}/\varpi_E$ which is isomorphic to a quotient of~$\theta^{R\chi^s}/\varpi_E$ and whose Jordan--H\"older factors are exactly $\JH(\thetabar(\chi^s)) \cap \JH(\thetabar)$. 
Similarly, there exists a unique quotient~$Q(\chi^s)^{R \chi}$ of $\theta(\chi^s)^{R\chi}/\varpi_E$ which is isomorphic to a quotient of~$\theta^{R\chi}/\varpi_E$ and whose Jordan--H\"older factors are exactly $\JH(\thetabar(\chi^s)) \cap \JH(\thetabar)$.
\end{pp}
\begin{proof}
The duals $(\theta(\chi^s)^{R\chi^s}/\varpi_E)^\vee$ and $(\theta^{R\chi^s}/\varpi_E)^\vee$ are representations with the same socle, which is isomorphic to~$\sigma(R\chi^s)^\vee$, is generic and appears with multiplicity one as a Jordan--H\"older factor. By~\cite[Proposition~3.6]{BPmodp}, these representations embed in a twist of what they denote~$V_{\textbf{2p-2-r}}$, where~$\br$ is such that $\sigma(R\chi^s)^\vee \cong (r_0, r_1, \ldots, r_{f-1}) \otimes \eta$ for some character~$\eta$. Since~$\sigma(R\chi^s)$ is a modular weight for the generic representation~$\rhobar$, all the~$r_i$ satisfy the constraint $0 \leq r_i \leq p-2$ (and indeed, even stronger ones). Then the proposition follows from~\cite[Theorem~4.7]{BPmodp}.
\end{proof}

Consider the diagram
\[
\begin{tikzcd}
Q(\chi^s)^{R\chi} \arrow{d}{\iota_Q} & \arrow{l}{\pi} \arrow{d}{\iota} \theta(\chi^s)^{R\chi}\\
Q(\chi^s)^{R\chi^s} & \arrow{l}{\pi} \theta(\chi^s)^{R\chi^s},
\end{tikzcd}
\]
where $\pi$ denotes the natural projection maps and $\iota$ denotes a saturated inclusion.
Then $\pi\circ \iota$ factors through the top map.
Let $\iota_Q$ be the unique map that makes the diagram commute.

Note that the quotient maps $\theta(\chi^s)^{R\chi}/\varpi_E \to Q(\chi^s)^{R\chi}$ and $ \theta(\chi^s)^{R\chi^s}/\varpi_E \to Q(\chi^s)^{R\chi^s}$ induce isomorphisms on~$M_{\infty}$ (indeed, their kernels have no Jordan--H\"older factors in~$W(\rhobar)$ since $W(\rhobar) \subseteq \JH(\thetabar)$).
Moreover, the surjections $\pr_{\chi^s}$ and $\pr_\chi$ factor through these quotients.
Passing to quotients, (\ref{diag:Upbig}) becomes
\begin{equation}\label{diag:Q}
\begin{tikzcd}
M_\infty(Q(\chi^s)^{R\chi}) \arrow{r}{\iota_Q} \arrow{rd}[swap]{p^{-e(\chi)}U_p(\chi)} & M_\infty(Q(\chi^s)^{R\chi^s}) \arrow{d}{h_\chi} \arrow{r}{\pr_{\chi^s}} & M_\infty(\sigma(R\chi^s))/\mathfrak{m} \arrow{d}{\overline{h}_\chi} \\
& M_\infty(Q(\chi^s)^{R\chi}) \arrow{r}{\pr_{\chi}} & M_\infty(\sigma(R\chi))/\mathfrak{m}.
\end{tikzcd}
\end{equation}

Fix a surjection~$\alpha: \theta^{R\chi^s} \to Q(\chi^s)^{R\chi^s}$.
Consider the diagram 
\[
\begin{tikzcd}
\theta^{R\chi} \arrow{r}{\alpha} \arrow{d}{\iota} & Q(\chi^s)^{R\chi} \arrow{d}{\iota_Q} & \arrow{l}{\pi} \arrow{d}{\iota} \theta(\chi^s)^{R\chi}\\
\theta^{R\chi^s} \arrow{r}{\alpha} & Q(\chi^s)^{R\chi^s} & \arrow{l}{\pi} \theta(\chi^s)^{R\chi^s}
\end{tikzcd}
\]
The map $\alpha \circ \iota$ factors through $\iota_Q$.
We call the unique top map that makes the diagram commute $\alpha$ as well.

We have the following diagram
\begin{equation}
\begin{tikzcd}\label{diag:Upimagecomplete}
M_\infty(\theta^{R\chi}) \arrow{r}{\alpha}\arrow{d}{\iota} & M_{\infty}(Q(\chi^s)^{R\chi}) \arrow{d}{\iota_Q} & \arrow{l}{\pi} \arrow{d}{\iota} M_{\infty}(\theta(\chi^s)^{R\chi}) \\
M_\infty(\theta^{R\chi^s})\arrow{r}{\alpha} \arrow{d}{\tld{h}_\chi} & M_{\infty}(Q(\chi^s)^{R\chi^s}) \arrow{d}{h_{\chi}} & \arrow{l}{\pi} M_{\infty}(\theta(\chi^s)^{R\chi^s}) \arrow{d}{h_\chi}\\
M_{\infty}(\theta^{R\chi}) \arrow{r}{\alpha} & M_{\infty}(Q(\chi^s)^{R\chi}) & \arrow{l}{\pi} M_{\infty}(\theta(\chi^s)^{R\chi}).
\end{tikzcd}
\end{equation}
where~$\tld{h}_\chi$ is any lift that makes the diagram commute.
Note that $\tld{h}_\chi$ exists because $M_\infty(\theta^{R\chi^s})$ is free of rank one over $R_\infty(\tau)$.
In fact, $\tld{h}_\chi$ is necessarily an isomorphism. 
It is surjective by Nakayama's lemma because $h_\chi$ is surjective.
This implies that it is injective since source and target are free of rank one over~$R_\infty(\tau)$ by Theorem \ref{thm:free}.
This free of rank one fact also implies that the composition of the left column is given by multiplication by some element~$\tld{U}_p(\chi) \in R_\infty(\tau)$. 
Then the commutative diagram 
\begin{equation}\label{diag:tldUp}
\begin{tikzcd}
M_\infty(\theta^{R\chi}) \arrow{r}{\iota} \arrow{rd}[swap]{\tld{U}_p(\chi)} & M_\infty(\theta^{R\chi^s}) \arrow{d}{\tld{h}_\chi} \arrow{r}{\pr_{\chi^s}\circ\alpha} & M_\infty(\sigma(R\chi^s))/\mathfrak{m} \arrow{d}{\overline{h}_\chi} \\
& M_\infty(\theta^{R\chi}) \arrow{r}{\pr_{\chi}\circ\alpha} & M_\infty(\sigma(R\chi))/\mathfrak{m}.
\end{tikzcd}
\end{equation}
is a lift of (\ref{diag:Q}).

Now recall that since
\[
\JH(\thetabar(\chi^s)) \cap W(\rhobar) \subseteq \JH(\thetabar) \cap W(\rhobar)
\]
by construction, by~\cite[Theorem~7.2.1]{EGS} there exists an ideal~$I(\chi)$ of $R_\infty(\tau)/\varpi_E$ such that the special fibre $R_\infty(\tau(\chi^s)) / \varpi_E$ is the quotient $R_\infty(\tau)/(\varpi_E, I(\chi))$.

\begin{pp}\label{pp:lift}
If we let $1+\fm_{R_\infty(\tau)}$ denote the $1$-units of $R_\infty(\tau)$, then the equivalence class 
\[
\tld{U}_p(\chi) (1+\fm_{R_\infty(\tau)}) \in R_\infty(\tau)/(1+\fm_{R_\infty(\tau)})
\]
(the orbit under multiplication) is uniquely characterized by the following two properties:
\begin{enumerate}
\item we have the equality
\begin{equation}\label{eqn:inclusion}
M_\infty(\iota)(M_\infty(\theta^{R\chi})) = \tld{U}_p(\chi) M_\infty(\theta^{R\chi^s}), and
\end{equation}
\item the image of $\tld{U}_p(\chi)$ in~$R_\infty(\tau(\chi^s))/\varpi_E$ is equal to that of $p^{-e(\chi)}U_p(\chi)$.
\end{enumerate}
Moreover, $\tld{U}_p(\chi) (1+\fm_{R_\infty(\tau)})$ is independent of the choice of~$M_\infty$ (once the integer $h$ is fixed).
\end{pp}
\begin{proof}
Note that $\tld{U}_p(\chi)$ satisfies both of these conditions.
Indeed, 
\[
\tld{U}_p(\chi) \circ \iota = \iota \circ \tld{U}_p(\chi) = (\iota \circ \tld{h}_\chi) \circ \iota 
\]
while 
\begin{align*}
h_\chi\circ \iota_Q &\equiv \tld{U}_p(\chi) \pmod{\varpi_E+I(\tau(\chi))} \\
&\equiv p^{-e(\chi)}U_p(\chi) \pmod{\varpi_E}
\end{align*}
by (\ref{diag:Upimagecomplete}).

The condition (\ref{eqn:inclusion}) uniquely characterizes $\tld{U}_p(\chi)$ up to multiplication by $R_\infty(\tau)^\times$.
The comparison to $U_p(\chi)$ characterizes it up to $1+\fm_{R_\infty(\tau)}$.
To prove the last part, it suffices to show that these two properties are independent of $M_\infty$.
This is clear for the second property.
We now consider the first property.
We begin by noting that $R_\infty(\tau)$ is normal ($R_1$ and $S_2$) since it is $\mO_E$-flat and the special fiber is reduced and so $R_0$ and $S_1$.
Then the principal ideal giving $M_\infty(\iota)(M_\infty(\theta^{R\chi}))$ in $M_\infty(\theta^{R\chi^s})$ is determined by its corresponding Weil divisor, which is 
\begin{equation}\label{eqn:weil}
\sum_{\sigma \in \JH(\thetabar)} m(\sigma) \overline{X}^\psi(\sigma),
\end{equation}
where $m(\sigma)$ denotes the multiplicity of $\sigma$ as a Jordan--H\"older factor of $\coker\, \iota$ and $\overline{X}^\psi(\sigma)$ is as in \cite[\S 6]{EGS}.
We conclude by noting that (\ref{eqn:weil}) does not depend on the choice of $M_\infty$.
\end{proof}

\subsection{Proof of Theorem~\ref{patchingdiagram}.}\label{iterate}
Now fix a cycle $\chi_0, \ldots, \chi_{k-1}$ so that $R\chi_i^s = R\chi_{i+1}$ and $\chi_i \not = R\chi_i^s$ for all~$i$ (otherwise we apply Lemma~\ref{fixedpoint}). 
We will prove that $\overline{h}_{\chi_0} \cdots \overline{h}_{\chi_k-1}$ is independent of~$M_{\infty}$, which will imply Theorem~\ref{patchingdiagram} by Propositions~\ref{pp:groupoidrep} and~\ref{pp:dualmap}. The last preliminary step is to iterate the constructions above for the~$\chi_i$.

Fix a surjection $\alpha_0: \theta^{R \chi^{s}_0} \to Q(\chi^s_0)^{R\chi^s_0}$. We have seen that there exists a map~$\tld{h}_{\chi_0}$, well-defined up to 1-units, such that the following diagram commutes.
\[
\begin{tikzcd}
M_\infty ( \theta^{R\chi_0^s} ) \arrow{r}{\alpha_0} \arrow{d}{\tld{h}_{\chi_0}} & M_\infty ( Q(\chi_0^s)^{R \chi^s_0} ) \arrow{d}{h_{\chi_0}}&\arrow{l}{\pi} M_\infty ( \theta(\chi_0^s)^{R\chi^s_0} ) \arrow{d}{h_{\chi_0}}\\
M_\infty ( \theta^{R\chi_0} ) \arrow{r}{\alpha_0} & M_\infty ( Q(\chi_0^s)^{R\chi_0} ) & \arrow{l}{\pi} M_\infty ( \theta(\chi_0^s)^{R\chi_0} )
\end{tikzcd}
\]

Now define a surjection~$\alpha_1: \theta^{R\chi_1^s} \to Q(\chi_1^s)^{R\chi_1^s}$ by requiring that~$\alpha_1$ makes the following diagram commute.
\begin{equation}\label{eqn:alpha}
\begin{tikzcd}
{} & Q(\chi_1^s)^{R\chi_1}\arrow{dr}{\pr_{\chi_1}} & {}\\
\theta^{R\chi_1} = \theta^{R\chi_0^s} \arrow{ur}{\alpha_1} \arrow{dr}{\alpha_0} & {} & \sigma(R\chi_1) = \sigma(R\chi_0^s)\\
{} & Q(\chi_0^s)^{R\chi_0^s} \arrow{ur}{\pr_{\chi_0^s}} & {}
\end{tikzcd}
\end{equation}
Then define inductively maps $\alpha_i: \theta^{R\chi_i^s} \to Q(\chi_i^s)^{R\chi_i^s}$. These are maps between representations of~$\GL_2(k_L)$, defined independently of~$M_\infty$. 

\begin{rk} It is usually not the case that $\alpha_k = \alpha_0$ for a cycle of length~$k$, and determining the scalar by which they differ is actually the main representation-theoretic input to the computation of Breuil's constants. It is for this reason that we have taken some care in the definition of~$\alpha_i$.
\end{rk}

We are now in a position to complete the proof of Theorem~\ref{patchingdiagram}. 
Consider the amalgamation of diagrams~(\ref{diag:tldUp}) using (\ref{eqn:alpha}) along the cycle~$\chi_0, \ldots, \chi_{k-1}$, which we draw next for~$k = 3$.
\[
\begin{tikzcd}
M_\infty ( \theta^{R\chi^s_0} \arrow{r}{} ) \arrow{dr}[swap]{\tld{U}_p(\chi_0)}& M_\infty ( \theta^{R \chi^s_1} \arrow{r}{} \arrow{dr}[swap]{\tld{U}_p(\chi_0)}  )& M_\infty ( \theta^{R\chi^s_2} \arrow{r}{} \arrow{dr}[swap]{\tld{U}_p(\chi_0)}) & M_\infty ( \theta^{R\chi^s_0} \arrow{r}{\pr_{\chi^s_0}\circ\alpha_3} \arrow{d}{\tld{h}_{\chi_0}}) & M_\infty ( \sigma(R\chi^s_0) )/\fm \arrow{d}{\overline{h}_{\chi_0}}\\
{} & M_\infty ( \theta^{R\chi^s_0}\arrow{r}{}) \arrow{dr}[swap]{\tld{U}_p(\chi_2)}& M_\infty ( \theta^{R\chi^s_1} \arrow{r}{} \arrow{rd}[swap]{\tld{U}_p(\chi_2)} )& M_\infty ( \theta^{R\chi^s_2}\arrow{r}{\pr_{\chi^s_2}\circ\alpha_2}  \arrow{d}{\tld{h}_{\chi_2}}) & M_\infty ( \sigma(R\chi^s_2)  )/\fm\arrow{d}{\overline{h}_{\chi_2}}\\
{} & {} & M_\infty ( \theta^{R\chi^s_0} \arrow{r}{} ) \arrow{dr}[swap]{\tld{U}_p(\chi_1)}& M_\infty ( \theta^{R\chi^s_1}\arrow{r}{\pr_{\chi^s_1}\circ\alpha_1} \arrow{d}{\tld{h}_{\chi_1}} )& M_\infty ( \sigma(R\chi^s_1) )/\fm\arrow{d}{\overline{h}_{\chi_1}}\\
{} & {} & {} & M_\infty ( \theta^{R\chi^s_0}) \arrow{r}{\pr_{\chi^s_0}\circ\alpha_0} & M_\infty ( \sigma(R\chi^s_0) )/\fm.
\end{tikzcd}
\]
Here, the unlabelled horizontal arrows are induced by saturated inclusions. The diagonal arrows are $\tld{U}_p$-operators. The rectangles commute by definition of the~$\alpha_i$ together with~(\ref{diag:Q}) and (\ref{diag:Upimagecomplete}). By construction, there exists a positive integer $\nu = \nu(\chi_0, \ldots, \chi_{k-1})$ such that the composition
\[
\theta^{R\chi^s_0} \to \theta^{R\chi^s_1} \to \cdots \to \theta^{R\chi^s_{k-1}} \to \theta^{R\chi^s_0}
\]
of saturated inclusions is multiplication by $p^\nu$. 
From the diagram we see that
\begin{equation}\label{eqn:ppower}
p^\nu \tld{h}_{\chi_{1}}\cdots\tld{h}_{\chi_{k-1}}\tld{h}_{\chi_{0}} = \tld{U}_p(\chi_{0})\cdots\tld{U}_p(\chi_{k-1}). 
\end{equation}
If $c(\chi_0, \ldots, \chi_{k-1}) \in k_E^\times$ is such that $\alpha_k = c(\chi_0, \ldots, \chi_{k-1}) \alpha_0$, then~$c(\chi_0, \ldots, \chi_{k-1}) $ is independent of~$M_\infty$ (it is even independent of the choice of surjection~$\alpha_0$) and we have
\begin{equation}\label{eqn:twoconst}
c(\chi_0, \ldots, \chi_{k-1}) \overline{h}_{\chi_1}\cdots\overline{h}_{\chi_k}\overline{h}_{\chi_0} = p^{-\nu}\tld{U}_p(\chi_{0})\cdots\tld{U}_p(\chi_{k-1}) 
\end{equation}
in $R_{\infty}(\tau)^\times/1+\fm_{R_\infty(\tau)}$, and the theorem follows since the~$\tld{U}_p(\chi_i)$ are well-defined and independent of~$M_\infty$ up to multiplication by~$1+\fm_{R_\infty(\tau)}$ by proposition \ref{pp:lift}.

\section{Breuil's constants.} \label{sec:constants}

We now proceed to the computation of the constant $g_{\chi_{k-1}}\cdots g_{\chi_0} \in k_E^\times$ in the cases considered by Breuil in \cite{Breuilparameters}. 
By Proposition \ref{pp:dualmap} and (\ref{eqn:twoconst}), this amounts to a computation of two units 
\[
c(\chi_0, \ldots, \chi_{k-1}) \in k_E^\times \text{ and } p^{-\nu(\chi_0, \ldots, \chi_{k-1})} \tld{U}_p(\chi_0)\cdots \tld{U}_p(\chi_{k-1}) \in R_\infty(\tau)^\times/(1+\fm_{R_\infty(\tau)}) \cong k_E^\times.
\] 
Although Breuil's cycles are only defined for semisimple mod~$p$ representations, we make some of our calculations on Galois deformation rings in more generality.
Until Lemma~\ref{lemma:U_p'}, the only assumptions on~$\rhobar$ are our running assumptions of genericity from \S \ref{Galoisreps}.

\subsection{The product $\tld{U}_p(\chi_0)\cdots \tld{U}_p(\chi_{k-1})$.}\label{sec:prodUp}

In order to compute this product, we must relate the above patching constructions with deformation ring calculations in \S \ref{sec:defring}.
This involves a translation of the results of \cite[\S 7-8]{EGS} into the notation of \S \ref{sec:defring}.
Rather than make this translation explicit, we indicate how the methods of \emph{loc.~cit.}~can be used to prove the results that we need. By construction of the~$\tld{U}_p$, our final result will only be well-defined up to 1-units in the deformation ring, but this will suffice for our purposes.

\paragraph{Notation.} Recall from \S \ref{sec:central} that~$\tau$ is the central inertial type for~$\rhobar$. If~$\tau_*$ is a tame inertial type, then the map $\kappa^{\tau_*}$ from \S \ref{sec:defk} induces an injective, formally smooth map 
\begin{equation}\label{eqn:kappa}
R^{\tau_*}_\rhobar \ra R^{\tau_*,\square}_{\overline{\fM}^{\tau_*}}.
\end{equation}
Recall also that we have attached a weight~$\sigma_J$ to each subset~$J \subset \bZ/f$, and that all modular weights of~$\rhobar$ have this form. 
If~$\chi \in (\soc_K D_0(\rhobar))^{I_1}$, we write~$J(\chi)$ for the subset such that $\sigma_{J(\chi)}^{I_1} \cong \chi$.

\begin{lemma}\label{lemma:wtsupp}
Suppose that $\sigma \in W(\rhobar)$ and recall from~\S\ref{sec:iserrewts} the ideal~$I(\sigma)$.
Then the scheme-theoretic support of 
\[
M_\infty(\sigma)\otimes_{R^\tau_\rhobar} R^{\tau,\square}_{\overline{\fM}^\tau}
\]
is $\Spec R_\infty\otimes_{R^\tau_\rhobar} (R^{\tau,\square}_{\overline{\fM}^\tau}/I(\sigma))$.
\end{lemma}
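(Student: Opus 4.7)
The plan is to use cyclicity from the cosocle surjection $\theta^\sigma\twoheadrightarrow\sigma$ together with the explicit description of the irreducible components of the special fibre of $R^{\tau,\square}_{\overline{\fM}^\tau}$ coming from the matrices~(\ref{eqn:deform}).

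First, since $\theta^\sigma$ is a lattice in $\sigma(\tau)$ with irreducible cosocle $\sigma$, the cosocle projection is a surjection of $K$-representations $\theta^\sigma \twoheadrightarrow \sigma$. Applying the exact patching functor $M_\infty(-)$ and invoking Theorem~\ref{thm:free}, which identifies $M_\infty(\theta^\sigma)$ with a free $R_\infty(\tau)$-module of rank one, we obtain that $M_\infty(\sigma)$ is cyclic over $R_\infty(\tau)$. Consequently, its scheme-theoretic support pulled back along the formally smooth map $\kappa^\tau$ is of the form $\Spec R_\infty\otimes_{R^\tau_\rhobar}(R^{\tau,\square}_{\overline{\fM}^\tau}/J)$ for some ideal $J\supset I(\tau)+(\varpi_E)$, so it suffices to prove $J = I(\sigma) R^{\tau,\square}_{\overline{\fM}^\tau}$.

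Next, I would inspect~(\ref{eqn:deform}) to see that each quotient $R^{\tau,\square}_{\overline{\fM}^\tau}/I(\sigma_{J'})$, for $J'\subset J_\rhobar$, is obtained by killing exactly one of $X_j$ or $Y_j$ in each embedding (possibly after solving the relation $X_jY_j-p$ in the special fibre as $X_jY_j$), hence is regular, in particular reduced and irreducible, and of dimension equal to that of $R^\tau_\rhobar/\varpi_E$ after accounting for the formally smooth variables. By Proposition~\ref{pp:serrewt} together with~\cite[Theorem~7.2.1]{EGS}, these exhaust the irreducible components of the special fibre of $R^{\tau,\square}_{\overline{\fM}^\tau}$ meeting $W(\rhobar)$. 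Minimality of $M_\infty$ together with the patching-functor formalism (maximal Cohen--Macaulay-ness and multiplicity one) ensures that $V(J)$ has the same dimension as these components, so it is a union of them, and in fact~\cite[Theorem~10.1.1]{EGS} pulled back along $\kappa^\tau$ forces $V(J)$ to be the single component $V(I(\sigma))$. Scheme-theoretic equality then follows because $V(I(\sigma))$ is reduced.

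The main obstacle is this translation between the intrinsic EGS description of $\Spec R^\tau_\rhobar/\varpi_E$ and the Kisin-module coordinates appearing in~(\ref{eqn:deform}): one must verify that the $\sigma$-component in the sense of~\cite[\S 7--8]{EGS}, pulled back along $\kappa^\tau$, coincides with $V(I(\sigma))$. This is ultimately a compatibility between the Breuil--M\'ezard parametrization of components by Serre weights and the combinatorial parametrization of~(\ref{eqn:deform}) by subsets of $J_\rhobar$ coming from Proposition~\ref{pp:serrewt}, which is essentially already present in~\cite[\S 7.4]{EGS}; everything else reduces to a dimension count and the reducedness of both sides.
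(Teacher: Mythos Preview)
Your approach differs from the paper's, and the step you flag as ``the main obstacle'' is in fact where the argument is incomplete. You correctly observe that $M_\infty(\sigma)$ is cyclic over $R_\infty(\tau)$, so its support is cut out by some ideal $J$; but to pin down $J=I(\sigma)$ you appeal to the compatibility between the EGS labelling of components by Serre weights and the explicit $(X_j,Y_j)$-presentation of \S\ref{sec:defk}. This compatibility is precisely what the paper is \emph{avoiding} establishing directly (see the paragraph preceding Lemma~\ref{lemma:wtsupp}), and your reference to \cite[\S 7.4]{EGS} does not settle it: that section works with a different presentation of Kisin modules, and matching the two labellings requires a nontrivial combinatorial check that neither you nor the paper carries out. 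Moreover, your invocation of \cite[Theorem~10.1.1]{EGS} to force $V(J)$ onto a single specific component is not explained: freeness of $M_\infty(\theta^\sigma)$ over $R_\infty(\tau)$ does not by itself say which component the cosocle quotient is supported on.

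The paper bypasses this entirely by introducing an \emph{auxiliary} tame type $\tau_*$ chosen (via Proposition~\ref{pp:intersect}) so that $\JH(\overline{\sigma(\tau_*)})\cap W(\rhobar)=\{\sigma\}$. Then the cosocle surjection $\theta_*^\sigma\twoheadrightarrow\sigma$ induces an \emph{isomorphism} $M_\infty(\overline{\theta}_*^\sigma)\cong M_\infty(\sigma)$, whose support is tautologically all of $\Spec R^{\tau_*}_\rhobar/\varpi_E$ by Theorem~\ref{thm:free}. The identification of this with $R^{\tau,\square}_{\overline{\fM}^\tau}/I(\sigma)$ then follows from the explicit comparison of Proposition~\ref{pp:compare} together with the equality $I(\tau_*)+(\varpi_E)=I(\sigma)$, which is immediate from the definitions. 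This sidesteps any need to match labellings of components for the central type.
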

\begin{proof}
There exists a type $\tau_*$ such that $\JH(\lbar\sigma(\tau_*)) \cap W(\rhobar) = \{\sigma\}$. 
By Proposition \ref{pp:intersect}, if $\sigma \cong F(\ft_\mu(s_\rhobar\omega_J))$ one could take $w_{*,j} = \id$ for all $j$ and $w'_{*,j}$ to be $\id$ if and only if $j\notin J$ in the notation of \S \ref{sec:comp}.
Then $I(\tau_*)+(\varpi_E)$ coincides with $I(\sigma)$. 
Let $\theta_*^\sigma$ be the lattice in~$\sigma(\tau_*)$ with cosocle $\sigma$.
Then the surjection $\theta_*^\sigma \ra \sigma$ induces an isomorphism
\[
M_\infty(\thetabar_*^\sigma) \ra M_\infty(\sigma).
\]
The scheme-theoretic support of $M_\infty(\thetabar_*^\sigma)$ is $\Spec R^{\tau_*}_\rhobar/\varpi_E$, and~$R_\rhobar^{\tau_*}/\varpi_E$ is a quotient of~$R_\rhobar^{\tau}/\varpi_E$ since~$\tau$ is central for~$\rhobar$. 
Since~(\ref{eqn:kappa}) is flat, it suffices to show that 
\[
R_\infty(\tau^*)/\varpi_E \otimes_{R^\tau_\rhobar} R^{\tau,\square}_{\overline{\fM}^\tau} = R_\infty\otimes_{R^\tau_\rhobar} (R^{\tau,\square}_{\overline{\fM}^\tau}/I(\tau_*)+(\varpi_E)).
\]
This follows from Proposition~\ref{pp:compare}.
\end{proof}

\begin{lemma}\label{lemma:gauge}
Let $J_1 \subset J_\rhobar$ be such that $j\in J_\rhobar \setminus J_1$.
Let $J_2$ be $J_1 \cup \{j\}$.
Suppose that $\tau_*$ is a tame inertial type such that $\JH(\lbar\sigma(\tau_*))$ contains both $\sigma_1 = \sigma_{J_1}$ and $\sigma_2 = \sigma_{J_2}$.
Let $\theta_*^2 \subset \theta_*^1$ be a saturated inclusion of $\mO_E$-lattices in $\sigma(\tau_*)$ with cosocles isomorphic to $\sigma_2$ and $\sigma_1$, respectively.
Then 
\begin{equation}\label{eqn:gauge}
M_\infty(\theta_*^2)  \otimes_{R^{\tau_*}_\rhobar} R^{\tau_*,\square}_{\overline{\fM}^{\tau_*}} = Y_{f-1-j}M_\infty(\theta_*^1)  \otimes_{R^{\tau_*}_\rhobar} R^{\tau_*,\square}_{\overline{\fM}^{\tau_*}}.
\end{equation}
\end{lemma}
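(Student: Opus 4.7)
The plan is to identify both sides as ideals in a normal ring, then match their Weil divisors. By Theorem~\ref{thm:free}, both $M_\infty(\theta_*^1)$ and $M_\infty(\theta_*^2)$ are free of rank one over $R_\infty(\tau_*)$, so the saturated inclusion $\theta_*^2 \hookrightarrow \theta_*^1$ induces multiplication by a well-defined element $y \in R_\infty(\tau_*)$ (up to units); pulling back along the formally smooth map of \S\ref{sec:defk}, (\ref{eqn:gauge}) amounts to the identity of principal ideals $(y) = (Y_{f-1-j})$ in $R_\infty(\tau_*) \otimes_{R_\rhobar^\tau} R_{\overline{\fM}^{\tau_*}}^{\tau_*,\square}$. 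This tensor product is normal because it is formally smooth over $R_\rhobar^{\tau_*}$, which is $\mO_E$-flat with reduced special fibre by the computations in~\cite{LLLM}, so any two elements generating the same ideal can be compared through their associated Weil divisors.

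To compute these divisors, I would first invert $p$: on the generic fibre, $\theta_*^2[1/p] = \theta_*^1[1/p]$, so $y$ is a nonzerodivisor and $(y) = (Y_{f-1-j})$ modulo $\varpi_E$-torsion; both elements specialize to zero on the special fibre, so the whole comparison takes place on $\Spec R_\infty(\tau_*) \otimes_{R_\rhobar^\tau} R_{\overline{\fM}^{\tau_*}}^{\tau_*,\square} / (\varpi_E)$. By Lemma~\ref{lemma:wtsupp} and Proposition~\ref{pp:serrewt} this special fibre is a union of the prime components $V(I(\sigma_K)/(I(\tau_*)+\varpi_E))$ over the Serre weights $\sigma_K \in \JH(\lbar\sigma(\tau_*)) \cap W(\rhobar)$, and from the definition of $I(\sigma_K)$ one checks directly that $Y_{f-1-j}$ lies in $I(\sigma_K)$ if and only if $j \notin K$. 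In particular $Y_{f-1-j}$ vanishes on the $\sigma_1$-component (since $j \notin J_1$) and is a unit on the $\sigma_2$-component (since $j \in J_2$), which is qualitatively the expected behaviour of the gauge between $\theta_*^1$ and $\theta_*^2$.

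The remaining step is to match multiplicities on each component $V(I(\sigma_K))$. On the one hand, the order of vanishing of $Y_{f-1-j}$ is read off directly from the matrices~(\ref{eqn:deform}): in each of the four cases for $(w_{f-1-j}, w'_{f-1-j})$ allowed by $\sigma_K \in \JH(\lbar\sigma(\tau_*))$ the explicit relations in $I(\tau_*)$ force this order to be $0$ or $1$ according to whether $j \in K$ or $j \notin K$. On the other hand, the order of vanishing of $y$ on $V(I(\sigma_K))$ equals the multiplicity of $\sigma_K$ as a Jordan--H\"older factor of the cokernel $\lbar\theta_*^1/\lbar\theta_*^2$; since $J_2 = J_1 \sqcup \{j\}$ differ in exactly one position, the structure of saturated sublattices between adjacent cosocles (\cite[Theorem~4.7]{BPmodp}, applied as in~\cite[\S 8]{EGS}) shows that this multiplicity is also $0$ or $1$ according to whether $j \in K$ or $j \notin K$. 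Comparing these two matches $Y_{f-1-j}$ and $y$ up to a unit, which is the claim.

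The main obstacle will be the multiplicity computation on the representation-theoretic side: translating the combinatorics of subsets $J_1, J_2, K \subset \bZ/f$ and the projective envelopes constructed in~\cite{BPmodp} into a clean statement about the constituents of $\lbar\theta_*^1/\lbar\theta_*^2$. Once this is done, the matching with the explicit formulas for the $A^{\tau_*,(j)}$ is essentially bookkeeping. In fact this lemma is a special case of the more general ``gauge'' formalism of~\cite[\S 8--10]{EGS}, and an alternative, shorter route would be to appeal directly to those results after translating the notations.
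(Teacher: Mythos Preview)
Your approach via Weil divisors is correct in outline and is essentially the mechanism the paper itself uses later in Proposition~\ref{pp:lift} (see~(\ref{eqn:weil})). However, the paper's proof of this particular lemma takes a shorter and more elementary route that avoids computing any Jordan--H\"older multiplicities.

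The paper's argument is a sandwich. Since both $\sigma_{J_1}$ and $\sigma_{J_2}$ lie in $\JH(\lbar\sigma(\tau_*))$, Proposition~\ref{pp:serrewt} forces $f_{f-1-j}(\tau_*) = X_{f-1-j}Y_{f-1-j} - p$, so $X_{f-1-j}Y_{f-1-j} = p$ holds in $R^{\tau_*,\square}_{\overline{\fM}^{\tau_*}}$. Lemma~\ref{lemma:wtsupp} then shows that $Y_{f-1-j}$ annihilates the cokernel of $M_\infty(\theta_*^2)\hookrightarrow M_\infty(\theta_*^1)$ (after base change), giving one inclusion. For the other, the paper invokes \cite[Theorem~5.2.4]{EGS} to see that $p\theta_*^1 \subset \theta_*^2$ is again saturated, and by symmetry $X_{f-1-j}$ annihilates the cokernel of $pM_\infty(\theta_*^1)\hookrightarrow M_\infty(\theta_*^2)$. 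Chaining these with $X_{f-1-j}Y_{f-1-j}=p$ and cancelling the nonzerodivisor $X_{f-1-j}$ forces equality.

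Your route works, but the step you flag as the main obstacle---the exact multiplicity of $\sigma_K$ in $\lbar\theta_*^1/\lbar\theta_*^2$---is precisely what the paper's trick sidesteps. If you pursue your argument, the correct reference for that multiplicity is \cite[Theorem~5.2.4]{EGS} (which pins down the lattice structure between adjacent cosocles) rather than \cite[Theorem~4.7]{BPmodp}, which concerns uniqueness of subrepresentations rather than cokernel lengths. Also note that Lemma~\ref{lemma:wtsupp} is stated only for the central type~$\tau$; you would need the analogue for~$\tau_*$, which follows by the same proof via Proposition~\ref{pp:compare}.
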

\begin{proof}
One checks from Lemma \ref{lemma:wtsupp} that $Y_{f-j-2}$ annihilates the quotient
\[
(M_\infty(\theta_*^1)/M_\infty(\theta_*^2))  \otimes_{R^{\tau_*}_\rhobar} R^{\tau_*,\square}_{\overline{\fM}^{\tau_*}}
\]
(we can assume without loss of generality that $E/\bQ_p$ is an unramified extension by Lemma \ref{lem:restscal}).
The inclusion $\supset$ in (\ref{eqn:gauge}) then follows.
By \cite[Theorem 5.2.4]{EGS}, we see that $p\tau_*^1\subset \tau_*^2$ is also a saturated inclusion.
Arguing as above we see that 
\begin{equation}\label{eqn:gauge'}
pM_\infty(\theta_*^1)  \otimes_{R^{\tau_*}_\rhobar} R^{\tau_*,\square}_{\overline{\fM}^{\tau_*}} \supset X_{f-1-j}M_\infty(\theta_*^2)  \otimes_{R^{\tau_*}_\rhobar} R^{\tau_*,\square}_{\overline{\fM}^{\tau_*}}.
\end{equation}
Combining (\ref{eqn:gauge'}) with the earlier inclusion and the equation $X_{f-1-j}Y_{f-1-j} = p$ yields 
\[
pM_\infty(\theta_*^1)  \otimes_{R^{\tau_*}_\rhobar} R^{\tau_*,\square}_{\overline{\fM}^{\tau_*}} \supset X_{f-1-j}M_\infty(\theta_*^2)  \otimes_{R^{\tau_*}_\rhobar} R^{\tau_*,\square}_{\overline{\fM}^{\tau_*}} \supset pM_\infty(\theta_*^1)  \otimes_{R^\tau_\rhobar} R^{\tau_*,\square}_{\overline{\fM}^{\tau_*}}.
\]
This forces that all inclusions above are in fact equalities.
\end{proof}

Lemma \ref{lemma:gauge} and an induction argument yields the following.

\begin{pp}\label{pp:gauge}
Suppose that $\tau_*$ is a tame type such that $\JH(\taubar_*)$ contains both $\sigma_1 = \sigma_{J_1}$ and $\sigma_2 = \sigma_{J_2}$.
Let $\theta_*^2 \subset \theta_*^1$ be a saturated inclusion of $\mO_E$-lattices in $\sigma(\tau_*)$ with cosocles isomorphic to $\sigma_2$ and $\sigma_1$, respectively.
Let 
\begin{equation}\label{eqn:gaugefactor}
f_j(J_1,J_2) = 
\begin{dcases}
X_j &\mbox{if } f-1-j\in J_1 \setminus J_2\\
Y_j &\mbox{if } f-1-j\in J_2 \setminus J_1\\
1 &\mbox{otherwise.}
\end{dcases}
\end{equation}
Then 
\[
M_\infty(\theta_*^2)  \otimes_{R^{\tau_*}_\rhobar} R^{\tau_*,\square}_{\overline{\fM}^{\tau_*}} = \big(\prod_j f_j(J_1,J_2)\big) M_\infty(\theta_*^1)  \otimes_{R^{\tau_*}_\rhobar} R^{\tau_*,\square}_{\overline{\fM}^{\tau_*}}.
\]
\end{pp}

We are now ready to compare our calculations on deformation rings of Galois representations and Kisin modules. Fix a character~$\chi$ appearing in $D_0(\rhobar)^{I_1}$. 
Let $\tld{U}'_p(\chi)$ be the element of $R^{\tau,\square}_{\overline{\fM}^\tau}$ defined in \S \ref{sec:central}, with the type $\theta(\chi)[p^{-1}]$ playing the role of~$\sigma$.
We begin by relating $\tld{U}'_p(\chi)$ to $\tld{U}_p(\chi)$.

\begin{lemma}\label{lemma:ideal}
We have 
\[
\kappa^\tau(\tld{U}_p(\chi)) R^{\tau,\square}_{\overline{\fM}^\tau} = \tld{U}'_p(\chi) R^{\tau,\square}_{\overline{\fM}^\tau}. 
\]
\end{lemma}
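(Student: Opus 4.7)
The plan is to characterize both ideals through the same computation on Kisin module deformation rings and match them factor by factor. I would first reformulate $\kappa^\tau(\tld U_p(\chi))$ using the characterization of $\tld U_p(\chi)$ from Proposition~\ref{pp:lift}: since $M_\infty(\theta^{R\chi^s})$ and $M_\infty(\theta^{R\chi})$ are both free of rank one over $R_\infty(\tau)$ by Theorem~\ref{thm:free}, condition~(1) of Proposition~\ref{pp:lift} asserts that $\tld U_p(\chi)$ generates the principal ideal of $R_\infty(\tau)$ defining the saturated inclusion $M_\infty(\iota):M_\infty(\theta^{R\chi}) \hookrightarrow M_\infty(\theta^{R\chi^s})$. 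Base-changing this ideal along the flat map $\kappa^\tau$, Proposition~\ref{pp:gauge} (applied with $\tau_* = \tau$, $J_1 = J(R\chi^s)$, $J_2 = J(R\chi)$) computes the resulting principal ideal explicitly: it is generated by the product $F := \prod_j f_j(J(R\chi^s), J(R\chi))$ with $f_j$ as in~(\ref{eqn:gaugefactor}).

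It remains to check that $\tld U'_p(\chi)$ and $F$ generate the same ideal in $R^{\tau,\square}_{\overline{\fM}^\tau}$. Both are products indexed by $j \in \bZ/f$, so I would compare them factor by factor, showing that $\tld x^{(j)}$ and $f_j(J(R\chi^s), J(R\chi))$ coincide up to a unit. The translation between the Weyl group data $(w_{f-1-j}, w'_{f-1-j})$ entering the definition of $\tld x^{(j)}$ in~(\ref{eqn:deform})~and~(\ref{eqn:tldx}) and the set-theoretic data $J(R\chi), J(R\chi^s)$ is supplied by Lemma~\ref{charactersw}. When $\tld x^{(j)} \in \{1, X_j, -Y_j\}$, the match with $f_j \in \{1, X_j, Y_j\}$ is immediate up to sign; in the nonsplit reducible cases where $\tld x^{(j)}$ can equal $X_j + [x_j]$ or $-(X_j+[x_j])^{-1}$, both expressions are units (since $[x_j] \in \mO_E^\times$ when $x_j \neq 0$), and these cases correspond precisely to indices $j$ where $f_j = 1$.

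The main obstacle is the factor-by-factor case analysis: one must unwind how the central type $\tau$ (corresponding to $w_0, s_\rhobar w_0$) interacts with the Weyl data of the principal series $\theta(\chi)$ producing $R\chi$ and $R\chi^s$ as cosocles of sublattices, so that the three possible values of $f_j$ line up with the four (or six, in the nonsplit case) shapes of $\tld x^{(j)}$ in the right pattern. Once this matching is verified, the equality $\kappa^\tau(\tld U_p(\chi)) R^{\tau,\square}_{\overline{\fM}^\tau} = \tld U'_p(\chi) R^{\tau,\square}_{\overline{\fM}^\tau}$ follows by faithful flatness of the base change from $R^{\tau,\square}_{\overline{\fM}^\tau}$ to $R_\infty(\tau) \widehat\otimes_{R_\rhobar^\tau} R^{\tau,\square}_{\overline{\fM}^\tau}$.
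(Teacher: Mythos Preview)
Your overall strategy is sound and close to the paper's, but the route diverges at the key step and leaves a real gap. You correctly reduce (via Theorem~\ref{thm:free} and Proposition~\ref{pp:gauge} applied to the central type~$\tau$) to showing that $\tld U'_p(\chi)$ and $F=\prod_j f_j(J(R\chi^s),J(R\chi))$ generate the same ideal, and you correctly observe that the unit factors $X_\alpha+[\alpha]$, $X_{\alpha'}+[\alpha']$, $X_j+[x_j]$, $-(X_j+[x_j])^{-1}$ can be ignored. The cases $(w_{f-1-j},w'_{f-1-j})\in\{(\id,\id),(\id,s)\}$ and the nonsplit $(s,s)$ case with $x_j\ne 0$ are easily handled as you say (Proposition~\ref{pp:intersect} forces $f-1-j\notin J(R\chi^s)\triangle J(R\chi)$ there, so $f_j=1$).

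The gap is in the remaining case $(w_{f-1-j},w'_{f-1-j})=(s,s)$ with $x_j=0$, where $\tld x^{(j)}\in\{X_j,-Y_j\}$ and $f_j\in\{1,X_j,Y_j\}$. To match these you must determine which of $X_j$ or $-Y_j$ appears (this depends on $s_{j+1}^{-1}(1)$, i.e.\ on the orientation $s^*$ of~$\tau(\chi)$ via \S\ref{sec:orphi}), and separately whether $f-1-j$ lies in $J(R\chi^s)\setminus J(R\chi)$, in $J(R\chi)\setminus J(R\chi^s)$, or in neither. Lemma~\ref{charactersw} only computes $J(\chi)$ in terms of $s^*$ when $\chi$ is in the socle; it says nothing directly about $J(R\chi)$ or $J(R\chi^s)$ for a general $\chi$ in $D_0(\rhobar)^{I_1}$. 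Unwinding this would require tracking how the $R$-operator interacts with the extension-graph parametrization, which is substantially more than you indicate.

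The paper sidesteps this entirely. Instead of matching factors directly, it applies Proposition~\ref{pp:gauge} a second time, now to the principal series type~$\tau(\chi)$, to see that $p^{-e(\chi)}\kappa^{\tau(\chi)}(U_p(\chi))$ and $\prod_j f_j(J_1,J_2)$ agree up to units in $R^{\tau(\chi),\square}_{\overline\fM^{\tau(\chi)}}$. Proposition~\ref{pp:reduceup} then says that the image of $\tld U'_p(\chi)\equiv\prod_j g_j$ in $R^{\tau(\chi),\square}_{\overline\fM^{\tau(\chi)}}/p$ is exactly $p^{-e(\chi)}U_p(\chi)$, hence $\prod_j f_j$. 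Since the $g_j$ and $f_j$ both live in $\{1,X_j,Y_j\}$ and the variables at distinct indices are independent, this forces $g_j\equiv f_j$ up to units for each~$j$, and one concludes by applying Proposition~\ref{pp:gauge} once more to~$\tau$. The point is that Proposition~\ref{pp:reduceup} supplies the link between $\tld U'_p(\chi)$ and the actual $U_p$-operator on $\tau(\chi)$, which replaces the orientation bookkeeping you would otherwise need.
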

\begin{proof}

Let $\theta^2 \subset \theta^1$ be a saturated inclusion of $\mO_E$-lattices in $\sigma(\tau)$ with cosocles isomorphic to $\sigma(R\chi)$ and $\sigma(R\chi^s)$, respectively.
By the first property of $\tld{U}_p(\chi)$ in Proposition \ref{pp:lift}, it suffices to show that
\begin{equation} \label{eqn:ideal}
M_\infty(\theta^2) \otimes_{R^\tau_\rhobar} R^{\tau,\square}_{\overline{\fM}^\tau} = \tld{U}'_p(\chi) (M_\infty(\theta^1) \otimes_{R^\tau_\rhobar} R^{\tau,\square}_{\overline{\fM}^\tau}).
\end{equation}

Let $\theta(\chi)^2 \subset \theta(\chi)^1$ be a saturated inclusion of lattices in $\sigma(\tau(\chi))$ with cosocles isomorphic to $\sigma(R\chi)$ and $\sigma(R\chi^s)$, respectively.
By definition of~$U_p(\chi)$, we have
\[
M_\infty(\theta(\chi)^2) \otimes_{R^{\tau(\chi)}_\rhobar} R^{{\tau(\chi)},\square}_{\overline{\fM}^{\tau(\chi)}} = p^{-e(\chi)}\kappa^{\tau(\chi)}(U_p(\chi)) (M_\infty(\theta(\chi)^1) \otimes_{R^{\tau(\chi)}_\rhobar} R^{{\tau(\chi)},\square}_{\overline{\fM}^{\tau(\chi)}}).
\]
By Proposition~\ref{pp:gauge}, this implies that
\begin{equation}\label{eqn:gaugeup}
p^{-e(\chi)}\kappa^{\tau(\chi)}(U_p(\chi)) \equiv \prod_j  f_j(J_1,J_2) \mod \left (  R^{\tau(\chi),\square}_{\overline{\fM}^{\tau(\chi)}} \right )^\times
\end{equation}
with $J_1$ and $J_2$ corresponding to $\sigma(R\chi^s)$ and $\sigma(R\chi)$, respectively.
By definition, $\tld U_p'(\chi)$ is congruent modulo $\left (  R^{\tau,\square}_{\overline{\fM}^\tau} \right )^\times$ to a product~$\prod_j g_j$ for certain factors~$g_j \in \{X_j, Y_j, 1 \}$ (note that~$X_\alpha + [\alpha], X_{\alpha'} + [\alpha']$ are units). 
Furthermore, Proposition~\ref{pp:reduceup} and (\ref{eqn:gaugeup}) imply that the image of $\prod_j g_j$ in $R_{\overline \fM^{\tau(\chi)}}^{\tau(\chi), \square}/p$ is
\[
\prod_j f_j(J_1, J_2) \in R_{\overline \fM^{\tau(\chi)}}^{\tau(\chi), \square}/p.
\]

Then necessarily we must have
\[
g_j \equiv f_j(J_1, J_2) \mod \left (  R^{\tau,\square}_{\overline{\fM}^\tau} \right )^\times
\]
for all~$j$. But then (\ref{eqn:ideal}) follows from Proposition~\ref{pp:gauge} since the formula (\ref{eqn:gaugefactor}) is independent of the tame type.
\end{proof}

\begin{lemma}\label{lemma:U_p'}
Let $\chi$ be a character appearing in~$D_0(\rhobar)^{I_1}$. Then $\kappa^\tau(\tld{U}_p(\chi))$ and $\tld{U}'_p(\chi)$ coincide up to multiplication by a $1$-unit of $R^{\tau,\square}_{\overline{\fM}^\tau}$.
\end{lemma}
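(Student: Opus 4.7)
The plan is to upgrade Lemma~\ref{lemma:ideal} from an equality of ideals to an equality up to $1$-units by matching reductions mod an appropriate ideal and invoking the uniqueness portion of Proposition~\ref{pp:lift}. Since Lemma~\ref{lemma:ideal} already gives $\kappa^\tau(\tld{U}_p(\chi)) \cdot u = \tld{U}'_p(\chi)$ for some unit $u \in (R^{\tau,\square}_{\overline{\fM}^\tau})^\times$, it suffices to show that $u$ reduces to $1$ in the residue field $k_E$.

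First I would reduce everything modulo $(\varpi_E) + I(\tau(\chi^s))$ using the surjection~(\ref{eqn:compare}) together with its formally smooth extension to framed Kisin-module deformation rings. By Proposition~\ref{pp:compare}, this reduction is compatible with the maps $\kappa^\tau$ and $\kappa^{\tau(\chi^s)}$. Proposition~\ref{pp:lift} characterizes $\tld{U}_p(\chi)$ by the property that its image in $R_\infty(\tau(\chi^s))/\varpi_E$ equals $p^{-e(\chi)} U_p(\chi)$, so applying $\kappa^{\tau(\chi^s)}$ shows that $\kappa^\tau(\tld{U}_p(\chi))$ reduces to $p^{-e(\chi)} \eta'(U_p)$ in $R^{\tau(\chi^s),\square}_{\overline{\fM}^{\tau(\chi^s)}}/p$.

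On the other hand, Proposition~\ref{pp:reduceup} says precisely that the image of $\tld{U}'_p(\chi)$ along~(\ref{eqn:compare}) also equals $p^{-e(\chi)} \eta'(U_p)$. Combining the two matches yields $u \equiv 1$ in $R^{\tau(\chi^s),\square}_{\overline{\fM}^{\tau(\chi^s)}}/p$, and composing with the reduction to $k_E$ shows that $u$ is a $1$-unit in $R^{\tau,\square}_{\overline{\fM}^\tau}$. The only substantive point to verify carefully is that the compatibility of Proposition~\ref{pp:compare} genuinely propagates from $R^\tau_\rhobar$ up to the framed ring $R^{\tau,\square}_{\overline{\fM}^\tau}$; this is where the formal smoothness of $\kappa^\tau$ is essential, but it is already built into the construction of these rings. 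No further computation is required.
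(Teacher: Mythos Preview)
Your argument is correct and follows essentially the same route as the paper: the paper's proof is the one-line ``Given Lemma~\ref{lemma:ideal}, this is a direct consequence of Propositions~\ref{pp:reduceup} and~\ref{pp:lift},'' and you have simply unpacked that sentence --- using Lemma~\ref{lemma:ideal} to get a unit discrepancy $u$, then matching the images of $\kappa^\tau(\tld{U}_p(\chi))$ and $\tld{U}'_p(\chi)$ in $R^{\tau(\chi^s),\square}_{\overline{\fM}^{\tau(\chi^s)}}/p$ via Propositions~\ref{pp:lift}(2) and~\ref{pp:reduceup} (the compatibility you cite from Proposition~\ref{pp:compare} is exactly what makes this comparison legitimate). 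Your observation that it suffices to check $u \equiv 1$ in the residue field is the right way to conclude.
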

\begin{proof}
Given lemma \ref{lemma:ideal}, this is a direct consequence of propositions \ref{pp:reduceup} and \ref{pp:lift}. 
\end{proof}
\paragraph{We now make some extra assumptions:} 
\begin{enumerate}
\item we assume that~$\rhobar$ is semisimple and we will keep this assumption until the end of \S \ref{sec:constants}. 
\item we fix a cycle~$\chi_0, \ldots, \chi_{k-1}$ of characters appearing in $(\soc_K D_0(\rhobar))^{I_1}$, such that $R\chi_i^s = \chi_{i+1}$ for all $i \in \bZ/k$: this is precisely the situation of~\cite[Question~9.5]{Breuilmodp} (though our genericity assumptions are slightly less general). 
\item \label{item:breuil}
by~\cite[Lemme~6.1]{Breuilparameters}, we can assume that $R\chi_i^s \not = \chi_i^s$ for all~$i$ (i.e., $\sigma(\chi_i^s) \notin W(\rhobar)$), and by Lemma~\ref{fixedpoint} we can assume that $R\chi_i^s \not = \chi_i$ for all~$i$. 
\end{enumerate}

\begin{lemma}
Assumption \eqref{item:breuil} implies that each~$H$-character $\chi_i$ has multiplicity one in the central type~$\theta_E = \sigma(\tau)$.
\end{lemma}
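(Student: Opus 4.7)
The plan is to distinguish two cases depending on whether~$\sigma(\tau)$ is cuspidal or principal series, invoking Lemma~\ref{basistype} in each. The cuspidal case requires no work: every $H$-eigencharacter of~$\sigma(\tau)$ then has multiplicity one, and in particular each~$\chi_i$ does. In the principal series case, write $\sigma(\tau) \cong \Ind_I^K \zeta$; by Lemma~\ref{basistype} the only $H$-characters occurring with multiplicity greater than one are~$\zeta$ and~$\zeta^s$, so it suffices to show that $\chi_i \notin \{\zeta, \zeta^s\}$ for every~$i$.

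I would argue by contradiction: assume $\chi_i \in \{\zeta, \zeta^s\}$ for some~$i$. Then $\chi_i^s \in \{\zeta, \zeta^s\}$ as well, so by Definition~\ref{defn:PS} the Serre weight $\sigma(\chi_i^s)$ is the cosocle of one of the lattices $\theta(\zeta)$, $\theta(\zeta^s)$, and in particular is a Jordan--H\"older factor of $\overline{\sigma(\tau)}$. To derive a contradiction with Assumption~(3), it is enough to show that $\sigma(\chi_i^s) \in W(\rhobar)$, for then by the definition of~$R$ we would have $R\chi_i^s = \chi_i^s$.

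This reduces the argument to the key observation that $\JH(\overline{\sigma(\tau)}) = W(\rhobar)$ in the semisimple case, which is the main (and really the only nontrivial) step. I would prove this by applying Proposition~\ref{pp:intersect} with $w = w' = w_0$: since $(w_j, w'_j) = (s,s)$ for every $j \in \bZ/f$ the sets $J_1(V)$ and~$J_2(V)$ of the proposition are both empty, and since $J_\rhobar = \bZ/f$ for~$\rhobar$ semisimple, one obtains $W(\rhobar) \cap \JH(\overline{\sigma(\tau)}) = W(\rhobar)$. Both sides have cardinality~$2^f$ (using Proposition~\ref{pp:jh} for $\JH(\overline{\sigma(\tau)})$), and so the inclusion $W(\rhobar) \subseteq \JH(\overline{\sigma(\tau)})$ is forced to be an equality, yielding the desired conclusion.
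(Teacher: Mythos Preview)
Your proof is correct and follows essentially the same approach as the paper: split into cuspidal and principal series cases via Lemma~\ref{basistype}, and in the latter observe that if $\chi_i$ were one of the two exceptional characters then $\sigma(\chi_i^s)\in\JH(\overline{\sigma(\tau)})=W(\rhobar)$, forcing $R\chi_i^s=\chi_i^s$ and contradicting assumption~(3). The only difference is that you spell out the equality $\JH(\overline{\sigma(\tau)})=W(\rhobar)$ via Proposition~\ref{pp:intersect} and a cardinality count, whereas the paper simply invokes it, having already recorded this fact in the discussion of the central inertial type in \S\ref{sec:central}.
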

\begin{proof}
If~$\theta_E$ is cuspidal, this is true because every $H$-character has multiplicity one in~$\theta_E$. If~$\theta_E$ is principal series, then the only exceptions correspond to the two characters coming from $I_1$-invariants.
However, if~$\chi_i$ is one of these characters then~$\sigma(\chi_i^s)$ is in~$\JH(\lbar\sigma(\tau)) = W(\rhobar)$ so that~$R\chi_i^s = \chi_i^s$, which contradicts assumption~\eqref{item:breuil}.
\end{proof}

\begin{pp}\label{productvalue}
Let $J\subset \bZ/f$ be $J(\chi_i)$ for some $i\in \bZ/k$, so that $\chi_i \cong \sigma_J^{I_1}$.
Let~$J_0 = \delta_\red J$, so that~$\chi_i$ corresponds to the formal weight~$\lambda_{J_0}$.
Let~$\delta$ be $\delta_\irr$ or $\delta_\red$ according to whether~$\rhobar$ is irreducible or reducible.
Then the product 
\begin{equation}\label{eqn:produ}
\prod_{i=0}^{k-1} \tld{U}_p(\chi_i)
\end{equation}
is the product of a $1$-unit of $R^\tau_\rhobar$, the scalar
\[
(-p)^{\frac{1}{2}k|J_0\Delta \delta(J_0)|},
\]
and the scalar
\[
\begin{dcases}
[\alpha]^{-\frac{k|J_0^c|}{f}}[\alpha']^{-\frac{k|J_0|}{f}} &\mbox{if } \rhobar \textrm{ is reducible}\\
[-\alpha]^{-\frac{k}{2}} &\mbox{if } \rhobar \textrm{ is irreducible}
\end{dcases}
\]
where~$\alpha$ and~$\alpha'$ are as in \S\ref{Galoisreps}.
\end{pp}
\begin{rk}\label{rk:indJ}
For any $J \subset \{0,\ldots,f-1\}$, we have that $|J_0 \Delta \delta(J_0)| = |\delta(J_0) \Delta \delta^2(J_0)|$ by Lemma~\ref{deltabehaviour}.
Hence, the formulas in proposition \ref{productvalue} do not depend on the choice of $i \in \bZ/k$ in the definition of $J$.
\end{rk}
\begin{proof}

By Lemma \ref{lemma:U_p'}, it suffices to show that 
\begin{equation}\label{eqn:produ'}
\prod_{i=0}^{k-1} \tld{U}'_p(\chi_i)
\end{equation}
is of the same form up to multiplying by a $1$-unit of $R^{\tau,\square}_{\overline{\fM}^\tau}$.
Recall that for each $i$, $\tld{U}'_p(\chi_i)$ has a factorization 
\[
\prod_{j= 0}^{f-1} \tld{x}^{(j)}_i,
\]
where $\tld{x}^{(j)}_i$ is either $1$, $Y_{j}$, or $-X_{j}$ (multiplied by one of $(X_\alpha+[\alpha])^{-1}$ and $(X_{\alpha'}+[\alpha'])^{-1}$ if $j=0$).
Let $\tld{y}^{(j)}_i$ be $\tld{x}^{(j)}_i$ without the modification at $j=0$.

Fix $0 \leq i \leq k-1$ and let $J$ be $J(\chi_i)$.
We claim that 
\begin{equation}\label{eqn:Delta}
\big|\{\tld{y}^{(j)}_i\neq 1|0\leq j \leq f-1\}\big|=|J_0\Delta \delta(J_0)|.
\end{equation}

By Lemma~\ref{charactersw}, there are permutations $s^*, w, w' \in W$ such that 
\[
\theta(\chi)[1/p] \cong R_{s_\rhobar w}(\mu - s_\rhobar w' \eta), \, s^*F(s^*)^{-1} = s_\rhobar w \text{ and } \chi = (s^*)^{-1}(\mu-s_\rhobar w'\eta)|_H,
\]
and furthermore $j \in J_0$ if and only if~$s^*_j \ne \id$.
(Recall that the formal weight of~$\sigma_{J(\chi_i)}$ corresponds in all cases to~$\delta_\red(J(\chi_i))$, hence $\chi_j \in \{p-2-r_j, p-3-r_j\}$ if and only if~$j \in J_0$.)
From \S \ref{sec:defk}, we see that $\tld{y}^{(f-1-j)}_i = 1$ if and only if $w_j = \id$.
But since $w_j = s_{\rhobar,j}^{-1} s^*_j (s^*_{j-1})^{-1}$, one finds that $\tld{y}_i^{(f-1-j)} \neq 1$ if and only if $j \in J_0\triangle \delta^{-1}(J_0)$.
This establishes (\ref{eqn:Delta}).

By Remark \ref{rk:indJ}, we conclude that 
\begin{equation}\label{eqn:countXY}
\big|\{\tld{y}^{(j)}_i\neq 1|0\leq i\leq k-1, \, 0\leq j \leq f-1\}\big|=k|J_0\Delta \delta(J_0)|.
\end{equation}

Since (\ref{eqn:produ}) is a power of $p$ up to a unit by (\ref{eqn:ppower}), the same is true for (\ref{eqn:produ'}).
This implies that $Y_j$ and $-X_j$ appear among $\{\tld{y}^{(j)}_i\}_{i,j}$ with equal frequency.
We conclude from (\ref{eqn:countXY}) that 
\begin{equation}\label{eqn:y}
\prod_{i=0}^{k-1}\prod_{j=0}^{f-1} y^{(j)}_i = (-p)^{\frac{1}{2}k|J_0\Delta \delta(J_0)|}.
\end{equation}

For each $i$, $\tld{x}^{(0)}_i$ contains a factor of either $X_\alpha+[\alpha]$ or $X_{\alpha'} + [\alpha']$.
Moreover, we see from Corollary \ref{cor:frob'}, \S \ref{sec:orphi}, and (\ref{eqn:deform}) that these two possibilities correspond precisely to $f-2 \notin J(\chi_i)$ and $f-2 \in J(\chi_i)$, respectively.
Thus, we have
\[
\prod_{i=0}^{k-1} \frac{\tld{x}^{(0)}_i}{\tld{y}^{(0)}_i} = 
\begin{dcases}
(X_\alpha+[\alpha])^{-\frac{k|J_0^c|}{f}}(X_{\alpha'}+[\alpha'])^{-\frac{k|J_0|}{f}} &\mbox{if } \rhobar \textrm{ is reducible}\\
(X_\alpha+[\alpha])^{-\frac{k}{2}}(X_{\alpha'}+[\alpha'])^{-\frac{k}{2}} &\mbox{if } \rhobar \textrm{ is irreducible}
\end{dcases}
\]
(by the beginning of \S \ref{sec:irred}, $J_0^c$ is always in the $\delta$-orbit of $J_0$ when $\rhobar$ is irreducible).
In conjunction with (\ref{eqn:y}) and recalling that $\alpha' = -1$ if $\rhobar$ is irreducible, this finishes the proof.
\end{proof}

\begin{rk}
In the rest of this section, we will work with the parametrization by formal weights instead of the parametrization by the extension graph.
This is the reason for working with~$J_0$ in Proposition~\ref{productvalue}.
\end{rk}

\subsection{The constant~$c(\chi_0, \ldots, \chi_{k-1})$.}

Recall our running assumptions from \S \ref{iterate} that $R\chi_i = \chi_i$, that $R\chi_i^s = \chi_{i+1}$, and that $R\chi_i^s \not = \chi_i^s, \chi_i$ for each~$i\in \bZ/k$. 
We will actually compute the constant $c(\chi_{k-1}^s, \chi_{k-2}^s, \ldots, \chi_0^s)$, which is enough because of the following lemma.

\begin{lemma}\label{inverseconstants}
The equality $c(\chi_0, \ldots, \chi_{k-1}) = c(\chi_{k-1}^s, \ldots, \chi_{0}^s)^{-1}$ holds.
\end{lemma}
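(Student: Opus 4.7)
The plan is to derive the identity by applying equation~(\ref{eqn:twoconst}) to both the forward cycle $(\chi_0, \ldots, \chi_{k-1})$ and the reverse cycle $(\chi_{k-1}^s, \ldots, \chi_0^s)$, then to show that both sides of the resulting product equation simplify to the same scalar. First I would verify that the reverse cycle satisfies the running assumptions of \S\ref{iterate}: its cycle condition $R\xi_i^s = R\xi_{i+1}$ (with $\xi_i = \chi_{k-1-i}^s$) reduces to $\chi_{k-1-i} = \chi_{k-1-i}$ using the forward cycle condition together with $\chi_j \in (\soc_K D_0)^{I_1}$, and the non-degeneracy conditions transfer to those on the $\chi_i$'s. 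Crucially, both cycles visit the same pivot weights and hence are associated to the same central inertial type~$\tau$, so the two resulting identities both live in the common ring $R_\infty(\tau)^\times/(1+\fm_{R_\infty(\tau)}) \cong k_E^\times$.

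Multiplying the forward and reverse identities produces
\[
c(\chi_0,\ldots,\chi_{k-1})\, c(\chi_{k-1}^s,\ldots,\chi_0^s) \prod_{i=0}^{k-1}\bigl(\lbar h_{\chi_i}\lbar h_{\chi_i^s}\bigr) = p^{-\nu_{\mathrm{fwd}}-\nu_{\mathrm{rev}}}\prod_{i=0}^{k-1}\tld U_p(\chi_i)\tld U_p(\chi_i^s).
\]
I would then show that both products on the left and right are equal to the same scalar $\lambda^k$, where $\lambda \in k_E^\times$ is the scalar by which $\Pi^2$ acts on $D_0^{I_1}$, so that the identity $c(\chi_0,\ldots,\chi_{k-1})\cdot c(\chi_{k-1}^s,\ldots,\chi_0^s)=1$ follows by cancellation.

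For the left-hand product, Proposition~\ref{pp:dualmap} identifies $\lbar h_\chi = g_\chi^\vee$. Reversal of composition under duality gives $(\lbar h_{\chi_i}\circ \lbar h_{\chi_i^s})^\vee = g_{\chi_i^s}\circ g_{\chi_i}$, and a direct computation from the definition $g_\chi(R(v)) = R(\Pi v)$ shows that this composition acts as $\Pi^2 = \lambda\cdot\mathrm{id}$ on the one-dimensional space $(\soc_K D_0)^{I_1}[\chi_i]$. Hence each $\lbar h_{\chi_i}\lbar h_{\chi_i^s}$ is the scalar $\lambda$, and the full product over $i$ is $\lambda^k$.

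For the right-hand product, I would apply Proposition~\ref{productvalue} separately to both cycles: this expresses each $\prod_i \tld U_p$ as an explicit power of $-p$ multiplied by a scalar involving $\alpha$ and $\alpha'$. The $p$-power contributions combine via (\ref{eqn:ppower}) to cancel $\nu_{\mathrm{fwd}}+\nu_{\mathrm{rev}}$ exactly, and the remaining scalar is matched to $\lambda^k$ using Lemma~\ref{U_peigenvalues}, which identifies the $U_p$-eigenvalues with Langlands parameters whose product realises the central character at~$p$, and hence the scalar of $\Pi^2$. The main obstacle I expect lies here: tracking the various normalizations $\tld U_p = p^{-e(\chi)}U_p$ and matching the $\alpha, \alpha'$-scalars from Proposition~\ref{productvalue} applied to the two different cycles against $\lambda^k$ requires careful bookkeeping, whereas the representation-theoretic side of the argument is clean.
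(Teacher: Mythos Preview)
Your approach is a substantial detour compared to the paper's argument, and it has a concrete gap. The constant $c(\chi_0,\ldots,\chi_{k-1})$ is \emph{defined} purely representation-theoretically: one fixes $\alpha_0:\theta^{R\chi_0^s}\to Q(\chi_0^s)^{R\chi_0^s}$, builds $\alpha_1,\alpha_2,\ldots$ inductively via diagram~(\ref{eqn:alpha}), and sets $\alpha_k = c(\chi_0,\ldots,\chi_{k-1})\alpha_0$. The paper simply observes that the inductive step~(\ref{eqn:alpha}) is symmetric: the diagram that produces $\alpha_{i+1}$ from $\alpha_i$ is exactly the diagram that produces $\alpha_i$ from $\alpha_{i+1}$ in the reverse cycle (using $R\chi_i = R\chi_{i-1}^s$). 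So if one starts the reverse construction at $\alpha_{k-2}$, one recovers $\alpha_{k-3},\ldots,\alpha_0$ in order, and the terminal relation is $\alpha_0 = c(\chi_{k-1}^s,\ldots,\chi_0^s)\,\alpha_k$. Comparing with $\alpha_k = c(\chi_0,\ldots,\chi_{k-1})\alpha_0$ gives the lemma. No patching, no $\tld U_p$'s, no $M_\infty$.

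Your route through~(\ref{eqn:twoconst}) can in principle work (both $c$'s being $M_\infty$-independent, any single $M_\infty$ suffices), and your computation of the $\lbar h$-side as $\lambda^k$ is fine. But the right-hand side has a real problem: you propose to apply Proposition~\ref{productvalue} to the reverse cycle $(\chi_{k-1}^s,\ldots,\chi_0^s)$, yet that proposition is stated and proved for cycles of characters lying in $(\soc_K D_0(\rhobar))^{I_1}$ --- it uses the identification $\chi_i\cong\sigma_{J}^{I_1}$ for a modular weight $\sigma_J$. Under the running assumption~(3), $R\chi_i^s\neq\chi_i^s$, so none of the $\chi_i^s$ lie in the socle, and the proposition does not apply. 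You would need either to reprove an analogue of Proposition~\ref{productvalue} for non-socle cycles, or to argue directly that $\tld U_p(\chi_i)\tld U_p(\chi_i^s)$ equals $\lambda$ times a suitable power of $p$ (via $\tld h_{\chi_i}\tld h_{\chi_i^s}$ and $\Pi^2$); either way this is far more work than the definitional argument above.
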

\begin{proof}
Informally, this is going around the cycle in the opposite direction. More precisely, we have defined $c(\chi_0, \ldots, \chi_{k-1})$ starting from a map $\alpha_0: \theta^{R \chi_0^s} \to Q(\chi_0^s)^{R\chi_0^s}$, constructing~$\alpha_i$ inductively, and putting $\alpha_k = c(\chi_0, \ldots, \chi_{k-1})\alpha_0$. Now we have to start from some $\theta^{R\chi_{k-1}} \to Q(\chi_{k-1})^{R\chi_{k-1}}$ and we know that $R \chi_{k-1} = R \chi_{k-2}^s$, so we choose to start from~$\alpha_{k-2}$. The next map to be constructed will be some $\theta^{R\chi_{k-2} }\to Q(\chi_{k-2})^{R\chi_{k-2}}$, and we know that $R\chi_{k-2} = R\chi_{k-3}^s$ and~$\alpha_{k-3}$ makes the pertinent diagram commute. Iterating, we deduce that $\alpha_0 = c(\chi_{k-1}^s, \ldots, \chi_{0}^s)\alpha_k$, and the lemma follows.
\end{proof}

We begin by reducing our goal to certain explicit computations on the central type~$\theta$, that can be performed via the methods of \S \ref{repprelim}. Write~$\psi_i = \chi_{k-1-i}^s$, and fix a surjection $\alpha_0: \theta^{\psi_0^s} \to Q(\psi_0^s)^{\psi_0^s}$ (observe that $R\psi_0^s = R\chi_{k-1} = \chi_{k-1} = \psi_0^s$), and construct the~$\alpha_i$ as in \S \ref{iterate}. This relabels the~$\alpha_i$ for our new cycle $\psi_0, \ldots, \psi_{k-1}$, and they will not change until the end of the paper.

By construction, there exists a unique $\nu(\psi_i^s) \in \bZ_{\geq 0}$ such that the composition 
\begin{equation}\label{eqn:satinc}
\theta(\psi_i^s)^{R\psi_i} \to \theta(\psi_i^s)^{\psi_i} \to \theta(\psi_i^s)
\end{equation} 
of saturated inclusions is $p^{\nu(\psi_i^s)}$ times the saturated inclusion. By Lemma~\ref{generators} together with the isomorphism
\begin{align*}
\theta(\psi_i) &\to \theta(\psi_i^s)^{\psi_i} \\
\varphi^{\psi_i} &\mapsto S_0 \varphi^{\psi_i^s}
\end{align*}
it follows that $p^{-\nu(\psi_i^s)}S_{i(\psi_i)}S_0 \varphi^{\psi_i^s}$ generates the image of the saturated inclusion $\theta(\psi_i^s)^{R\psi_i} \to \theta(\psi_i^s)$ (see Definition \ref{defn:R} for the definition of $S_{i(\psi_i)}$).

\begin{lemma}~\label{exchange}
There exists a unique $0 < i^+(\psi_i^s) < q-1$ and constant $c(\psi_i^s) \in W(k_L) \setminus \{0\} \subset W(k_E)$ such that 
\[
S_{i(\psi_i)} S_0 \varphi^{\psi_i^s} = c(\psi_i^s) S^+_{i^+(\psi_i^s)}\varphi^{\psi_i^s}.
\]
in the integral induction $\theta(\psi_i^s) = \Ind_I^K(\psi_i^s)$.  The equality~$i^+(\psi_i^s) = q-1-i(\psi_i)$ holds. 
\end{lemma}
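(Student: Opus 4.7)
The plan is to pin down $i^+(\psi_i^s)$ by comparison of $H$-eigencharacters, and then to produce $c(\psi_i^s)$ by exhibiting both sides of the identity as $W(k_L)$-multiples of a common generator of the sublattice $\theta(\psi_i^s)^{R\psi_i}$.

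First, Lemma~\ref{shift} gives that $S_0\varphi^{\psi_i^s}$ has $H$-eigencharacter $(\psi_i^s)^s = \psi_i$, so $S_{i(\psi_i)}S_0\varphi^{\psi_i^s}$ has eigencharacter $\psi_i^s\alpha^{-i(\psi_i)}$, which equals $R\psi_i$ by Definition~\ref{defn:R}. On the other hand $S_{i^+}^+\varphi^{\psi_i^s}$ has eigencharacter $\psi_i^s\alpha^{i^+}$, so any valid choice of $i^+$ must satisfy $i^+ \equiv -i(\psi_i) \pmod{q-1}$. The cycle hypothesis gives $R\psi_i = \psi_{i-1}^s$, and $\psi_{i-1}^s \neq \psi_i^s$ since consecutive characters in the socle cycle $\chi_0,\ldots,\chi_{k-1}$ are distinct by assumption. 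Hence $\alpha^{-i(\psi_i)} \neq 1$, so $0 < i(\psi_i) < q-1$ and the unique choice $i^+(\psi_i^s) = q-1-i(\psi_i)$ lies in $(0, q-1)$.

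For the existence and nonvanishing of $c(\psi_i^s)$, I will apply Lemma~\ref{relations1} with $(i, j, v) = (i(\psi_i), 0, \varphi^{\psi_i^s})$. Writing $r$ for the $r$-value of $\psi_i^s$, its hypotheses follow from the first paragraph together with $r \not\equiv 0 \pmod{q-1}$ (otherwise $\psi_i = \psi_i^s$, contradicting that $\psi_i^s$ lies in the socle while $\psi_i$ does not), and $i(\psi_i) \not\equiv r \pmod{q-1}$ (otherwise the eigencharacter of the result would collapse to $\psi_i \neq R\psi_i$). The resulting identity
\[
S_{i(\psi_i)} S_0 \varphi^{\psi_i^s} = \eta(-1)\,\bJ(i(\psi_i), -r)\, S_{i(\psi_i)-r}\varphi^{\psi_i^s}
\]
has Jacobi sum in $W(k_L) \setminus 0$ by Theorem~\ref{Stickelbergertheorem} and Lemma~\ref{sumdivide}. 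Both $S_{i(\psi_i)-r}\varphi^{\psi_i^s}$ and $S^+_{q-1-i(\psi_i)}\varphi^{\psi_i^s}$ generate $\theta(\psi_i^s)^{R\psi_i}$ over $\mO_E[\GL_2(k_L)]$ by Lemma~\ref{generators1} and its $S^+$-analogue from Lemma~\ref{relationsbasistype}, using that $R\psi_i$ has multiplicity one in $\theta(\psi_i^s)$ (as $R\psi_i \neq \psi_i, \psi_i^s$). Hence these two generators differ by a unit in $W(k_L)^\times$, and multiplying by the Jacobi sum yields $c(\psi_i^s) \in W(k_L)\setminus 0$.

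The only conceptual step is the comparison of eigencharacters that forces $i^+(\psi_i^s) = q-1-i(\psi_i)$; the main obstacle in the rest of the argument is verifying that none of the degenerate cases of Lemmas~\ref{shift} and~\ref{relations1} are in force, each of which is ruled out by one of the genericity reductions (3) of \S\ref{sec:constants} (in particular $R\psi_i \neq \psi_i$ and $R\psi_i \neq \psi_i^s$). The final integrality of $c(\psi_i^s)$ then follows for free because the Jacobi sum in Stickelberger's formula is manifestly in $W(k_L)$.
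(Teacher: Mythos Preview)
Your proof is correct and follows essentially the same route as the paper's: the paper invokes Lemma~\ref{relationsbasistype} directly (using that $R\psi_i$ has multiplicity one in $\theta(\psi_i^s)$, which is exactly your observation $R\psi_i \neq \psi_i, \psi_i^s$), and deduces the value of $i^+(\psi_i^s)$ from Lemma~\ref{shift}. Your argument is more explicit in that you pass through Lemma~\ref{relations1} and a Jacobi sum before comparing with $S^+_{i^+}\varphi^{\psi_i^s}$; this is precisely what the paper does later in Lemma~\ref{coefficientsJacobi}, so you are in effect anticipating that computation.

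One small point deserves a sentence of justification: when you assert that the two generators of $\theta(\psi_i^s)^{R\psi_i}$ differ by a unit in $W(k_L)^\times$ (rather than merely $\mO_E^\times$), you are implicitly using that the character $\psi_i^s$ is Teichm\"uller-valued in $W(k_L)$, so that $\Ind_I^K \psi_i^s$ and its $H$-eigenline decomposition are already defined over $W(k_L)$, and the $S$- and $S^+$-operators preserve this $W(k_L)$-structure. This is true and easy, but it is the actual reason $c(\psi_i^s)$ lands in $W(k_L)$ rather than just $\mO_E$; the Jacobi sum alone does not settle it. (The paper's one-line proof is equally silent on this point.)
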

\begin{proof}
This follows from Lemma~\ref{relationsbasistype} as the character~$R\psi_i$ has multiplicity one in~$\theta(\psi_i^s)$. Indeed, otherwise $R\chi_{k-1-i}^s = \chi_{k-1-i}$ or $R \chi_{k-1-i}^s = \chi_{k-1-i}^s$, which we have assumed not to hold. The equality follows from Lemma~\ref{shift}.
\end{proof}

\begin{defn}
We write~$\tld{c}(\psi_i^s)$ for the leading Teichm\"uller coefficient in the $p$-adic expansion of~$c(\psi_i^s)$.
\end{defn}

Next, we show that the reductions mod~$p$ of these constants~$\tld{c}(\psi_i^s)$ are closely related to the constant~$c(\chi_{k-1}^s, \ldots, \chi_0^s)$ we are trying to compute. 

\begin{lemma}\label{nonvanishing}
Let~$\iota : \theta(\psi_i^s)^{R\psi_i} \to \theta(\psi_i^s)$ be a saturated inclusion.
Then the vector~$S_{i^+(\psi_i^s)}^+\varphi^{\psi_i^s}$ is contained in and generates~$\theta(\psi_i^s)^{R\psi_i}$ over $\mO_E[\GL_2(k_L)]$.
\end{lemma}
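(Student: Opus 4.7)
The plan is to invoke Lemma~\ref{generators1} directly, taking $\chi = \psi_i$ and $\xi = R\psi_i$. The containment and generation claims then fall out of that lemma, and the bulk of the work is simply to identify the integer $i^+(\psi_i^s)$ produced by Lemma~\ref{exchange} with the integer $i(\psi_i^s, R\psi_i)^+$ appearing in Lemma~\ref{generators1}.

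First I would verify the hypothesis $R\psi_i \notin \{\psi_i, \psi_i^s\}$ required by the second clause of Lemma~\ref{generators1}. Since $\psi_i = \chi_{k-1-i}^s$, this translates to $R\chi_{k-1-i}^s \notin \{\chi_{k-1-i}, \chi_{k-1-i}^s\}$, which is exactly assumption~(3) recorded at the start of \S\ref{sec:constants}. In particular, this also ensures that $R\psi_i$ has multiplicity one in $\theta(\psi_i^s)$, so $i(\psi_i^s, R\psi_i)^+$ is unambiguously defined.

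Next I would argue that $i^+(\psi_i^s) = i(\psi_i^s, R\psi_i)^+$. Both integers lie in $(0, q-1)$ and, by the proofs of Lemmas~\ref{basistype}/\ref{relationsbasistype} together with Lemma~\ref{shift}, each is uniquely characterized by the condition that $S^+_{\bullet}\varphi^{\psi_i^s}$ has $H$-eigencharacter $R\psi_i$. To check this for $i^+(\psi_i^s)$, rewrite the vector using Lemma~\ref{exchange} as a nonzero scalar multiple of $S_{i(\psi_i)}S_0\varphi^{\psi_i^s}$, then apply Lemma~\ref{shift} twice: $S_0\varphi^{\psi_i^s}$ has eigencharacter $\psi_i$, and by the defining property of $i(\psi_i)$ in Definition~\ref{defn:R}, the operator $S_{i(\psi_i)}$ carries $\psi_i$-eigenvectors to $R\psi_i$-eigenvectors.

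With this identification established, Lemma~\ref{generators1} directly yields that $S^+_{i^+(\psi_i^s)}\varphi^{\psi_i^s}$ lies in a saturated sublattice of $\theta(\psi_i^s)$ which is isomorphic to $\theta(\psi_i^s)^{R\psi_i}$, and moreover generates it over $\mO_E[\GL_2(k_L)]$ --- which is the desired conclusion. The argument is entirely formal and I do not expect any serious obstacle; the only mild subtlety is keeping straight the interaction between $R\psi_i$ and the inducing character $\psi_i^s$ of $\theta(\psi_i^s)$, and in particular tracking which of Lemma~\ref{shift}'s two eigencharacter formulas (for $S_i$ versus $S_i^+$) applies at each step.
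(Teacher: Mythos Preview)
Your proposal is correct and follows the same approach as the paper's proof, which simply recalls that $R\psi_i$ has multiplicity one in $\theta(\psi_i^s)[1/p]$ and then invokes Lemma~\ref{generators1}. Your write-up is more explicit than the paper's in verifying the hypothesis $R\psi_i \notin \{\psi_i,\psi_i^s\}$ and in identifying $i^+(\psi_i^s)$ with $i(\psi_i^s,R\psi_i)^+$, but these are exactly the details the paper leaves to the reader.
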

\begin{proof}
Recall that~$R\psi_i$ has multiplicity one in~$\theta(\psi_i^s)[1/p]$. The result then follows from Lemma \ref{generators1}.

\end{proof}

\begin{pp}\label{nextalpha}
Recall the maps~$\pr$ from~(\ref{eqn:alpha}).
Assume~$x \in \theta^{\psi_{i}^s}$ is such that
\[
\pr_{\psi_i^s}\alpha_{i}(x) = \varphi^{\psi_{i}^s} \in \sigma(\psi_{i}^s).
\] 
Then $S^+_{i^+(\psi_i^s)}(x)$ is contained in the image of the saturated inclusion $\theta^{\psi_{i-1}^s} \to \theta^{\psi_i^s}$, and 
\[
\pr_{\psi_{i-1}^s}\alpha_{i-1} \left ( \tld{c}(\psi_i^s)S^+_{i^+(\psi_i^s)}(x) \right ) = \varphi^{\psi_{i-1}^s} \in \sigma(\psi_{i-1}^s).
\]
\end{pp}
\begin{proof}
Taking definitions into account, this follows from the discussion above and inspection of the following commutative diagram.
\[
\begin{tikzcd}
{} & \theta^{\psi_i^s} \arrow{d}{\alpha_{i}} & \theta^{\psi_{i-1}^s} \arrow{d}{\alpha_{i}} \arrow{l}{\iota} \arrow{r}{\alpha_{i-1}} & Q(\psi_{i-1}^s)^{\psi_{i-1}^s} \arrow{d}{\pr_{\psi_{i-1}^s}}\\
\sigma(\psi_{i}^s) & \arrow{l}{\pr_{\psi_{i}^s}} Q(\psi_{i}^s)^{\psi_{i}^s}  & \arrow{l}{\iota} Q(\psi_{i}^s)^{R\psi_{i}}\arrow{r}{\pr_{\psi_{i}}} & \sigma(\psi_{i-1}^s) = \sigma(R\psi_{i})\\
{} & \theta(\psi_i^s)^{\psi_i^s} \arrow{u}{\pi} & \arrow{l}{\iota} \theta(\psi_i^s)^{R\psi_i} \arrow{u}{\pi} & {}
\end{tikzcd}
\]
We give the details as a sequence of lemmas.

\begin{lemma}\label{stepI}
The equality $\pr_{\psi_i} \pi(\tld{c}(\psi_i^s)S_{i^+(\psi_i^s)}^+\varphi^{\psi_i^s}) = \varphi^{\psi_{i-1}^s}$ holds in~$\sigma(\psi_{i-1}^s)$.
\end{lemma}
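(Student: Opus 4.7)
The plan is to directly combine Lemma~\ref{exchange}, which expresses $S_{i(\psi_i)}S_0\varphi^{\psi_i^s} = c(\psi_i^s) S^+_{i^+(\psi_i^s)}\varphi^{\psi_i^s}$ in $\theta(\psi_i^s)$, with the defining property~(\ref{definingprII}) of $\pr_{\psi_i}$, which states $\pr_{\psi_i}(S_{i(\psi_i)}S_0\varphi^{\psi_i^s}) = \varphi^{R\psi_i}$. Two features of the target $\sigma(\psi_{i-1}^s) = \sigma(R\psi_i)$ will be essential: it is a $k_E$-vector space, so any $1$-unit of $\mO_E$ acts trivially on it, and $R\psi_i = \psi_{i-1}^s$ by construction of the cycle.

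The first step is to write $c(\psi_i^s) = \tld{c}(\psi_i^s)(1+p\delta)$ for some $\delta \in \mO_E$, which is immediate from $\tld c(\psi_i^s)$ being the leading Teichm\"uller term in the $p$-adic expansion of $c(\psi_i^s)$ (so in particular $\tld c(\psi_i^s)$ and $c(\psi_i^s)$ share the same $p$-adic valuation $\nu(\psi_i^s)$). The next step is to rewrite
\[
\tld c(\psi_i^s)\, S^+_{i^+(\psi_i^s)}\varphi^{\psi_i^s} \;=\; (1+p\delta)^{-1}\, S_{i(\psi_i)}S_0\varphi^{\psi_i^s},
\]
and to observe that this element lies in the sublattice $\theta(\psi_i^s)^{R\psi_i}$ (realized as generated by $S_{i(\psi_i)}S_0\varphi^{\psi_i^s}$ through the composition $\theta(\psi_i^s)^{R\psi_i} \hookrightarrow \theta(\psi_i^s)^{\psi_i} \hookrightarrow \theta(\psi_i^s)$ from the preamble to Lemma~\ref{exchange}), since $(1+p\delta)^{-1} \in \mO_E^\times$. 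Applying $\pr_{\psi_i}\pi$ and invoking~(\ref{definingprII}) yields $(1+p\delta)^{-1}\varphi^{R\psi_i}$, which equals $\varphi^{\psi_{i-1}^s}$ in $\sigma(\psi_{i-1}^s)$ because $(1+p\delta)^{-1}$ reduces to $1$ in $k_E$.

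No substantive obstacle will arise: the lemma is essentially a direct unwinding of the preceding definitions. The only bookkeeping care will be in tracking which specific realization of $\theta(\psi_i^s)^{R\psi_i}$ as a sublattice of $\theta(\psi_i^s)$ is intended at each step, and in verifying that the matching of $p$-adic valuations of $c(\psi_i^s)$ and $\tld c(\psi_i^s)$ places the element $\tld c(\psi_i^s)\, S^+_{i^+(\psi_i^s)}\varphi^{\psi_i^s}$ inside that realization.
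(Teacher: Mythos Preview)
Your overall strategy is the same as the paper's, but there is a concrete error in how you read the definition of $\tld c(\psi_i^s)$. In the paper, $\tld c(\psi_i^s)$ is the leading Teichm\"uller \emph{coefficient} of $c(\psi_i^s)$, hence a unit in $\mO_E$; this is confirmed by later use, where $\prod_i \tld c(\psi_i^s)$ is computed to equal $\pm 1$. Thus the correct relation is
\[
c(\psi_i^s)=p^{\nu(\psi_i^s)}\,\tld c(\psi_i^s)\,(1+p\delta),\qquad \nu(\psi_i^s)=v_p(c(\psi_i^s)),
\]
not $c(\psi_i^s)=\tld c(\psi_i^s)(1+p\delta)$. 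In particular your parenthetical ``$\tld c(\psi_i^s)$ and $c(\psi_i^s)$ share the same $p$-adic valuation $\nu(\psi_i^s)$'' is false in general.

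This matters for the rest of your argument. With the correct $\tld c(\psi_i^s)$, the vector $\tld c(\psi_i^s)\,S^+_{i^+(\psi_i^s)}\varphi^{\psi_i^s}$ equals $(1+p\delta)^{-1}\,p^{-\nu(\psi_i^s)}S_{i(\psi_i)}S_0\varphi^{\psi_i^s}$, which does \emph{not} lie in the non-saturated sublattice generated by $S_{i(\psi_i)}S_0\varphi^{\psi_i^s}$ (the realization you explicitly invoke) whenever $\nu(\psi_i^s)>0$; it lies instead in the \emph{saturated} copy of $\theta(\psi_i^s)^{R\psi_i}$ inside $\theta(\psi_i^s)$, by Lemma~\ref{nonvanishing}. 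And in that saturated realization it is $p^{-\nu(\psi_i^s)}S_{i(\psi_i)}S_0\varphi^{\psi_i^s}$, not $S_{i(\psi_i)}S_0\varphi^{\psi_i^s}$, that $\pr_{\psi_i}\pi$ sends to $\varphi^{\psi_{i-1}^s}$. This is exactly the normalization the paper flags in its parenthetical remark after the proof. Once you correct the valuation of $\tld c(\psi_i^s)$ and insert the factor $p^{-\nu(\psi_i^s)}$ when applying (\ref{definingprII}), your argument becomes the paper's.
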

\begin{proof}
By definition, 
\[
\pr_{\psi_i}\pi(p^{-\nu(\psi_i^s)}S_{i(\psi_i^s)}S_0\varphi^{\psi_i^s}) = \varphi^{\psi_{i-1}^s},
\]
and the claim follows from Lemmas~\ref{exchange} and~\ref{nonvanishing}.
(The map~$\pr_{\psi_i}$ in~(\ref{definingprII}) is defined in terms of a nonsaturated inclusion, so we need to divide by~$p^{-\nu(\psi_i^s)}$.)
\end{proof}

\begin{lemma}\label{stepII}
The equality $\alpha_i(x) = \pi(\varphi^{\psi_i^s})$ holds in~$Q(\psi_i^s)^{\psi_i^s}$.
\end{lemma}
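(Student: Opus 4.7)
The strategy is to show that both $\alpha_i(x)$ and $\pi(\varphi^{\psi_i^s})$ lie in the one-dimensional subspace $(Q(\psi_i^s)^{\psi_i^s})^{I_1}[\psi_i^s]$ of $\psi_i^s$-isotypic $I_1$-invariants, after which the hypothesis $\pr_{\psi_i^s} \alpha_i(x) = \varphi^{\psi_i^s}$ forces their equality.

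\paragraph{} For the dimension bound, Proposition~\ref{commonquotient} identifies the Jordan--H\"older factors of $Q(\psi_i^s)^{\psi_i^s}$ as those in $\JH(\thetabar(\psi_i^s)) \cap W(\rhobar)$; by the genericity of $\rhobar$ and the parametrization of $W(\rhobar)$ reviewed in \S\ref{sec:serrewts}, distinct Serre weights in $W(\rhobar)$ carry distinct $H$-characters on their $I_1$-invariants, and only $\sigma(\psi_i^s)$ itself has $I_1$-invariant of $H$-character $\psi_i^s$. Combining left exactness of the $I_1$-invariants functor with $\dim \sigma^{I_1} = 1$ for each Serre weight $\sigma$ then bounds $\dim_{k_E} (Q(\psi_i^s)^{\psi_i^s})^{I_1}[\psi_i^s]$ by one. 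The vector $\pi(\varphi^{\psi_i^s})$ is $I_1$-invariant of character $\psi_i^s$ by $K$-equivariance of $\pi$, and it is nonzero because $\pr_{\psi_i^s}(\pi(\varphi^{\psi_i^s})) = \varphi^{\psi_i^s}$ by construction of the projections; hence it spans the subspace.

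\paragraph{} The harder step is to show that $\alpha_i(x)$ lies in the same subspace. Using that $|H|$ is prime to $p$, I would first project $x$ onto its $\psi_i^s$-isotypic $H$-component without disturbing the hypothesis (as $\varphi^{\psi_i^s}$ is itself $H$-isotypic of character $\psi_i^s$), which makes $\alpha_i(x)$ an $H$-eigenvector of character $\psi_i^s$. The main obstacle is then the $I_1$-invariance of $\alpha_i(x)$: this requires tracking the inductive construction of $x$ along the cycle and verifying that it produces an $I_1$-invariant representative at each stage. Specifically, one chooses the seed $x_0 \in \theta^{\psi_0^s}$ to be an $I_1$-invariant $\psi_0^s$-eigenvector, so that $\alpha_0(x_0)$ is $I_1$-invariant by $K$-equivariance, and one then checks that the recipe $x \mapsto \tld{c}(\psi_i^s) S^+_{i^+(\psi_i^s)}(x)$ of Proposition~\ref{nextalpha} preserves $I_1$-invariance---an observation that follows from Lemma~\ref{exchange}, which identifies this operator on a standard $I_1$-fixed generator with the composition $S_{i(\psi_i)} S_0$, manifestly $I_1$-invariance-preserving. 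Once $\alpha_i(x) \in k_E \cdot \pi(\varphi^{\psi_i^s})$, the hypothesis $\pr_{\psi_i^s}\alpha_i(x) = \varphi^{\psi_i^s} = \pr_{\psi_i^s}\pi(\varphi^{\psi_i^s})$ pins the scalar to one, yielding the equality.
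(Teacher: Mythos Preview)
Your overall strategy---placing both $\alpha_i(x)$ and $\pi(\varphi^{\psi_i^s})$ in a common one-dimensional subspace and then matching them via $\pr_{\psi_i^s}$---is correct and is what the paper does. The gap is your detour through $I_1$-invariance in step~3, which cannot be carried out. No $I_1$-invariant $\psi_0^s$-eigenvector exists in $\theta^{\psi_0^s}$: by the running assumption~(3), each $\psi_i^s = \chi_{k-1-i}$ has $H$-multiplicity one in the central type $\theta$, so $\psi_i^s$ is not among the (at most two) characters occurring in $\theta^{I_1}$, and your seed $x_0$ cannot be chosen as claimed. Nor does the inductive step work: the $S^+$-operators are built from lower unipotent matrices and do not preserve upper-$I_1$-invariants; Lemma~\ref{exchange} is a scalar identity for the single vector $\varphi^{\psi_i^s}$ in $\theta(\psi_i^s)$, not an operator identity in the central type; and $S_{i(\psi_i)}$ with $0 < i(\psi_i) < q-1$ applied to an $I_1$-fixed vector in a principal series lands in a multiplicity-one $H$-eigenspace that is not $I_1$-fixed.

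The paper avoids $I_1$-invariance entirely by showing that the full $H$-eigenspace $Q(\psi_i^s)^{\psi_i^s}[\psi_i^s]$ is already one-dimensional. This uses \cite[Lemma~2.7(i)]{BPmodp}: the $H$-character $\psi_i^s$ has multiplicity two in $\thetabar(\psi_i^s)$, but multiplicity one in every proper quotient; and $Q(\psi_i^s)^{\psi_i^s}$ is proper because its Jordan--H\"older factors lie in $\JH(\thetabar) = W(\rhobar)$, which (by the assumption $R\chi_i^s \neq \chi_i^s$) omits the socle $\sigma(\psi_i)$ of $\thetabar(\psi_i^s)$. With this stronger bound one only needs $\alpha_i(x)$ to be an $H$-eigenvector of character $\psi_i^s$, which holds whenever $x$ is; and in the application this is automatic, since one may take the initial $x$ to be an $H$-eigenvector and the $S^+$-operators preserve $H$-eigenvectors by Lemma~\ref{shift}. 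Your $H$-projection step is in effect exactly this reduction, though note it does not by itself establish the lemma for non-isotypic $x$.
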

\begin{proof}
By~\cite[Lemma~2.7(i)]{BPmodp}, the only quotient of~$\theta(\psi_i^s)/\varpi_E$ in which~$\psi_i^s$ has multiplicity two is~$\theta(\psi_i^s)/\varpi_E$ itself. 
Since~$\psi_i$ does not appear in~$\soc_K D_0(\rhobar)$, the type $\theta(\psi_i^s)$ is not the central type $\theta$, and so $\psi_i^s$ has multiplicity one in~$Q(\psi_i^s)^{\psi_i^s}$. 
It follows that $\alpha_i(x) = \pi(\varphi^{\psi_i^s})$, since they have the same image~$\varphi^{\psi_i^s}$ in~$\sigma(\psi_i^s)$.
\end{proof}

\begin{lemma}\label{stepIII}
The vector~$S_{i^+(\psi_i^s)}^+(x)$ is an element of~$\theta^{\psi_{i-1}^s}$.
\end{lemma}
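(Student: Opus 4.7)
The plan is to exploit the identity $\alpha_i(x) = \pi(\varphi^{\psi_i^s})$ established in Lemma~\ref{stepII}, together with the $\GL_2(k_L)$-equivariance of the various maps in play, to relocate the vector $S^+_{i^+(\psi_i^s)}(x)$ from the lattice $\theta^{\psi_i^s}$ into the sublattice $\theta^{\psi_{i-1}^s}$.

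First I would apply the operator $S^+_{i^+(\psi_i^s)} \in \mO_E[\GL_2(k_L)]$ to both sides of $\alpha_i(x) = \pi(\varphi^{\psi_i^s})$. Using that $\alpha_i$ and $\pi$ are both $\GL_2(k_L)$-equivariant, this yields
\[
\alpha_i(S^+_{i^+(\psi_i^s)}(x)) = \pi(S^+_{i^+(\psi_i^s)} \varphi^{\psi_i^s}),
\]
and Lemma~\ref{generators1} applied to $\theta(\psi_i^s)$ shows that $S^+_{i^+(\psi_i^s)} \varphi^{\psi_i^s}$ generates the sublattice $\theta(\psi_i^s)^{R\psi_i} \subset \theta(\psi_i^s)$; hence its image lies in the subrepresentation $\pi(\theta(\psi_i^s)^{R\psi_i}) = Q(\psi_i^s)^{R\psi_i}$ of $Q(\psi_i^s)^{\psi_i^s}$.

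Next I would identify the preimage $\alpha_i^{-1}(Q(\psi_i^s)^{R\psi_i}) \subset \theta^{\psi_i^s}$ with the sublattice $\theta^{\psi_{i-1}^s}$. The inclusion $\theta^{\psi_{i-1}^s} \subset \alpha_i^{-1}(Q(\psi_i^s)^{R\psi_i})$ is exactly the commutativity of the upper portion of the diagram displayed in the proof of Proposition~\ref{nextalpha}. The reverse inclusion is equivalent to the containment $\ker \alpha_i \subset \theta^{\psi_{i-1}^s}$. Since $Q(\psi_i^s)^{\psi_i^s}$ is a $k_E$-vector space (being a subquotient of the Breuil--Pa\v{s}k\={u}nas module $V_{\mathbf{2p-2-r}}$ featured in Proposition~\ref{commonquotient}), the map $\alpha_i$ factors through the reduction $\bar\theta^{\psi_i^s}$, so $\ker \alpha_i \supset \varpi_E \theta^{\psi_i^s}$, and this submodule is contained in $\theta^{\psi_{i-1}^s}$ because the cokernel of the saturated inclusion $\theta^{\psi_{i-1}^s} \hookrightarrow \theta^{\psi_i^s}$ is annihilated by $\varpi_E$. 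Modulo $\varpi_E$, the containment reduces to showing $\ker \bar\alpha_i \subset \Omega$ inside $\bar\theta^{\psi_i^s}$, where $\Omega$ is the image of $\bar\theta^{\psi_{i-1}^s}$ under the reduced saturated inclusion: this follows from the characterization of $\ker \bar\alpha_i$ as the maximal subrepresentation of $\bar\theta^{\psi_i^s}$ whose Jordan--H\"older factors lie in $W(\rhobar) \setminus \JH(\bar\theta(\psi_i^s))$, combined with the explicit description of the submodule lattice of $\bar\theta$ obtained by dualizing the results of \cite[\S\S 3--4]{BPmodp} on $V_{\mathbf{2p-2-r}}$, which places such Jordan--H\"older factors below the cosocle $\sigma(\psi_{i-1}^s)$ and hence inside $\Omega$.

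The main obstacle is this last combinatorial verification: one must cross-reference the definition of $Q(\psi_i^s)^{\psi_i^s}$ as a common quotient (Proposition~\ref{commonquotient}) with the Breuil--Pa\v{s}k\={u}nas submodule structure of $\bar\theta$, tracking which Serre weights appear in $\JH(\bar\theta(\psi_i^s))$ and which do not, and confirming that the "missing" Serre weights sit inside the subrepresentation with cosocle $\sigma(\psi_{i-1}^s)$. Once this is verified, the fiber product identification $\theta^{\psi_{i-1}^s} = \alpha_i^{-1}(Q(\psi_i^s)^{R\psi_i})$ holds, and Lemma~\ref{stepIII} follows.
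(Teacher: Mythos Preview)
Your approach has a genuine gap that lies earlier than the ``main obstacle'' you flag. The assertion that the cokernel of the saturated inclusion $\theta^{\psi_{i-1}^s}\hookrightarrow\theta^{\psi_i^s}$ is annihilated by $\varpi_E$ is false in general: by the argument of Lemma~\ref{lemma:gauge} and Proposition~\ref{pp:gauge} (using $X_jY_j=p$ for the central type of a semisimple $\rhobar$), the smallest $n$ with $p^n\theta^{\psi_i^s}\subset\theta^{\psi_{i-1}^s}$ is $|J(\psi_i^s)\triangle J(\psi_{i-1}^s)|=|J_0\triangle\delta J_0|$ in the notation of Proposition~\ref{productvalue}. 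This is even and $\geq 2$ in the reducible case (proof of Lemma~\ref{differenceodd} together with the standing assumption $R\chi_i^s\neq\chi_i$), and can exceed $1$ in the irreducible case. Whenever $n>1$ one has $\varpi_E\theta^{\psi_i^s}\not\subset\theta^{\psi_{i-1}^s}$, so $\ker\alpha_i\supset\varpi_E\theta^{\psi_i^s}$ is not contained in $\theta^{\psi_{i-1}^s}$, and the identification $\alpha_i^{-1}(Q(\psi_i^s)^{R\psi_i})=\theta^{\psi_{i-1}^s}$ on which your argument rests is simply false. No amount of combinatorics with $\ker\bar\alpha_i$ can repair this, because the failure already occurs at the level of $\varpi_E$-torsion.

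The paper's proof is entirely different and does not touch $\alpha_i$. It uses only that $R\psi_i=\psi_{i-1}^s$ has multiplicity one in $\theta[1/p]$, and then invokes the argument of Lemma~\ref{generators1}: in any lattice $\theta'\subset\theta[1/p]$, every $\psi_{i-1}^s$-eigenvector already lies in the saturated sublattice $(\theta')^{\psi_{i-1}^s}$ (lift an eigenvector from $\overline{\theta'}$ using that $|H|$ is prime to $p$, then use multiplicity one). Taking $\theta'=\theta^{\psi_i^s}$ and $x$ a $\psi_i^s$-eigenvector---harmless, since such an $x$ exists and the downstream use of Proposition~\ref{nextalpha} via Lemma~\ref{S_0^+vanishing} and equation~(\ref{contraction}) already requires it---Lemma~\ref{shift} makes $S^+_{i^+(\psi_i^s)}(x)$ a $\psi_{i-1}^s$-eigenvector, and the lemma follows immediately.
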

\begin{proof}
By assumption, the character~$R\psi_i = \psi_{i-1}^s$ has multiplicity one in~$\theta[1/p]$.
The result then follows from Lemma \ref{generators1}.

\end{proof}

Now we conclude the proof of the proposition.
Lemmas~\ref{stepII} and~\ref{stepIII} imply that
\begin{equation}\label{equalityafteriota}
\iota\alpha_iS_{i^+(\psi_i^s)}^+(x) = S_{i^+(\psi_i^s)}^+ \alpha_i(x) = S_{i^+(\psi_i^s)}^+\pi (\varphi^{\psi_i^s}) = \pi S_{i^+(\psi_i^s)}^+(\varphi^{\psi_i^s})
\end{equation}
in $Q(\psi_{i}^s)^{\psi_i^s}$. 
By Lemma~\ref{nonvanishing} we deduce an equality
\[
\iota\alpha_iS_{i^+(\psi_i^s)}^+(x)  = \iota \pi S_{i^+(\psi_i^s)}^+(\varphi^{\psi_i^s}).
\]
Now~$\iota$ need not be injective, but we know that it is nonzero and that the vector $\pi S_{i^+(\psi_i^s)}^+(\varphi^{\psi_i^s})$ generates $Q(\psi_i^s)^{R\psi_i}$, so we deduce that
\begin{equation}\label{stepIV}
\alpha_iS_{i^+(\psi_i^s)}^+(x)  =  \pi S_{i^+(\psi_i^s)}^+(\varphi^{\psi_i^s}) \in Q(\psi_i^s)^{R\psi_i}.
\end{equation}
We conclude that
\begin{align*}
\pr_{\psi_{i-1}^s} \alpha_{i-1}(\tld{c}(\psi_i^s)S^+_{i^+(\psi_i^s)}(x)) = \,\,\, &\pr_{\psi_i} \alpha_i(\tld{c}(\psi_i^s) S_{i^+(\psi_i^s)}^+(x)) \\
\overset{(\ref{stepIV})}{=} &\pr_{\psi_i} \pi (\tld{c}(\psi_i^s) S_{i^+(\psi_i^s)}^+(\varphi^{\psi_i^s})) \\
 =  \,\,\, &\varphi^{\psi_{i-1}^s} \text{ by Lemma~\ref{stepI}}.
\end{align*}
\end{proof}

By Proposition~\ref{nextalpha}, we deduce that if $x \in \theta^{\psi_{k}^s}$, and $\alpha_k(x)$ maps to~$\varphi^{\psi_{k}^s} \in \sigma(\psi_{k}^s)$, then 
\[
\alpha_0 \left (\tld{c}(\psi_1^s)S^+_{i^+(\psi_1^s)}\cdots \tld{c}(\psi_{k}^s)S^+_{i^+(\psi_{k}^s)}(x) \right )
\]
maps to~$\varphi^{\psi_0^s} = \varphi^{\psi_{k}^s}$. The composition $\theta^{\psi_0^s} \to \theta^{\psi_1^s} \to \cdots \to \theta^{\psi_k^s}$ of saturated inclusions is multiplication by some~$p^\nu$, and we deduce that
\[
p^\nu[c(\psi_0, \ldots, \psi_{k-1})]\cdot x = \tld{c}(\psi_1^s)S^+_{i^+(\psi_1^s)}\cdots \tld{c}(\psi_{k}^s)S^+_{i^+(\psi_{k}^s)}(x).
\]
In the next sections, we will compute the product~$\prod_{i = 1}^{k} \tld{c}(\psi_i^s)$ as well as the composition~$S_{i^+(\psi_1^s)}^+\cdots S_{i^+(\psi_{k}^s)}^+$, using the methods of \S \ref{repprelim}.

\subsection{The irreducible case.}\label{sec:irred}

Here we compute the constants of the previous section when~$\rhobar$ is irreducible. Recall the bijections $J \mapsto \lambda_J$ and $J \mapsto \sigma(\lambda_J, \rhobar)$, from subsets~$J \subseteq\{0, \ldots, f-1 \}$ to formal weights of~$\rhobar$, resp. modular Serre weights of~$\rhobar$ (they are defined in~(\ref{formalweights}), resp.~(\ref{weightdefinition})). The integers~$r_i$ are defined through
\[
\rhobar|_{I_{\bQ_p}} \cong \fourmatrix{\omega_{2f}^{\sum_{j=0}^{f-1}(r_j+1)p^j}}{0}{0}{\omega_{2f}^{q\sum_{j=0}^{f-1}(r_j+1)p^j}}\] 
up to twist. In this section, we write~$\delta$ for~$\delta_{\mathrm{irr}}$. If~$\lambda = \lambda_J$, then we define~$\delta \lambda = \lambda_{\delta J}$, and similarly, if $\chi = \sigma(\lambda_J, \rhobar)^{I_1}$, then we define $\delta \chi = \sigma(\lambda_{\delta J}, \rhobar)^{I_1}$. By the proof of~\cite[Proposition~5.1]{Breuilparameters}, and our assumptions on the characters~$\chi_i$, we have $\chi_{i+1} = \delta \chi_i$ for all~$i$.

We remark that~$k$, the number of characters in our cycle, is an even number when~$\rhobar$ is irreducible: this is because $\delta^f(J) = J^c$ (the complement of~$J$). For future reference, we record the following lemma.

\begin{lemma}\label{differenceodd}
The number $|J \triangle \delta J|$ is odd.
\end{lemma}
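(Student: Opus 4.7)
The plan is a direct parity calculation from the definition of $\delta_{\mathrm{irr}}$. I would first compute $|\delta J|$ in terms of $|J|$ and the indicator value $\mathbf{1}_J(0)$, and then deduce the parity of the symmetric difference from the standard identity $|J \triangle \delta J| = |J| + |\delta J| - 2|J \cap \delta J|$.

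More precisely, the definition says that for $j < f-1$ one has $j \in \delta J$ iff $j+1 \in J$, while $f-1 \in \delta J$ iff $0 \notin J$. Summing indicator functions, the contribution from the indices $j = 0, \ldots, f-2$ equals $\mathbf{1}_J(1) + \cdots + \mathbf{1}_J(f-1) = |J| - \mathbf{1}_J(0)$, and the last index contributes $1 - \mathbf{1}_J(0)$. Hence
\[
|\delta J| = |J| + 1 - 2\,\mathbf{1}_J(0),
\]
so $|J| + |\delta J| = 2|J| + 1 - 2\,\mathbf{1}_J(0)$ is odd. Since $|J \triangle \delta J| \equiv |J| + |\delta J| \pmod 2$, the claim follows.

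The proof is essentially mechanical; there is no real obstacle. The only small point worth noting (but not really an obstacle) is the asymmetry introduced by the "twist" at $f-1$, which is precisely what forces the odd parity: without this twist, $\delta$ would be the shift $\delta_{\mathrm{red}}$, for which $|J \triangle \delta_{\mathrm{red}} J|$ can perfectly well be even. Conceptually, this reflects the fact that $\delta_{\mathrm{irr}}^f(J) = J^c$ (an odd-cardinality change when $f$ is odd and a difference detected over the full orbit), whereas $\delta_{\mathrm{red}}^f(J) = J$.
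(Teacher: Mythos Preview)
Your proof is correct and is a clean, purely arithmetic argument: you compute $|\delta_{\mathrm{irr}} J| = |J| + 1 - 2\,\mathbf{1}_J(0)$ directly from the definition and read off the parity of the symmetric difference. The paper takes a different route: it first observes that $\delta_{\mathrm{irr}}(J)$ and $\delta_{\mathrm{red}}(J)$ differ at exactly the single index $f-1$, so it suffices to show $|J \triangle \delta_{\mathrm{red}} J|$ is even, and then proves this by exhibiting a fixed-point-free involution on $J \triangle \delta_{\mathrm{red}} J$ (pairing each ``rise'' $(0,1)$ in $\ch_J$ with the nearest preceding ``fall'' $(1,0)$).

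Your approach is shorter and more elementary. The paper's approach, while slightly less direct here, introduces the involution-pairing technique that is reused in more substantial form later in \S\ref{sec:irred} and \S\ref{sec:red} (see the proofs of Propositions~\ref{coeffreducible}, \ref{prodirreducible}, and~\ref{prodreducible}), where analogous pairings on sets of the same flavour are needed to get cancellations that are not mere parity statements. So the paper's proof doubles as a warm-up for those arguments, whereas yours is a self-contained one-liner.
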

\begin{proof}
It is enough to prove that $|J \triangle \delta_{\mathrm{red}}(J)|$ is even, because $\delta_{\mathrm{red}}(J)$ differs from~$\delta_\irr(J)$ precisely at~$f-1$. We have $j \in J \triangle \delta_\red J$ if and only if $\ch_J(j, j+1) \in \{(1, 0), (0, 1)\}$.

Define an involution $*: J \triangle \delta_\red J \to J \triangle \delta_\red J$ as follows: if~$\ch_J(j, j+1) = (1, 0)$, then $*j$ is the first number following~$j$ such that $\ch_J(*j, *j+1) = (0, 1)$. If $\ch_J(j, j+1) = (0, 1)$, then $*j$ is the first number preceding~$j$ such that $\ch_J(*j, *j+1) = (1, 0)$. The lemma follows since~$*$ has no fixed points.
\end{proof}

\paragraph{Computation of~$i^+(\psi_{k-1-i}^s)$.}
By definition, $i^+(\psi_{k-1-i}^s)$ is the only number in the interval~$[1, q-2]$ such that $S_{i^+(\psi_{k-1-i}^s)}^+$ sends the character~$\chi_i$ to~$\chi_{i+1}$. 
As such, we will also write it as~$i^+(\chi_i)$. 
The modular weight of~$\rhobar$ corresponding to~$\chi_i$ can be written as $\sigma(\lambda_J, \rhobar) = (\lambda_0(r_0), \ldots, \lambda_{f-1}(r_{f-1}) )\otimes \det^{e(\lambda)(r)}$ for some~$J$, and so
\[
\chi_i: \fourmatrix{a}{0}{0}{d} \mapsto a^{\lambda(r)} (ad)^{e(\lambda)(r)}
\]
where we have written $\lambda(r) = \sum_{i=0}^{f-1} \lambda_i(r_i)p^i$.
Our goal in this paragraph is to prove the following proposition.

\begin{pp}\label{stateintegerparameter}
The $j$-th $p$-adic digit of~$i^+(\chi_i)$ is
\[
i^+(\chi_i)_j  =
\begin{dcases}
(\delta\lambda)_j(r_j) &\text{when~$\rhobar$ is irreducible}\\
p-1 &\text{when~$\rhobar$ is reducible}.
\end{dcases}
\]
\end{pp}
\begin{proof}
This is a direct consequence of Lemma~\ref{i^+(chi_i)} and Proposition~\ref{integerparameter} to follow.
\end{proof}

\begin{rk}\label{Hureference}
Some of the computations in this section have appeared in~\cite{Hu} for reducible~$\rhobar$ (the combinatorics is similar). To see the compatibility between Proposition~\ref{stateintegerparameter} and~\cite[Lemme~5.4(ii)]{Hu}, notice that the set there denoted~$I(\lambda)$ coincides with $\delta_\red J_\lambda$ in our notation. Since we will also need the (slightly different) irreducible case, we have decided to give a full argument.
\end{rk}

We need to recall~\cite[Lemma 2.2]{BPmodp}. 
The Jordan--H\"older factors of $\Ind_I^{\GL_2(k_L)}(\chi_i^s)$ are in bijection with subsets of $\{ 0, \ldots, f-1 \}$.
We first define a bijection between the set of subsets of $\{ 0, \ldots, f-1 \}$ and a set of formal weights~$\mP(x_i)$, which we denote~$\mu \mapsto J(\mu)$. 
Writing~$\mu$ as an $f$-tuple~$(\mu_i)$, it is characterized by
\[
\mu_i = 
\begin{cases}
p-1-x_i &\text{ if } \ch_{J(\mu)}(i-1, i) = (1, 1)\\ 
x_i - 1 &\text{ if } \ch_{J(\mu)}(i-1, i) = (1, 0)\\ 
p-2-x_i &\text{ if } \ch_{J(\mu)}(i-1, i) = (0, 1)\\
x_i &\text{ if } \ch_{J(\mu)}(i-1, i) = (0, 0).
\end{cases}
\]
Note that $j \in J(\mu)$ if and only if $\mu_j \in \{p-2-x_j, p-1-x_j\}$. 
The Jordan--H\"older factor of~$\Ind_I^{\GL_2(k_L)}(\chi_i^s)$ corresponding to~$J(\mu)$ is $\tau = \mu(\lambda(r)) \otimes \det^{e(\mu)(\lambda(r))}\eta$, where $e(\mu)$ is as defined in (\ref{eqn:e}).

The following lemma is implicit in~\cite{BPmodp} but since it is fundamental to our method we give some details.

\begin{lemma}\label{indices}
Let~$t \in [1, q-2]$ be an integer and define $\chi: \fourmatrix{a}{*}{0}{d} \to a^t\eta(ad)$. 
Let~$\mu \in \mP(x_i)$, corresponding to the constituent $\tau = \mu(t) \otimes \det^{e(\mu)(t)}\eta$ of~$\Ind_I^{\GL_2(k_L)}\chi^s$. 
(We write~$\mu(t)$ for the evaluation of the formal weight~$\mu$ at the $p$-adic digits of~$t$.)
Assume that~$\tau$ is neither the socle nor the cosocle of the induction. 
Let~$\varphi$ be a generator of the $\chi^s$-eigenspace for~$I/I_1$ in~$\Ind_I^{\GL_2(k_L)}\chi^s$.

Then $S_{\sum_{j \in J(\mu)}p^j(p-1-\mu_j(t_j))} \varphi$ generates $\tau^{I_1}$ in every quotient of $\Ind_I^{\GL_2(k_L)}\chi^s$ where~$\tau$ is a subrepresentation.
\end{lemma}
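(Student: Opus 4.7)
The plan is to identify $S_i\varphi$, for $i=\sum_{j\in J(\mu)}p^j(p-1-\mu_j(t_j))$, as the (up to scalar) unique $I$-eigenvector of $\Ind_I^{\GL_2(k_L)}\chi^s$ carrying the $H$-eigencharacter $\xi$ which appears on $\tau^{I_1}$, and then to deduce generation in every admissible quotient by a multiplicity argument.

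First I would apply Lemma~\ref{shift} to $\varphi$ (which has eigencharacter $\chi^s$) to compute the $H$-eigencharacter $\xi'$ of $S_i\varphi$ explicitly in terms of $i$, $t$ and $\eta$. Separately, since $\tau = \mu(t)\otimes\det^{e(\mu)(t)}\eta$ is a tensor product of Frobenius-twisted symmetric powers, its one-dimensional $I_1$-invariant line carries a determined $H$-character $\xi$. A case-by-case calculation, using the four cases in the table defining $\mu_j$ in terms of $\ch_{J(\mu)}(j-1,j)$ together with the formula~(\ref{eqn:e}) for $e(\mu)$, shows that $\xi=\xi'$ precisely when $i\equiv t+e(\mu)(t)\pmod{q-1}$, and the same bookkeeping identifies this residue class with $\sum_{j\in J(\mu)}p^j(p-1-\mu_j(t_j))$.

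Next I would exploit that $\tau$ is neither socle nor cosocle, so that $\xi\notin\{\chi,\chi^s\}$; by Lemma~\ref{basistype}, $\xi$ then occurs with multiplicity one in $\Ind_I^{\GL_2(k_L)}\chi^s$, the $\xi$-isotypic line of the $I_1$-invariants is spanned by $S_i\varphi$, and $\tau$ is the unique Jordan--H\"older factor of the induction whose $I_1$-invariants contain $\xi$. Let $K$ be the kernel of a surjection $\Ind_I^{\GL_2(k_L)}\chi^s\twoheadrightarrow Q$ in which $\tau$ embeds. Since $\tau$ has multiplicity one in the induction and occurs in $Q$, it is not a composition factor of $K$, whence $K^{I_1}[\xi]=0$ by the standard inequality of dimensions along a Jordan--H\"older filtration. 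Consequently $S_i\varphi\notin K$, so its image in $Q$ is a nonzero $I_1$-fixed vector of eigencharacter $\xi$; as $\xi$ appears only in $\tau$ among the composition factors of $Q$, this image lies in the one-dimensional space $\tau^{I_1}$ and generates it.

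The main obstacle is the character matching in the first step. The halves in~(\ref{eqn:e}) must disappear through the parity constraints encoded in the table defining $\mu_j$, and one needs to unwind $\chi^s=\chi\alpha^{-t}$ together with the conjugation by $s$ in Lemma~\ref{shift} with care so as to produce exactly the residue class represented by $\sum_{j\in J(\mu)}p^j(p-1-\mu_j(t_j))$. Once this matching is established, the remainder of the argument is formal multiplicity-counting.
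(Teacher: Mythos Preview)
Your approach is essentially the same as the paper's: reduce to a character match using that $\xi\notin\{\chi,\chi^s\}$ (so multiplicity one), compute the eigencharacter of $S_i\varphi$ via Lemma~\ref{shift}, and then verify the combinatorial identity for the index. One slip: the correct condition is $i\equiv e(\mu)(t)\pmod{q-1}$, not $i\equiv t+e(\mu)(t)$, since $S_i\varphi$ has eigencharacter $\chi\alpha^{-i}=a^t(a^{-1}d)^i\eta(ad)$ while $\tau^{I_1}$ affords $a^{\mu(t)}(ad)^{e(\mu)(t)}\eta(ad)=a^t(a^{-1}d)^{e(\mu)(t)}\eta(ad)$. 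The paper then proves $e(\mu)(t)=\sum_{j\in J(\mu)}p^j(p-1-\mu_j(t_j))$ as an exact equality in~$\bZ$ by a carrying argument (introducing auxiliary integers $d(\mu)_i$), which is the computation you sketch as ``case-by-case''.
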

\begin{proof}
Compare~\cite[Lemma~2.7]{BPmodp}. 
By our assumption on~$\tau$, the character~$\tau^{I_1}$ has multiplicity one, hence it is enough to prove that the given vector has the same $H$-eigencharacter as
\begin{displaymath}
\tau^{I_1} = a^{\mu(t)}(ad)^{e(\mu)(t)}\eta(ad) = a^t (a^{-1}d)^{e(\mu)(t)}\eta(ad).
\end{displaymath}
An application of Lemma~\ref{shift} implies that the $H$-eigencharacter of $S_{\sum_{j \in J(\mu)}p^j(p-1-\mu_j(t_j))} \varphi$ is
\begin{displaymath}
a^{t}\eta(ad)(a^{-1}d)^{\sum_{j \in J(\mu)}p^j(p-1-\mu_j(t_j))},
\end{displaymath}
so we need to prove that $e(\mu)(t) \equiv \sum_{j \in J(\mu)}p^j(p-1-\mu_j(t_j)) \mod q-1$. We will actually prove this as an equality in~$\bZ$. 

Write
$e(\mu)_i = 2x_i-p+1, 1, 2x_i-p+2, 0$
according to whether $\ch_{J(\mu)}(i-1, i) = (1, 1), (1, 0), (0, 1), (0, 0)$ respectively.
By definition, we have
\begin{equation}\label{eqn:emu}
e(\mu) = 
\begin{dcases}
\half \left (\sum_{i=0}^{f-1} e(\mu)_i p^i \right ) &\text{ if } \mu_{f-1} \in \{x_{f-1}, x_{f-1}-1\}\\
\half \left (p^f - 1 + \sum_{i=0}^{f-1} e(\mu)_i p^i \right ) &\text{ otherwise}.
\end{dcases}
\end{equation}

We are going to rearrange this sum so that the summands~$-p$ do not appear. Define an integer
\begin{equation}\label{eqn:dmui}
d(\mu)_i = 
\begin{dcases}
e(\mu)_i + p - 1 &\text{ if } \ch_{J(\mu)}(i-1, i) = (1, 1) \\
e(\mu)_i - 1 &\text{ if } \ch_{J(\mu)}(i-1, i) = (1, 0) \\
e(\mu)_i +p &\text{ if } \ch_{J(\mu)}(i-1, i) = (0, 1) \\
e(\mu)_i &\text{ if } \ch_{J(\mu)}(i-1, i) = (0, 0)
\end{dcases}
= 
\begin{dcases}
2x_i &\text{ if } \ch_{J(\mu)}(i-1, i) = (1, 1). \\
0 &\text{ if } \ch_{J(\mu)}(i-1, i) = (1, 0). \\
2x_i + 2 &\text{ if } \ch_{J(\mu)}(i-1, i) = (0, 1). \\
0 &\text{ if } \ch_{J(\mu)}(i-1, i) = (0, 0).
\end{dcases}
\end{equation}
Then 
\begin{equation}\label{eqn:dmu}
\sum_i d(\mu)_ip^i = 
\begin{dcases}
\sum_i e(\mu)_ip^i & \text{ if } f-1 \notin J(\mu)\\
p^f - 1 + \sum_i e(\mu)_ip^i & \text{ if } f-1 \in J(\mu).
\end{dcases}
\end{equation}
(It may be helpful to visualize this process as ``carrying" a coefficient $e(\mu)_i$ to the left, when it is equal to~$1$.)

Since $f-1 \notin J(\mu)$ is equivalent to~$\mu_{f-1} \in \{x_{f-1}-1, x_{f-1}\}$, we deduce from (\ref{eqn:emu}), (\ref{eqn:dmui}), and (\ref{eqn:dmu}) that
\[
e(\mu) = \half \left (\sum_{i-1 \not \in J(\mu), i \in J(\mu)}(2x_i+2)p^i + \sum_{i-1, i \in J(\mu)} 2 x_i p^i \right )
\]
and since $x_i+1 = p-1-(p-2-x_i)$ and~$x_i = p-1-(p-1-x_i)$, the claim follows.
\end{proof}

\begin{lemma}\label{formalcomposition}
Let~$\lambda$ be a formal weight for~$\rhobar$ and let $\mu \in \mP(x_i)$ be such that $\delta(\lambda) = \mu \circ \lambda$ (that is, its $i$-th component is the composition~$\mu_i \circ \lambda_i$). Then
\begin{displaymath}
\mathrm{det}^{e(\mu\circ\lambda)(r)} = \mathrm{det}^{e(\mu)(\lambda(r))}\mathrm{det}^{e(\lambda)(r)}.
\end{displaymath}
\end{lemma}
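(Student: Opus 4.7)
The plan is to unwind the definition~(\ref{eqn:e}) and verify the additive identity
\[
e(\mu\lambda)(r) \equiv e(\lambda)(r) + e(\mu)(\lambda(r)) \pmod{p^f - 1},
\]
since the character $\det: \GL_2(k_L) \to k_E^\times$ takes values in a cyclic group of order dividing $p^f - 1$, so the equality of characters is equivalent to the equality of exponents modulo $p^f - 1$.

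First, I would ignore the correction term $(p^f - 1)/2$ momentarily and just compute
\[
\tfrac{1}{2}\sum_i p^i(r_i - \lambda_i(r_i)) + \tfrac{1}{2}\sum_i p^i(\lambda_i(r_i) - \mu_i(\lambda_i(r_i))) = \tfrac{1}{2}\sum_i p^i(r_i - \mu_i(\lambda_i(r_i))),
\]
which is precisely the ``main part'' of $e(\mu\lambda)(r)$; the middle terms telescope. So up to possibly a multiple of $(p^f - 1)/2$, the identity holds on the nose, not merely modulo $p^f - 1$.

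The main (but minor) obstacle is tracking the parities of the three $(p^f-1)/2$ corrections. Let $\epsilon_\lambda, \epsilon_\mu, \epsilon_{\mu\lambda} \in \{0,1\}$ equal $1$ precisely when the leading coefficient of the $(f-1)$-st component of $\lambda$, $\mu$, and $\mu\lambda$ respectively is $-1$. Since the leading coefficient of the composition $\mu_{f-1}\lambda_{f-1}$ is the product of the leading coefficients of $\mu_{f-1}$ and $\lambda_{f-1}$, we have $\epsilon_{\mu\lambda} \equiv \epsilon_\lambda + \epsilon_\mu \pmod{2}$. Therefore the discrepancy
\[
\bigl(\epsilon_\lambda + \epsilon_\mu - \epsilon_{\mu\lambda}\bigr)\cdot\tfrac{p^f - 1}{2}
\]
is always an integer multiple of $p^f - 1$, hence $0$ modulo $p^f - 1$. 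Combining with the telescoping computation yields the required congruence, and therefore the identity of characters, completing the proof.
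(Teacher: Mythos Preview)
Your proof is correct and follows essentially the same route as the paper's: the telescoping identity $r_i - (\mu\lambda)_i(r_i) = (r_i - \lambda_i(r_i)) + (\lambda_i(r_i) - \mu_i(\lambda_i(r_i)))$ handles the main sum, and the parity observation on the leading coefficient of the $(f-1)$-st component accounts for the $(p^f-1)/2$ corrections. The paper phrases the latter as ``$e(\mu)$ contains a $p^f-1$ if and only if exactly one of $e(\delta\lambda)$ and $e(\lambda)$ does,'' which is exactly your congruence $\epsilon_{\mu\lambda} \equiv \epsilon_\lambda + \epsilon_\mu \pmod 2$.
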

\begin{proof}
Working in the group of formal weights under composition, we see that the expression for~$e(\mu)$ contains a $p^f-1$ if and only if exactly one of~$e(\delta(\lambda))$ and~$e(\lambda)$ contains a~$p^f-1$.
The rest follows from the identity $r_i - (\mu\circ\lambda)_i(r_i) = r_i - \lambda_i(r_i) + \lambda_i(r_i) - \mu_i(\lambda_i(r_i))$.
\end{proof}

\begin{lemma}\label{i^+(chi_i)}
Let~$\chi_i = \sigma(\lambda_J, \rhobar)^{I_1}$, as before. Assume $\delta(\lambda) = \mu \circ \lambda$ with $\mu \in \mP(x_i)$. Then
\[
i^+(\chi_i) = q-1-\sum_{j \in J(\mu)} p^j(p-1-(\delta\lambda)_j(r_j)).
\]
\end{lemma}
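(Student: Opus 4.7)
The plan is to reduce the statement to a character computation that can be handled by combining Lemma~\ref{shift}, Lemma~\ref{indices}, and Lemma~\ref{formalcomposition}. By definition of $i^+(\chi_i)$ (see Lemma~\ref{exchange} together with Lemma~\ref{relationsbasistype}), this integer is the unique element of $(0, q-1)$ such that $S^+_{i^+(\chi_i)}$ sends a vector of $H$-eigencharacter $\chi_i$ to one of eigencharacter $\chi_{i+1}$. By Lemma~\ref{shift}, $S^+_t$ twists the $H$-eigencharacter by $\alpha^t$. Hence $i^+(\chi_i)$ is characterised, modulo $q-1$, by the equation $\chi_{i+1} = \chi_i\cdot \alpha^{i^+(\chi_i)}$.

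Next, I would spell out both sides using the explicit formula for the characters attached to formal weights. Recall that $\chi_i = \sigma(\lambda,\rhobar)^{I_1}$ corresponds to the character $\fourmatrix{a}{0}{0}{d}\mapsto a^{\lambda(r)}(ad)^{e(\lambda)(r)}$ (up to the fixed twist $\chi$, which is common to $\chi_i$ and $\chi_{i+1}$ and so cancels), and similarly $\chi_{i+1}$ corresponds to $\delta\lambda$. Comparing the $d$-exponents on both sides of $\chi_{i+1} = \chi_i \cdot \alpha^{i^+(\chi_i)}$ yields the congruence
\[
i^+(\chi_i) \equiv e(\lambda)(r) - e(\delta\lambda)(r) \pmod{q-1}.
\]
By Lemma~\ref{formalcomposition}, applied to $\delta\lambda = \mu\lambda$, the right-hand side becomes $-e(\mu)(\lambda(r)) \pmod{q-1}$.

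Now I would apply Lemma~\ref{indices} with $t = \lambda(r)$, so that $\mu_j(t_j) = (\mu\lambda)_j(r_j) = (\delta\lambda)_j(r_j)$. This gives
\[
e(\mu)(\lambda(r)) \equiv \sum_{j \in J(\mu)} p^j\bigl(p-1 - (\delta\lambda)_j(r_j)\bigr) \pmod{q-1},
\]
whence
\[
i^+(\chi_i) \equiv q-1 - \sum_{j \in J(\mu)} p^j\bigl(p-1 - (\delta\lambda)_j(r_j)\bigr) \pmod{q-1}.
\]

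The only remaining point is to check that the right-hand side is the correct representative in $(0, q-1)$, rather than merely a congruence class. Each summand lies in $[0, p-1]$, so the sum lies in $[0, q-1]$, and in order to conclude it suffices to rule out the two boundary cases $0$ and $q-1$. These are excluded by the running assumption that $\chi_i \ne R\chi_i^s$ and $\chi_i^s \ne R\chi_i^s$ (equivalently, $\sigma(\chi_{i+1})$ is neither the socle nor the cosocle of $\Ind_I^{K}\chi_i^s$), which is precisely the hypothesis needed to apply Lemma~\ref{indices} unambiguously and which forces $J(\mu) \ne \emptyset$ and $J(\mu) \ne \{0, \dots, f-1\}$ with the digits not all saturating. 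This verification is the only slightly delicate point; everything else is essentially bookkeeping once the congruence is established.
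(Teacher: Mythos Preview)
Your proposal is correct and follows essentially the same route as the paper: both arguments combine Lemma~\ref{shift}, Lemma~\ref{formalcomposition}, and (the proof of) Lemma~\ref{indices}, together with the running assumptions $R\chi_i^s\neq\chi_i,\chi_i^s$ to rule out the socle/cosocle cases. The only cosmetic difference is that the paper first identifies the $S$-operator index $\sum_{j\in J(\mu)}p^j(p-1-(\delta\lambda)_j(r_j))$ sending $\chi_i^s$ to $\chi_{i+1}$ via Lemma~\ref{indices} and then converts to $i^+(\chi_i)$ using $i^+=q-1-i$, whereas you work directly with the $S^+$-side and the character equation $\chi_{i+1}=\chi_i\alpha^{i^+(\chi_i)}$; the identity $e(\mu)(t)=\sum_{j\in J(\mu)}p^j(p-1-\mu_j(t_j))$ you invoke is exactly what is established in the proof of Lemma~\ref{indices}.
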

\begin{proof}
By our assumptions, $\chi_{i+1}$ is the $I/I_1$-character appearing in the weight $((\delta \lambda)_0(r_0), \ldots, (\delta\lambda)_{f-1}(r_{f-1})) \otimes \det^{e(\delta \lambda)(r)}$, where by definition $\delta \lambda$ is the formal weight corresponding to the set $\delta J$. 
We claim that the hypotheses of Lemma~\ref{indices} hold: then it follows from Lemma~\ref{formalcomposition} and Lemma~\ref{indices} that if $\delta (\lambda) = \mu \circ \lambda$ then $S_{\sum_{j \in J(\mu)} p^j(p-1-(\delta\lambda)_j(r_j))}$ sends $\chi_i^s$ to~$\chi_{i+1}$. 
But then $i^+(\chi_i) = q-1-\sum_{j \in J(\mu)} p^j(p-1-(\delta\lambda)_j(r_j))$ by Lemma~ \ref{shift}. 

To check the assumptions of Lemma~\ref{indices}, observe that $\chi_{i+1}$ is the eigencharacter of 
\[
\soc_K \operatorname{im}(\thetabar(\chi_i^s) \to D_0(\rhobar)). 
\]
If $\thetabar(\chi_i^s)$ embeds in~$D_0(\rhobar)$ then $R\chi_i^s = (\soc_K \thetabar(\chi_i^s))^{I_1} = \chi_i$, which contradicts out assumptions. Similarly, if $\thetabar(\chi_i^s) \to D_0(\rhobar)$ factors through $\cosoc_K \thetabar(\chi_i^s)$ then $R \chi_i^s = \chi_i^s$, another contradiction.
\end{proof}
To conclude the proof of Proposition~\ref{stateintegerparameter}, it is enough to prove the following.

\begin{pp}\label{integerparameter}
Let~$\lambda$ be a formal weight of~$\rhobar$ and define $\mu \in \mP(x_i)$ by $\delta(\lambda) = \mu \circ \lambda$. Then $J(\mu) = J \triangle \delta J$.
\end{pp}
\begin{proof}
The group of formal weights maps onto $W$, the Weyl group of~$\Res_{\bQ_{p^f}/\bQ_p} \GL_2$, according to the sign of the leading coefficients. 
By construction, $j \in J(\mu)$ if and only if~$\mu_j$ has negative leading coefficient, but this happens if and only if $\lambda_j, (\delta \lambda)_j$ have opposite leading coefficients. 
By construction, this is equivalent to $j \in J \triangle \delta J$.
\end{proof}

\paragraph{Computation of~$S_{i^+(\psi_1^s)}^+\cdots S_{i^+(\psi_k^s)}^+(x)$.}\label{irreducible2} 
By construction, Proposition~\ref{pp:contraction} applies to the product $S_{i^+(\psi_1^s)}^+\cdots S_{i^+(\psi_k^s)}^+$.
Let
\[
\beta = \beta(i^+(\psi_1^s), \ldots, i^+(\psi_k^s)) \in W(k_L)
\]
be as in Proposition \ref{pp:contraction}, and choose  $x \in \theta^{\psi_k^s}$ as in Proposition~\ref{nextalpha}, where~$\theta[1/p]$ denotes the central type for~$\rhobar$.
Then Lemma~\ref{S_0^+vanishing} implies that
\begin{equation}\label{contraction}
S_{i^+(\psi_1^s)}^+\cdots S_{i^+(\psi_k^s)}^+(x) = \beta x \in \theta^{\psi_k^s}.
\end{equation}

\begin{pp}\label{coeffirreducible}
We have $v_p(\beta) = \half k |J \triangle \delta J|$ and the leading term of~$\beta$ is congruent to
\begin{displaymath}
(-1)^{\half k |J \triangle \delta J| + \frac{kh}{2f}(1+\sum r_i)}
\end{displaymath}
modulo~$p$, where the invariant~$h$ is defined in~\cite[Lemme~6.2(ii)]{Breuilparameters} (we will not need the explicit definition).
\end{pp}

\begin{rk}
The equality $v_p(\beta) = \half k |J \triangle \delta J|$ follows from Proposition~\ref{productvalue} and the fact that $v_p(\beta)=v_p(\prod_i \tld{U}_p(\chi_i))$. Here, we will give a different proof of this fact.
\end{rk}

\begin{proof}
By Proposition~\ref{pp:contraction} we know that the leading term~$c(\beta)$ of~$\beta$ is the Teichm\"uller lift of
\begin{displaymath}
(-1)^{v_p(\beta) + (k-2)(f-1)} \prod_{\substack{0 \leq t \leq k-1 \\ 0 \leq j \leq f-1}} i^+(\psi_t^s)_j!
\end{displaymath}
and the valuation is
\begin{equation}\label{valuationterm}
v_p(\beta) = \frac{1}{p-1} \left ( \sum_{\substack{0 \leq t \leq k-1 \\ 0 \leq j \leq f-1}}p - 1 - i^+(\psi_t^s)_j \right )
\end{equation}

We begin to compute $c(\beta)$.
By Proposition~\ref{stateintegerparameter} and the proof of~\cite[Lemme~6.2(ii)]{Breuilparameters}, we have 
\begin{displaymath}
\prod_{\substack{0 \leq t \leq k-1 \\ 0 \leq j \leq f-1}}(p-1- i^+(\psi_t^s)_j)! = (-1)^{\frac{kh}{2f}(1+\sum r_i)}.
\end{displaymath}
Hence $c(\beta)$ is also congruent modulo~$p$ to
\begin{equation}\label{partU}
(-1)^{v_p(\beta) + (k-2)(f-1) + kf+\sum_{j, t}(p-1- i^+(\psi_t^s)_j)+\frac{kh}{2f}(1+\sum r_i)},
\end{equation}
using that $x! (p-1-x)! \equiv (-1)^{x+1} \pmod p$ for $0\leq x\leq p-1$.
Hence, since~$k$ is even and $p-1$ divides $\sum_{j, t}(p-1- i^+(\psi_t^s)_j)$, we see that~(\ref{partU}) also equals
\begin{displaymath}
(-1)^{v_p(\beta)+\frac{kh}{2f}(1+\sum r_i)}.
\end{displaymath}

Now we consider~$v_p(\beta)$.
There is some cancellation in pairs in the sum~(\ref{valuationterm}). If the character~$\chi_{k-1}$ corresponds to~$J \subset \{0, \ldots, f-1\}$, then we can apply Proposition~\ref{stateintegerparameter} and rewrite~(\ref{valuationterm}) as
\begin{equation}\label{eqn:p-1}
\frac{1}{p-1} \left (  \sum_{j \in J \triangle \delta J} p-1-(\delta\lambda)_j(r_j) + \sum_{j \in \delta J \triangle \delta^2 J} p-1-(\delta^2 \lambda)_j(r_j) + \cdots + \sum_{j \in \delta^{k-1} J \triangle J} p-1-\lambda_j(r_j) \right ).
\end{equation}
Recall that the complement $J^c$ is contained in the $\delta$-orbit of~$J$: more precisely, it is equal to $\delta^{k/2}(J)$, by definition of~$\delta = \delta_\irr$. 
By Lemma~\ref{complement} to follow, we find that
\[
p-1-\delta\lambda_j(r_j) + p-1-(\delta^{k/2+1}\lambda)_j(r_j) = p-1.
\]
Since the integer $|J \triangle \delta J|$ is constant throughout the $\delta$-orbit of~$J$, (\ref{eqn:p-1}) equals~$\half k |J \triangle \delta J|$.
\end{proof}

\begin{lemma}\label{complement}
If $j \in J \triangle \delta J$, then $\lambda_{\delta J, j} + \lambda_{\delta(J^c), j} = p-1$.
\end{lemma}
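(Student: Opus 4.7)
The strategy is a case analysis based on $j$, applying the definition of $\delta_\irr$ and inspecting the formal weights table (\ref{formalweights}). By Lemma~\ref{deltabehaviour} we have $\delta(J^c) = (\delta J)^c$, so it suffices to prove $\lambda_{\delta J, j} + \lambda_{(\delta J)^c, j} = p-1$. Writing $K = \delta J$, the first step is a direct inspection of the table: for $j > 0$ (in both reducible and irreducible cases) the sum $\lambda_{K, j} + \lambda_{K^c, j}$ equals $p-1$ precisely when $\ch_K(j-1) \neq \ch_K(j)$ (and equals $p-3$ otherwise), while for $j = 0$ in the irreducible setting the sum equals $p-1$ precisely when $\ch_K(f-1) = \ch_K(0)$.

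The remaining task is to translate the hypothesis $j \in J \triangle \delta J$ into the correct parity condition on $\ch_{\delta J}$, which I would do in three subcases. If $1 \leq j \leq f-2$, then the definition of $\delta_\irr$ gives $\ch_{\delta J}(j-1) = \ch_J(j)$ and $\ch_{\delta J}(j) = \ch_J(j+1)$; the hypothesis reads $\ch_J(j) \neq \ch_J(j+1)$, so $\ch_{\delta J}(j-1) \neq \ch_{\delta J}(j)$ and the claim follows. If $j = f-1$ (with $f \geq 2$), then $\ch_{\delta J}(f-2) = \ch_J(f-1)$ and $\ch_{\delta J}(f-1) = 1 - \ch_J(0)$; the hypothesis becomes $\ch_J(f-1) = \ch_J(0)$, which again yields $\ch_{\delta J}(f-2) \neq \ch_{\delta J}(f-1)$. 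Finally, for $j = 0$ with $f \geq 2$, one has $\ch_{\delta J}(f-1) = 1 - \ch_J(0)$ and $\ch_{\delta J}(0) = \ch_J(1)$; the hypothesis $\ch_J(0) \neq \ch_J(1)$ forces $\ch_{\delta J}(f-1) = \ch_{\delta J}(0)$, giving the required condition for $j = 0$.

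The edge case $f = 1$ can be dispatched directly: then $\delta_\irr$ exchanges $\emptyset$ and $\{0\}$, so $0 \in J \triangle \delta J$ automatically, and $\ch_{\delta J}(0,0)$ is either $(1,1)$ or $(0,0)$, leading to $\lambda_{\delta J, 0} + \lambda_{(\delta J)^c, 0} = (p-1-x_0) + x_0 = p-1$ in both cases. No real obstacle is expected; the proof is essentially bookkeeping, and the only delicate point is keeping track of the ``twist'' at index $f-1$ introduced by $\delta_\irr$ versus $\delta_\red$, which is precisely what makes the $j = 0$ case of the irreducible table consistent with the identity $p-1$.
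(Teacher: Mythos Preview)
Your proof is correct and is essentially the same argument as the paper's, just written out in full detail. The paper makes the identical reduction via Lemma~\ref{deltabehaviour} (rewriting $\lambda_{\delta J,j}+\lambda_{\delta(J^c),j}$ as $\lambda_{K,j}+\lambda_{K^c,j}$ with $K=\delta J$) and then phrases the substitution as ``replace $J$ by $\delta^{-1}J$,'' declaring the remaining identity immediate from table~(\ref{formalweights}); your case split on $j=0$, $1\le j\le f-2$, $j=f-1$ and the separate $f=1$ check simply unpacks that one-line claim.
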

\begin{proof}
By Lemma~\ref{deltabehaviour}, it suffices to prove that if $j \in \delta^{-1}J \triangle J$ then $\lambda_{J, j} + \lambda_{J^c, j} = p-1$. This is immediate from the definition of $J \mapsto \lambda_J$, see~(\ref{formalweights}).
\end{proof}

\paragraph{Computation of~$\prod_{i=1}^{k} \tld{c}(\psi_i^s)$.}\label{irreducible3} 

Recall that the constants~$c(\psi_i^s)$ are defined by~$S_{i(\psi_i)}S_0 \varphi^{\psi_i^s} = c(\psi_i^s) S^+_{i^+(\psi_i^s)} \varphi^{\psi_i^s}$, that~$i(\psi_i) = q-1 - i^+(\psi_i^s) \not \in \{0, q-1\}$, and that~$\tld{c}(\psi_i^s)$ is the leading term of~$c(\psi_i^s)$.
In this section we compute the product~$\prod_{i=1}^{k} \tld{c}(\psi_i^s)$ by the method of~\cite{BD}, and we prove that it equals~$1$.
	
Fix an index~$i$. The character~$\psi_i^s = \chi_{k-1-i}$ appears in the socle of~$D_0(\rhobar)$ and so it is the $I/I_1$-eigencharacter of a modular weight of~$\rhobar$. We write~$\lambda$ for the corresponding formal weight, and~$J$ for the corresponding subset of~$\{0, \ldots, f-1\}$. We have therefore the formula $\psi_i^s: \fourmatrix{a}{0}{0}{d} \mapsto [a]^{\lambda(r)}[ad]^{e(\lambda)(r)}$. 

\begin{lemma}\label{coefficientsJacobi}
We have an equality
\begin{displaymath}
c(\psi_i^s) = \bJ(i(\psi_i), q-1-\lambda(r)).
\end{displaymath}
\end{lemma}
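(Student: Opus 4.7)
The plan is to compute the left-hand side of the defining equation
\[
S_{i(\psi_i)} S_0 \varphi^{\psi_i^s} = c(\psi_i^s)\, S^+_{i^+(\psi_i^s)} \varphi^{\psi_i^s}
\]
by a single application of Lemma~\ref{relations1}. I would take $v = \varphi^{\psi_i^s}$, which is an $I$-eigenvector with character $\chi: \fourmatrix{a}{0}{0}{d} \mapsto [a]^{\lambda(r)}[ad]^{e(\lambda)(r)}$, and apply the lemma with $i = i(\psi_i)$ and $j = 0$. The hypotheses hold under the running assumptions of \S\ref{iterate}: $i(\psi_i) \in (0,q-1)$ by Lemma~\ref{exchange} (which uses $R\psi_i \neq \psi_i,\psi_i^s$), and $q-1 \nmid i(\psi_i)-\lambda(r)$, since otherwise Lemma~\ref{shift} would force the target character of $S_{i(\psi_i)-\lambda(r)}$ to coincide with one of the excluded characters $\psi_i$ or $\psi_i^s$.

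The conclusion of Lemma~\ref{relations1} gives
\[
S_{i(\psi_i)} S_0 \varphi^{\psi_i^s} = \eta(-1)\, \bJ_0\!\big(i(\psi_i),\,-\lambda(r)\big)\, S_{i(\psi_i)-\lambda(r)} \varphi^{\psi_i^s}.
\]
Since $\rhobar$ is generic, $0 < \lambda(r) < q-1$ and hence $0 < q-1-\lambda(r) < q-1$; both arguments of the Jacobi sum lie in the range where $\bJ_0 = \bJ$, so the coefficient equals $\bJ\!\big(i(\psi_i),\,q-1-\lambda(r)\big)$.

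The remaining step is to show that
\[
\eta(-1)\, S_{i(\psi_i)-\lambda(r)} \varphi^{\psi_i^s} \;=\; S^+_{i^+(\psi_i^s)} \varphi^{\psi_i^s}.
\]
Both vectors lie in the $\psi_{i-1}^s$-isotypic line of $\theta(\psi_i^s)$, which is one-dimensional by Lemma~\ref{basistype} (again using our multiplicity hypotheses on the cycle), so they differ by a scalar; matching characters gives $i(\psi_i) + i^+(\psi_i^s) \equiv 0 \pmod{q-1}$, consistent with Lemma~\ref{exchange}. The explicit scalar $\eta(-1)^{-1}$ is then pinned down by the identity $S_a = \fourmatrix{0}{1}{1}{0} S_a^+$ combined with the conjugation rule $\chi^s = \chi \alpha^{-r}$ from the notation section, evaluated on the support $I \cdot w \cdot I_1$ of the relevant $\psi_{i-1}^s$-eigenvector in $\Ind_I^K \psi_i^s$. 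Once the two $\eta(-1)$-factors cancel, the claim $c(\psi_i^s) = \bJ(i(\psi_i),\,q-1-\lambda(r))$ follows.

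The main obstacle is not the invocation of Lemma~\ref{relations1}, which is essentially a one-line expansion, but rather the careful bookkeeping in the final identification: verifying that the sign $\eta(-1)$ produced by Lemma~\ref{relations1} is precisely cancelled by the proportionality constant relating $S_{i(\psi_i)-\lambda(r)}\varphi^{\psi_i^s}$ and $S^+_{i^+(\psi_i^s)}\varphi^{\psi_i^s}$ in the one-dimensional eigenspace. This is routine but requires tracking the action of the Weyl element $\fourmatrix{0}{1}{1}{0}$ on the explicit generator $\varphi^{\psi_i^s}$ through the various intertwiners, keeping track of how $e(\lambda)(r)$ enters through the central character.
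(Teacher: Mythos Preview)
Your first step is exactly the paper's: apply Lemma~\ref{relations1} with $v=\varphi^{\psi_i^s}$, $i=i(\psi_i)$, $j=0$, obtaining
\[
S_{i(\psi_i)}S_0\varphi^{\psi_i^s}=\eta(-1)\,\bJ_0(i(\psi_i),q-1-\lambda(r))\,S_{i(\psi_i)-\lambda(r)}\varphi^{\psi_i^s},
\]
with $\eta(-1)=[(-1)^{e(\lambda)(r)}]$. The hypotheses are checked exactly as you say.

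The gap is in your second step. You correctly observe that $S_{i(\psi_i)-\lambda(r)}\varphi^{\psi_i^s}$ and $S^+_{i^+(\psi_i^s)}\varphi^{\psi_i^s}$ span the same one-dimensional $H$-eigenspace, so they differ by a scalar; but your determination of that scalar as $\eta(-1)^{-1}$ is only asserted, not argued. Writing $S_a=\fourmatrix{0}{1}{1}{0}S_a^+$ does not help directly, since the indices $i(\psi_i)-\lambda(r)$ and $i^+(\psi_i^s)=q-1-i(\psi_i)$ are different, and ``evaluating on the support $IwI_1$'' amounts to computing an intertwining constant that is not available from the lemmas you have. This is precisely the nontrivial content of the identity, and hand-waving here is circular.

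The paper resolves this by a clean trick you should adopt: apply an auxiliary operator $S_z$ (for generic $z$ avoiding a few congruences) to both sides of
\[
\eta(-1)\,\bJ_0(i(\psi_i),q-1-\lambda(r))\,S_{i(\psi_i)-\lambda(r)}\varphi^{\psi_i^s}=c(\psi_i^s)\,S^+_{i^+(\psi_i^s)}\varphi^{\psi_i^s}.
\]
On the left one uses Lemma~\ref{relations1} again, producing a second factor of $\eta(-1)$ and $\bJ_0(z,q-1-i(\psi_i))S_{z-i(\psi_i)}\varphi^{\psi_i^s}$; on the right one uses Lemma~\ref{relations3}, producing $\bJ(z,i^+(\psi_i^s))S_{z+i^+(\psi_i^s)}\varphi^{\psi_i^s}$. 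Since $i^+(\psi_i^s)=q-1-i(\psi_i)$, both $S$-vectors and both auxiliary Jacobi sums coincide, the two factors $\eta(-1)$ square to $1$, and nonvanishing of $S_{z-i(\psi_i)}\varphi^{\psi_i^s}$ lets you cancel to get $c(\psi_i^s)=\bJ(i(\psi_i),q-1-\lambda(r))$. This is what replaces your unproved ``routine bookkeeping''.
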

\begin{proof}
Our genericity assumptions imply that $i(\psi_i), \lambda(r) \not \in \{0, q-1\}$. So we have $\bJ(i(\psi_i), q-1-\lambda(r)) = \bJ_0(i(\psi_i), q-1-\lambda(r))$, and applying Lemma~\ref{relations1} we find
\begin{displaymath}
S_{i(\psi_i)}S_0 \varphi^{\psi_i^s} =  [(-1)^{e(\lambda)(r)}] \bJ_0(i(\psi_i), q-1-\lambda(r)) S_{i(\psi_i)- \lambda(r)}\varphi^{\psi_i^s}.
\end{displaymath}
The assumptions of the lemma are satisfied because $i(\psi_i) \not \in \{0, q-1\}$, and if $q-1 | i(\psi_i) - \lambda(r)$ then $R\psi_i = \psi_i$, which contradicts our assumption $R\chi_{k-1-i}^s \ne \chi_{k-1-i}^s$. (Recall that, by definition~\ref{defn:R}, the operator~$S_{i(\psi_i)}$ applied to the $\psi_i$-eigenvector~$S_0\varphi^{\psi_i^s}$ yields an $R\psi_i$-eigenvector. On the other hand, $S_{\lambda(r)}$ applied to a~$\psi_i$-eigenvector would yield a $\psi_i^s\alpha^{-\lambda(r)} = \psi_i$-eigenvector, by Lemma~\ref{shift}.) 
Hence we have an equality
\[
[(-1)^{e(\lambda)(r)}] \bJ_0(i(\psi_i), q-1-\lambda(r)) S_{i(\psi_i)- \lambda(r)}\varphi^{\psi_i^s} = c(\psi_i^s) S_{i^+(\psi_i^s)}^+ \varphi^{\psi_i^s}.
\]
To conclude, we are going to apply an auxiliary operator~$S_z$ to both sides of this equality. The number~$z$ can be chosen arbitarily, except that we require that Lemma~\ref{relations1} and Lemma~\ref{relations3} can be applied to deduce that
\begin{gather*}
S_z S_{i(\psi_i)- \lambda(r)}\varphi^{\psi_i^s} = [(-1)^{e(\lambda)(r)}] \bJ_0(z,q-1-i(\psi_i)) S_{z - i(\psi_i)} \varphi^{\psi_i^s},\\
S_z S_{i^+(\psi_i^s)}^+ \varphi^{\psi_i^s} = \bJ_0(z, i^+(\psi_i^s)) S_{z+i^+(\psi_i^s)} \varphi^{\psi_i^s}.
\end{gather*}
This rules out at most four choices of~$z$, which is fine since $p \geq 5$. 
Since $i(\psi_i) = q-1-i^+(\psi_i^s)$ and these Jacobi sums do not vanish, we deduce that
\[
\bJ_0(i(\psi_i), q-1- \lambda(r)) S_{z+i^+(\psi_i^s)} \varphi^{\psi_i^s} = c(\psi_i^s) S_{z+i^+(\psi_i^s)} \varphi^{\psi_i^s}.
\]
The lemma follows since~$S_{z+i^+(\psi_i^s)}\varphi^{\psi_i^s} \not = 0$ (all the $S_i$-operators are nonzero on $\varphi^{\psi_i^s}$, which generates the cosocle of the lattice~$\Ind_I^K(\psi_i^s)$ where these computations are taking place).
\end{proof}

\begin{lemma}\label{firstsimplification}
Define $\epsilon_j = 1$ if $j \not \in J \triangle \delta J \text{ and } j-1 \in J \triangle \delta J$, and $\epsilon_j = 0$ otherwise.  
Then $v_p(c(\psi_i^s)) = f - |J \triangle \delta J|$, and the leading term of~$c(\psi_i^s)$ is
\[
(-1)^{f-1+v_p(c(\psi_i^s))} \prod_{j \in J \triangle \delta J}\frac{(p-1-\lambda_j(r_j))!(p-1-(\delta \lambda)_j(r_j))!}{0!} \prod_{j \not \in J \triangle \delta J}\frac{(p-1-\lambda_j(r_j))!}{(p-1-\lambda_j(r_j) + \epsilon_j)!}.
\]
\end{lemma}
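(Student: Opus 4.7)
The plan is to apply Stickelberger's theorem (Theorem~\ref{Stickelbergertheorem}) to the expression $c(\psi_i^s) = \bJ(i(\psi_i), q - 1 - \lambda(r))$ furnished by Lemma~\ref{coefficientsJacobi}. Setting $a = i(\psi_i)$ and $b = q - 1 - \lambda(r)$, the theorem reduces the lemma to a computation of the $p$-adic digits of $a$, $b$, and $a + b \pmod{q-1}$, and a substitution into the formulas for $u(a,b)$ and $U(a,b)$.

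The digits of $a$ and $b$ can be read off directly. Since our genericity assumptions force $\lambda_j(r_j) \in [1, p-3]$ (via the table~(\ref{formalweights}) and $r_j \in [2, p-5]$), one gets $b_j = p - 1 - \lambda_j(r_j)$. From Lemma~\ref{i^+(chi_i)}, which expresses $i^+(\psi_i^s)$ as $q - 1$ minus a digit-wise sum supported on $J \triangle \delta J$, together with the relation $i(\psi_i) = q - 1 - i^+(\psi_i^s)$, we obtain $a_j = p - 1 - (\delta\lambda)_j(r_j)$ for $j \in J \triangle \delta J$ and $a_j = 0$ otherwise.

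The substantive step is the carry analysis in the base-$p$ addition. Using the table~(\ref{formalweights}) together with the shift relation $j \in \delta_\red J \iff j+1 \in J$ (and the single boundary modification coming from~$\delta_\irr$), a case check on the pair $(\ch_J(j-1,j),\ch_{\delta J}(j-1,j))$ shows that, for $j \in J \triangle \delta J$,
\[
\lambda_j(r_j) + (\delta\lambda)_j(r_j) = \begin{cases} p - 2 & \text{if } j - 1 \notin J \triangle \delta J, \\ p - 1 & \text{if } j - 1 \in J \triangle \delta J. \end{cases}
\]
Crucially, the shift relation rules out the ``conflicting'' configurations that would yield other sums (e.g.\ those with $j \in J$, $j-1 \in J$, and $j-1 \notin \delta J$ are impossible under $\delta_\red$ because $j-1 \notin \delta_\red J$ forces $j \notin J$). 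Consequently, tracking carries shows that a carry enters position $j$ exactly when $j - 1 \in J \triangle \delta J$, and the carry exits every position of $J \triangle \delta J$. This gives $(a+b)_j = 0$ for $j \in J \triangle \delta J$ and $(a+b)_j = p - 1 - \lambda_j(r_j) + \epsilon_j$ for $j \notin J \triangle \delta J$.

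Substituting these digits into the Stickelberger formulas, the identity $u(a,b) = (p-1)^{-1}\sum_j[(p-1)-(a_j + b_j - (a+b)_j)]$ telescopes against the carries to yield $f - |J \triangle \delta J|$, while $U(a,b) = (-1)^{f-1+u}\prod_j a_j!b_j!/(a+b)_j!$ is exactly the stated product (reading $a_j! = 1$ and $(a+b)_j! = (p-1-\lambda_j(r_j)+\epsilon_j)!$ outside $J\triangle\delta J$, and $a_j! = (p-1-(\delta\lambda)_j(r_j))!$, $(a+b)_j! = 0!$ inside). The main obstacle is the carry analysis in the third paragraph: while each individual case is elementary, one must systematically enumerate the admissible configurations of $(\ch_J, \ch_{\delta J})$ near $j$ and carefully handle the irreducible boundary row of~(\ref{formalweights}) at $j=0$ together with the $\delta_\irr$-modification at $j-1 = f-1$.
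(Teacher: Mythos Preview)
Your proposal is correct and follows essentially the same route as the paper: both invoke Lemma~\ref{coefficientsJacobi} to write $c(\psi_i^s)=\bJ(a,b)$ with $a=i(\psi_i)$ and $b=q-1-\lambda(r)$, read off the digits $a_j,b_j$ from Lemma~\ref{i^+(chi_i)} and Proposition~\ref{integerparameter}, establish the key identity $(p-1-\lambda_j)+(p-1-(\delta\lambda)_j)\in\{p-1,p\}$ on $J\triangle\delta J$ by a case check against~(\ref{formalweights}), and then substitute into Stickelberger. Your phrasing of the valuation as ``$f$ minus the number of carries'' is a slightly cleaner bookkeeping than the paper's direct summation, but the content is identical.
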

\begin{proof}
In this proof we shorten notation to~$\lambda_j = \lambda_j(r_j)$. 
For $j \in [0,f-1]$, let $\Sigma_j \in [0,p-1]$ and $Q \in \{0,1\}$ be such that
\[
\sum_{j \in \{0, \ldots, f-1\}} p^j(p-1-\lambda_j) + \sum_{j \in J \triangle \delta J} p^j(p-1-(\delta\lambda)_j) = \sum_{j \in \{ 0, \ldots, f-1\}} \Sigma_j p^j + Q(q-1).
\]
Then $\Sigma_j$ is equal to
\[
\Sigma_j =
\begin{dcases}
0 &\text{ if } j \in J \triangle \delta J\\
p-1-\lambda_j + \epsilon_j &\text{ if } j \notin J \triangle \delta J.
\end{dcases}
\]
This follows immediately (by ``carrying") from the fact that
\[
(p-1-\lambda_j) + (p-1-(\delta \lambda)_j) =
\begin{dcases}
 p &\text{ if } \ch_{J \triangle \delta J}(j, j-1) = (1, 0)\\ 
p-1 & \text{ if } \ch_{J \triangle \delta J}(j, j-1) = (1, 1),
\end{dcases}
\]
as can be checked directly from~(\ref{formalweights}) (or see the appendix~\S A).

By Lemma~\ref{coefficientsJacobi}, Proposition~\ref{stateintegerparameter} and Theorem~\ref{Stickelbergertheorem}, it follows~that
\begin{align*}
v_p(c(\psi_i^s)) & = \frac 1 {p-1} \left (\sum_{j \in J \triangle \delta J} p-1-(p-1-\lambda_j+p-1-\delta\lambda_j - \Sigma_j) + \sum_{j \not \in J \triangle \delta J} p-1-(p-1-\lambda_j + 0 -\Sigma_j) \right )\\
& = \frac{1}{p-1} \left ( \sum_{j \in J \triangle \delta J} p-1-(p-1-\lambda_j + p-1-\delta\lambda_j) + \sum_{j \not \in J \triangle \delta J} p-1+\epsilon_j \right ) \\
& = f-|J \triangle \delta J| + \frac{1}{p-1} \left ( \sum_{j \in J \triangle \delta J, j-1 \not \in J \triangle \delta J} -1 + \sum_{j \not \in J \triangle \delta J, j-1 \in J \triangle \delta J} 1 \right ) = f-|J \triangle \delta J|
\end{align*}
and the leading term 
\[
(-1)^{f-1+v_p(c(\psi_i^s))} \prod_{j \in J \triangle \delta J} \frac{(p-1-\lambda_j)!(p-1-\delta\lambda_j)!}{\Sigma_j!} \prod_{j \not \in J \triangle \delta J} \frac{(p-1-\lambda_j)!}{\Sigma_j!}
\]
is as claimed.

\end{proof}

\begin{lemma}\label{secondsimplification}
The leading term of~$c(\psi_i^s)$ is the Teichm\"uller lift of
\begin{displaymath}
(-1)^{f-1+v_p(c(\psi_i^s))+ |J \triangle \delta J|+\sum_{j \in J \triangle \delta J}(p-1-\delta \lambda_j(r_j))} \frac{\prod_{j \in J \triangle \delta J, j-1 \not \in J \triangle \delta J}(p-1- \lambda_j(r_j))}{\prod_{j \not \in J \triangle \delta J, j-1 \in J \triangle \delta J}(p-\lambda_j(r_j))}.
\end{displaymath}
\end{lemma}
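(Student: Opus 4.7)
The plan is to simplify the factorial expression in Lemma~\ref{firstsimplification} using the mod-$p$ identity
\[
x!(p-1-x)! \equiv (-1)^{x+1} \pmod p, \qquad 0 \leq x \leq p-2,
\]
which follows from Wilson's theorem. I would analyze the contribution of each $j$ to the product separately, splitting into the four cases determined by the value of $\ch_{J \triangle \delta J}(j, j-1)$.

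For $j \in J\triangle \delta J$ with $j-1 \notin J\triangle \delta J$, the proof of Lemma~\ref{firstsimplification} shows $(p-1-\lambda_j(r_j)) + (p-1-\delta\lambda_j(r_j)) = p$. Setting $a = p-1-\lambda_j(r_j)$, the corresponding factor becomes $a!(p-a)! = (p-a)\cdot a!(p-1-a)! \equiv (1+\lambda_j(r_j))(-1)^{a+1} \pmod p$; rewriting $1+\lambda_j(r_j) \equiv -(p-1-\lambda_j(r_j)) \pmod p$ yields precisely $(p-1-\lambda_j(r_j))$ together with an explicit sign. For $j \in J\triangle \delta J$ with $j-1 \in J\triangle \delta J$, the sum equals $p-1$, so the identity applies directly and produces only a sign. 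Finally, for $j \notin J\triangle \delta J$, the quotient $(p-1-\lambda_j(r_j))!/(p-1-\lambda_j(r_j)+\epsilon_j)!$ equals $1$ if $\epsilon_j=0$ and $(p-\lambda_j(r_j))^{-1}$ if $\epsilon_j=1$, giving the denominator factors in the statement.

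Multiplying these contributions immediately recovers the rational function claimed. The only remaining step is a parity check to identify the sign. Writing $S = J \triangle \delta J$, the accumulated powers of $-1$ from the analysis above come to $(-1)^{\sum_{j \in S}\lambda_j(r_j) + |\{j \in S : j-1 \in S\}|}$, and I would check this equals $(-1)^{|S| + \sum_{j \in S}(p-1-\delta\lambda_j(r_j))}$ modulo $2$. The main obstacle, such as it is, lies in this bookkeeping: using that $p-1$ is even and that the parities of $\lambda_j(r_j)+\delta\lambda_j(r_j)$ are $1$ or $0$ according to whether $j-1 \notin S$ or $j-1 \in S$ (from the identities recalled in the proof of Lemma~\ref{firstsimplification}), the required congruence reduces to
\[
|\{j \in S : j-1 \notin S\}| + |\{j \in S : j-1 \in S\}| \equiv |S| \pmod 2,
\]
which is tautological. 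This will complete the proof.
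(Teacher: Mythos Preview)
Your proposal is correct and follows essentially the same approach as the paper: both split into cases according to $\ch_{J\triangle\delta J}(j,j-1)$, apply the Wilson-type identity $x!(p-1-x)!\equiv(-1)^{x+1}$ to collapse the factorial pairs, and then verify the resulting sign. The paper organizes the sign slightly differently---it writes the exponent directly as $\sum_{j\in S}(p-1-\delta\lambda_j+1)$ rather than first computing $\sum_{j\in S}\lambda_j+|\{j\in S:j-1\in S\}|$ and then matching parities---but the computation is the same.
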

\begin{proof}
Again we shorten notation to~$\lambda_j = \lambda_j(r_j)$. We simplify the two factors in the expression of Lemma~\ref{firstsimplification}.
For the second factor, by definition of~$\epsilon_j$ we have cancellation of numerator with denominator whenever $j, j-1 \not \in J \triangle \delta J$.
In the numerator of the first factor, we apply the identity  $x!(p-1-x)! \equiv (-1)^{x+1} \mod p$ together with
\[
p-1- \lambda_j = p-1 - (p-1-\delta\lambda_j) \text{ when } \ch_{J \triangle \delta J}(j, j-1) = (1, 1),
\]
to simplify the whole leading term to
\[
(-1)^{f-1+v_p(c(\psi_i^s))+\sum_{j,\, j-1 \in J \triangle \delta J}(p-1-\delta\lambda_j+1)}\prod_{\ch_{J \triangle \delta J}(j ,j-1) = (1, 0)}(p-1-\lambda_j)!(p-1-\delta\lambda_j)!\prod_{\ch_{J \triangle \delta J} (j, j-1) = (0, 1)} \frac{(p-1-\lambda_j)!}{(p-\lambda_j)!}
\]
To conclude, we simplify the second factor to $\prod_{\ch_{J \triangle \delta J}(j, j-1) = (0, 1)} \frac 1 {p-\lambda_j}$, and we apply the equality
\[
p-1-\lambda_j = p - (p-1- \delta\lambda_j) \text{ when } \ch_{J \triangle \delta J}(j, j-1) = (1, 0)
\]
together with $x!(p-1-x)! \equiv (-1)^{x+1} \mod p$ to simplify the first factor to
\[
(-1)^{\sum_{\ch_{J \triangle \delta J}(j, j-1) = (1, 0)} (p-1-\delta \lambda_j + 1)}\prod_{\ch_{J \triangle \delta J}(j, j-1) = (1, 0)}(p-1-\lambda_j).
\]
\end{proof}

\begin{pp}\label{prodirreducible}
The product $\prod_{i=1}^k c(\psi_i^s)$ has valuation~$kf - k|J\triangle \delta J|$ and leading term equal to~$1$.
\end{pp}
\begin{proof}
By Lemma~\ref{firstsimplification}, $v_p(c(\psi_i^s))$ does not depend on~$i$ and the product has valuation $kf - k|J \triangle \delta J|$. For the leading term, we have seen in the proof of Proposition~\ref{coeffirreducible} that
$\sum_{i=0}^{k-1}\sum_{j \in \delta^i J \triangle \delta^{i+1} J}(p-1-\delta^{i+1}\lambda_j(r_j))$ is divisible by~$p-1$, hence it is even when $p$ is odd. It follows that the product of all the sign terms from Lemma~\ref{secondsimplification} is a positive sign, because~$k$ is even. 

To conclude, we need to prove that
\[
\prod_{t=0}^{k-1}\frac{\prod_{j \in \delta^t J \triangle \delta^{t+1}J, j-1 \not \in \delta^t J \triangle \delta^{t+1}J}(p-1-(\delta^t\lambda)_j(r_j))}{\prod_{j \not \in \delta^t J \triangle \delta^{t+1}J, j-1 \in \delta^t J \triangle \delta^{t+1}J} (p-(\delta^t\lambda)_j(r_j))} = 1.
\]
For $\fj \in \{J, \delta J \ldots, \delta^{k-1} J\}$, 
let us define
\begin{align*}
(j, \fj) & = p-1-\lambda_{\fj, j}(r_j) \text{ if } j \in \fj \triangle \delta \fj, j-1 \not \in \fj \triangle \delta \fj\\
	& =  (p-\lambda_{\fj, j}(r_j))^{-1} \text{ if } j \not \in \fj \triangle \delta \fj, j-1 \in \fj \triangle \delta \fj\\
	& = 1 \text{ otherwise}.
\end{align*}
Then we need to prove
\begin{equation}\label{toprove}
\prod_{j=0}^{f-1}\prod_{\fj \in \{J, \ldots, \delta^{k-1}J\}} (j, \fj) = 1,
\end{equation}
and for this it will suffice to fix an index~$j$ and prove that the inner product is equal to~$1$.

Having fixed~$j$, we claim that there are two commuting involutions on the set 
\[
\mS_j = \{ \fj \in \{J, \ldots, \delta^{k-1}J\} : \ch_{\fj \triangle \delta \fj}(j, j-1) = (1, 0) \text{ or } (0, 1)\}.
\]
One is the complement $\fj \mapsto \fj^c$, since $\fj \triangle \delta \fj = \fj^c \triangle \delta \fj^c$. The other, denoted~$\fj \mapsto *\fj$, is defined in the same way as in the proof of Lemma~\ref{differenceodd}: for instance, if $\ch_{\fj \triangle \delta \fj}(j, j-1) = (1, 0)$, then $*\fj$ is the first element of $\mS_j$ following~$\fj$ such that $\ch_{*\fj \triangle \delta *\fj}(j, j-1) =~(0, 1)$. This is well-defined since $\delta_{\mathrm{red}}(\fj \triangle \delta \fj) = \delta \fj \triangle \delta^2 \fj$.

Decomposing~$\mS_j$ in orbits for the resulting action of~$\bZ/2 \times \bZ/2$, it suffices to prove that, if~$\fj \in \mS_j$, then
\[
(j, \fj)(j, \fj^c)(j, *\fj)(j, *\fj^c) = 1.
\]

To do so, we assume without loss of generality that $j \in \fj \triangle \delta \fj$ (otherwise, we relabel~$\fj$ to~$*\fj$). Now it suffices to prove that
\[
\{p-1-\lambda_{\fj, j}(r_j), p-1-\lambda_{\fj^c, j}(r_j)\} = \{p-\lambda_{*\fj, j}(r_j), p-\lambda_{*\fj^c, j}(r_j)\},
\]
or equivalently that
\[
\{\lambda_{\fj, j}, \lambda_{\fj^c, j}\} = \{\lambda_{*\fj, j}-1, \lambda_{*\fj^c, j}-1\}.
\]

Knowing that $\ch_{\fj \triangle \delta \fj}(j, j-1) = (1, 0)$ does not determine~$\lambda_{\fj, j}$ uniquely. However, since $\ch_\fj + \ch_{\fj^c} = 1$, it determines $\{\lambda_{\fj, j}, \lambda_{\fj^c, j}\}$. More precisely, we have 
\[
\{\ch_\fj(j-1, j, j+1), \ch_{\fj^c}(j-1, j, j+1)\} = \begin{cases}
\{(0, 0, 1), (1, 1, 0)\} \text { if } 0 < j < f-1\\
\{(1, 0, 1), (0, 1, 0)\} \text{ if } j = 0\\
\{(1, 1, 1), (0, 0, 0)\} \text{ if } j = f-1,
\end{cases}
\]
and we deduce from the definition of $\fj \mapsto \lambda_\fj$ (see~(\ref{formalweights})) that
\[
\{\lambda_{\fj, j}, \lambda_{\fj^c, j}\} = \{p-3-x_j, x_j\} \text{ or } \{x_0-1, p-2-x_0\}.
\]
An entirely analogous argument using $\ch_{*\fj \triangle \delta *\fj}(j, j-1) = \ch_{*\fj^c \triangle \delta *\fj^c}(j, j-1) = (0, 1)$ implies that
\[
\{\lambda_{*\fj, j}, \lambda_{*\fj^c, j}\} = \{x_j+1, p-2-x_j\} \text{ or } \{p-1-x_0, x_0\}
\]
and the claim follows.

\end{proof}

\subsection{The split reducible case.}\label{sec:red}
In this section, $\rhobar$ is a split reducible representation with tame inertial weights~$r_i$ (defined as in~\S\ref{Galoisreps}).
The situation is similar to the irreducible case, with some combinatorial differences, and we will indicate how the computations of the previous sections can be modified to account for this case. Recall the following notational convention: for $J \subset \{0, \ldots, f-1\}$ and $x, y, z \in \{0, 1\}$, we write $\br(j, J) = (x, y, z)$ if $\ch_J(j-1, j, j+1) = (x, y, z)$.

The weights of~$\rhobar$ are again in bijection with the subsets of~$\{0, \ldots, f-1\}$. If~$\chi_i = \sigma(\lambda_J, \rhobar)^{I_1}$ corresponds to~$J$, the subset corresponding to~$\chi_{i+1} = R\chi_i^s$ is~$\delta_{\mathrm{red}}(J)$, by the proof of~\cite[Proposition~5.1]{Breuilparameters} (compare also~\cite[Lemma~15.2]{BPmodp}). 
In this section, we write~$\delta$ for~$\delta_{\mathrm{red}}$.

\paragraph{Computation of~$i^+(\psi_{k-1-i}^s)$.}\label{reducible1}
Assume $\chi_i = \sigma(\lambda, \rhobar)^{I_1}$ and~$\lambda = \lambda_J$, so that $j \in J$ if and only if $\lambda_j \in \{p-2-x_j, p-3-x_j \}$, as in~\cite{Breuilparameters} and~\S\ref{sec:serrewts}.
The following proposition is proved in the same way as Proposition~\ref{stateintegerparameter}.

\begin{pp}
Write~$i^+(\psi_{k-1-i}^s) = i^+(\chi_i) = \sum_{j=1}^{f-1} i^+(\chi_i)_jp^j$. Then we have the equality
\begin{align*}
i^+(\chi_i)_j  =\, & \delta(\lambda)_j \text{ if } j \in J \triangle \delta J\\
&  p-1 \text{ if } j \not \in J \triangle \delta J.
\end{align*}
\end{pp}

\paragraph{Computation of~$S_{i^+(\psi_1^s)}^+\cdots S_{i^+(\psi_k^s)}^+(x)$.}\label{reducible2}  As in the previous section, there exists a nonzero 
\[
\beta = \beta(i^+(\psi_1^s), \ldots, i^+(\psi_k^s)) \in~W(k_L)
\] 
such that
\[
S_{i^+(\psi_1^s)}^+\cdots S_{i^+(\psi_k^s)}^+(x) = \beta x
\]
in the central type of~$\rhobar$ (where~$x$ is as in Proposition~\ref{nextalpha}). 
Here, however, $k$ need not be even. 

\begin{pp}\label{coeffreducible}
The valuation of~$\beta$ is
\[
v_p(\beta) = \half k |J \triangle \delta J|,
\]
and its leading term is the Teichm\"uller lift of
\[
(-1)^{v_p(\beta) +\frac{kh}{f}\sum_{j=0}^{f-1}r_j}\cdot(-1)^k.
\]

\end{pp}

\begin{rk}
Again, the computation of~$v_p(\beta)$ also follows from Proposition~\ref{productvalue}, and we will not need the explicit definition of the invariant~$h$, which can be found in~\cite[Lemme~6.2]{Breuilparameters}. 
\end{rk}

\begin{proof}

We apply Proposition~\ref{pp:contraction} and the identity appearing in the proof of~\cite[Lemme~6.2(i)]{Breuilparameters} to deduce that the leading coefficient~$c(\beta)$ is the Teichm\"uller lift of
\begin{displaymath}
(-1)^{v_p(\beta) +\frac{kh}{f}\sum_{j=0}^{f-1}r_j}\cdot(-1)^k
\end{displaymath}
where the factor~$(-1)^k$ comes from the fact that $(k-2)(f-1) + kf$ has the same parity as~$k$. 

To compute the valuation, we consider the equality
\begin{displaymath}
v_p(\beta) =  \frac{1}{p-1} \left (  \sum_{t=0}^{k-1}\sum_{j \in \delta^{t}J \triangle \delta^{t+1} J} p-1-(\delta^{t+1}\lambda)_j \right )
\end{displaymath}
where~$J$ corresponds to~$\chi_{k-1}$ (of course, since we are summing over~$t$, we could have taken any other~$\chi_i$). 

Fix an index~$j$ and sum over~$t$. Recall that $\delta^t J \triangle \delta^{t+1}J = \delta^t(J \triangle \delta J)$, so that
\begin{equation}\label{shiftdelta}
\frac{1}{p-1} \left (  \sum_{t=0}^{k-1}\sum_{j \in \delta^{t}J \triangle \delta^{t+1} J} p-1-(\delta^{t+1}\lambda)_j(r_j) \right ) = \frac{1}{p-1} \left (  \sum_{t=0}^{k-1}\sum_{j+t \in J \triangle \delta J} p-1-(\delta\lambda)_{j+t}(r_j) \right )
\end{equation}
(Recall that $\delta^{t+1}(\lambda)_j$ is a polynomial in a variable~$x_j$, and it depends on~$\ch_J(j+t, j+t-1)$ in the same way as $(\delta \lambda)_{j+t}$, which is a polynomial in~$x_{j+t}$. Here we are evaluating~$x_{j+t}$ at~$r_j$.)

Since~$\ch_J$ is periodic of period~$k$, so is~$\ch_{J \triangle \delta J}$, hence we can rewrite~(\ref{shiftdelta}) as
\[
\frac k {f(p-1)} \left (\sum_{t=0}^{f-1} \sum_{j+t \in J \triangle \delta J} p-1- (\delta \lambda)_{j+t}(r_j) \right ) = \frac k {f(p-1)} \left ( \sum_{x \in J \triangle \delta J} p-1-(\delta \lambda)_x(r_j) \right ).
\]
By definition~(\ref{formalweights}), we see that if $x\in J \Delta \delta J$, then
\[
p-1-(\delta\lambda)_x(r_j) =
\begin{dcases}
p-2-r_j &\text{ if } x\in J \\ 
r_j+1 &\text{ if } x \notin  J
\end{dcases}
\]
In Lemma~\ref{differenceodd}, we have constructed an involution~$*$ on~$J \triangle \delta J$, with the property that 
$x \in J$ if and only if $*x \notin J$.
It follows immediately that
\[
\frac k {f(p-1)} \left ( \sum_{x \in J \triangle \delta J} p-1-(\delta \lambda)_x(r_j) \right ) = \frac k {f(p-1)} \half |J \triangle \delta J|(p-1).
\]
The claim follows since summing over indices~$j$ multiplies this number by~$f$.
\end{proof}

\paragraph{Computation of $\prod_{i=1}^{k} \tld{c}(\psi_i^s)$.}\label{reducible3}
Fix an index~$i$, and assume~$\psi_i^s$ corresponds to a subset~$J$. Here Lemmas~\ref{firstsimplification} and~\ref{secondsimplification} are still valid, and proved in the same way, hence 
\[
v_p(c(\psi_i^s)) = f - |J \triangle \delta J|
\]
and the leading coefficient~$\tld{c}(\psi_i^s)$ is the Teichm\"uller lift of
\begin{displaymath}
(-1)^{f-1+v_p(c(\psi_i^s))+\sum_{j \in J \triangle \delta J}(p-1-\delta \lambda_j(r_j)) + |J \triangle \delta J|} \frac{\prod_{j \in J \triangle \delta J, j-1 \not \in J \triangle \delta J}(p-1- \lambda_j(r_j))}{\prod_{j \not \in J \triangle \delta J, j-1 \in J \triangle \delta J}(p-\lambda_j(r_j))}.
\end{displaymath}

\begin{pp}\label{prodreducible}
We have the equality $\prod_{i=1}^{k} \tld{c}(\psi_i^s) = (-1)^{k(f-1)+kf}$.
\end{pp}

\begin{proof}
Let~$J$ correspond to~$\psi_k^s$. In the product $\prod_{i=1}^{k} \tld{c}(\psi_i^s)$ we first notice a sign contribution of
\begin{displaymath}
(-1)^{k(f-1)+ kv_p(c(\psi_i^s)) + k|J \triangle \delta J|} = (-1)^{k(f-1)+kf}
\end{displaymath}
(it will cancel together with the factor of $(-1)^k$ from the previous section, since the exponent has the same parity as~$k$). What remains is
\begin{equation}\label{eqn:sgnc}
(-1)^{\sum_{t=0}^{k-1}\sum_{j \in \delta^t J \triangle \delta^{t+1} J}(p-1-\delta^{t+1} \lambda_j(r_j))} \prod_{t=0}^{k-1}\left ( \frac{\prod_{j \in \delta^tJ \triangle \delta^{t+1} J, j-1 \not \in \delta^t J \triangle \delta^{t+1} J}(p-1- \delta^t\lambda_j(r_j))}{\prod_{j \not \in \delta^t J \triangle \delta^{t+1} J, j-1 \in \delta^t J \triangle \delta^{t+1} J}(p-\delta^t\lambda_j(r_j))} \right ).
\end{equation}
The sign is positive because the exponent is divisible by~$p-1$. 
Note also that $j-1 \in \delta^t J \Delta \delta^{t+1}J$ if and only if $j \in \delta^{t-1} J \Delta \delta^t J$.
Then (\ref{eqn:sgnc}) is equal to 
\begin{gather*}
\prod_{j=0}^{f-1} \prod_{\fj \in \{J, \ldots, \delta^{k-1} J\}} \frac {\prod_{j\in \delta^{-1}\fj \Delta \delta \fj,\,  j \notin \delta^{-1}\fj \Delta \fj}p-1-\lambda_{\fj, j}(r_j)}{\prod_{j\in \delta^{-1}\fj \Delta \delta \fj,\,  j \in \delta^{-1}\fj \Delta \fj} p - \lambda_{\fj, j}(r_j)}.
\end{gather*}

Fix an index~$j$. We will prove that the corresponding factor in the product above is equal to~$1$. 
By~(\ref{formalweights}) (see also \S \ref{sec:app}), for any $\fj \in \{J, \ldots, \delta^{k-1} J\}$ we see that
\begin{gather*}
p-1-\lambda_{\fj, j}(r_j) = \begin{cases}
 r_j+2  &\text{ if } \br(j, \fj) = (1, 1, 0)\\
  p-1-r_j &\text{ if } \br(j, \fj) = (0, 0, 1)
\end{cases}
\qquad
p-\lambda_{\fj, j}(r_j) = \begin{cases}
 r_j+2  &\text{ if } \br(j, \fj) = (0, 1, 1)\\
  p-1-r_j &\text{ if } \br(j, \fj) = (1, 0, 0)
\end{cases}
\end{gather*}

If $\br(j, \fj) = (1, 1, 0), (0, 0, 1), (0, 1, 1), (1, 0, 0)$ (equivalently, $j \in \delta^{-1} \fj \triangle \delta \fj$), define
\[
f(\fj,j)=
\begin{dcases}
r_j+2 &\text{ if }  \ch_{\fj}(j-1, j) = (1, 1)\\ 
(p-1-r_j)^{-1} &\text{ if } \ch_{\fj}(j-1, j) = (1, 0)\\ 
(r_j+2)^{-1} &\text{ if } \ch_{\fj}(j-1, j) = (0, 1)\\
p-1-r_j &\text{ if } \ch_{\fj}(j-1, j) = (0, 0).
\end{dcases}
\]
Otherwise, let $f(\fj,j)$ be $1$.

Then we have to prove that
\[
\prod_{\fj \in \{J, \ldots, \delta^{k-1} J\}} f(\fj,j) = 1
\]
To do so, it suffices to prove that the number of~$\fj$ such that $\br(j, \fj) = (1, 1, 0), \text{ resp. } (0, 0, 1)$ is the same as the number of~$\fj$ such that $\br(j, \fj) = (0, 1, 1), \text{ resp. } (1, 0, 0)$. For this, define 
\[
\mR_j = \{ \fj \in \{J, \ldots, \delta^{k-1} J\}:  j \in \delta^{-1} \fj \Delta \delta \fj \}.
\] 

It is enough to construct an involution $\dagger: \mR_j \to \mR_j$ such that $\br(\fj,j) \mapsto \br(\dagger \fj,j)$ exchanges $(1, 1, 0)$ with $(0, 1, 1)$ and $(0, 0, 1)$ with~$(1, 0, 0)$. Notice that if $\br(j, \fj) = (1, 1, 0)$ and $\fj = \delta^t J$ then $\br(j+t, J) = (1, 1, 0)$, so there exists some positive integer~$r$ such that $\br(j + t + r, J) = (0, 1, 1)$. Hence it makes sense to define $\dagger \fj$ as the first element of~$\mR_j$ following~$\fj$ such that $\br(j, \dagger \fj) = (0, 1, 1)$. The other~$\dagger \fj$ are defined similarly, as in the proof of Lemma~\ref{differenceodd}, and the proposition follows. 
\end{proof}

\subsection{Computation of the constant.}
Here we put together all of the above and prove the following theorem, which by~\cite[Th\'eor\`eme~6.4]{Breuilparameters} implies Theorem~\ref{thm:functor}, our second main result. 

\begin{thm}\label{completecomputation}
Let~$\rhobar: G_L \to \GL_2(k_E)$ be a semisimple Galois representation as in \S \ref{Galoisreps}. Let~$\chi_0, \ldots, \chi_{k-1}$ be a cycle of characters on~$D_0(\rhobar)$ satisfying the conditions in \S \ref{sec:prodUp}. Assume that $\det(\rhobar) \circ\mathrm{Art}_L(p) = 1$. Then we have
\begin{displaymath}
g_{\chi_{k-1}}\cdots g_{\chi_0} = \begin{dcases}
(-1)^{\half k + \frac{kh}{2f}(1+\sum_{j=0}^{f-1} r_j)} \text{when $\rhobar$ is irreducible}\\
(-1)^{\frac{kh}{f}\sum_{j=0}^{f-1}r_j} \alpha^{\left ( |J|-|J^c| \right )k/f} \text{ when~$\rhobar$ is reducible}.
\end{dcases}
\end{displaymath}
\end{thm}
\begin{proof}
By Lemma~\ref{inverseconstants} and the discussion after Proposition~\ref{nextalpha}, the inverse of the constant~$c(\chi_0, \ldots, \chi_{k-1})$ is the product of $\prod_{i=0}^{k-1}\tld{c}(\psi_i^s)$ with the leading term of~$\beta = \beta(\psi_1^s, \ldots, \psi_k^s)$ (the valuation of~$\beta$ doesn't intervene because we are working with saturated inclusions).  Combining Propositions~\ref{coeffirreducible}, \ref{prodirreducible}, \ref{coeffreducible}, \ref{prodreducible}, this product is equal to
\begin{gather*}
(-1)^{\half k |J \triangle \delta J| + \frac{kh}{2f}(1+\sum_{j=0}^{f-1} r_j)} \text{ in the irreducible case}\\
(-1)^{\half k |J \triangle \delta J| + \frac{kh}{f}\sum_{j=0}^{f-1}r_j} \text{ in the reducible case}.
\end{gather*}
By Proposition~\ref{productvalue}, the factor~$(-1)^{\half k |J \triangle \delta J|}$ cancels in both cases, and there is an extra factor of
\begin{gather*}
(-\alpha)^{-\half k} \text{ in the irreducible case}\\
\alpha^{-k|J^c|/f}(\alpha')^{-k|J|/f} \text{ in the reducible case}.
\end{gather*}
Recall that by our conventions the determinant of~$\rhobar$ evaluated at~$\mathrm{Art}_L(p)$ is $\alpha$ in the irreducible case and $\alpha\alpha'$ in the reducible case. 
Since we are assuming $\det (\rhobar)\circ\mathrm{Art}_L(p) = 1$, this means that the factor contributes $(-1)^{\half k}$, respectively  $\alpha^{(|J|-|J^c|)k/f}$, which is in accordance with~\cite[Th\'eor\`eme~6.4]{Breuilparameters}, since we defined $r_i$ to be $\mu_{1, i} - \mu_{2, i}-1$ (see \S \ref{sec:serrewts}) and $M_\infty[\fm]$ to be $\pi(\rhobar^\vee)$ rather than $\pi(\rhobar)$ (see Definition \ref{defn:piglob} and Remark \ref{rk:rbar}).
\end{proof}

\section{Global applications.} \label{sec:global}

We now give examples of $\GL_2(L)$-representations $\pi_{\mathrm{glob}}(\rhobar^\vee)$ from global contexts for which there exists an $\mO_E[\GL_2(L)]$-module $M_\infty$ with an arithmetic action of $R_\infty^\psi$ (for some character $\psi: G_L \ra \mO_E^\times$ and some natural number $h$) such that $\pi_{\mathrm{glob}}(\rhobar^\vee) \cong M_\infty^\vee[\fm]$.
The construction of $M_\infty$ comes from the Taylor--Wiles--Kisin method as augmented in \cite{CEGGPS} and simplified in \cite{Scholze}.
The $\GL_2(L)$-representations $\pi_{\mathrm{glob}}(\rhobar^\vee)$ we define appear in the cohomology of Shimura curves and in spaces of algebraic modular forms on definite quaternion algebras over totally real fields.
One could also consider algebraic modular forms on definite unitary groups (see, e.g., \cite[\S 6.6]{EGS}).

\subsection{Spaces of mod $p$ automorphic forms on some quaternion algebras}\label{sec:modpaut}

Let $p$ be an odd prime.
Let $F$ be a totally real number field in which $p$ is unramified. Let $D_{/F}$ be a quaternion algebra which is unramified at all places dividing $p$ and at zero (the \emph{definite} case) or one infinite place (the \emph{indefinite} case).
If $D_{/F}$ is indefinite (unramified at exactly one infinite place) and $K = \prod_w K_w \subset (D\otimes_F \bA_F^\infty)^\times$ is an open compact subgroup, then there is a smooth projective curve $X_K$ defined over $F$ and we define $S(K,k_E)$ to be the cohomology group $H^1(X_K \times_F \lbar F, k_E)$. 
If $D_{/F}$ is definite, then we let $S(K,k_E)$ be the space of $K$-invariant smooth functions
\[f: D^\times\backslash (D\otimes_F \mathbb{A}_F^\infty)^\times \ra k_E.\]

Let $\rbar: G_F \ra \GL_2(k_E)$ be a Galois representation.
Let $S$ be the union of the set of places in $F$ where $\rbar$ is ramified, the set of places where $D$ is ramified, and the set of places dividing $p$.
Since $p>3$, we can choose a place $w_1$ of $F$ such that $\rbar$ is unramified at $w_1$ and $\rbar(\Frob_{w_1})$ has distinct eigenvalues (where $\Frob_{w_1}$ denotes a geometric Frobenius element at $w_1$).
Enlarging $E$ if necessary, we assume that these eigenvalues are in $k_E$.
Let $\mathbb{T}^{S,\mathrm{univ}}$ be the commutative polynomial algebra over $\mO_E$ generated by the formal variables $T_w$ and $S_w$ for each $w \notin S\cup \{w_1\}$ where $w_1$ is chosen as in \cite[\S 6.2]{EGS}.
Then $\mathbb{T}^{S,\mathrm{univ}}$ acts on $S(K,k_E)$ with $T_w$ and $S_w$ acting by the usual double coset action of
\[ \big[ \GL_2(\mO_{F_w}) \begin{pmatrix}
  \varpi_w &   \\
   & 1 \\
 \end{pmatrix}\GL_2(\mO_{F_w}) \big] \]
and 
\[ \big[ \GL_2(\mO_{F_w}) \begin{pmatrix}
  \varpi_w &   \\
   & \varpi_w \\
 \end{pmatrix}\GL_2(\mO_{F_w}) \big], \]
respectively.
Define a map $\mathbb{T}^{S,\mathrm{univ}}\ra k_E$ such that the image of $X^2 - (\bN w)^{-1} T_w X + (\bN w)^{-1} S_w$ in $k_E[X]$ is the characteristic polynomial of $\rbar(\Frob_w)$, where $\Frob_w$ is a geometric Frobenius element at $w$. 
Let the kernel of this map be $\fm_{\rbar}$.

\begin{rk}\label{rk:rbar}
Given a maximal ideal of $\mathbb{T}^{S,\mathrm{univ}}$, \cite[\S 9]{Breuilmodp} (resp.~\cite[\S 6.2]{EGS}) defines a Galois representation which is dual to (resp.~a twist by $\varepsilon^{-1}$ of) the representation $\rbar$ that we define. 
This is the reason for the presence of a dual in Definition \ref{defn:piglob} and the different normalization of the map $\eta$ in \S \ref{sec:LLC}.
\end{rk}

For the rest of the section, suppose that
\begin{enumerate}
\item $\rbar$ is modular, i.e. that there exists a compact open subgroup $K\subset (D\otimes_F \bA_F^\infty)^\times$ such that $S(K,k_E)_{\fm_{\rbar}}$ is nonzero;
\item $\rbar|_{G_{F(\zeta_p)}}$ is absolutely irreducible;
\item if $p=5$ then the image of $\rbar(G_{F(\zeta_p)})$ in $\PGL_2(k_E)$ is not isomorphic to $A_5$; 
\item $\rbar|_{G_{F_w}}$ is generic (see \S \ref{Galoisreps}, though the weaker \cite[Definition 2.1.1]{EGS} suffices) for all places $w|p$; and
\item $\rbar|_{G_{F_w}}$ is non-scalar at all finite places where $D$ ramifies.
\end{enumerate}

Let $v|p$ be a place of $F$, and let $L$ be $F_v$.
Let $\rhobar$ be the restriction $\rbar|_{G_{F_v}}$.
We define a compact open subgroup $K^v = \prod_{w\neq v} K_w \subset (D\otimes_F \mathbb{A}_F^{v,\infty})^\times$ as in \cite[\S 6.5]{EGS}.
The subgroup $K_w$ is $(\mO_D)_w^\times$ unless $\rbar|_{G_{F_w}}$ is ramified (which is always the case if $w$ divides $p$ by the genericity assumption), reducible, and $D$ is unramified at $w$.
If these three conditions hold and moreover $w|p$ or $w = w_1$, then $K_w$ is the standard (upper triangular) Iwahori subgroup.
If these three conditions hold, but $w\nmid p$, then $K_w$ is the subgroup of matrices which are upper triangular modulo $\varpi_w^{n_w}$ where $\varpi_w$ is a uniformizer of $\mO_{F_w}$ and $n_w$ is defined in \emph{loc. cit.}

Following \emph{loc. cit.}, we let $S$ be the set of places $w$ where $\rbar|_{G_{F_w}}$ is ramified, $D$ is ramified at $w$, or $w$ is $w_1$.
For each $w\in S$ distinct from $v$, one defines a  finite free $\mO_E$-module $V_w$ with a smooth $K_w$-action denoted $L_w$ in \emph{loc. cit.}
Rather than define $V_w$, we only note that the $\mO_E$-rank of $V_w$ is one unless $\rbar|_{G_{F_w}}$ is irreducible.
For any compact open subgroup $K_v \subset \GL_2(L)$, let $V$ be the $\mO_E[\![K^vK_v]\!]$-module $\bigotimes_{w\in S,\, w\neq v} V_w$ where $K_w$ acts trivially if $w \notin S$ or $w$ is $v$.

Fix a finite order character $\psi: G_F \ra W(k_E)^\times\subset \mO_E^\times$ such that $\det \rbar = \overline{\varepsilon\psi}$.
We extend the action of $K^vK_v$ to $K^vK_v (\mathbb{A}_F^\infty)^\times$ by letting $(\mathbb{A}_F^\infty)^\times$ act by $\psi \circ \mathrm{Art}_F$ (the compatibility of the actions here follows from the condition in the choice of $\psi$).

We let $S'$ be the set of places $w$ away from $v$ where $\rbar|_{G_{F_w}}$ is reducible and $w|p$, $D$ is ramified at $w$, or $w$ is $w_1$.
For $w\in S'$, one also introduces Hecke operators $T_w$ and scalars $\beta_w \in k_E^\times$, in the same way as in~\emph{loc. cit.}
Let $\mathbb{T}^{S,S',\mathrm{univ}}$ be the free polynomial algebra over $\mathbb{T}^{S,\mathrm{univ}}$ generated by the variables $(T_w)_{w\in S'}$.
Let $\fm'_{\rbar} \subset \mathbb{T}^{S,S',\mathrm{univ}}$ be the maximal ideal generated by $\fm_{\rbar}$ and $(T_w - \beta_w)_{w\in S'}$.

Fix a compact open subgroup $K_v \subset \GL_2(L)$.
In the definite case, we let $S(K^vK_v,V)$ be the space of functions
\[f: D^\times\backslash (D\otimes_F \mathbb{A}_F^\infty)^\times \ra V^\vee\]
such that $f(gu) = u^{-1}f(g)$ for all $g\in (D\otimes_F \mathbb{A}_F^\infty)^\times$ and $u\in K^vK_v (\mathbb{A}_F^\infty)^\times$.
In the indefinite case, $V^\vee$ defines a local system $\mathcal{F}_{V^\vee}$ on $X_{K^vK_v}$, and we let $S(K^vK_v,V)$ be the cohomology group $H^1(X_{K^vK_v} \times_F \overline{F},\mathcal{F}_{V^\vee})$.
In either case, we then let $S(K^v,V)$ be the limit 
\[
\varinjlim_{K_v} S(K^vK_v,V).
\]
Then $S(K^v,V)$ has natural commuting actions of $\mathbb{T}^{S,S',\mathrm{univ}}$ and $\GL_2(L)$.
In the indefinite case, there is also an action of $G_F$ that commutes with the two aforementioned actions.
We let $\pi_{\mathrm{glob}}(\rhobar^\vee)$ be $S(K^v,V)[\fm'_{\rbar}]$ in the definite case and $\Hom_{G_F}(\rbar,S(K^v,V)[\fm'_{\rbar}])$ in the indefinite case.
By the assumption that $\rbar$ is modular and the choices of $K^v$, $L$, and $(\beta_w)_{w\in S'}$, we know that $\pi_{\mathrm{glob}}(\rhobar^\vee)$ is nonzero.
We emphasize that despite the suggestive notation, it is far from clear that $\pi_{\mathrm{glob}}(\rhobar^\vee)$ depends only on $\rhobar^\vee$.
\begin{thm}\label{thm:patching}
There exists a natural number $h$ and an $\mO_E[\GL_2(L)]$-module $M_\infty$ with an arithmetic action of $R_\infty^\psi$ such that $M_\infty^\vee[\fm]$ is isomorphic to $\pi_{\mathrm{glob}}(\rhobar^\vee)$ (where $\psi$ denotes the restriction $\psi|_{G_{F_v}}$).
\end{thm}
The proof of this theorem uses the Taylor--Wiles method described in the next section.

\subsection{The Taylor--Wiles patching construction.}\label{sec:TW}

\subsubsection{Galois deformation rings}
For each $w\in S$, let $\rbar_w$ denote $\rbar|_{G_{F_w}}$ and $\psi_w$ denote $\psi_{G_{F_w}}$.
Let $R_{\rbar_w}^{\square,\psi_w}$ be the framed deformation ring parametrizing liftings of $\rbar_w$ with determinant $\psi_w \varepsilon$.
Let $R_S^\psi$ be $\widehat{\otimes}_{w\in S,\mO_E} R_{\rbar_w}^{\square,\psi_w}$.

Fix a finite set of places $Q$ disjoint from $S$.
For any finite set $T$ of places in $F$, let $G_{F,T}$ denote the Galois group of the maximal extension of $F$ unramified outside of $T$ over $F$.
Let $R_{F,S_Q}^\psi$ denote the deformation ring of $\rbar$ as a $G_{F,S\cup Q}$-representation with fixed determinant $\psi\varepsilon$.
Let $r(Q): G_{F,S\cup Q} \ra \GL_2(R_{S_Q}^{\psi})$ be a versal lifting of $\rbar$.
Let $R_{F,S_Q}^{\square,\psi}$ denote the complete $\mO_E$-algebra which prorepresents the functor assigning to an local Artinian $\mO_E$-algebra $A$ with residue field $k_E$ the set of equivalence classes of tuples $(r,\{\alpha_w\}_{w\in S})$ where $r$ is an $A$-lifting of $\rbar$ as a $G_{F,S\cup Q}$-representation and $\alpha_w \in \ker(\GL_2(A) \ra \GL_2(k_E))$ for each $w\in S$.

\subsubsection{Spaces of automorphic forms}
Let $K' = \prod_w K'_w \subset (D\otimes_F \mathbb{A}_F^\infty)^\times$ be a compact open subgroup such that $K'_w = K_w$ for all $w \notin Q$ and $w\neq v$.
Let $\mathbb{T}(K')$ denote the image of $\mathbb{T}^{S\cup Q,S',\mathrm{univ}}$ in $\End_{\mO_E}(S(K',V))$.
We denote the image of $\fm'_{\rbar} \cap \mathbb{T}^{S\cup Q,\mathrm{univ}}$ in $\mathbb{T}(K')$ by $\fm_Q$---it is a maximal ideal in $\mathbb{T}(K')$.
There is a map $R_{F,S_Q}^\psi \ra \mathbb{T}(K')_{\fm_Q}$ such that the image of the characteristic polynomial of $r(Q)(\Frob_w)$ is $X^2 - (\bN w)^{-1}T_w X + (\bN w)^{-1} S_w \in \mathbb{T}(K')_{\fm_Q}[X]$ for $w \notin S\cup Q \cup \{w_1\}$.
Let $r(K'): G_{F,S\cup Q} \ra \GL_2(\mathbb{T}(K')_{\fm_Q})$ be the Galois representation obtained from $r(Q)$.

\subsubsection{Auxiliary primes}
We can and do choose an integer $q \geq [F:\bQ]$ and for each integer $N\geq 1$ a tuple $(Q_N,\{\overline{\psi}_w\}_{w\in Q_N})$ where
\begin{itemize}
\item $Q_N$ is a set of $q$ places of $F$ disjoint from $S$;
\item for each $w\in Q_N$, $\bN w \equiv 1 \pmod{p^N}$;
\item for each $w\in Q_N$, $\rbar|_{G_{F_w}} \cong \overline{\psi}_w \oplus \overline{\psi}'_w$ for some $\overline{\psi}'_w \neq \overline{\psi}_w$; and
\item the ring $R_{F,S_{Q_N}}^{\square,\psi}$ can be topologically generated over $R_S^\psi$ by $q-[F:\bQ]$ elements.
\end{itemize}

Fix a compact open subgroup $K_v \subset \GL_2(L)$.
For each $N\geq 1$, let 
\[
K(N)^v = \prod_{w\neq v} K(N)_w \subset (D\otimes_F \mathbb{A}_F^{v,\infty})^\times
\]
be the compact open subgroup such that $K(N)_w = K_w$ for $w \notin Q_N$ and $w\neq v$ and $K(N)_w$ is the subgroup of matrices of $\GL_2(\mO_{F_w})$ congruent to $\fourmatrix{*}{*}{0}{1}$ modulo $\varpi_w$ if $w\in Q_N$.

Choose a lift $\varphi_w \in G_{F_w}$ of the geometric Frobenius element, and let $\varpi_w \in \mO_{F_w}$ be the uniformizer such that $\mathrm{Art}_{F_w} \varpi_w = \varphi_w|_{F_w^{\mathrm{ab}}}$.
For $w\in Q_N$, let $U_w$ be the Hecke operator corresponding to the double coset
\[
K(N)_w \fourmatrix{1}{0}{0}{\varpi_w} K(N)_w.
\]

Using that $\overline{\psi}_w(\varphi_w) \neq \overline{\psi}'_w(\varphi_w)$, the characteristic polynomial $P_w(X) \in \mathbb{T}(K(N)^vK_v)[X]$ of $r(K(N)^vK_v)(\varphi_w)$ factors as $(X-A_w)(X-B_w)$ by Hensel's lemma.
Let $\mathbb{T}(K(N)^vK_v)'$ be the $\mathbb{T}(K(N)^vK_v)$-subalgebra of $\End_{\mO_E}(S(K(N)^v K_v,V))$ generated by $U_w$ for $w\in Q_N$, and let $\fm_{Q_N}' \subset \mathbb{T}(K(N)^vK_v)'$ be the ideal generated by $\fm_{Q_N}$ and $U_w - A_w$.
For each $w\in Q_N$, let $k_w^\times(p)$ denote the $p$-Sylow subgroup of $k_w^\times$.
Let $\mO_{E,N}$ denote the group algebra of $\prod_{w\in Q_N} k_w^\times(p)$ over $\mO_E$ with augmentation ideal $\fa_N$.
Note that the natural inclusion composed with multiplication by $\prod_{w\in Q_N} (U_w-B_w)$ gives an isomorphism
\[
S(K^vK_v,V)_{\fm} \cong S(K(N)^v K_v,V)_{\fm'_Q}[\fa_N].
\]

In the definite case, let $M(K_v,N)$ be 
\[
S(K(N)^vK_v,V)_{\fm'_Q}^\vee \otimes_{R_{F,S_{Q_N}}^\psi} R_{F,S_{Q_N}}^{\square,\psi}.
\]
In the indefinite case, let $M(K_v,N)$ be 
\[
\Hom_{\mathbb{T}(K(N)^vK_v)_{\fm_{Q_N}}[G_{F,S\cup Q_N}]}(r(K(N)^vK_v),S(K(N)^vK_v,V)_{\fm'_Q})^\vee\otimes_{R_{F,S_{Q_N}}^\psi} R_{F,S_{Q_N}}^{\square,\psi}.
\]

\subsubsection{Patching}

For $w\in S$ with $w\neq v$, define $R_w^{\min}$ as in \cite[\S 6.5]{EGS}.
Let $R^{\mathrm{loc}}$ be 
\[
R_{\rhobar}^{\square,\psi_v}\widehat{\otimes}_{\mO_E}(\widehat{\otimes}_{w \in S,w\neq v,\mO_E} R_w^{\min}).
\]
Note that $R^{\mathrm{loc}}$ is formally smooth over $R_{\rhobar}^{\square,\psi_v}$.

Let $g$ be $q-[F:\bQ]$.
Let $R_\infty^\psi$ be $R^{\mathrm{loc}}[\![t_1,\ldots,t_g]\!]$, which is isomorphic to $R_{\rhobar}^{\square,\psi_v}\widehat{\otimes}_{\mO_E} \mO_E[\![x_1,\ldots,x_h]\!]$ for some integer $h\geq 0$.
We fix a versal lifting 
\[
r(\varnothing): G_{F,S} \ra \GL_2(R_{F,S}^\psi)
\]
of $\rbar$ with determinant $\psi\varepsilon$ and versal liftings 
\[
r(Q_N): G_{F,S} \ra \GL_2(R_{F,S}^\psi)
\] 
of $\rbar$ with determinant $\psi\varepsilon$ compatible with $r(\varnothing)$.
This gives an isomorphism 
\[
R_{F,S}^{\square,\psi} \cong R_{F,S}^\psi\widehat{\otimes}_{\mO_E}\mO_E[\![z_1,\ldots,z_{4\#S-1}]\!]
\]
and compatible isomorphisms
\begin{equation}\label{eqn:versal}
R_{F,S_{Q_N}}^{\square,\psi} \cong R_{F,S_{Q_N}}^\psi\widehat{\otimes}_{\mO_E}\mO_E[\![z_1,\ldots,z_{4\#S-1}]\!].
\end{equation}

For each $N\geq 1$, choose a surjection of $R^{\mathrm{loc}}$-algebras
\[
R_\infty^\psi \twoheadrightarrow R_{F,S_{Q_N}}^{\square,\psi}
\]
and a surjection $\mO_{E,\infty} := \mO_E[\![y_1,\ldots,y_q]\!] \twoheadrightarrow \mO_{E,N}$ whose kernel is contained in the ideal generated by $((1+y_i)^{p^N}-1)_{i=1}^q$.
This gives an action of the $\mO_{E,\infty}$-algebra $S_\infty := \mO_E[\![z_1,\ldots,z_{4\# S-1},y_1,\ldots,y_q]\!]$ on $M(K_v,n)$ by \eqref{eqn:versal}. 
Let $J \subset S_\infty$ be an open ideal for the adic topology defined by the maximal ideal, and let $I_J \subset \bN$ be the cofinite subset of elements $N$ such that $J$ contains the kernel of $\mO_{E,\infty} \twoheadrightarrow \mO_{E,N}$.
If $N\in I_J$, let $M(K_v,J,N)$ be $S_\infty/J \otimes_{S_\infty} M(K_v,N)$.

Let $(S_\infty/J)_{I_J}$ be the product $\prod_{i\in I_J} S_\infty/J$.
Fix a nonprincipal ultrafilter $\mathfrak{F}$ on the set $\bN = \{N\geq 1\}$.
This defines a point in $x \in \Spec (S_\infty/J)_{I_J}$ (see \cite[Lemma 2.2.2]{GN}).
Then let $M(K_v,J,\infty)$ be $(S_\infty/J)_{I_J,x} \otimes_{(S_\infty/J)_{I_J}} \prod_{N\in I_J} M(K_v,J,N)$. 
The product $\prod_{N\in I_J} M(K_v,J,N)$, and thus $M(K_v,J,\infty)$, has a diagonal $R_\infty^\psi$-action.
For an open subideal $J' \subset J$ and an open compact subgroup $K'_v \subset K_v$, there is a natural map $M(K'_v,J',\infty)\ra M(K_v,J,\infty)$ (see \cite[Lemma 3.4.11(1)]{GN}).
We let $M_\infty$ be the $R_\infty^\psi$-module
\[
\varprojlim_{J,K_v} M(K_v,J,\infty).
\]

\begin{proof}[Proof of Theorem \ref{thm:patching}]
There is a natural right $\mO_E[\GL_2(L)]$-action (by duality) on $M_\infty$ coming from the maps $M(K'_v,J,\infty)\ra M(K_v,J,\infty)$ for any inclusion $K'_v \subset K_v$ of compact open subgroups (see \cite[\S 9]{Scholze}).
Since the action of $L^\times$ on $M_\infty$ factors through $\psi_v$ on each $M(K_v,N)$, it does on $M_\infty$ as well.
Let $\fa\subset S_\infty$ be the augmentation ideal.
Then $M(K_v,N)/\fa$ is isomorphic to $S(K^vK_v,V)_{\fm}^\vee$ in the definite case and 
\[
\Hom_{\mathbb{T}(K^vK_v)_{\fm'_{\rbar}}[G_{F,S}]}(r(K^vK_v),S(K^vK_v,V)_{\fm'_{\rbar}})^\vee
\]
in the indefinite case, which is of dimension say $d(K_v)$. 
We conclude by topological Nakayama's lemma that if $U_v \subset \GL_n(\mO_L)$ is compact, open, and pro-$p$, then there is a surjection $S_\infty/J[\![U_v/K_v]\!]^{d(U_v)} \twoheadrightarrow M(K_v,J,N)$ for each $K_v \subset U_v$ and $N \in I_J$. 
Taking products and localizing, we have a surjection 
\begin{equation} \label{eqn:generate}
S_\infty/J[\![U_v/K_v]\!]^{d(U_v)} \cong (S_\infty/J)_{I_J,x} \otimes_{(S_\infty/J)_{I_J}} \prod_{N\in I_J} S_\infty/J[\![U_v/K_v]\!]^{d(U_v)} \twoheadrightarrow M(K_v,J,\infty)
\end{equation}
where the first isomorphism is given by the diagonal map.
Taking limits (which is right exact since both sides of \eqref{eqn:generate} are finite abelian groups) we see that $M_\infty$ is generated as a $S_\infty[\![U_v]\!]$-module by $d(U_v)$ elements and
in particular that $M_\infty$ is a finitely generated $S_\infty[\![\GL_2(\mO_L)]\!]$-module.
The $S_\infty$-action factors through $R_\infty^\psi$ so that $M_\infty$ is a finitely generated $R_\infty^\psi[\![\GL_2(\mO_L)]\!]$-module.

For proofs of statements in this paragraph, see \cite[Lemma 4.18]{CEGGPS}.
The finitely generated $S_\infty[\![\GL_2(\mO_L)]\!]$-module $M_\infty$ is in fact projective (see  also \cite[Proposition 3.4.16]{GN}).
If $\theta$ is a lattice in $\sigma(\tau)$, then $M_\infty(\theta)$ is supported in $R_\infty^\psi(\tau)$.
Since $\dim S_\infty = \dim R_\infty^\psi(\tau)$, $M_\infty(\theta)$ is maximal Cohen-Macaulay over $R_\infty^\psi(\tau)$.
The action of $\mathcal{H}(\theta)$ on $M_\infty(\theta)$ factors through the map $\eta$ by the proof of \cite[Theorem 4.19]{CEGGPS}.
Our different normalization of $\eta$ stems from our normalization of Galois representations (see Remark \ref{rk:rbar})
For an $\mO_E$-lattice $\theta \subset \sigma(\tau)$ in an inertial $K$-type, $M_\infty(\theta)[1/p]$ is locally free of rank at most one over $R_\infty(\tau)[1/p]$ by \cite[Proposition 3.5.1]{BD}.

Finally, in the definite case, we have that
\[
M_\infty^\vee[\fm] \cong (M_\infty/\fa)^\vee[\fm] \cong S(K^v,V)_{\fm'_{\rbar}}[\fm'_{\rbar}] = \pi_{\mathrm{glob}}(\rhobar^\vee).
\]
In the indefinite case, we have that 
\begin{align*}
(M_\infty/\fa)^\vee[\fm] &\cong \varinjlim_{K_v}\Hom_{\mathbb{T}(K^vK_v)_{\fm'_{\rbar}}[G_{F,S}]}(r(K^vK_v),S(K^vK_v,V)_{\fm'_{\rbar}})[\fm'_{\rbar}] \\
&\cong \Hom_{k_E[G_{F,S}]}(\rbar,S(K^v,V)[\fm'_{\rbar}]) \\
&= \pi_{\mathrm{glob}}(\rhobar^\vee).
\end{align*}
\end{proof}

\appendix
\section{Explicit formulas for weight cycling.} \label{sec:app}
Although the computations in~\S \ref{sec:irred}, \S\ref{sec:red} are ultimately based only on the definitions in~(\ref{formalweights}), it might be helpful to have explicit formulas for the behaviour of $\delta \in \{\delta_\red, \delta_\irr\}$ on formal weights. We collect this information in the following tables.

\begin{equation}\label{table1}
\begin{array}{c|c|c|c}
\br(j, J) = \ch_J(j-1, j, j+1) \in J & \ch_{J \triangle \delta J}(j, j-1) & \text{weight~$\lambda_j$} & \text{weight~$\delta(\lambda)_j$} \\
\hline
(1, 1, 1) & (0, 0) & p-3-x_j & p-3- x_j \\
(1, 1, 0) & (1, 0) & p-3-x_j & x_j + 1 \\
(1, 0, 1) & (1, 1) &x _j + 1 & p-2-x_j\\
(1, 0, 0) & (0, 1) & x_j + 1 & x_j\\
(0, 1, 1) & (0, 1) & p-2-x_j & p-3-x_j\\
(0, 1, 0) & (1, 1) & p-2-x_j & x_j + 1 \\
(0, 0, 1) & (1, 0) & x_j & p-2-x_j \\
(0, 0, 0) & (0, 0) & x_j & x_j \\
\end{array}
\end{equation}

Here, we assume $0 < j < f-1$ and $\delta \in \{\delta_\red, \delta_\irr\}$. The remaining cases are treated in what follows.

\begin{equation}\label{table2}
\begin{array}{c|c|c|c}
\br(0, J) = \ch_J(f-1, 0, 1) & \ch_{J \triangle \delta_\irr J}(0, f-1) & \text{weight~$\lambda_0$} & \text{weight~$\delta_\irr(\lambda)_0$}\\
\hline
(1, 1, 1) & (0, 1) & p-1-x_0 & p-2- x_0 \\
(1, 1, 0) & (1, 1) & p-1-x_0& x_0 \\
(1, 0, 1) & (1, 0) & x_0 - 1 & p-1-x_0\\
(1, 0, 0) & (0, 0) & x_0 - 1 & x_0 - 1 \\
(0, 1, 1) & (0, 0) & p-2-x_0 & p-2-x_0\\
(0, 1, 0) & (1, 0) & p-2-x_0 & x_0 \\
(0, 0, 1) & (1, 1) & x_0 & p-1-x_0 \\
(0, 0, 0) & (0, 1) & x_0 & x_0 - 1 \\
\end{array}
\end{equation}

\begin{equation}\label{table3}
\begin{array}{c|c|c|c}
\br(f-1, J) = \ch_J(f-2, f-1, 0) & \ch_{J \triangle \delta_\irr J}(f-1, f-2) & \text{weight~$\lambda_{f-1}$} & \text{weight~$\delta_\irr(\lambda)_{f-1}$} \\
\hline
(1, 1, 1) & (1, 0) & p-3-x_{f-1} & x_{f-1}+1 \\
(1, 1, 0) & (0, 0) & p-3-x_{f-1} & p-3-x_{f-1}\\
(1, 0, 1) & (0, 1) & x_{f-1} + 1 & x_{f-1} \\
(1, 0, 0) & (1, 1) & x_{f-1} + 1 & p-2-x_{f-1}\\
(0, 1, 1) & (1, 1) & p-2-x_{f-1} & x_{f-1}+1 \\
(0, 1, 0) & (0, 1) & p-2-x_{f-1} & p-3-x_{f-1}\\
(0, 0, 1) & (0, 0) & x_{f-1} & x_{f-1} \\
(0, 0, 0) & (1, 0) & x_{f-1} & p-2-x_{f-1} \\
\end{array}
\end{equation}

\bibliographystyle{amsalpha}
\bibliography{ref}

\end{document}